\documentclass[12pt,reqno]{amsart}

\usepackage{graphicx,subfigure}
\usepackage{hyperref}
\hypersetup{colorlinks=true, citecolor=blue, linkcolor=red}

\numberwithin{equation}{section} \numberwithin{figure}{section}
\numberwithin{table}{section} \setlength{\oddsidemargin}{0in}
\setlength{\evensidemargin}{0in} \setlength{\textwidth}{6.5in}

\setlength{\topmargin}{-.3in} \setlength{\textheight}{9in}

{ \theoremstyle{plain}

}

{ \theoremstyle{definition}
\newtheorem{thm}{Theorem}
\newtheorem{cor}{Corollary}
\newtheorem{lem}{Lemma}
\newtheorem{pro}{Proposition}
\newtheorem{rem}{Remark}

\numberwithin{equation}{section} \numberwithin{lem}{section}
\numberwithin{thm}{section} \numberwithin{cor}{section}
\numberwithin{pro}{section} \numberwithin{rem}{section}
}

\begin{document}

\title[Positive solutions of semilinear elliptic equations]{Uniform estimates for positive solutions of a class of semilinear
elliptic equations and related Liouville and one-dimensional
symmetry results}

\author{Christos Sourdis}
\address{Department of Applied Mathematics and Department of Mathematics, University of
Crete, 700 13 Panepistimioupoli Vouton, Crete, Greece.}
\email{csourdis@tem.uoc.gr} \maketitle
\begin{abstract}
We consider the semilinear elliptic equation $\Delta u = W'(u)$ with
Dirichlet boundary conditions in a smooth, possibly unbounded,
domain $\Omega \subset \mathbb{R}^n$. Under suitable assumptions on
the potential $W$, including the double well potential that gives
rise to the Allen-Cahn equation, we deduce a condition on the size
of the domain that implies the existence of a positive solution
satisfying a uniform pointwise estimate. Here, uniform means that
the estimate is independent of $\Omega$. The main advantage of our
approach is that it allows us to remove a restrictive monotonicity
assumption on $W$ that was imposed in the recent paper by G. Fusco,
F. Leonetti and C. Pignotti \cite{fuscoTrans}. In addition, we can
remove a non-degeneracy condition on the global minimum of $W$ that
was assumed in the latter reference. Furthermore, we can generalize
an old result of P. Hess \cite{hess} and D. G. De Figueiredo
\cite{defiguerdo}, concerning semilinear elliptic nonlinear
eigenvalue problems.  Moreover, we study the boundary layer of
global minimizers of the corresponding singular perturbation
problem. For the above applications, our approach is based on a
refinement of a useful result that dates back to P. Cl\'{e}ment and
G. Sweers \cite{clemente}, concerning the behavior of global
minimizers of the associated energy over large balls, subject to
Dirichlet conditions. Combining this refinement with global
bifurcation theory and the celebrated sliding method, we can prove
uniform estimates for solutions away from their nodal set, refining
a lemma from a well known paper of H. Berestycki, L. A. Caffarelli
and L. Nirenberg \cite{cafareliPacard}. In particular, combining our
approach with a-priori estimates that we obtain by blow-up, the
doubling lemma of P. Polacik, P. Quittner, and P. Souplet \cite{pqs}
and known Liouville type theorems, we can give a new proof of a
Liouville type theorem of Y. Du and L. Ma \cite{duMaSqueeeze},
without using boundary blow-up solutions. We can also provide an
alternative proof, and a useful extension, of a Liouville theorem of
H. Berestycki, F. Hamel, and H. Matano \cite{matanoObstacle},
involving the presence of an obstacle. Making use of the latter
extension, we  consider the singular perturbation problem with mixed
boundary conditions. Furthermore, we prove some new one-dimensional
symmetry properties of certain entire solutions to Allen-Cahn type
equations, by exploiting for the first time  an old result of
Caffarelli, Garofalo, and Seg\'{a}la \cite{cafamodica}, and we
suggest a connection with the theory of minimal surfaces. Using this
approach, we can give a new proof of Gibbons' conjecture in two
dimensions which is a weak form of the famous conjecture of De
Giorgi. Furthermore, we provide new proofs of well known symmetry
results in half-spaces with Dirichlet boundary conditions. Moreover,
we can generalize a rigidity result due to A. Farina
\cite{farinaRigidity}. Lastly, we study the one-dimensional symmetry
of solutions in convex cylindrical domains with Neumann boundary
conditions.
\end{abstract}

\tableofcontents

\section{Introduction and statement of the main result}
A problem that has received considerable attention in the literature
is the study of the structure of solutions $(\lambda, u)\in
\mathbb{R}\times C^{2,\alpha}(\bar{\mathcal{D}})$, $0<\alpha<1$,
depending on the nonlinearity $f$, of the semilinear elliptic
nonlinear eigenvalue problem
\begin{equation}\label{eqlions}
\Delta u+\lambda f(u)=0,\ x\in \mathcal{D};\ \ u(x)=0,\ x\in
\partial\mathcal{D},
\end{equation}
where $\mathcal{D}$ is typically a smooth bounded domain. To this
end, the main approaches used  include the method of upper and lower
solutions, bifurcation techniques, as well as topological and
variational methods (see \cite{kormanBook}, \cite{lionsSIAM},
\cite{shiBook}, \cite{SmollerWasserman} and the references therein).

 Recently,   G. Fusco, F. Leonetti and C. Pignotti considered in
\cite{fuscoTrans} the semilinear elliptic problem
\begin{equation}\label{eqEq}
\left\{\begin{array}{ll}
         \Delta u=W'(u), & x\in \Omega, \\
           &   \\
         u=0, & x\in \partial\Omega,
         %\\
         %  &   \\
         %u>0, & x\in \Omega,
       \end{array}
 \right.
\end{equation}
where $\Omega\subset \mathbb{R}^n$, $n\geq 1$, is a domain with
nonempty Lipschitz boundary (see for instance \cite{evansGariepy}),
under the following assumptions on the $C^2$ function
$W:\mathbb{R}\to \mathbb{R}$, which we will often refer to as a
potential:
\begin{description}
  \item[(a)] There exists a constant $\mu>0$ such that
  \[
0=W(\mu)<W(t),\ t\in [0,\infty),\ t\neq \mu,
  \]
  \[
W(-t)\geq W(t), \ t\in [0,\infty);
  \]
  \item[(b)] $W'(t)\leq 0,\ t\in (0,\mu)$;
  \item[(c)] $W''(\mu)>0$.
\end{description}
A model potential which satisfies the assumptions in
\cite{fuscoTrans} is the double well potential in (\ref{eqAllen})
below, appearing frequently in the mathematical study of phase
transitions, see \cite{delpinoAnnals}. Another, model example is
given in (\ref{eqmodel}). An example of an unbounded domain with
nonempty Lipschitz boundary  is  (\ref{eqOmegaBCN}) below, which was
considered in \cite{cafareliPacard}.
 We stress that, in the case where the domain is unbounded,
the boundary conditions in (\ref{eqEq}) \emph{do not} refer to
$u(x)\to 0$ as $|x|\to \infty$ with $x \in \Omega$. Note that
(\ref{eqlions}) can be related to (\ref{eqEq}) via a simple
rescaling (see the relation between (\ref{eqEV1}) and (\ref{eqEV2})
below).

\emph{\textbf{Some preliminary notation.}} For $x\in \mathbb{R}^n$,
$\rho>0$, we let
\[
B_\rho(x)=\{y\in \mathbb{R}^n\ :\ |y-x|<\rho\},\ \ B_\rho=B_\rho(0),
\]
\[
A+B=\{x+y\ :\ x\in A,\ y\in B \},\ \ A,B\subset\mathbb{R}^n,
\]
and denote by $d(x,E)$ the Euclidean distance of the point $x\in
\mathbb{R}^n$ from the set $E\subset\mathbb{R}^n$, and by $|E|$,
unless specified otherwise, the $n$-dimensional Lebesgue measure of
$E$ (see \cite{evansGariepy}). By $\mathcal{O}(\cdot),\ o(\cdot) $
we will denote the standard Landau's symbols.

The main result of \cite{fuscoTrans} was the following:
\begin{thm}\label{thmfusco}
Assume $\Omega$ and $W$ as above. There are positive constants
$R^*$, $r^*\in (0,R^*)$, $a^*\in (0,\mu)$, $k$, $K$, depending only
on $W$ and $n$, such that if $\Omega$ contains a closed ball of
radius $R^*$, then problem (\ref{eqEq}) has a solution $u \in
C^2(\Omega)\cap C(\bar{\Omega})$ verifying
\begin{equation} \label{eq12} 0<u(x)<\mu,\ \ x\in \Omega,
\end{equation}

\begin{equation}\label{eq14}
\mu-a^*<u(x),\ \ x\in \Omega_{R^*}+B_{r^*},
\end{equation}
and
\begin{equation}\label{eq13}
\mu-u(x)\leq K e^{-kd(x,\partial \Omega)},\ \ x\in \Omega,
\end{equation}
where
\begin{equation}\label{eqomegaR}
\Omega_{R^*}=\{x\in \Omega\ :\ d(x,\partial \Omega)>R^*\}.
\end{equation}
\end{thm}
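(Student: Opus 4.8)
The plan is to build the solution $u$ on $\Omega$ as a limit of energy minimizers on an exhaustion of $\Omega$ by bounded domains, and to extract all three conclusions from one comparison with a carefully chosen radial minimizer on a large ball; the crux is a quantitative, hence geometry‑independent, version of the Cl\'ement--Sweers estimate.

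I begin with the variational set‑up. By (a) one has $W\ge 0$ on all of $\mathbb{R}$ and $W'(\mu)=0$, so for every bounded Lipschitz domain $D$ the functional $\cE_D(v)=\int_D\big(\tfrac{1}{2}|\nabla v|^2+W(v)\big)$ is coercive and weakly lower semicontinuous on $H^1_0(D)$ and attains its minimum; since $v\mapsto|v|$ and the truncation $v\mapsto\min\{|v|,\mu\}$ do not increase $\cE_D$ (using $W\ge0$ and $W(-t)\ge W(t)$), a minimizer $u_D$ with $0\le u_D\le\mu$ can be chosen, and it then solves $\Delta u_D=W'(u_D)$ in $D$ and, by the strong maximum principle together with (b) and (c), satisfies $u_D\equiv 0$ or $0<u_D<\mu$ in $D$. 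On $D=B_R$, replacing $u_{B_R}$ by its Schwarz symmetrization — which does not increase the Dirichlet integral, preserves $\int W(\cdot)$, and keeps the function in $H^1_0$ — yields a radial, radially nonincreasing minimizer $\phi_R$.

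The core step is the uniform bound: \emph{there are $R^*$, $a^*\in(0,\mu)$, $r^*\in(0,R^*)$, depending only on $W$ and $n$, with $\phi_{R^*}>\mu-a^*$ on $\overline{B_{r^*}}$.} Testing $\cE_{B_R}$ against the competitor equal to $\mu$ on $B_{R-1}$ and linearly interpolated to $0$ on $B_R\setminus B_{R-1}$, and using $W(\mu)=0$, gives $\cE_{B_R}(\phi_R)\le C_1(W,n)R^{n-1}$ for $R\ge1$; conversely, if $\phi_R(0)\le\mu-\delta$ then radial monotonicity gives $\phi_R\le\mu-\delta$ on $B_R$, whence $\cE_{B_R}(\phi_R)\ge|B_R|\min_{[0,\mu-\delta]}W$ with $\min_{[0,\mu-\delta]}W>0$ by (a), so $R$ must lie below an explicit threshold depending only on $W,n,\delta$. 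Now fix $a^*\in(0,\mu)$ small enough that $W'(t)\le -c_0(\mu-t)$ on $[\mu-a^*,\mu]$, where $c_0:=\tfrac{1}{2}W''(\mu)>0$ (possible by (c), Taylor's theorem, and $W'(\mu)=0$), take $R^*$ just above the threshold for $\delta=a^*$, so that $\phi_{R^*}(0)>\mu-a^*$, and choose $r^*$ by continuity of $\phi_{R^*}$; the bound follows. Moreover, since $\min_{[0,\mu-a^*]}W\le W(0)$, the same estimates give $\cE_{B_{R^*}}(\phi_{R^*})<W(0)\,|B_{R^*}|$, so the zero function is not a minimizer on $B_{R^*}$. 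I expect this quantitative step — isolating $R^*,a^*,r^*$ that depend only on $(W,n)$ and not on the domain — to be the main obstacle; assumptions (a) and (b) enter here through, respectively, the strict positivity of $W$ on $[0,\mu)$ and the radial monotonicity of $\phi_R$.

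It remains to pass to a general $\Omega$ and read off the estimates. Fix $x_0$ with $\overline{B_{R^*}(x_0)}\subset\Omega$, set $\Omega^{(j)}:=\Omega\cap B_j(x_0)\nearrow\Omega$, and let $u_j:=u_{\Omega^{(j)}}$. For $j>R^*$ the extension of $\phi_{R^*}(\cdot-x_0)$ by $0$ is admissible on $\Omega^{(j)}$ with energy strictly below that of $0$, so $u_j\not\equiv0$ and hence $0<u_j<\mu$ in $\Omega^{(j)}$. Whenever $\overline{B_{R^*}(y)}\subset\Omega^{(j)}$, the restriction $u_j|_{B_{R^*}(y)}$ minimizes $\cE_{B_{R^*}(y)}$ subject to its own (positive) boundary values while $\phi_{R^*}(\cdot-y)$ minimizes it subject to zero boundary values; combining the pointwise identity $\cE(\max\{u_j,\phi_{R^*}(\cdot-y)\})+\cE(\min\{u_j,\phi_{R^*}(\cdot-y)\})=\cE(u_j)+\cE(\phi_{R^*}(\cdot-y))$ on $B_{R^*}(y)$ with these two minimality properties shows that $\max\{u_j,\phi_{R^*}(\cdot-y)\}$ is again a minimizer with the boundary data of $u_j$, hence a solution of $\Delta w=W'(w)$ there; since it coincides with $u_j$ near $\partial B_{R^*}(y)$ (where $\phi_{R^*}(\cdot-y)$ drops below the positive minimum of $u_j$ over $\overline{B_{R^*}(y)}$), the strong maximum principle forces $\max\{u_j,\phi_{R^*}(\cdot-y)\}\equiv u_j$, i.e.\ $u_j\ge\phi_{R^*}(\cdot-y)$ on $B_{R^*}(y)$ (this comparison may also be effected by the sliding method). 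Interior elliptic estimates ($W'$ bounded on $[0,\mu]$) and a barrier at the Lipschitz boundary give, along a subsequence, $u_j\to u$ in $C^2_{loc}(\Omega)\cap C(\overline\Omega)$ with $0\le u\le\mu$, $u=0$ on $\partial\Omega$, $\Delta u=W'(u)$, and $u\ge\phi_{R^*}(\cdot-y)$ on $B_{R^*}(y)$ whenever $\overline{B_{R^*}(y)}\subset\Omega$. In particular $u(x_0)\ge\phi_{R^*}(0)>0$, so $u\not\equiv0$ and (\ref{eq12}) follows from the strong maximum principle; and for $d(y,\partial\Omega)>R^*$ and $|z|<r^*$ one has $u(y+z)\ge\phi_{R^*}(z)>\mu-a^*$, which is (\ref{eq14}). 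Finally, on $\Omega_{R^*}\subset\Omega_{R^*}+B_{r^*}$ we have $\mu-a^*<u<\mu$, so $v:=\mu-u$ satisfies $0\le v<a^*$ and, by the choice of $a^*$, $\Delta v=-W'(u)\ge c_0 v$ there; given $x\in\Omega_{R^*}$, put $D_x:=d(x,\partial\Omega)-R^*>0$, note $B_{D_x}(x)\subset\Omega_{R^*}$, and compare $v$ there with the radial solution of $\Delta\psi=c_0\psi$ equal to $a^*$ on $\partial B_{D_x}(x)$ to obtain $\mu-u(x)\le C_2(W,n)\,a^*\,e^{-k\,D_x}$ for a suitable $k=k(W,n)>0$; for $d(x,\partial\Omega)\le R^*$ the trivial bound $\mu-u(x)<\mu$ already yields (\ref{eq13}) once $K$ is large enough to absorb both cases. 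This completes the proof.
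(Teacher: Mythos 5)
Your proof is correct and follows the same broad strategy that the paper uses for its refined Theorem \ref{thmmine}: you take the radial energy minimizer $\phi_R$ on a ball (rather than the explicitly patched radial comparison functions (\ref{eqfuscoTechnics}) of \cite{fuscoTrans}) as the ball barrier, establish that $\phi_R(0)\to\mu$ by a test-function versus lower-bound energy argument, and then propagate the bound by comparing the exhaustion minimizers $u_j$ with every translate $\phi_{R^*}(\cdot-y)$ via the max/min energy identity — which is precisely what Lemma \ref{lemdancer} does in the paper's second proof. The substantive difference is in the quantitative radial input: you invoke only the Cl\'ement--Sweers type conclusion (\ref{eqsweers}), i.e.\ $\phi_R(0)>\mu-a^*$ for $R$ above a $(W,n)$-threshold, and then take $r^*$ small by continuity of $\phi_{R^*}$, whereas the paper's Lemma \ref{lem1} proves $\phi_R\geq\mu-\epsilon$ on all of $\overline{B}_{R-D}$ with $D$ \emph{independent of $n$}. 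For Theorem \ref{thmfusco} — which only asks for some $r^*\in(0,R^*)$ — your cruder estimate is simpler and entirely sufficient; it would not, however, yield the paper's sharper estimate (\ref{eq14+}) of Theorem \ref{thmmine}, where the lower bound is asserted on $\Omega_{R'}+B_{(R'-D)}$, and the paper stresses precisely this refinement as its main contribution. Two minor points worth flagging: Schwarz symmetrization already forces $\phi_R$ radially nonincreasing, so your parenthetical appeal to \textbf{(b)} for the monotonicity is not actually needed (indeed the paper proves its analogue under \textbf{(a')} alone); and your one-line ``barrier at the Lipschitz boundary'' is exactly the step the paper declares unclear in \cite{fuscoTrans} and sidesteps by assuming a $C^2$ boundary in Theorem \ref{thmmine} — for an unbounded $\Omega$ one should at least remark that a uniform Lipschitz character (or some uniform exterior cone condition) is being used to get a modulus of continuity at $\partial\Omega$ that survives the passage $j\to\infty$.
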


The approach of \cite{fuscoTrans} to the proof of Theorem
\ref{thmfusco} is variational, involving the construction of various
judicious radial comparison functions, see also \cite{alikakosARMA}.
Although variational, in our opinion, their argument boils down to
the construction of a weak lower solution to (\ref{eqEq}), see
\cite{Berestyckilion}, whose building blocks, after a translation,
are radial solutions of
\begin{equation}\label{eqfuscoTechnics}
\Delta \Phi^r+ c^2(\mu-\Phi^r)=0\  \textrm{in}\ B_r ,\
\Phi^r(r)=\mu-a; \ -\Delta \Psi^{r,R}=0\ \textrm{in}\
B_{r+R}\backslash B_{r},\ \Psi^{r,R}(r)=\mu-a,\ \Psi^{r,R}(r+R)=0,
\end{equation}
where $c^2<W''(t)$, $t\in [\mu-a,\mu]$ (note that assumption
\textbf{(b)} implies that solutions of (\ref{eqEq}) are
super-harmonic). It can be verified that $\Phi'(r)<\Psi'(r)$ for
sufficiently large $r$ and $R$ (having dropped the superscripts for
convenience). So, after a translation, the functions $u$, $v$, and
zero, can be patched together at $|x|=r$ and $|x|=r+R$ to form a
weak lower solution to (\ref{eqEq}), in the sense of
\cite{Berestyckilion}, provided that $\Omega$ contains some large
ball of radius greater than $r+R$. This gives us a solution
satisfying (\ref{eq14}) only in $B_r$ (we use $\mu$ as an upper
solution). However, we may extend the domain of validity, and obtain
the desired bound (\ref{eq14}), by ``sliding around'' that
lower-solution, as in \cite{cafareliPacard}. Using this strategy,
one may considerably simplify the corresponding arguments in
\cite{fuscoTrans}.
 We note that, once (\ref{eq14}) is
established, the proof of the exponential decay estimate
(\ref{eq13}), given in \cite{fuscoTrans}, can be simplified
considerably  by employing Lemma 4.2 in \cite{fife-arma}, making use
of the non-degeneracy condition \textbf{(c)} (the constants in
Theorem \ref{thmfusco} can be chosen so that $W''(t)>0,\ t\in
[\mu-a^*,\mu]$). Moreover, an examination of the proof of Lemma 2.1
in \cite{fuscoTrans} (see Lemma \ref{lemfusco} herein) shows that
assumption \textbf{(a)} above can be relaxed to
\begin{description}
  \item[(a')] There exists a constant $\mu>0$ such that
  \[
0=W(\mu)<W(t),\  t\in [0,\mu),\ \ W(t)\geq 0,\ t\in \mathbb{R},
  \]
  \[
W(-t)\geq W(t), \ t\in [0,\mu] \ \textrm{or}\ W'(t)<0,\ t<0.
  \]
\end{description}
For a typical example of such a potential, see Figure \ref{figW}.
\begin{figure}[htbp]
\begin{center}
\includegraphics[width=4in]{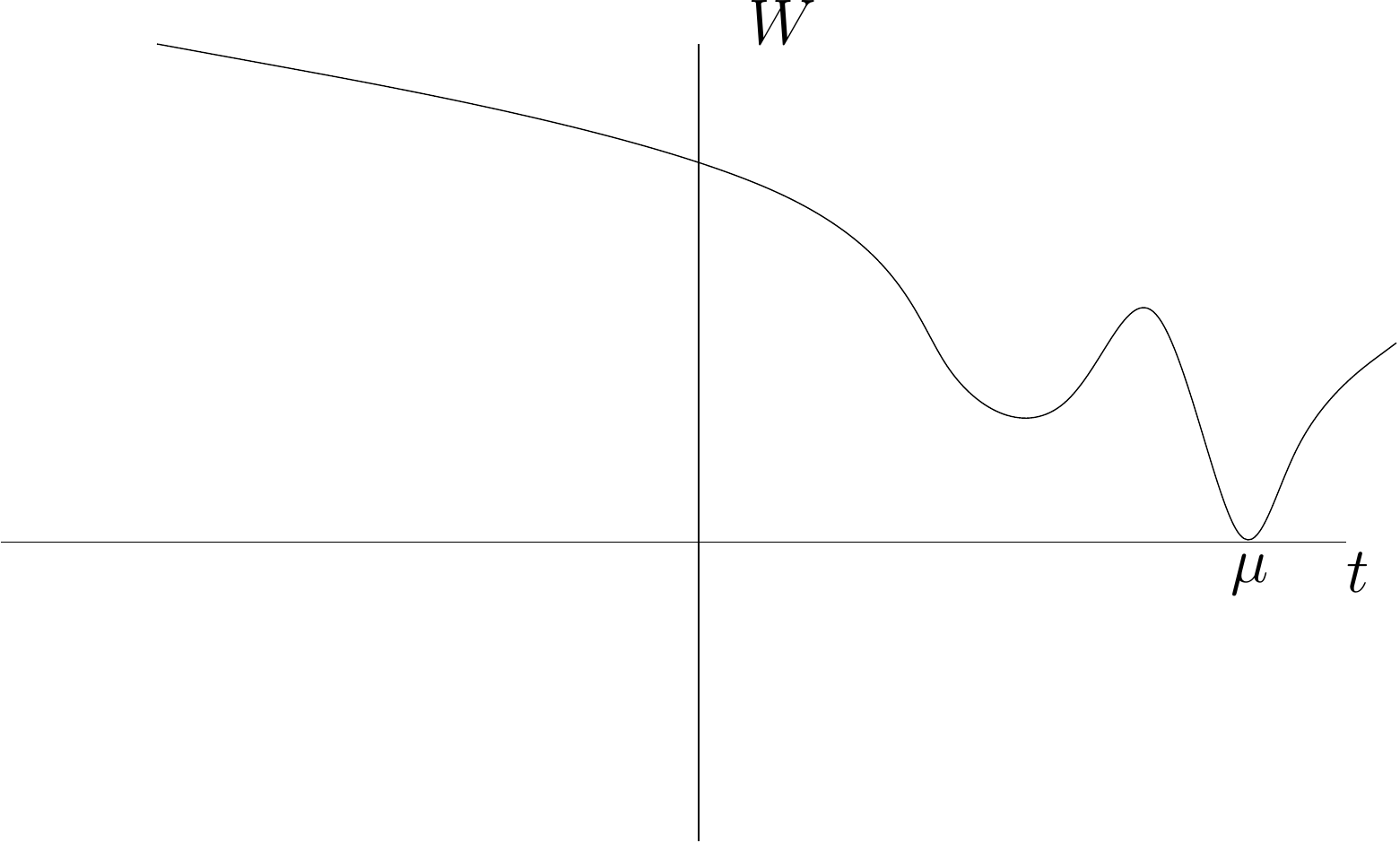}
\caption{An example of a potential $W$ satisfying hypothesis
\textbf{(a')}.}  \label{figW}
\end{center}
\end{figure}

If one further assumes that
\begin{equation}\label{eqpacardW''}
W''(0)<0\ \ \textrm{if}\ \ W'(0)=0,
\end{equation}
and
\begin{equation}\label{eqpacardW'}
W'(t)<0,\ \ t\in (0,\mu),
\end{equation}
then Theorem \ref{thmfusco} can essentially be deduced from Lemmas
3.2--3.3 in the famous article \cite{cafareliPacard} by Berestycki,
Caffarelli and Nirenberg or Lemma 4.1 in the recent article
\cite{kowalczykliupacard} by Pacard, Kowalczyk and Liu, see also
Lemmas 6.1--6.2 in \cite{shiTams},  and \cite{ghosubGui}. In fact,
the latter lemmas hold for arbitrary positive solutions to
(\ref{eqEq}) with values less than $\mu$.

The main purpose of this article is to show that relation
(\ref{eq14}) can be established in a simple manner \emph{without}
assuming the monotonicity condition \textbf{(b)}, and in fact we
will prove a stronger version of it. A well known nonlinearity which
satisfies our assumptions but not \textbf{(b)} is
\begin{equation}\label{eqwei}W'(u)=u(u-a)(u-\mu)\  \textrm{with}\ 0<a<\frac{\mu}{2},
\end{equation}
which arises in the mathematical study of population genetics (see
\cite{aronson}).
 Moreover, we remove completely
the non-degeneracy condition \textbf{(c)} from the proof of
(\ref{eq14}). On the other hand, since an argument of
\cite{fuscoTrans} involving the boundary regularity of weak
solutions to (\ref{eqEq}) when $\partial \Omega$ is arbitrarily
Lipschitz is not clear to us (see the last part of the proof of
Theorem 3.3 therein), we will assume that $\Omega$ has
$C^2$-boundary (to be on the safe side, see however Remarks
\ref{remcone} and \ref{remcafalipschitz} that follow). We will
accomplish the aforementioned improvements, loosely speaking, by
using translations of a positive solution of
\[
\Delta u=W'(u),\ x\in B_R;\ \ u(x)=0,\ x\in \partial B_R,
\]
which minimizes the associated energy, as a lower solution of
(\ref{eqEq}) after we have extended it by zero outside of $B_R$.
Actually, this approach will allow us to refine the results of
\cite{cafareliPacard}, \cite{kowalczykliupacard} that we mentioned
earlier in relation to (\ref{eqpacardW''}), (\ref{eqpacardW'}). On
the other side, assuming further that $W'$ satisfies  a scaling
property and that the corresponding  whole space problem
(\ref{eqentire}) below does not have nontrivial entire solutions (a
Liouville type theorem),
 we will
use ``blow-up'' arguments from \cite{gidasSpruck} together with a
key ``doubling lemma'' from \cite{pqs} to establish that Lemma 3.3
in \cite{cafareliPacard} can be improved.

In passing, we remark that a similar monotonicity assumption to
\textbf{(b)} also appears in a series of papers \cite{alikakosARMA},
\cite{alikakosNewproof}, \cite{alikakosSmyrnelis} in the context of
variational elliptic systems of the form $\Delta u=\nabla_u W(u)$
with $W:\mathbb{R}^n \to \mathbb{R}$. In particular, these
references employ comparison functions of the form
(\ref{eqfuscoTechnics}). In this direction, see also Remarks
\ref{remFusco}, \ref{remalikakossimple} and Appendix \ref{appenAlik}
below.
% of the
%form (\ref{eqEq}), where $W:\mathbb{R}^n\to \mathbb{R}$, with the
%obvious notation (see also Remark \ref{remFusco} and Lemma
%\ref{lemalikakos} below).

%\begin{description}
%  \item[(c')] There exist   $\delta, a>0$ such that
 % \[W^{-1}(0,\delta)\cap (0,\mu)=(\mu-a,\mu).
 % \]
%\end{description}

 Our main result is
\begin{thm}\label{thmmine}
Assume that $\Omega$ is a domain with nonempty boundary of class
$C^2$, and that $W\in C^2$ satisfies \textbf{(a')}. Let $\epsilon\in
(0,\mu)$ and $D>D'$, where $D'$  is determined from the relation
\begin{equation}\label{eqD}
\textbf{U}(D')=\mu-\epsilon,
\end{equation}
where in turn $\textbf{U}$ is the only function in $C^2[0,\infty)$
that satisfies
\begin{equation}\label{eqU}
\textbf{U}''=W'(\textbf{U}),\  s>0;\ \ \textbf{U}(0)=0,\ \lim_{s\to
\infty}\textbf{U}(s)=\mu,
\end{equation}
(see Remark \ref{remU} below). There exists an $R'>D$, depending
only on $\epsilon$, $D$, $W$, and $n$, such that if $\Omega$
contains some closed ball of radius $R'$ then problem (\ref{eqEq})
has a solution $u \in C^2(\Omega)\cap C(\bar{\Omega})$ verifying
(\ref{eq12}), and
\begin{equation}\label{eq14+}
\mu-\epsilon\leq u(x),\ \ x\in \Omega_{R'}+B_{(R'-D)},
\end{equation}
where $\Omega_{R'}$ was previously defined in (\ref{eqomegaR}).
Furthermore, it holds that
\begin{equation}\label{eqcaffaThmmine}
\min\left\{W(t)\ :\ t\in \left[0,u(x)\right] \right\}\leq
\frac{C}{\textrm{dist}(x,\partial \Omega)},\ \ x\in \Omega_{R'},
\end{equation}
for some constant $C>0$ that depends  only on $W,n$.
\\
 If $W''(\mu)>0$ then estimate
(\ref{eq13}) holds true. \\
 If
\begin{equation}\label{eqW''}
W''(t)\geq 0\ \ \textrm{for}\ \ \mu-t>0\ \ \textrm{small},
\end{equation}
then
\begin{equation}\label{eqcafathm}
-W'\left(u(x) \right)\leq
\frac{\tilde{C}}{\left(\textrm{dist}(x,\partial \Omega)
\right)^{2}},\ \ \ x\in \Omega_{R'},\ \ R\geq R',
\end{equation}
for some constant $\tilde{C}>0$ that depends only on $n$, assuming
that $W''\geq 0$ on $[\mu-\epsilon,\mu]$.
\\
If there exist constants $c>0$ and $p>1$ such that
\begin{equation}\label{eqWmonotThm}
-W'(t)\geq c(\mu-t)^p, \ \ t\in [\mu-d,\mu],\ \ \textrm{for\ some\
small}\ d>0,
\end{equation} and $\bar{\Omega}$ is disjoint from the closure of an infinite
open connected cone, or $n=2$ and $\bar{\Omega}\neq \mathbb{R}^2$,
then
\begin{equation}\label{eqfinal}
\mu-u\leq \tilde{K}\textrm{dist}^{-\frac{2}{p-1}}(x,\partial
\Omega),\ \ x\in \Omega,
\end{equation}
for some constant $\tilde{K}>0$ that depends only on  $c$, $p$, $n$
and $W$.
\end{thm}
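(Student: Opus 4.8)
The plan is to reinterpret the inequality as a Keller--Osserman type a priori bound for subsolutions of $\Delta w=c\,w^{p}$ and to transplant it into a neighbourhood of infinity of $\Omega$ by means of (\ref{eq14+}). Put $v:=\mu-u$; by (\ref{eq12}) we have $0<v<\mu$ on $\Omega$, and $\Delta v=-W'(u)=-W'(\mu-v)$. Hypothesis (\ref{eqWmonotThm}) says exactly that $-W'(\mu-s)\ge c\,s^{p}$ for $0\le s\le d$, so \emph{wherever $v\le d$} the function $v$ is a nonnegative subsolution of $\Delta w=c\,w^{p}$. I would take the parameter $\epsilon$ in Theorem \ref{thmmine} so small that $\epsilon\le d$ (legitimate, since $\epsilon\in(0,\mu)$ is at our disposal); then, using $R'>D$ and hence $\Omega_{R'}\subseteq\Omega_{R'}+B_{R'-D}$, estimate (\ref{eq14+}) gives $v\le\epsilon\le d$ throughout the deep region $\Omega_{R'}$ of (\ref{eqomegaR}). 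Therefore $v$ is a nonnegative subsolution of $\Delta w=c\,w^{p}$ on all of $\Omega_{R'}$. Since $v<\mu$, the bound (\ref{eqfinal}) is automatic when $\mathrm{dist}(x,\partial\Omega)\le 2R'$ (take $\tilde K\ge\mu(2R')^{2/(p-1)}$), so it suffices to control $v(x_{0})$ at a fixed $x_{0}$ with $d_{0}:=\mathrm{dist}(x_{0},\partial\Omega)>2R'$.

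For such an $x_{0}$, set $\rho:=d_{0}-R'>d_{0}/2$ and compare $v$ on $B_{\rho}(x_{0})$ with the explicit radial function
\[
\bar\Phi(x):=A\,\rho^{\,2/(p-1)}\bigl(\rho^{2}-|x-x_{0}|^{2}\bigr)^{-2/(p-1)},
\]
which, by a direct computation, is a supersolution of $\Delta w=c\,w^{p}$ on $B_{\rho}(x_{0})$ as soon as $A=A(n,p,c)$ is chosen large enough depending only on $n,p,c$, and which satisfies $\bar\Phi\to+\infty$ as $|x-x_{0}|\to\rho$. (The factor $\rho^{2/(p-1)}$ is what forces the centre value to be of the sharp order $\rho^{-2/(p-1)}$, not $\rho^{-4/(p-1)}$, which a careless ansatz produces.) Because $B_{\rho}(x_{0})\subseteq\Omega_{R'}$ and $\overline{B_{\rho}(x_{0})}\subset\Omega$, we have $v\le\epsilon<+\infty$ on $\overline{B_{\rho}(x_{0})}$ and $v$ is a subsolution there; since $w\mapsto c\,w^{p}$ is nondecreasing and $\bar\Phi$ is infinite on $\partial B_{\rho}(x_{0})$, the comparison principle gives $v\le\bar\Phi$ in $B_{\rho}(x_{0})$, whence
\[
v(x_{0})\le\bar\Phi(x_{0})=A(n,p,c)\,\rho^{-2/(p-1)}\le A(n,p,c)\,2^{2/(p-1)}\,d_{0}^{-2/(p-1)}.
\]
Combining the two ranges, (\ref{eqfinal}) holds with $\tilde K:=\max\bigl\{A(n,p,c)2^{2/(p-1)},\ \mu(2R')^{2/(p-1)}\bigr\}$; once $\epsilon$ and $D$ are fixed as explicit functions of $W$ (e.g.\ $\epsilon=\tfrac12\min\{d,\mu\}$ and $D=2D'$, with $D'$ then determined by $W$ through (\ref{eqD})--(\ref{eqU})), the quantities $R'$ and $\mu$ depend only on $W$ and $n$, so $\tilde K$ depends only on $c,p,n,W$, as asserted.

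The step to get right is the comparison on the ball: exhibiting a radial supersolution that blows up on the boundary sphere with the correct centre scaling, and checking that the maximum principle survives the infinite boundary datum — here one uses that $v-\bar\Phi\to-\infty$ at $\partial B_{\rho}(x_{0})$, so a hypothetical positive maximum of $v-\bar\Phi$ is interior, where the two differential inequalities and the monotonicity of $w\mapsto c w^{p}$ clash. I also note that the argument above does not seem to use the hypothesis that $\bar\Omega$ be disjoint from a cone, nor the two-dimensional alternative; that hypothesis is what one needs instead if (\ref{eqfinal}) is proved by the contradiction/blow-up route indicated in the introduction. In that route one takes $x_{k}$ violating (\ref{eqfinal}), observes that $\mathrm{dist}(x_{k},\partial\Omega)\to\infty$ since $v<\mu$, applies the doubling lemma of \cite{pqs} to $M:=v^{(p-1)/2}$ to pass to almost-maximum points $\tilde x_{k}$, rescales by $v(\tilde x_{k})^{-(p-1)/2}$, and --- using (\ref{eqcaffaThmmine}) together with $W(t)\ge\frac{c}{p+1}(\mu-t)^{p+1}$ near $\mu$ to deduce $v(\tilde x_{k})\to0$, hence that the rescaled domains exhaust $\mathbb{R}^{n}$ --- extracts a limit $w\ge0$ with $w(0)=1$, $w$ bounded and $\Delta w\ge c\,w^{p}$, contradicting a Liouville theorem of Du--Ma \cite{duMaSqueeeze} / Berestycki--Hamel--Matano \cite{matanoObstacle} type. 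It is precisely in that last step --- identifying the blown-up limit domain and invoking the Liouville theorem available for it (and excluding the degenerate case $\bar\Omega=\mathbb{R}^{n}$, where (\ref{eqfinal}) forces $u\equiv\mu$) --- that the cone, respectively planar, hypothesis on $\Omega$ enters; passing to the limit cleanly there also benefits from a scaling property of $W'$ near $\mu$, which can be sidestepped by working with differential inequalities throughout.
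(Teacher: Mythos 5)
Your proposal establishes only the last assertion of the theorem, estimate (\ref{eqfinal}); it takes the existence of the solution $u$ together with (\ref{eq12}) and (\ref{eq14+}) as given inputs. Those earlier assertions are the bulk of Theorem \ref{thmmine} and in the paper rest on Lemma \ref{lem1} (the uniform lower bound $u_R\geq \mu-\epsilon$ on $\bar{B}_{(R-D)}$ for energy-minimizing radial solutions), on the upper/lower-solution construction combined with Serrin's sweeping and the sliding method (or, alternatively, on global minimization plus Lemma \ref{lemdancer}), and, for (\ref{eq13}), (\ref{eqcafathm}) and (\ref{eqcaffaThmmine}), on Corollary \ref{corbrezis}, Lemma \ref{lemcafa} and (\ref{eqcaffaLemmmine}). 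None of this is reproduced or replaced in your write-up, so as a proof of the full statement there is a substantial gap. Note also that both you and the paper reduce to the case $\epsilon\le d$; strictly speaking one should justify that the fixed solution $u$ attached to a possibly larger $\epsilon$ still enjoys the lower bound on the deeper region corresponding to a smaller $\epsilon'$ (for the global minimizer this follows from Lemma \ref{lemdancer}).

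For (\ref{eqfinal}) itself your argument is correct and genuinely different from the paper's. The paper first proves Proposition \ref{proModel} for the pure power nonlinearity by blow-up, the doubling lemma of \cite{pqs} and Brezis's Liouville theorem, then solves an auxiliary Dirichlet problem on each (possibly unbounded) component $\mathcal{A}$ of $\Omega_{R'}+B_{(R'-D)}$ and compares $u$ with it; the comparison is where the maximum principle in unbounded domains, hence the cone/planar hypothesis on $\Omega$, enters (Lemma 2.1 of \cite{cafareliPacard}) --- not in the Liouville step, as you speculate at the end. Your route replaces all of this by a single local comparison of $v=\mu-u$ (a subsolution of $\Delta w=c\,w^{p}$ on $\Omega_{R'}$ once $\epsilon\le d$) with the explicit Osserman barrier $A\rho^{2/(p-1)}\left(\rho^{2}-|x-x_{0}|^{2}\right)^{-2/(p-1)}$ on $B_{d_{0}-R'}(x_{0})$; the verification that this is a supersolution blowing up on the bounding sphere, and the interior-maximum argument with the monotone nonlinearity, are both sound, and the trivial bound handles $\textrm{dist}(x,\partial\Omega)\le 2R'$. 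This buys a cleaner and stronger result: (\ref{eqfinal}) then holds for every domain, with no cone or two-dimensionality restriction, and without appealing to Proposition \ref{proModel} at all. The price is that the barrier method is tied to the one-sided power inequality (\ref{eqWmonotThm}), whereas the paper's Propositions \ref{proModel} and \ref{propolacikGeneral} also deliver the gradient estimate and apply to arbitrary (possibly sign-changing, unbounded) solutions of the equation itself.
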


Estimate (\ref{eqcafathm}) is motivated from \cite{cafareliPacard}.
Condition (\ref{eqWmonotThm}) is in part motivated by some recent
studies \cite{vasilevaDiff,vasileva1-,vasileva2} of a class of
singularly perturbed elliptic boundary value problems of the form
(\ref{eqinhomogeneous}) below in \emph{one space dimension}, where
the degenerate equation $W(u,x)=0$ has a root $u=u_0(x)$ of finite
multiplicity.

The method of our proof is quite flexible, and we came up with a
variety of  applications to related problems that can be found in
the following sections and the included remarks (see the outline at
the end of this section).
  As
will be apparent from the proof, see in particular the comments
leading to Proposition \ref{prolayer} below, a delicacy of our
result is that the constant $D'$ is independent of $n$.

\begin{rem}\label{remU}
The existence and uniqueness of such a solution $\textbf{U}$ of the
ordinary differential equation $u''=W'(u)$ follows readily from
\textbf{(a')} by phase plane analysis, using the fact that the
latter equation has the conserved quantity
$e(s)=\frac{1}{2}(u')^2-W(u)$, see for instance Lemma 3.2 in
\cite{ambrosio3D}, Chapter 2 in \cite{arnold} or page 135 in
\cite{walter} (for a more analytic approach, we refer to
\cite{berestyckiLionsARMA1} or \cite{bouhours}). We note that
\begin{equation}\label{eqUincr}
\textbf{U}'(s)>0,\ \ s\geq 0.
\end{equation}
\end{rem}

%\begin{rem}\label{remthmmineC2}
%If the boundary of $\Omega$ is $C^2$, then the lower bound in
%(\ref{eq14+}) holds in $\Omega_D$ (see Remarks \ref{remlayer},
%\ref{remlayer2} and Proposition \ref{prolayer} below).
%\end{rem}

\begin{rem}\label{remRobin}
Similar  assertions hold for the Robin boundary value problem:
\[
\Delta u=W'(u),\ x\in \Omega;\ \ \frac{\partial u}{\partial
\nu}+b(x)u=0,\ x\in \partial\Omega,
\]
where $\nu$ denotes the outward unit normal vector to the boundary
of $\Omega$, assuming here that the latter is at least $C^1$, with
$b\in C^{1+\alpha}(\partial \Omega),\ \alpha>0$, being nonnegative
(so that the constant $\mu$ is a positive upper solution, see
\cite{sattinger}). Moreover, as in \cite{fuscoTrans}, we can study
some problems with mixed boundary conditions (see also Section
\ref{secmixed}).
\end{rem}

\begin{rem}\label{remuniq}
%Under the assumptions of \cite{fuscoTrans} on $W$ (see the
%introduction herein), and assuming that $\Omega$ is a smooth bounded
%domain, it has been proven in \cite{angenent} that (\ref{eqEV1}) has
%a unique positive solution, provided that $\lambda$ is sufficiently
%large.
A sufficient, and easy to check, condition for the uniqueness of a
positive solution of (\ref{eqEq}), in \emph{any smooth bounded
domain}, is
\begin{equation}\label{eqbrezisOz}
\frac{W'(t)}{t}\ \ \textrm{being\ strictly\ increasing\ in}\ \
(0,\infty).\end{equation} This uniqueness result is originally due
to Krasnoselski, see \cite{brezis-oswald} (see also Proposition 3.5
in \cite{ni} or Theorem 1.16 in \cite{radulescuBook} for a different
proof, and Theorem 3 in \cite{smollerCMP} for a radially symmetric
proof).
 The above condition is clearly satisfied
by the model double well potential in (\ref{eqAllen}) below.
 Related
conditions can be found in \cite{tertikas}. In certain cases, these
type of conditions imply uniqueness of a positive solution in
unbounded domains as well, see for example \cite{saddlecabre3solo}
and \cite{fifesaddle} for uniqueness of the so-called saddle
solutions that we will discuss shortly. Another sufficient
condition, which on the other hand depends partly on the smooth
bounded domain $\Omega$, is
\[
W''(t)\geq -\lambda,\ \ t\geq 0,
\]
%\[
%W'(t_2)-W'(t_1)\geq \lambda (t_1-t_2), \ \ t_2\geq t_1\geq 0,
%\]
for some $\lambda<\lambda_1$, where $\lambda_1>0$ denotes the
principal eigenvalue of $-\Delta$ in $W^{1,2}_0(\Omega)$ (see
\cite{amann-uniq}, \cite{serrin}). This condition is clearly
satisfied, with $\lambda=0$, by the convex model potential in
(\ref{eqmodel}) below.
\end{rem}

Let us mention that for a class  of potentials, including
(\ref{eqAllen}), the dependence of the set of solutions of
(\ref{eqEq}), in one space dimension, on the size of the interval
was studied  for the first time in \cite{chafee} (see also the more
up to date reference \cite{chicone}).

In our opinion, Theorems \ref{thmfusco} and \ref{thmmine} are
important for the following reasons. If we additionally assume that
$W$ is even, namely
\begin{equation}\label{eqWeven}W(-t) = W(t),\ \ \  t\in\mathbb{R},\end{equation} by means of
these theorems, we can derive the existence of various sign-changing
entire solutions for the problem
\begin{equation}\label{eqentire}
\Delta u=W'(u),\ \ \ x\in \mathbb{R}^n.
\end{equation}
This can be done by first establishing  existence of a positive
solution in a suitable large ``fundamental'' domain $\Omega_F\subset
\mathbb{R}^n$, with Dirichlet boundary conditions on $\partial
\Omega_F$, and then performing consecutive odd reflections to cover
the entire space.

\begin{rem}\label{remcone}
The boundary of the fundamental domain $\Omega_F$ may have corner or
conical points. But we can round them off, approximating $\Omega_F$
by a sequence of expanding \emph{smooth} domains $\Omega_j$ (where
Theorem \ref{thmmine} is applicable). Then, we can obtain the
desired solution in $\Omega_F$ by letting $j\to \infty$ along a
subsequence (see \cite{fifesaddle}). In this regard, see also Remark
\ref{remcafalipschitz} below.

The fact that, after reflecting, we obtain a classical solution can
be shown by a standard capacity argument (see Theorem 1.4 in
\cite{cabre}).
\end{rem}

  In the case where
\begin{equation}\label{eqAllen} W(t)=\frac{1}{4}(t^2-1)^2,\ \ t\in
\mathbb{R},
\end{equation}
then  (\ref{eqentire}) becomes the well known  Allen-Cahn equation
(see for instance \cite{pacard}). Assuming that $W$ is even, namely
that (\ref{eqWeven}) holds true, then (\ref{eqEq}) has always the
trivial solution.
  In this regard, the purpose of estimate (\ref{eq14+}) is
twofold: In the case where $\Omega_F$ is bounded,  it ensures that
the solution of (\ref{eqEq}) (on $\Omega_F$), provided by Theorem
\ref{thmmine}, is nontrivial. The situation of unbounded domains
$\Omega_F$ can be treated by exhausting them by an increasing (with
respect to inclusions) sequence $\{\Omega_j\}$ of bounded ones, each
containing the same ball $B_{R'}(x_0)$, and a standard compactness
argument, making use of (\ref{eq12}) together with elliptic
estimates and a Cantor type diagonal argument. The fact that the
region of validity of estimate (\ref{eq14+}) increases, as $j\to
\infty$, rules out the possibility of subsequences of the (chosen)
solutions $u_j$ of $(\ref{eqEq})_j$ on $\Omega_j$ converging,
uniformly in compact subsets of $\Omega_F$, to the trivial solution
of (\ref{eqEq}) on $\Omega_F$. Another approach for excluding this
last scenario, which however does not seem to provide uniform
estimates directly, can be found in the proof of Theorem 1.3 in
\cite{cabre}, based on a similar relation to (\ref{eqcabre}) below
(see also \cite{delpinoScrew} and \cite{pacard}). In this fashion,
and under more general assumptions on $W$ than previous studies
(conditions \textbf{(a')} and (\ref{eqWeven}) suffice for most
applications), one can construct a whole gallery of nontrivial
sign-changing solutions of (\ref{eqentire}) that includes
\begin{itemize}
  \item ``saddle
solutions''  which vanish on the Simons cone $\{(x,y)\in
\mathbb{R}^{2m}\ :\ |x|=|y|\}\subset \mathbb{R}^{2m}=\mathbb{R}^n$
 if $n$ is even (see
\cite{cabre}, \cite{saddleCabreCPDE2}, \cite{saddlecabre3solo},
\cite{fifesaddle}, \cite{gui}, and \cite{pacard}).
%(Some care is
%required near the vertex of the cone because there the boundary of
%the fundamental domain has merely the cone property when $n\geq 4$.
%, where our Lipschitz
%regularity assumption breaks down if $n\geq 4$
%An approach to handle this singularity is to perform ``surgery'',
%that is remove from the fundamental domain a small ball
%$B_\epsilon$, apply Theorem \ref{thmmine} there, and then let
%$%\epsilon\to 0$).
%for the regularity tools needed in order to be able to handle that
%case).
In fact, they can be constructed in the \emph{block-radial} class,
namely $u(x,y)=\mathrm{u}(|x|,|y|)=-\mathrm{u}(|y|,|x|)$. In
passing, we note that solutions with these symmetries have been
studied for nonlinear Schr\"{o}dinger type equations, say
(\ref{eqentire}) with $W'(t)=t-t^3$,  in Chapter 3 in \cite{kuzin},
 Section 1.6 in \cite{willem}, and the references therein (for such solutions to the  Gross-Pitaevskii equation with radial trapping potential,
 we refer to Section 6 in \cite{karaliBoseMine}).
 Estimate
(\ref{eq13}) implies that the corresponding saddle solution
converges to $\pm \mu$ exponentially fast, as the signed distance
from the Simons cone tends to plus/minus  infinity respectively.
 Analogous
solutions exist in odd dimensions, for example when $n=3$ it was
shown in \cite{alesioNodea} that there exists a solution which
vanishes on all coordinate planes (see also a related discussion in
\cite{delpinoDiffGeom}).
%; the
%fact that the angles between nodal planes have to be equal can be
%shown as in \cite{damialis}).
 In dimension $n = 2$, solutions whose
zero level set has the symmetry of a regular $2k$--polygon and
consists of $k$ straight lines passing through the origin were found
in \cite{saddlealessio} (in the case where $W$ is periodic, similar
solutions but with polynomial growth were found,  following this
strategy, recently in \cite{weiMoser}); such solutions can
appropriately be named ``pizza solutions'', see also
\cite{shiPizza}. Denote $G$ the rotation of order $2k$, and note
that these solutions satisfy $u(Gx) =-u(x)$, $x\in \mathbb{R}^2$.
Another method to get $u$ is to find a minimizing solution $u_R$ of
the equation in the invariant class $\{u\in W^{1,2}_0(B_R) \
\textrm{and}\ u(Gx) =-u(x), \ x\in B_R\}$. The minimizer $u_R$ can
be proved to satisfy (\ref{eqEq}) in $B_R$ by the heat flow method
(see \cite{alikakosARMA}, \cite{alikakosBasicFacts},
\cite{alikakosSmyrnelis}, \cite{batesFuscoSmyrnelis},
\cite{weiEntire}). Note that because $W$ is even, the invariant
class is positively invariant by the heat flow.
%(the fact that the
%angles between the nodal lines have to be equal  can be shown as in
%\cite{gui}).
  \item ``lattice solutions'' which include solutions that  are periodic in each variable $x_i$ with
period $L_i$, provided that $L_i$, $i=1,\cdots,n$, are sufficiently
large (see \cite{alikakosSmyrnelis},
\cite{batesFuscoSmyrnelisIIIII}, \cite{fifeCheckerboard},
\cite{kielhoferBook}, and \cite{mironescPeriodic}). This type of
solutions, which can be described as having lamellar phase, were
recently conjectured to exist in Chapter 4 of \cite{sigal}. Another
example, which is motivated from \cite{maierJDE}, are solutions in
the plane whose nodal domains consist of sufficiently large
(identical modulo translation and rotation) equilateral triangles
tiling the plane (in relation to this, see also Remark
\ref{remconvex} below). Under some additional hypotheses on $W$,
planar lattice solutions can be constructed
 by local and global bifurcation techniques (see
 \cite{fifeCheckerboard}, \cite{kielhoffer}, \cite{kielhoferBook}, and \cite{maierJDE}).
\item
``tick saddle solutions'' which have saddle (or pizza) structure in
some coordinates while they are periodic in the remaining ones (see
the introduction in \cite{fuscoTrans}). For example, in
$\mathbb{R}^2$, these solutions are odd
 with respect to both $x$ and $y$, having as nodal curves the
lines $x=0$ and $y=kL$, $k\in \mathbb{Z}$, for $L$ sufficiently
large (so that the fundamental domain $\Omega_{F,L}\equiv\{x>0,\
y\in (0,L)\}$ contains a sufficiently large closed ball). In fact,
if $W''(0)<0$ and (\ref{eqdelpinobelowRc}) below hold, by modifying
the approach of the current paper and using some ideas from
Proposition 3.1 in \cite{delpinoScrew} (which dealt with a problem
of similar nature on an infinite half strip, see also Remark
\ref{remscrew2} below), it is plausible that there exists an
explicit constant $L^*>0$  such that (\ref{eqEq}) considered in
$\Omega_{F,L}$ has a positive solution if and only if $L> L^*$ (see
also Remark \ref{remscrew} below); a similar construction should
also work in higher dimensions. We note that tick saddle solutions
can be constructed as limits of appropriate lattice solutions by
letting some of the periods tend to infinity (along a subsequence),
see \cite{alikakosSmyrnelis}. In the case where $W$ is as in
(\ref{eqAllen}), and $n=2$, the spectrum of the linearized operator
about the saddle solution of \cite{fifesaddle} has a unique negative
eigenvalue (see \cite{schatman}).  Moreover, it has been shown
recently that the saddle solution is non-degenerate, namely there
are no decaying elements in the kernel of the linearized operator
(see \cite{saddleKowal}). In view of these two properties it might
also be possible to construct tick saddle solutions in
$\mathbb{R}^3$, with $W$ as in (\ref{eqAllen}), by local bifurcation
techniques (for example, by the ideas in \cite{dancerPiza}). Lastly,
let us point out that the shape of their zero set bears some
qualitative similarities to Sherk's singly periodic minimal surface
(see for example \cite{coldingCour}).
\item ``Screw--motion invariant solutions'' whose nodal set is a helico\"{i}d
of $\mathbb{R}^3$, or analogous minimal surfaces in any odd
dimension (see \cite{delpinoScrew} and Remark \ref{remscrew2}
herein).
\end{itemize}
%, the so called , we refer the interested reader to the introduction
%of \cite{fuscoTrans}.

 A completely different approach to the
construction of sign-changing solutions of (\ref{eqentire}), mainly
applied for potentials  satisfying \textbf{(a)}, \textbf{(c)}, and
(\ref{eqWeven}) (the typical representative being (\ref{eqAllen})),
is based on the implementation of an infinite dimensional
Lyapunov--Schmidt reduction argument, see \cite{delpinoAnnals},
\cite{delpinoScrew}, \cite{delpinoDiffGeom}, \cite{pacard}, and the
references therein. This approach produces solutions with less (or
even without any) symmetry but is technically more involved.

Our Theorem \ref{thmmine} can also be used to construct multiple
positive solutions of (\ref{eqEq}), using estimate (\ref{eq14+}) to
make sure that they are distinct, see Section \ref{secExtensions}
below.

\subsection{Outline of the paper} The outline of the paper is as follows: In
Section \ref{secproof}, we will present the proof of our main
result, with the exception of (\ref{eqfinal}), by using two
different approaches, both based on a special case of a radial lemma
that we prove in Subsection \ref{subsectionBalls}. In the remainder
of the paper we will exploit further  this radial lemma and use it
as a basis to prove interesting results. In Section \ref{secpacard},
we prove uniform lower bounds for arbitrary positive solutions. In
Section \ref{secdoubling}, we prove universal decay estimates for
solutions, in the case where $W$ is a model power nonlinearity
potential, thereby generalizing the exponential decay estimate
(\ref{eq13}) by an algebraic one and relating the obtained result to
a corresponding one in \cite{cafareliPacard}. Moreover, this
algebraic decay estimate allows us to show (\ref{eqfinal}) and thus
complete the proof of Theorem \ref{thmmine}. In Section
\ref{secBoundsonEntire}, under appropriate conditions on $W$, we
will show that all entire solutions of (\ref{eqentire}) are
uniformly bounded; combining this with the main result of Section
\ref{secpacard}, we can give a short self-contained proof of the
main result in the paper of Du and Ma \cite{duMaSqueeeze}.
   In Section
\ref{secmatano}, we prove nonexistence results for nonconstant
solutions with Neumann boundary conditions that are motivated by
some Liouville type result of Berestycki, Hamel and Matano
\cite{berestyckiLiouville} (for which we provide simplified proofs,
while at the same time removing a technical assumption). In Section
\ref{secExtensions}, we will show how our Theorem \ref{thmmine} can
be used to produce multiple positive solutions of (\ref{eqEq}) and
thus generalize an old result of P. Hess from 1981, where nonlinear
eigenvalue problems were considered. In Section \ref{seclayer}, we
study the size of the boundary layer of global minimizers of the
corresponding singular perturbation problem, in the context of
nonlinear eigenvalue problems. In Section \ref{secmixed}, we will
study the corresponding problem with mixed boundary conditions. In
Section \ref{secfarina}, we will prove some new one-dimensional
symmetry results for certain entire solutions to (\ref{eqentire}),
by exploiting for the first time  an old result of Caffarelli,
Garofalo, and Seg\'{a}la \cite{cafamodica}, and we suggest a
connection with the theory of minimal surfaces. Exploiting this
approach, and the Hamiltonian structure of the equation, we can give
a new proof of Gibbons' conjecture in two dimensions, originally
proven (in all dimensions) independently by
\cite{barlow,berestDuke,farinaRendi} (see also
\cite{cafaCordobaIntermed}). This conjecture is a weak form of the
famous conjecture of De Giorgi. Furthermore, we are able to provide
new proofs of well known symmetry results in half-spaces with
Dirichlet boundary conditions. Moreover, we generalize a rigidity
result of \cite{farinaRigidity}. Finally, in Section
\ref{seccylindric}, we study the one-dimensional symmetry of
solutions in convex cylindrical domains with Neumann boundary
conditions.
%(i.e.,
%when the nonlinear eigenvalue parameter tends to infinity).
In Appendix \ref{secappenda}, for completeness purposes, we will
state some useful comparison  lemmas that we will use in this
article. In Appendix \ref{appenLiouville}, for the reader's
convenience, we will state a useful Liouville type theorem of
\cite{farinaHB} which extends a result of \cite{brezisLiouville}. In
Appendix \ref{secAppenDoubling}, for the reader's convenience, we
will state the useful doubling lemma of \cite{pqs} that we mentioned
earlier. In Appendix \ref{appenAlik}, we make some remarks that are
motivated from the recent paper \cite{alikakosNewproof}, dealing
with uniform estimates for equivariant entire solutions to an
elliptic system under assumptions that are analogous to those in
\cite{fuscoTrans}.

\begin{rem}\label{remFusco}
We recently found the paper \cite{alikakosSmyrnelis}, where it is
stated that G. Fusco, in work in progress (now published, see
\cite{fuscoPreprint}, and also \cite{fuscoCPAA}), has been able to
remove the corresponding monotonicity assumption to \textbf{(b)}
from the vector-valued Allen-Cahn type equation that was treated in
\cite{alikakosARMA}. After the first version of the current paper
was completed, we were informed by G. Fusco that himself, F.
Leonetti and C. Pignotti are working in a paper where, using the
same technique developed for the vector case, they are in the
process of  extending the main result in \cite{fuscoTrans} to more
general potentials without assuming \textbf{(b)}. Their approach is
certainly more elaborate but  it is entirely self-contained, while
we use in a simple and coordinate way various deep well known
results.
%(see also the ending of Remark
%\ref{remalikakossimple} herein).
\end{rem}

\section{Proof  of the main result}\label{secproof}
\subsection{Minimizers of the energy functional on large
balls}\label{subsectionBalls} In this subsection, we will mainly
prove two lemmas concerning  the asymptotic behavior of the
minimizing (of the associated energy) solutions of (\ref{eqEq}) over
large balls as their radius tends to infinity. The first one is
essential for the proof of Theorem \ref{thmmine}, and refines a
result of P. Cl\'{e}ment and G. Sweers \cite{clemente}. The latter
result is quite useful, and has been previously applied in singular
perturbation problems (see \cite{danceryanCVPDE}, \cite{kurata}, and
\cite{yanedinburg}).
%Nevertheless, we have not
%been able to locate a single source in the literature where it is
%proven in its entirety.
The second lemma, an extension of the first, is of independent
interest and in particular  allows for $W'(0)$ to be positive. Even
though the first lemma is a special case of the second, we felt that
it would be more instructive and more convenient for the reader to
present them separately, since the more general second lemma is not
needed for the proof of Theorem \ref{thmmine} and can be skipped at
first reading.

The following is our first lemma, which is motivated from Lemma 2 in
\cite{kurata} and Lemma 2.2 in \cite{yanedinburg} (see also Lemma
2.4 in \cite{efediev}), whose origins can be traced back to
\cite{clemente-peletier,clemente}. In these works, the weaker
relation (\ref{eqsweers}) below was established, which implies that
assertion (\ref{eqestimRect}) holds \emph{but} with constant $D$
possibly \emph{diverging} as $n\to \infty$ (see also Remark
\ref{remn} below).
 Our
improvement turns out to have interesting consequences in the study
of the boundary layer of solutions of singular perturbation problems
of the form (\ref{eqEV1}) below, with $\lambda=\varepsilon^{-1}\to
\infty$, see Remark \ref{remlayer} and Section \ref{seclayer} below.
Moreover, estimate (\ref{eqestimRect}) will be used in a crucial way
in Proposition \ref{proUniq} for studying the asymptotic stability
of minimizing solutions that are provided by the following lemma or
the more general  Lemma \ref{lem1Sign} below.
\begin{lem}\label{lem1}
Assume that $W\in C^2$ satisfies condition \textbf{(a')}. Let
$\epsilon\in (0,\mu)$ and $D>D'$, where $D'$ is as in (\ref{eqD}).
There  exists a positive constant $R'>D$, depending only on
$\epsilon$, $D$, $W$ and $n$, such that there exists a global
minimizer $u_R$ of the energy functional
\begin{equation}\label{eqenergy}
J(v;B_R)=\int_{B_R}^{}\left\{\frac{1}{2}|\nabla v|^2+W(v)
\right\}dx,\ \ \ v\in W^{1,2}_0(B_R),\end{equation} which satisfies
\begin{equation}\label{eqmaxball}
0<u_R(x)<\mu,\ x\in B_R,
\end{equation}
and
\begin{equation}\label{eqestimRect}
\mu-\epsilon \leq u_R(x),\ x\in \bar{B}_{(R-D)},
\end{equation}
provided that $R\geq R'$. Moreover, there exists a constant $C$
depending only on $W,n$ such that
\begin{equation}\label{eqcaffaLemmmine}
\min\left\{W(t)\ :\ t\in \left[0,u_R(r)\right] \right\}\leq
\frac{C}{R-r},\ \ r\in [0,R),\ \forall\  R\geq R'.
\end{equation}
 (If necessary, we assume that $W$ is
extended linearly outside of a large compact interval so that the
above functional is well defined (see also Lemma 2.4 in
\cite{efediev}); clearly this modification does not affect the
assertions of the lemma).
\end{lem}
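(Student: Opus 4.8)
The plan is to realize the one–dimensional heteroclinic $\textbf{U}$ as the blow‑up limit of the radial minimizer near $\partial B_R$; the fact that the transition width is then governed by $D'$ — in particular independently of $n$ — will come from an $n$‑independent energy upper bound that survives this blow‑up. First I would set up the basic objects. After the harmless linear modification of $W$ outside a large compact interval, the direct method produces a minimizer of $J(\,\cdot\,;B_R)$ in $W^{1,2}_0(B_R)$ (coercivity from $W\ge 0$ and Poincar\'e, lower semicontinuity from convexity of the gradient term together with Fatou's lemma for $\int W$). A standard truncation/reflection using \textbf{(a')} lets us take $0\le u_R\le\mu$, and replacing $u_R$ by its symmetric decreasing rearrangement (P\'olya--Szeg\H{o}, with $\int_{B_R}W(u_R)$ unchanged by equimeasurability) lets us assume in addition that $u_R$ is radial and radially non‑increasing; elliptic regularity gives $u_R\in C^2(\bar B_R)$, so $u_R$ solves $u_R''+\frac{n-1}{r}u_R'=W'(u_R)$ on $(0,R)$ with $u_R'(0)=0$, $u_R'\le 0$, $u_R(R)=0$. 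Testing against the competitor $v(x)=\textbf{U}(R-|x|)$ and using the Hamiltonian identity $\tfrac12(\textbf{U}')^2+W(\textbf{U})=(\textbf{U}')^2$ (valid since the conserved energy of $\textbf{U}$ is $0$) together with $\int_0^\infty(\textbf{U}')^2\,ds=\int_0^\mu\sqrt{2W(t)}\,dt=:\sigma$ yields
\[
J(u_R;B_R)\ \le\ J(v;B_R)\ =\ |\partial B_1|\int_0^R \textbf{U}'(R-r)^2\, r^{n-1}\,dr\ \le\ |\partial B_1|\,\sigma\, R^{n-1}.
\]
In particular $u_R\not\equiv0$ once $R>n\sigma/W(0)$ (compare with $J(0;B_R)=W(0)|B_R|$, and $W(0)>0$ by \textbf{(a')}), and then $(\ref{eqmaxball})$ follows from the strong maximum principle.

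The estimate $(\ref{eqcaffaLemmmine})$ then drops out immediately. Since $u_R$ is radially non‑increasing it attains every value of $[0,u_R(r)]$ on $B_R\setminus B_r$, so
\[
|\partial B_1|\,\sigma\, R^{n-1}\ \ge\ J(u_R;B_R)\ \ge\ \int_{B_R\setminus B_r}W(u_R)\,dx\ \ge\ \frac{|\partial B_1|}{n}\,(R^n-r^n)\,\min\{W(t):t\in[0,u_R(r)]\},
\]
and $R^n-r^n\ge (R-r)R^{n-1}$ gives $(\ref{eqcaffaLemmmine})$ with $C=n\sigma$.

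For $(\ref{eqestimRect})$ I would blow up at the boundary: set $v_R(s):=u_R(R-s)$, $s\in[0,R]$, so $v_R(0)=0$, $0\le v_R\le\mu$, $v_R'\ge0$, and $v_R''=W'(v_R)+\frac{n-1}{R-s}v_R'$. The quantity $e_R(r):=\tfrac12\,u_R'(r)^2-W(u_R(r))$ satisfies $e_R'=-\frac{n-1}{r}(u_R')^2\le0$, hence $\tfrac12(u_R')^2=e_R+W(u_R)\le e_R(0)+W(u_R)=W(u_R)-W(u_R(0))\le\max_{[0,\mu]}W$, giving a bound $|v_R'|\le M:=\sqrt{2\max_{[0,\mu]}W}$ uniform in $R$ and $n$; with the equation this bounds $v_R''$ on $[0,T]$ once $R$ is large. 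By Arzel\`a--Ascoli and a diagonal argument, along a subsequence $v_R\to v_*$ in $C^1_{\mathrm{loc}}([0,\infty))$, where $v_*$ solves $v_*''=W'(v_*)$, $v_*(0)=0$, is non‑decreasing and valued in $[0,\mu]$, so $v_*(s)\nearrow L\le\mu$. Restricting the energy bound to the boundary annulus $\{R-T\le|x|\le R\}$, where $r^{n-1}\ge(R-T)^{n-1}$, gives $\int_0^T[\tfrac12(v_R')^2+W(v_R)]\,ds\le\sigma\bigl(R/(R-T)\bigr)^{n-1}$, so in the limit $\int_0^\infty[\tfrac12(v_*')^2+W(v_*)]\,ds\le\sigma<\infty$. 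Since $v_*$ is bounded and monotone, $v_*'\to0$ and the integrand tends to $W(L)$ as $s\to\infty$; finiteness forces $W(L)=0$, i.e. $L=\mu$. Then the conserved Hamiltonian of $v_*$ equals $-W(\mu)=0$, so $v_*'=\sqrt{2W(v_*)}$ with $v_*(0)=0$, which by uniqueness forces $v_*=\textbf{U}$. The limit being the same along every subsequence, $v_R\to\textbf{U}$ in $C^1_{\mathrm{loc}}([0,\infty))$. Since $D>D'$ and $\textbf{U}$ is strictly increasing with $\textbf{U}(D')=\mu-\epsilon$ (see $(\ref{eqD})$ and $(\ref{eqUincr})$), there is $R'=R'(\epsilon,D,W,n)>D$ with $v_R(D)\ge\mu-\epsilon$ for $R\ge R'$, and monotonicity of $v_R$ then gives $u_R(r)=v_R(R-r)\ge\mu-\epsilon$ for $0\le r\le R-D$, which is $(\ref{eqestimRect})$.

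The delicate point — and the whole content of the refinement over \cite{clemente} and its successors — is the identification $v_*=\textbf{U}$: a priori the boundary blow‑up limit could be any non‑decreasing solution issuing from $0$, and it is precisely the competitor $\textbf{U}(R-|x|)$ that furnishes an energy bound with $n$‑independent constant which, after localizing to the boundary layer, forces the limit profile to have finite one‑dimensional energy, hence to reach $\mu$, hence to coincide with $\textbf{U}$ and the transition width to be $D'$. Arguing only at the level of volumes, or of the weaker relation $(\ref{eqsweers})$, loses this and produces a width that degrades as $n\to\infty$.
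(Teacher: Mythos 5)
Your proof is correct, and it takes a genuinely different route from the paper's at the two central points of the argument. First, where the paper uses the crude piecewise-linear competitor (\ref{eqlinear}) (giving $J(u_R;B_R)\le C_1R^{n-1}$ with an unspecified constant), you test against $\textbf{U}(R-|x|)$ and exploit the Hamiltonian identity $\tfrac12(\textbf{U}')^2=W(\textbf{U})$ to obtain the sharp $n$-independent constant $\sigma=\int_0^\mu\sqrt{2W(t)}\,dt$. Second, and more importantly, to identify the blow-up profile the paper first proves (\ref{eqchen}), i.e. $[u_R'(R)]^2\to 2W(0)$, by tracking the decay of $E_R$ via (\ref{eqER'})--(\ref{eqER3}), and then pins down $V=\textbf{U}$ through uniqueness of the IVP with $V(0)=0$, $V'(0)=\sqrt{2W(0)}$; you instead localize the energy bound to the boundary annulus $\{R-T\le|x|\le R\}$, pass to the limit to get $\int_0^\infty[\tfrac12(v_*')^2+W(v_*)]\,ds\le\sigma<\infty$, argue $W(\lim v_*)=0$, hence the conserved Hamiltonian of $v_*$ vanishes, and conclude $v_*=\textbf{U}$ from $(v_*')^2=2W(v_*)$, $v_*(0)=0$. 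Both versions achieve $n$-independence of $D'$ --- the crux of the refinement over \cite{clemente} --- but yours makes it structurally transparent (the $n$-independent $\sigma$ survives the blow-up), whereas the paper's (\ref{eqchen}) is a stronger intermediate statement that is reused later (e.g. in Proposition \ref{proUniq}). Your rearrangement route to radial monotonicity (P\'olya--Szeg\H{o} plus equimeasurability) is also a simplification relative to the paper's default of moving planes, though the paper does note that alternative in passing. One small point worth flagging explicitly when writing this up: the uniqueness step at the end needs $W(0)>0$ to make $\sqrt{2W(\cdot)}$ Lipschitz near $0$ (so the first-order ODE has a unique solution emanating from $v_*(0)=0$), or equivalently one reverts to the second-order IVP with the now-determined $v_*'(0)=\sqrt{2W(0)}$; both work, and the hypothesis \textbf{(a')} supplies $W(0)>0$.
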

\begin{proof}
Under our assumptions on $W$, it is standard to show the existence
of a global  minimizer $u_R\in W^{1,2}_0(B_R)$ satisfying
\begin{equation}\label{eqmax} 0\leq u_R(x)\leq \mu\ \  \textrm{a.e.\  in}\   B_R,\end{equation}
see Chapter 2 in \cite{badiale}, \cite{fuscoTrans},
\cite{rabinowitz}, and Lemma \ref{lemfusco} herein (applied to the
minimizing sequence converging, weakly in $W_0^{1,2}(B_R)$, to
$u_R$). (The upper bound in (\ref{eqmax}) can also be derived from
Lemma \ref{lemdancer} below, see also the second proof of Theorem
\ref{thmmine}). By standard elliptic regularity theory
\cite{Gilbarg-Trudinger}, this minimizer is a smooth solution, in
$C^2(\bar{B}_R)$, of
\begin{equation}\label{eqgidasEq}
\Delta u=W'(u)\ \ \textrm{in}\ \ B_R;\ u=0\ \ \textrm{on}\ \
\partial B_R.
\end{equation}
By  the strong maximum principle (see for example Lemma 3.4 in
\cite{Gilbarg-Trudinger}), via (\ref{eqmax}) and (\ref{eqgidasEq}),
we deduce that $u_R(x)<\mu,\ x\in B_R$, and that either $u_R$ is
identically equal to zero or $u_R(x)>0,\ x\in B_R$ (recall that
assumption \textbf{(a')} implies that $W'(0)\leq 0$ and
$W'(\mu)=0$).
% Observe that $u$ depends on $R$ but
%we have chosen not to make this apparent in the notations.

By adapting an argument from Section 4 in \cite{pacard} (see also
Lemma 5.3 in \cite{guiAnnals} and Theorem 1.13 in
\cite{rabinowitz}), we will show that $u_R$ is nontrivial, provided
that $R$ is sufficiently large (depending only on $W$ and $n$).
(This is certainly the case when $W'(0)<0$).
 It is easy to cook up a test function, and use it as a competitor, to show that there exists a
positive constant $C_1$, depending only on $W$ and $n$, such that
\begin{equation}\label{eqJcomp} J(u_R; B_R)\leq C_1 R^{n-1},\ \ \textrm{say\ for}\
\ R\geq 2.
\end{equation}
(Plainly construct a function which interpolates smoothly from $\mu$
to $0$ in a layer of size $1$ around the boundary of $B_R$ and which
is identically equal to $\mu$ elsewhere, see also (\ref{eqlinear})
below or Lemma 1 in \cite{cafaCordoba}). In fact, as in Proposition
1 in \cite{alama} (see also \cite{lassouedmironescu}), it can be
shown that
\begin{equation}\label{eqK}
J(u_R; B_K)\leq \tilde{C}_1 K^{n-1}\ \ \forall\ K<R,\ \ R\geq 2,
\end{equation}
where the constant $\tilde{C}_1>0$ depends only on $W$ and $n$ (see
also Remark \ref{remMonoto}, and  the arguments leading to relation
(\ref{eqcabre}) below).
 On
the other hand, the energy of the trivial solution is
\[J(0;B_R)= \int_{B_R}^{}W(0) dx= C_2 R^n,
\] where $C_2>0$ depends only on $W,\ n$.
From (\ref{eqJcomp}), and the above relation,
%\begin{equation}\label{eqanalog}
%C_2R^n=J(0;B_R)\leq  C_1 R^{n-1},\ \ R\geq 2,
%\end{equation}
 we
infer that $u_R$ is certainly not identically equal to zero for
\[R\geq C_1 C_2^{-1}+2.\] We thus conclude that (\ref{eqmaxball})
holds. (In the above calculation, we relied on the fact that
\textbf{(a')} implies that $W(0)>0$; in this regard, see Remark
\ref{rempoho} below).

 Since $u_R \in C^2(\bar{B}_R)$ is strictly positive
in the ball $B_R$,  by  (\ref{eqgidasEq}) and the method of moving
planes \cite{brezis,dancePlanes,gidas}, we infer that $u_R$ is
radially symmetric and decreasing, namely
\begin{equation}\label{eqmonotonicity}
u_R'(r)<0,\ \ r\in(0,R),
\end{equation}
(with the obvious notation). In this regard, keep in mind that if
$v\in W^{1,2}_0(B_R)$ is nonnegative, then its Schwarz
symmetrization $v^*\in W^{1,2}_0(B_R)$, which is radially symmetric
and decreasing, satisfies $J(v^*;B_R)\leq J(v;B_R)$ (see for example
\cite{brock} and the references therein). We note that, since $u_R$
is a global minimizer and thus stable (in the usual sense, as
described in Remark \ref{remstable} below), the radial symmetry of
$u_R$, for $n\geq 2$, can also be deduced as in Lemma 1.1 in
\cite{alikakosbates} (see also the related references in the proof
of Lemma \ref{lem1Sign} below). In fact, the monotonicity property
(\ref{eqmonotonicity}) can be alternatively  derived by arguing  as
in Lemma 2 in \cite{cabreRadial} (see also Proposition 1.3.4 in
\cite{dupaigneBook}), making use of the stability of the radial
solution $u_R$ (see also the proof of Lemma \ref{lem1Sign} below,
and Lemma 1 in \cite{alikakosSimpson}). Now, relation
(\ref{eqJcomp}) and the nonnegativity of $W$ clearly imply that
\begin{equation}\label{eqcoarea}
\int_{{B_R}\backslash B_{\frac{R}{2}}}^{}\left\{\frac{1}{2}|\nabla
u_R|^2+W(u_R)\right\}dx\leq C_1 R^{n-1},\ \ R\geq C_1
C_2^{-1}+2.\end{equation} Hence, by the mean value theorem and the
radial symmetry of $u_R$, there exists a $\xi \in
\left(\frac{R}{2},R\right)$ such that
\[
\left\{\frac{1}{2}[u_R'(\xi)]^2+W\left(u_R(\xi)\right)\right\}\left|{B_R}\backslash
B_{\frac{R}{2}} \right|\leq C_1 R^{n-1},\ \ R\geq C_1 C_2^{-1}+2,
\]
i.e.,
\begin{equation}\label{eqgood}
\frac{1}{2}[u_R'(\xi)]^2+W\left(u_R(\xi)\right)\leq C_3 R^{-1},\ \
R\geq C_1 C_2^{-1}+2,
\end{equation}
where the positive constat $C_3$ depends only on $W$ and $n$ (for
simplicity in notation, we have suppressed the obvious dependence of
$\xi$ on $R$). Hence, from assumption \textbf{(a')}, and relations
(\ref{eqmonotonicity}), (\ref{eqgood}), we obtain that
%\label{eqbelow}
\begin{equation}\label{eqsweers}
u_R\to \mu, \ \textrm{uniformly \ in}\  \bar{B}_{\frac{R}{2}}, \
\textrm{as}\ R\to \infty.
\end{equation}
In the sequel, we will prove that the stronger property
(\ref{eqestimRect}) holds true.

For future reference, we note here that
\begin{equation}\label{eqchen}
[u_R'(R)]^2\to 2W(0)\ \ \textrm{as}\ \ R\to \infty.
\end{equation}
Indeed, let
\begin{equation}\label{eqER1}
E_R(r)=\frac{1}{2}[u_R'(r)]^2-W\left(u_R(r)\right),\ \ r\in (0,R).
\end{equation}
Thanks to (\ref{eqgidasEq}), we find that
\begin{equation}\label{eqER'}
E_R'(r)=u''_Ru'_R-W'(u_R)u'_R=-\frac{n-1}{r}(u_R')^2,\ \ r\in (0,R).
\end{equation}
So,
\begin{equation}\label{eqER2}
E_R(R)=E_R(\xi)-\int_{\xi}^{R}\frac{n-1}{r}(u_R')^2dr,
\end{equation}
where $\xi \in \left(\frac{R}{2},R \right)$ is as in (\ref{eqgood}).
Now, observe that (\ref{eqcoarea}) and the nonnegativity of $W$
imply that
\[
\int_{\xi}^{R}r^{n-1}(u_R')^2dr \leq C_4R^{n-1},\ \ R\geq C_1
C_2^{-1}+2,
\]
with $C_4$ depending only on $W$ and $n$. In turn, the above
estimate clearly implies that
\[
\int_{\xi}^{R}(u_R')^2dr \leq 2^{n-1} C_4,\ \ R\geq C_1 C_2^{-1}+2,
\]
and it follows that
\begin{equation}\label{eqER3}
 \int_{\xi}^{R}\frac{n-1}{r}(u_R')^2dr\leq 2^nC_4(n-1)R^{-1},\ \ R\geq C_1
C_2^{-1}+2.
 \end{equation}
 The claimed relation (\ref{eqchen}) follows readily from
 (\ref{eqgood}), (\ref{eqER1}), (\ref{eqER2}), and (\ref{eqER3}).
In fact, we have shown that $R|E_R(R)|$ remains uniformly bounded as
$R\to \infty$. In relation to (\ref{eqchen}), see also Remark
\ref{remshibata} below.

We also consider  the following family of functions
\begin{equation}\label{eqUR}
U_R(s)=u_R(R-s),\ \ s\in [0,R].
\end{equation}
We claim that
\begin{equation}\label{eqclaim}
U_R\to \textbf{U}, \ \textrm{uniformly\ on\ compact\ intervals\ of}\
[0,\infty), \ \textrm{as}\ R\to \infty,
\end{equation}
where $\textbf{U}$ is as in (\ref{eqU}).

In view of (\ref{eqgidasEq}), we get
\begin{equation}\label{eqhellyuniform}
U_R''-\frac{n-1}{R-s}U_R'-W'(U_R)=0,\ \ s\in (0,R).
\end{equation}
Making use of (\ref{eqmaxball}), the above equation, elliptic
estimates \cite{Gilbarg-Trudinger}, Arczela-Ascoli's theorem, and a
standard diagonal argument, passing to a subsequence $R_i\to
\infty$, we find that
\begin{equation}\label{eqURi}
U_{R_i}\to {V}\ \textrm{and}\ U_{R_i}'\to {V}', \ \textrm{uniformly\
on\ compact\ intervals\ of}\ [0,\infty), \ \textrm{as}\ i\to \infty,
\end{equation}
where $V\in C^2[0,\infty)$ is nonnegative and satisfies
\begin{equation}\label{eqhellyV}V''=W'(V),\ s>0,\ \textrm{and}\ V(0)=0.\end{equation} Moreover, by
(\ref{eqchen}), (\ref{eqUR}), and (\ref{eqURi}), we see that
\[
%V'\geq 0,\ s\geq 0,\ \textrm{and}\
[V'(0)]^2=2W(0)>0.
\]
By the uniqueness of solutions of initial value problems of ordinary
differential equations, see for example page 108 in \cite{walter},
we deduce that
\[
V\equiv \textbf{U},
\]
where $\textbf{U}$ is as in (\ref{eqU}). We also used that
$\textbf{U},\ V$ are nonnegative (which implies that
$\textbf{U}'(0),\ V'(0)$ are also nonnegative), and the relation
\begin{equation}\label{eqstar}
[\textbf{U}'(0)]^2=2W(0),
\end{equation}
which follows from the identity
\[
[\textbf{U}'(s)]^2-[\textbf{U}'(0)]^2=2\int_{0}^{s}W'(\textbf{U})\textbf{U}'ds=2W\left(\textbf{U}(s)\right)-2W(0),\
\ s\geq0,
\]
and the fact that $\textbf{U}(s)\to \mu$ as $s\to \infty$, recalling
that $W(\mu)=0$ (otherwise, $\textbf{U}'(s)$ would tend to a nonzero
number and in turn $\left|\textbf{U}(s)\right|$ would diverge, as
$s\to \infty$). Moreover, by the uniqueness of the limiting
function, we infer that the limits in (\ref{eqURi}) hold for
\emph{all}  $R\to \infty$. Consequently, the claimed relation
(\ref{eqclaim}) holds.

Having (\ref{eqchen}), (\ref{eqclaim}) at our disposal, we can now
proceed to the proof of (\ref{eqestimRect}). Let $\epsilon \in
(0,\mu)$ and $D>D'$, where $D'$ is as in (\ref{eqD}). By virtue of
(\ref{eqU}), (\ref{eqUincr}), and (\ref{eqclaim}), there exists a
sufficiently large $R'$, depending only on $\epsilon$, $D$, $W$,
$n$, such that $ U_{R}(D)\geq \mu-\epsilon$, and all the previous
relations continue to hold, for $R>R'$. In other words, via
(\ref{eqUR}), we have that
\begin{equation}\label{eqstarting}
u_R (R-D)=U_R(D)\geq \mu-\epsilon,\ \ R>R'.
\end{equation}
The  fact that $u_R$ is radially decreasing, recall
(\ref{eqmonotonicity}), and the above relation imply the validity of
(\ref{eqestimRect}). As   will be apparent from the Remarks
\ref{remmonotbelowLem1} and \ref{remClearing} that follow, condition
(\ref{eqmonotonicity}) is essential only when dealing with
degenerate situations when  there exists a sequence $t_j\to\mu^-$
such that $W'(t_{2j})W'(t_{2j+1})<0$ for large $j$; an example is a
potential $W$ that coincides with
$(\mu-t)^2\left[\sin\left(\frac{1}{\mu-t}\right)+2 \right]$ near
$\mu$, in which case we can choose $t_j=\mu-\frac{1}{j\pi}$ (note
that $W'(t)\sim \cos\left(\frac{1}{\mu-t}\right)$ as $t\to \mu^-$).
It remains to prove (\ref{eqcaffaLemmmine}). To this end, note that
the nonnegativity of $W$ and (\ref{eqJcomp}) imply that
\[
\int_{r}^{R}s^{n-1}W\left(u_R(s) \right)ds\leq \tilde{C}_1 R^{n-1},\
\ r\in (0,R),
\]
where $\tilde{C}_1$ is independent of $R\geq R'$. It  follows, via
(\ref{eqmonotonicity}), that
\[
\min\left\{W(t)\ :\ t\in \left[0,u_R(r)\right] \right\}(R^n-r^n)\leq
n\tilde{C}_1 R^{n-1},
\]
which clearly implies the validity of (\ref{eqcaffaLemmmine}).

%At this point, observe that assumption \textbf{(a')} implies that
%\begin{equation}\label{eqa''}
%\textrm{there\ exist}\  \delta>0\  \textrm{and}  \ a\in(0,\mu)\
%\textrm{such\ that}\ \ W^{-1}(0,\delta)\cap
%(0,\mu)=(\mu-a,\mu).\end{equation} Thus, choosing
%\[
%R'> \max\left\{\frac{C_3}{\delta},\ C_1 C_2^{-1}+2\right\},
%\]
%by  (\ref{eqmaxball}), (\ref{eqgood}), and (\ref{eqa''}), we deduce
%that
%\begin{equation}\label{equxi}
%u_R(\xi)> \mu-a\ \ \ \textrm{if}\ \ R\geq R'.
%\end{equation}
%Recalling that $\xi \in \left(\frac{R}{2},R \right)$, via relation
%(\ref{eqmonotonicity}), we infer that
% the desired estimate (\ref{eqestimRect}) holds.

The proof of the lemma is complete.
\end{proof}
\begin{rem}\label{rembelowNew}
Our assumptions on the behavior of $W$ near its global minimum at
$\mu$ are quite weak, and in fact even allow for the potential $W$
to
 have $C^\infty$ contact with zero at the point $\mu$, that is
$W^{(i)}(\mu)=0,\ i\geq 1$. This degeneracy translates into the
absence of decay rates for the convergence of the ``inner''
approximate solution $\textbf{U}(R-|x|)$ (in the sense of singular
perturbation theory, see \cite{fife-arma} and the related references
that can be found in Remark \ref{remSingular} below), where
$\textbf{U}$ is as described in (\ref{eqU}), to the ``outer'' one
$\mu$, away from the boundary of $B_R$, as $R\to \infty$ (see also
the discussion leading to (\ref{eqEV2}) below). This is the main
reason why we have not attempted to apply a perturbation argument,
see for instance \cite{fife-arma} and the related references in
Remark \ref{remSingular} below, in order to study the asymptotic
behavior of $u_R$ as $R\to \infty$. We refer to the recent papers
\cite{vasilevaDiff,vasileva1-,vasileva2} for  singular perturbation
arguments (in one space dimension) in the case where $\mu$ is a root
of $W'$ of finite multiplicity (also allowing for $x$ dependence on
$W'$). From the viewpoint of geometric singular perturbation theory,
the case $W''(\mu)=0$ corresponds to lack of normal hyperbolicity of
the slow manifold corresponding to the equilibria with $(u,
u')=(\mu,0)$ (see \cite{tin}).

If $W''(\mu)>0$, then the convergence of $\textbf{U}$ to $\mu$ is of
order $e^{-\sqrt{W''(\mu)}s}$ as $s\to \infty$ (by the stable
manifold theorem, see \cite{codington}), and one can effectively
interpolate between the outer and inner approximations in order to
construct a smooth global approximation that is valid throughout
$B_R$.
%prohibits the use of
%conventional matched asymptotic techniques from singular
%perturbation theory (see for instance \cite{fife-arma}) in order to
%construct an
\end{rem}

\begin{rem}\label{remn}
By the well known relations $|B_R|=c_n R^n$, $|\partial B_R|=n c_n
R^{n-1}$, $R>0$, $n\geq 2$, for some explicit constants $c_n$
(independent of $R$), where $|\partial B_R|$ denotes the
$(n-1)$--dimensional measure of $\partial B_R$, we find that
\[
\frac{|\partial B_R|}{\left|B_R\backslash
B_\frac{R}{2}\right|}=\frac{n2^n}{2^n-1}R^{-1},\ \ R>0.
\]
We deduce that the constant $R'$ in Lemma \ref{lem1} diverges (at
least linearly) as $n\to \infty$ (see in particular the relations
leading to (\ref{eqgood})).
\end{rem}

\begin{rem}\label{remmonotbelowLem1}
If in addition to \textbf{(a')} we assume that there exists some
$d\in (0,\mu)$ such that
\begin{equation}\label{eqWmonotone}W'(t)\leq 0,\ \ t\in (\mu-d,\mu),
\end{equation}
(note that this is very natural), then relation (\ref{eqestimRect})
can alternatively be shown, starting from (\ref{eqstarting}),
\emph{without} assuming knowledge of (\ref{eqmonotonicity}), as
follows:  Assuming, without loss of generality, that $2\epsilon<d$,
thanks to   Lemma \ref{lemalikakos} below, we can find a radial
$\tilde{u}\in W^{1,2}(B_{R-D})$ such that
\[
J(\tilde{u};B_{R-D})\leq J(u_R;B_{R-D}), \ \
\tilde{u}(R-D)=u_R(R-D),\ \ \textrm{and}\ \ \tilde{u}(x)\in
[\mu-\epsilon,\mu],\ x\in \bar{B}_{R-D}.
\]
Thus, the function
\[
\hat{u}(x)=\left\{\begin{array}{ll}
                 \tilde{u}(x), & x\in B_{R-D}, \\
                   &   \\
                 u_R(x), & x\in B_R\backslash B_{R-D},
               \end{array}
 \right.
\]
belongs in $W^{1,2}_0(B_R)$ and is a global minimizer of $J(\cdot;
B_R)$ in $W^{1,2}_0(B_R)$ (since $J(\hat{u};B_R)\leq J(u_R;B_R)$).
In particular, it is smooth, radial (and by virtue of its
construction), and solves (\ref{eqgidasEq}). It follows from Lemma
3.1 in \cite{jerison}, which is in the spirit of Lemma
\ref{lemdancer} below, that the function $u_R-\hat{u}$ is either
strictly positive, strictly negative, or identically equal to zero
in $B_R$, and obviously the latter case occurs. For completeness
purposes, as well as for future reference, we will draw the same
conclusion by an alternative and, to our opinion, more elementary
approach: The function \[v\equiv u_R-\hat{u}\] solves the linear
equation
\[
\Delta v+Q(x)v=0,\ \ x\in B_R, \] where
\begin{equation}\label{eqQ}
Q(x)=\left\{
\begin{array}{ll}
  \frac{W'\left(\hat{u}(x)\right)-W'\left(u_R(x)\right)}{u_R(x)-\hat{u}(x)}, & \textrm{if}\ \hat{u}(x)\neq u_R(x), \\
   &  \\
  -W''(u_R(x)), & \textrm{if}\ \hat{u}(x)= u_R(x).
\end{array}
\right.
\end{equation}
On the other hand, since
\[
v(x)=0,\ \ x\in  B_R\backslash B_{(R-D)},
\]
and $Q\in L^\infty(B_R)$, the unique continuation principle (see for
instance \cite{hormander}) yields that \[v(x)=0,\ x\in B_R.\] (In
this simple case of radial symmetry, we can also make use of the
uniqueness theorem of ordinary differential equations to show that
$v\equiv 0$). Therefore, estimate (\ref{eqestimRect}) holds.
  We remark
that, if $W$ was \emph{strictly} decreasing in $(\mu-d,\mu)$, then
(\ref{eqestimRect}) follows at once from the general lemma in
\cite{alikakosReplace} (see also the second assertion of Lemma
\ref{lemalikakos} herein) and (\ref{eqstarting}).

The approach that we just presented makes only partial use of the
radial symmetry of the problem (in order to establish
(\ref{eqstarting})), and may be applied to extend some results in
\cite{danceryanCVPDE} to the general case (without radial symmetry),
see \cite{sourdisDancer}. Moreover, it can be applied for the study
of global minimizers of the analogous  vector-valued energy
functionals, as those appearing in \cite{alikakosARMA}, over $B_R$.
In this case, it is known that global minimizers are radial, see
\cite{lopez}, but monotonicity properties do not hold in general.
\end{rem}

\begin{rem}\label{remClearing}
In the one dimensional case, i.e.,  when $n=1$, the assertion of
Remark \ref{remmonotbelowLem1} can be shown \emph{without} assuming
(\ref{eqWmonotone}). As in the latter remark, we do not assume the
monotonicity property (\ref{eqmonotonicity}) of $u_R$, just that it
is even, and we will start from (\ref{eqstarting}) which clearly
implies that \begin{equation}\label{eqstrauss1}u_R(R-D)\to \mu \ \
\textrm{as}\ \ R\to \infty.
\end{equation}
 Since the energy of $u_R$ is not larger
than that of the even function given by
\begin{equation}\label{eqscrewclear}\check{u}_R(x)=\left\{\begin{array}{ll}
                          u_R(x), &  x\in[R- D,R],\\
                           &  \\
                          \frac{u_R(R-D)-\mu}{D}(x-R+D)+u_R(R-D), &  x\in [R-2D,R-D], \\
                           &  \\
                          \mu, &  x \in[0,R-2D],
                        \end{array}
 \right.
\end{equation}
it follows readily from \textbf{(a')} and (\ref{eqstrauss1}) that
\begin{equation}\label{eqclearing-}
\int_{-R+D}^{R-D}\left\{(u_R')^2+W(u)\right\}dx\to 0 \ \ \textrm{as}
\ R\to \infty.
\end{equation}
Hence, by \textbf{(a')} and  the \emph{clearing--out} Lemma 1 in
\cite{bethuelJDE} (noting that it continues to apply in our possibly
degenerate setting), we have that
\begin{equation}\label{eqclearing}u_R\to \mu, \ \textrm{uniformly\
in}\ [-R+D,R-D],\  \textrm{as}\  R\to \infty.\end{equation} The
intuition behind the latter lemma, as applied in the case at hand,
is  that if the energy is sufficiently small in some place, then
there are no spikes located there.
 Note
that from (\ref{eqmaxball}), (\ref{eqgidasEq}), in arbitrary
dimensions, via standard interior elliptic regularity estimates
\cite{Gilbarg-Trudinger} (see also Lemma A.1 in
\cite{bethuelCVPDE}), applied on balls of radius $\frac{D}{4}$
covering $B_{(R-D)}$, we have that $|\nabla u_R|$ remains uniformly
bounded in $B_{(R-D)}$ as $R\to \infty$ (or see the gradient bound
in (\ref{eqmodicaRadial}) below). Thus, relation (\ref{eqclearing})
can also be derived from \textbf{(a')} and (\ref{eqclearing-})
similarly to  Theorem III.3 in \cite{bethuel}, see also Lemma 3.2 in
\cite{weiMoser} (the point is that the ``bad'' intervals, where
$u_R$ is away from $\mu$ must have size of order one (by the uniform
gradient estimate), as $R\to \infty$, which is not possible by
\textbf{(a')} and (\ref{eqclearing-})). In contrast to the one
dimensional case,
 in
$n\geq 2$ dimensions, by the analog of (\ref{eqclearing-}), i.e.,
\begin{equation}\label{eqanalogNNN}
R^{1-n}J(u_R;B_{(R-D)})\to 0\ \  \textrm{as}\ \  R\to \infty,
\end{equation} arguing again as in Theorem III.3 in \cite{bethuel}, we can
show the weaker property:
\begin{equation}\label{eqclearweak}\textrm{Given}\ \alpha\in (0,1)\  \Rightarrow \ u_R\to \mu,\ \textrm{uniformly\
in} \ \bar{B}_{(R-D)}\backslash B_{\alpha R}, \ \textrm{as}\  R\to
\infty.
\end{equation}

%(By the way, in  dimensions $n\geq 2$, the size of the corresponding
%bad set is not a-priori so small in order to apply Stampacchia's
%maximum principle for domains with small volume, see \cite{brezis},
%to $\mu-\epsilon-u_R$).

We note that if $W''(\mu)>0$, then (\ref{eqclearing}) follows
directly  from (\ref{eqclearing-}), via (\ref{eqCschoen}) below and
the Sobolev embedding
\[\|\mu-u_R\|_{L^\infty(-R+D,R-D)}\leq
C\|\mu-u_R\|_{W^{1,2}(-R+D,R-D)},\] with constant $C$ independent of
$R\geq 2D$ (see Corollary 5.16 in \cite{adams}). One might be
curious whether this  simple argument can be extended to $n\geq 2$
dimensions. In this direction, we would like to mention that by
using the pointwise estimate
\[
\left(\mu-u_R(r)\right)^2\leq C_n
r^{1-n}\|\mu-u_R\|^2_{W^{1,2}\left(B_{(R-D)}
\right)}+\left(\frac{R-D}{r}
\right)^{n-1}\left(\mu-u_R(R-D)\right)^2,\ \ r\in(0,R-D),
\]
which can be proven similarly as the classical Strauss  radial lemma
(see \cite{strauss}), relation (\ref{eqstrauss1}), and
(\ref{eqanalogNNN}), we arrive again at (\ref{eqclearweak}). On the
other side, as in \cite{alikakosBasicFacts}, fixing $K$ and letting
$R\to \infty$, we see from the monotonicity formula (\ref{eqmodica})
below that $u_R\to \mu$, uniformly on $\bar{B}_K$, as $R\to \infty$
(see also Remark \ref{remsigal} below, and the compactness argument
that follows). Suppose that for a sequence $R\to \infty$, there
exist $r_R\in [0,R-D]$ such that $u_R(r_R)=\mu-2\epsilon$. From
(\ref{eqclearweak}) and our previous comment, we get that
$R-r_R\to\infty$ and $r_R\to \infty$, as $R\to \infty$,
respectively. As in the proof of Theorems 1.3 and 1.4 in
\cite{danceryanCVPDE}, we let $v_R(s)=u_R(r_R+s)$, $s\in
(-r_R,R-r_R)$, note that $v_R(0)=\mu-2\epsilon$. Using
(\ref{eqmaxball}), (\ref{eqgidasEq}), together with standard
elliptic regularity estimates and Sobolev embeddings (see
\cite{Gilbarg-Trudinger}), passing to a subsequence, we find that
$v_R\to V$ in $C^1_{loc}(\mathbb{R})$, where
\begin{equation}\label{eqVdan}
V''=W'(V),\ \ 0\leq V\leq \mu,\ \ s\in \mathbb{R},\ \
V(0)=\mu-2\epsilon.
\end{equation}
Moreover, the solution $V$ is a minimizer of the energy
\[
I(v)=\int_{-\infty}^{\infty}\left[\frac{1}{2}(v')^2+W(v)\right]ds,
\]
in the sense that $I(V+\phi)\leq I(V)$ for every $\phi\in
C^\infty_0(\mathbb{R})$, see page 104 in \cite{danceryanCVPDE}.
Arguing as in the proof of De Giorgi's conjecture in low dimensions
(see \cite{ambrosio3D}, \cite{berestyckiCaffarelli},
\cite{dancerMorseTrans}, \cite{farinaHB}, \cite{ghosubGui},
\cite{pacard}), we can prove that either $V$ is a constant with
$W'(V)=0$, $W''(V)\geq 0$ or $V'$ is nontrivial and has fixed sign.
Since we are assuming  that $W''(\mu)>0$, the first scenario is
ruled out at once from the last condition in (\ref{eqVdan}); in the
second scenario, it follows from phase analysis (see \cite{arnold},
\cite{walter}) that $V$ has to connect two equal wells of the
potential $W$ at respective infinities, one of them being $\mu$, but
this is impossible since $W(t)>0,\ t\in [0,\mu)$. Consequently, if
we assume that $W''(\mu)>0$, assertion (\ref{eqestimRect}) can be
deduced in this manner from (\ref{eqstarting}) \emph{without} making
use of (\ref{eqmonotonicity}) for \emph{all} $n\geq 1$.

%A more direct approach, with the advantage of  not making  use of
%the radial symmetry of $u_R$, is the following: Observe that the
%function $v=(\mu-u_R)^2$ satisfies
%\[
%|\nabla v|=2(\mu-u_R)|\nabla u_R|\leq |\nabla u_R|^2+(\mu-u_R)^2, \
%\ x\in B_{(R-D)}.
%\]
%Then, via  (\ref{eqanalogNNN}), and (\ref{eqCschoen}) below, we
%obtain that
%\[
%R^{1-n}\int_{B_{(R-D)}}^{}|\nabla v|dx\to 0\ \ \textrm{as}\ \ R\to
%\infty.
%\]
%By a useful imbedding theorem of Morrey (see Theorem 7.19 in
%\cite{Gilbarg-Trudinger}), and the above relation, we infer that
%\[
%\textrm{osc}_{B_{(R-D)}}v=\max_{B_{(R-D)}} v-\min_{B_{(R-D)}} v \to
%0\ \ \textrm{as}\ \ R\to \infty,
%\]
%which clearly implies the relation sought for.
\end{rem}

\begin{rem}\label{remmonotbelowLem12}
If $W\in C^{2,\alpha}(\mathbb{R})$, $0<\alpha<1$, satisfies
\textbf{(a')},
\[
W'(\rho_1)=0,\ \ W'(t)<0,\ t\in (\rho_1,\mu),\ \textrm{for\ some}\
\rho_1\in (0,\mu),
\]
 and
(\ref{eqW''}), then Theorem 2 in \cite{sweersEdinburg} tells us that
there exists a $\delta_1\in (0,\mu)$ such that (\ref{eqgidasEq}) has
at most one solution  such that
\[\max_{x\in \bar{B}_R}u(x) \in (\mu-\delta_1,\mu)\ \textrm{and}  \ -\mu<u(x)<\mu,\ x\in B_R,\]
for all $R>0$. Therefore, under these assumptions on $W$, in view of
(\ref{eqmaxball}) and (\ref{eqestimRect}) which hold for all global
minimizers (with the same $R'$), we conclude that there exists a
unique global minimizer of (\ref{eqenergy}), if $R$ is sufficiently
large.

On the other side, if in addition to \textbf{(a')}, the stronger
assumption $W''(\mu)>0$ holds (in other words \textbf{(c)}), then a
simple proof of the uniqueness of the global minimizer, satisfying
(\ref{eqmaxball}), for large $R$, can be given as follows: One first
shows that if a solution of (\ref{eqgidasEq}) satisfies
(\ref{eqmaxball}), (\ref{eqestimRect}), and (\ref{eqclaim}) (recall
(\ref{eqUR})), then it is \emph{asymptotically} stable for large
$R>0$ (we will give a short self--contained proof of this in the
sequel).
%The key for this is the fact that $\textbf{U}$ itself is an
%asymptotically stable solution of (\ref{eqU}), which can be shown
%similarly as Sturm's theorem (see \cite{walter}) by using
%(\ref{eqUincr}) and \textbf{(c)}.
Then, suppose that $u_1$ and $u_2$ are two distinct global
minimizers of (\ref{eqenergy}), satisfying (\ref{eqmaxball}). By the
proof of Lemma \ref{lem1}, they satisfy (\ref{eqmaxball}),
(\ref{eqestimRect}), and (\ref{eqclaim}), uniformly (independent of
the choice of minimizers) as $R\to \infty$.
 Thanks to Lemma 3.1 in \cite{jerison} (see also Lemma
\ref{lemdancer} herein), without loss of generality, we may assume
that $u_1(x)<u_2(x)$, $x\in B_R$ (in the problem at hand, we can
also assume this when dealing with stable solutions). On the other
hand, by the mountain pass theorem or the theory of monotone
dynamical systems (see \cite{defiguerdo}, \cite{matano}
respectively, and Section \ref{secExtensions} herein), we infer that
there exists an \emph{unstable} solution $\hat{u}_1$ of
(\ref{eqgidasEq}) such that $u_1(x)<\hat{u}_1(x)<u_2(x),\ x\in B_R$.
In particular, the unstable solution enjoys the asymptotic behavior
of global minimizers, as $R\to \infty$, and thus is asymptotically
stable (by our previous discussion); a contradiction. A related
uniqueness proof, based on a dynamical systems argument (but not of
monotone nature), can be found in \cite{alikakosSimpson}.

Here, for completeness, assuming that $W''(\mu)>0$, we will show
that  solutions $u_R$ of (\ref{eqgidasEq}) which satisfy
(\ref{eqmaxball}), (\ref{eqestimRect}), and (\ref{eqclaim}) are
\emph{asymptotically} stable if $R$ is sufficiently large. Our
argument is inspired from \cite{angenent} where, in particular,
under the additional assumption \textbf{(b)} with strict inequality,
it was applied to (\ref{eqEV1}) below on a smooth bounded domain
with large $\lambda$. We will ague by contradiction. Suppose that,
for a sequence $R\to \infty$, the principal eigenvalue $\mu_R$ of
the linearized operator about $u_R$ is non-positive, i.e,
\begin{equation}\label{eqmuR}
\mu_R\leq 0.
\end{equation}
It is well known that $\mu_R$ is simple and that the corresponding
eigenfunction $\varphi_R$ (modulo normalization) may be chosen to be
positive in $B_R$, see for instance Theorem 8.38 in
\cite{Gilbarg-Trudinger}. We have
\begin{equation}\label{eqkernelstarb}
-\Delta \varphi_R+W''(u_R)\varphi_R=\mu_R\varphi_R\ \textrm{in}\
B_R;\ \varphi_R=0 \ \textrm{on}\ \partial B_R,
\end{equation}
and we   normalize $\varphi_R$ by imposing that
\begin{equation}\label{eqnormal}
\|\varphi_R\|_{L^\infty (B_R)}=1.
\end{equation}
We note that $\varphi_R$  is radially symmetric (and so is every
eigenfunction that is associated to a non-positive eigenvalue, see
\cite{harauxPolacik}, \cite{linNi}, because \textbf{(a')} and Hopf's
boundary point lemma yield that $u_R'(R)<0$). For future reference,
observe that testing (\ref{eqkernelstarb}) by $\varphi_R$ yields the
uniform (in $R$) lower bound:
\begin{equation}\label{eqmulower}
\mu_R\geq -\max_{t\in[0,\mu]}\left|W''(t) \right|.
\end{equation}
Now, by  virtue of (\ref{eqestimRect})  and the positivity of
$W''(\mu)$, there exists a constant $D>0$ such that
\[
W''\left(u_R\right)\geq \frac{W''(\mu)}{2}>0\ \ \textrm{on}\ \
\bar{B}_{(R-D)},
\]
for large $R>0$. So, from (\ref{eqmuR}), (\ref{eqkernelstarb}), and
(\ref{eqnormal}), we obtain that there exist $z_R\in [R-D,R]$ such
that $\varphi_R(z_R)=1$, $\varphi_R'(z_R)=0$, and
$\varphi_R''(z_R)\leq 0$, for large $R$ (along the sequence).
 As in the proof of Lemma
\ref{lem1}, making use of (\ref{eqclaim}), (\ref{eqmuR}),
(\ref{eqkernelstarb}),  (\ref{eqnormal}), and (\ref{eqmulower}),
passing to a subsequence, we get that $\varphi_{R_i}(R_i-\cdot)\to
\Phi(\cdot)$ in $C^1_{loc}[0,\infty)$, $\mu_{R_i}\to \mu_*\leq 0$,
and $R_i-z_{R_i}\to \textbf{z}\in [0,D]$, as $i\to \infty$, such
that
\begin{equation}\label{eqskribble}
-\Phi''+W''\left(\textbf{U}(r)\right)\Phi=\mu_* \Phi, \ r\in
(0,\infty);\ \Phi(0)=0,\
\Phi(\textbf{z})=\|\Phi\|_{L^\infty(0,\infty)}=1,
\end{equation}
where $\textbf{U}$ is as in (\ref{eqU}). On the other hand,
differentiating (\ref{eqU}), multiplying the resulting identity by
$\frac{\Phi^2}{\textbf{U}'}$ (recall (\ref{eqUincr})) and
integrating by parts over $(0,\infty)$, we arrive at $\mu_*\geq 0$
(see also Proposition 3.1 in \cite{pacard}); to be more precise, one
first multiplies by $\frac{\zeta_m^2}{\textbf{U}'}$, with
$\zeta_m\in C_0^\infty(0,\infty)$ such that $\zeta_m\to \Phi$ in
$W^{1,2}_0(0,\infty)$, and then lets $m\to \infty$. A different way
to see that $\mu_*\geq 0$ is to note that the linear operator
defined by the lefthand side of (\ref{eqskribble}) is an unbounded,
self-adjoint operator  in $L^2(0,\infty)$ with domain
$W^{1,2}_0(0,\infty)\cap W^{2,2}(0,\infty)$, having as continuous
spectrum the interval $[W''(\mu), \infty)$ and principal eigenvalue
zero (by the positivity of $\textbf{U}'$), see also Remark 2.8 in
\cite{stefanopoulos} or Proposition 1 in \cite{henryCMP} or
\cite{schatman}.
 In other words, recalling (\ref{eqmuR}), we have
\[
-\Phi''+W''\left(\textbf{U}(r)\right)\Phi=0,\ \Phi>0, \ r\in
(0,\infty);\ \Phi(0)=0,\
\Phi(\textbf{z})=\|\Phi\|_{L^\infty(0,\infty)}=1.
\]
 The above linear second
order equation has the following two independent solutions:
\[
\textbf{U}'(r)\ \ \textrm{and}\ \
\textbf{U}'(r)\int_{0}^{r}\frac{1}{\left[\textbf{U}'(s)
\right]^2}ds,
\]
see for example Lemma 3.2 in \cite{batesZhang}. It is easy to see
that the second solution grows unbounded as $r\to \infty$ (plainly
apply l'hospital's rule), and thus $\Phi$ has to be
$\|\textbf{U}'\|^{-1}_{L^\infty(0,\infty)}\textbf{U}'$. Since
$\Phi(0)=0$, whereas $\textbf{U}'(0)=\sqrt{2W(0)}>0$, we have
reached a contradiction.

%Another possible way to  show that
%$u_R$ are asymptotically stable could be  by investigating whether
%this relation can   hold for $R$ large (it is not difficult to
%calculate the asymptotic behavior of $w_R$ as $R\to \infty$).

For further information on ``asymptotic'' uniqueness of positive
solutions, in arbitrary domains, we refer to Remark
\ref{remuniqDancer} below.

 In the
case where uniqueness of a stable solution, satisfying
(\ref{eqmaxball}), holds for $R>R_0\geq 0$ (recall Remark
\ref{remuniq} and see Remark \ref{remuniqDancer} below), it is easy
to see that the family $\{u_R\}_{R>R_0}$ is nondecreasing with
respect to $R$, namely
\begin{equation}\label{eqnondecreasing}
u_{R_2}(x)>u_{R_1}(x),\ \ x\in \bar{B}_{R_1},\ \ \forall\
R_2>R_1>R_0,
\end{equation}
see Lemma 1 in \cite{fifesaddle}. Moreover, as in Lemma 2 in
\cite{fifesaddle}, we have that
\begin{equation}\label{eqfifebarrier}
u_R(R-r)\leq \textbf{U}(r),\ \ r\in [0,R],\ \forall R>R_0,
\end{equation}
(plainly observe that, thanks to (\ref{eqU}) and (\ref{eqUincr}),
the function $\textbf{U}(R-r)$ is a weak upper solution to
(\ref{eqgidasEq}) in the sense of \cite{Berestyckilion}).
%Upper
%bounds of this type for arbitrary positive solutions of
%(\ref{eqEq}), with $W'$ satisfying (\ref{eqbrezisOz}), for a general
%class of domains $\Omega$, have been obtained recently in
%\cite{kajikiya}.
 We note
that, arguing as in Remark \ref{remmonotbelowLem1} (see also Lemma
\ref{lemdancer} in Appendix \ref{secappenda}), it follows that
(\ref{eqnondecreasing}) holds \emph{without} assuming uniqueness
(plainly observe that $J\left(
\max\{u_{R_2},u_{R_1}\};B_{R_2}\right)\leq J(u_{R_2},B_{R_2})$, see
also Lemma 5.3 in \cite{guiAnnals}). Moreover, similarly to
Proposition 1.5 in \cite{cabre}, based on the gradient estimate
(\ref{eqmodicaRadial}), one can show (\ref{eqfifebarrier}) without
assuming uniqueness.
\end{rem}

\begin{rem}\label{remscrew}
If in addition to \textbf{(a')}, we assume that $W'(0)=0$,
$W''(0)<0$, and $W\in C^3$ ($W'''$ bounded for $t>0$ small is
enough), then (\ref{eqgidasEq}) admits a nontrivial positive
solution, which is a global minimizer of $J(\cdot;B_R)$ in
$W^{1,2}_0(B_R)$, as long as $R>R_c$, where
\begin{equation}\label{eqRc}
R_c=\sqrt{-\frac{\lambda_1}{W''(0)}},
\end{equation}
and $\lambda_1$ denotes the principal eigenvalue of $-\Delta$ in
$W^{1,2}_0(B_1)$ (an analogous result holds for (\ref{eqEV1})
below). To see this, let $\varphi_1$ denote the associated
eigenfunction with the normalization $\varphi_1(0)=1$ ($\varphi_1$
is radially decreasing). Then, the pair
\begin{equation}\label{eqhellyphiR}\lambda_R=\lambda_1R^{-2}\ \
\textrm{and}\ \  \varphi_R(x)=\varphi_1(R^{-1}x)\end{equation} is
the principal eigenvalue and eigenfunction of $-\Delta$ in
$W^{1,2}_0(B_R)$ such that $\varphi_R(0)=1$. Now, the desired
conclusion follows at once by noting that
\[
J\left(\varepsilon \varphi_R;B_R \right)=J\left(0;B_R
\right)+\frac{\varepsilon^2}{2}\int_{B_R}^{}\varphi_R^2\left(\lambda_1R^{-2}+W''(0)+\mathcal{O}(\varepsilon)
\right)dx\ \ \textrm{as}\  \varepsilon\to 0^+,
\]
which implies that zero is not a global minimizer if $R>R_c$ (see
also Example 5.11 in \cite{malchiodicambridge}, Theorem 2.19 in
\cite{badiale}, Lemma 2.1 in \cite{delpinoScrew} and Proposition
1.3.3 in \cite{dupaigneBook}; note also that $\varepsilon\varphi_R$,
with $R>R_c$, is a lower solution to (\ref{eqgidasEq}) for small
$\varepsilon>0$). (If $W$ is even, one can construct a plethora of
sign-changing solutions, for large $R$, not necessarily radial, by
noting that $J(u;B_R)<J(0;B_R)$ for $u\in
\textrm{Span}\left\{\varphi_1(R^{-1}x),\cdots,\varphi_k(R^{-1}x)\right\}$,
$k\geq 1$, and $\|u\|_{L^2(B_R)}$ sufficiently small, where
$\varphi_i$ denote eigenfunctions of the Laplacian in
$W^{1,2}_0(B_1)$ (normalized so that
$\|\varphi_i(R^{-1}x)\|_{L^2(B_R)}=1$ and
$\int_{B_1}\varphi_i\varphi_j dx=0$ if $i\neq j$), corresponding to
the first $k$ eigenvalues (counting multiplicities), and applying
Theorem 8.10 in \cite{rabcbms}; see also \cite{alikakosPhilips} and
Theorem 10.22 in \cite{malchiodicambridge}).

 If we further assume that
\begin{equation}\label{eqdelpinobelowRc}
W'(t)\geq W''(0)t,\ \ t\geq 0,
\end{equation}
then (\ref{eqgidasEq}), for $R\in (0,R_c)$, has no positive solution
as can be seen by testing the equation by $\varphi_R$.

Under some different conditions, which are compatible with
\textbf{(a')}, and are satisfied for example by the nonlinearity in
(\ref{eqwei}), there exists an $R'_c>0$ such that (\ref{eqgidasEq})
has exactly one positive solution for $R=R_c'$ and exactly two for
$R>R_c'$, the one is a global minimizer while the other is a
mountain pass of the associated energy (see \cite{shiOyang},
\cite{shiBook}, \cite{weiProc}).
\end{rem}

\begin{rem}\label{remcoarea}
By (\ref{eqcoarea}), via the coarea formula (see
\cite{evansGariepy}), it follows that there exists a $\xi_R \in
\left(\frac{R}{2}, R \right)$ such that
\[
\int_{\partial B_{\xi_R}}^{}\left\{\frac{1}{2}|\nabla
u_R|^2+W(u_R)\right\}dS\leq 2 C_1 R^{n-2},\ \ R\geq C_1 C_2^{-1}+2.
\]
This observation makes no use of the radial symmetry of $u_R$, and
is motivated from the proof of the corollary in
\cite{alikakosReplace}. In regard to the latter comment, it might be
useful to recall our Remark \ref{remClearing} and compare with the
arguments of \cite{alikakosReplace}.
\end{rem}

\begin{rem}\label{rempoho}
In  case a $C^2$ potential $W$ satisfies $W(0)=0$ and the domain
$\Omega$ has $C^1$ boundary, is bounded, and star-shaped with
respect to some point in its interior, the well known Pohozaev
identity easily implies that there does not exist a nontrivial
solution of (\ref{eqEq}) such that $W(u(x))\geq 0, \ x\in \Omega$
(see for instance relation (11) in \cite{alikakosfaliagas}, a
reference which is in accordance with our notation).
%In this regard,
%we also refer to  Chapter 4 in \cite{sigal}.
Actually, relation (11) in the latter reference holds true for the
elliptic system that corresponds to (\ref{eqEq}) (with the obvious
notation), and an analogous nonexistence result holds in that
situation as well.
\end{rem}

\begin{rem}\label{remalikakossimple}
Under the stronger assumptions \textbf{(a)} (or more generally
\textbf{(a')}), \textbf{(b)}, and \textbf{(c)}, considered in
\cite{fuscoTrans} (recall the introduction herein), motivated from
the proof of Lemma 3 in \cite{lin} (see also \cite{schoen} and the
remarks following Lemma 2.1 in \cite{caffareliLin}),
 we
can give a streamlined proof of relation (\ref{eqsweers}) as
follows: Note first that, thanks to \textbf{(a')} and \textbf{(c)},
there exists a positive constant $c_0$ such that
\begin{equation}\label{eqCschoen}
W(t)\geq c_0 (\mu-t)^2,\ \ 0\leq t\leq \mu.
\end{equation}
Then, bounds (\ref{eqmaxball}), (\ref{eqJcomp}), and the above
relation yield that
\begin{equation}\label{eqL2alik}
\int_{B_R}^{}(\mu-u_R)^2dx\leq c_1 R^{n-1},\ \ R\geq 2,
\end{equation}
where the positive constant $c_1$ depends only on $W$ and $n$.
 Next, note that assumption $\textbf{(b)}$, bound (\ref{eqmaxball}), and the equation in (\ref{eqgidasEq}), imply
that the function $\mu-u_R$ is subharmonic in $B_R$, and thus we
have
\[
\Delta (\mu-u_R)^2\geq 0\ \ \textrm{in}\ \ B_R,\ \ R\geq 2.
\]
In other words, the function $(\mu-u_R)^2$ is also subharmonic in
$B_R$. Consequently, by (\ref{eqL2alik}) and the mean value
inequality of subharmonic functions (see Theorem 2.1 in
\cite{Gilbarg-Trudinger}) together with a simple covering argument
(see also the general Theorem 9.20 in \cite{Gilbarg-Trudinger} and
Chapter 5 in \cite{morrey}),
%and the more general Theorem 1.3 in
%\cite{trudinger}
we deduce that
\begin{equation}\label{eqmorrey}
\max_{\bar{B}_{\frac{R}{2}}}(\mu-u_R)^2\leq c_2
R^{-n}\int_{B_R}^{}(\mu-u_R)^2dx \leq c_3 R^{-1},\ \ R\geq 2,
\end{equation}
where the positive constants $c_2,\ c_3$ depend only on $W$ and $n$.
The latter inequality clearly implies the validity of
(\ref{eqsweers}). In passing, we note that the spherical mean of
$(\mu-u_R)^2$ appearing in the above inequality is nondecreasing
with respect to $R$, because of the subharmonic property, see
\cite{QitnerrIma}.

The above argument makes no  use of the fact that $u_R$ is radially
symmetric. Moreover, it works equally well if instead of
(\ref{eqCschoen}) we had  $W(t)\geq c(\mu-t)^p,\ t\in [0,\mu]$, for
some constants $c>0$ and $p> 2$. In Appendix \ref{appenAlik}, we
will adapt this approach in order  to simplify some arguments from
Section 6  of the recent paper \cite{alikakosNewproof}, where the De
Giorgi oscillation lemma for subharmonic functions was employed
instead of the mean value inequality. The former lemma roughly says
that  if a positive subharmonic function is smaller than one in
$B_1$ and is ``far from one" in a set of non trivial measure, it
cannot get too close to one in $B_{\frac{1}{2}}$ (see for example
\cite{caffarelliDegiorgi}). An intriguing application of the
techniques in the current remark is given in the following Remark
\ref{remscrew2}.
\end{rem}

\begin{rem}\label{remscrew2}
When seeking solutions of (\ref{eqentire}) in $\mathbb{R}^3$ which
are invariant under screw motion and whose nodal set is a
helico\"{i}d, assuming that $W$ is even, by introducing cylindrical
coordinates, one is led to study positive solutions of
\begin{equation}\label{eqscrewleft}
\partial_r^2U+\frac{1}{r}\partial_r U+\left(
1+\frac{\lambda^2}{\pi^2r^2}\right)\partial_s^2U-W'(U)=0,
\end{equation}
in the infinite half strip $\{(r,s)\in (0,\infty)\times (0,\lambda)
\}$, vanishing  on the boundary of $[0,\infty)\times [0,\lambda]$,
where $\lambda$ corresponds to a dilation parameter of a fixed
helico\"{i}d. More specifically, such solutions $U$ give rise to
solutions $u$ of (\ref{eqentire}) which vanish on the helico\"{i}d
that is parameterized by
\[
 \left\{\left(r\cos\theta,r\sin\theta,
z \right)\in \mathbb{R}^3\ :\ z=\frac{\lambda}{\pi}\theta\right\},
\]
see \cite{delpinoScrew} for the details. In the latter reference,
assuming that $W''(0)<0$ and (\ref{eqdelpinobelowRc}), it was shown
that there exists an explicit constant $\lambda^*>0$ such that the
above problem has a positive solution $U_\lambda$  if and only if
$\lambda>\lambda^*$ ($\lambda^*$ is actually equal to $2R_c$, where
$R_c$ is given from (\ref{eqRc}) with $n=1$).
% The latter solution
%gives rise to an entire solution of (\ref{eqentire}) which, in
%particular, is bounded and whose zero set is equal to
%$\mathcal{H}_\lambda$.

Here, motivated from our previous Remark \ref{remalikakossimple}, we
will study this problem for large values of $\lambda$ under
complementary conditions on $W$ (in particular, without assuming
that $W''(0)<0$). Due to the presence of singularities in the
equation (\ref{eqscrewleft}) at $r=0$, as in Lemma 3.4 in
\cite{duNakas}, we will first consider the approximate (regularized)
problem
\begin{equation}\label{eqscrewleft2}
\Delta_{\mathbb{R}^2} U+\left(
1+\frac{\lambda^2}{\pi^2|x|^2}\right)\partial_s^2U-W'(U)=0,
\end{equation}
in $\{\xi<|x|,\ s\in (0,\lambda) \}$ with zero conditions on
$\{|x|=\xi, \ s\in [0,\lambda]\}$ and $\{\lambda=0,\ |x|\geq \xi
\}$, $\{\lambda=1,\ |x|\geq \xi \}$, with $\xi$ small (this was
skipped in \cite{delpinoScrew}).  Then, we consider equation
(\ref{eqscrewleft2}) in the annular cylinder $\{\xi<|x|<R,\ s\in
(0,\lambda)\}$, imposing that $U$ also vanishes on $|x|=R$. Assuming
\textbf{(a')}, as in Lemma \ref{lem1}, by minimizing the energy
\begin{equation}\label{eqcellbiol}
E(V)=\frac{1}{2}\int_{}^{}\left\{|\nabla_x V|^2+\left(
1+\frac{\lambda^2}{\pi^2|x|^2}\right)|\partial_sV|^2+2W(V)
\right\}dxds
\end{equation}
in $W^{1,2}_0\left((B_R\backslash B_\xi)\times (0,\lambda) \right)$
(with the obvious notation), but this time \emph{in the radially
symmetric class} with respect to $|x|$ (minimizers in this class are
critical points in the usual sense, see \cite{palais}),
 we find a solution $U_{\xi,R,\lambda}$
of (\ref{eqscrewleft2}), satisfying the prescribed Dirichlet
boundary conditions, such that $0\leq U_{\xi,R,\lambda}(|x|,s)\leq
\mu$ on $\left(\bar{B}_R\backslash B_\xi\right)\times [0,\lambda]$
(see  Lemma \ref{lemalikakos} below). Moreover, as in the proof of
Lemma \ref{lem1}, we have
\begin{equation}\label{eqscrewE}
E\left(U_{\xi,R,\lambda} \right)\leq CR\lambda, \ \ R\geq 2,\
\lambda\geq 2, \ \xi \leq 1,
\end{equation}
with $C$ independent of $\xi,\ R,\ \lambda$ (for this, it is
convenient to use a separable test function of the form
$\eta(r)\vartheta(s)$, see also \cite{delpinoScrew}). Hence, again
as in Lemma \ref{lem1}, we have that $0< U_{\xi,R,\lambda}(|x|,s)<
\mu$, if $\xi\leq |x|\leq R,\ s\in [0,\lambda]$, for all $\xi\leq
1$, $\lambda\geq 2$, provided that $R$ is sufficiently large (note
that $E(0)=\lambda \pi (R^2-\xi^2)W(0)$). Using the standard
compactness argument, letting $\xi\to 0$ and $R\to \infty$ (along a
sequence), we are left with a solution $U_\lambda$ of
(\ref{eqscrewleft}) in the infinite half strip $(0,\infty)\times
(0,\lambda)$, with zero conditions on its boundary, such that $0\leq
U_\lambda\leq \mu$ on the half strip. The latter relation leaves
open the possibility of $U_\lambda$ being identically zero. However,
$U_\lambda$ is a minimizer of the energy in (\ref{eqcellbiol}), in
the sense of (\ref{eqJer}) below (since it is the limit of a family
of minimizers, see also page 104 in \cite{danceryanCVPDE}). So, with
the help of a suitable energy competitor (see for example
(\ref{eqscrewclear}) or (\ref{eqlinear})), for any two--dimensional
ball ${B}_\frac{\lambda}{3}(q)$ of radius $\frac{\lambda}{3}$ that
is contained in $(\lambda,\infty)\times (1,\lambda-1)$, we have
\[
\int_{{B}_\frac{\lambda}{3}(q)}^{}W(U_\lambda)drds\leq C\lambda^2,
\]
with constant $C>0$ independent of large $\lambda$. If we further
assume that conditions \textbf{(b)} and \textbf{(c)} hold, noting
that
\[
\partial_r^2(\mu-U_\lambda)+\frac{1}{r}\partial_r (\mu-U_\lambda)+\left(
1+\frac{\lambda^2}{\pi^2r^2}\right)\partial_s^2(\mu-U_\lambda)\geq
0,
\]
and that the coefficients of the elliptic operator above satisfy
\[
\frac{1}{r}\leq \lambda^{-1},\ \ 1
\leq1+\frac{\lambda^2}{\pi^2r^2}\leq 1+\pi^{-2}\ \ \textrm{on}\
B_\frac{\lambda}{3}(q),
\]
the arguments in Remark \ref{remalikakossimple} can be applied to
show that
\[
U_\lambda \to\mu,\ \textrm{uniformly\ on}\
\bar{B}_\frac{\lambda}{6}(q),\ \textrm{as}\ \lambda\to \infty.
\]
Since $q$ was any point with coordinates $r>\frac{4\lambda}{3}$ and
$s\in \left(\frac{\lambda}{3}+1, \frac{2\lambda}{3}-1 \right)$, we
deduce
 that
\[
U_\lambda \to\mu,\ \textrm{uniformly\ on}\
[2\lambda,\infty)\times\left[\frac{\lambda}{6}+1,\frac{5\lambda}{6}-1\right],\
\textrm{as}\ \lambda\to \infty.
\]
Studying the existence and asymptotic behavior of $U_\lambda$, as
$\lambda\to \infty$, assuming \emph{only} \textbf{(a')}, is left as
an interesting open problem.
% Let
%$\textbf{B}_{\frac{R}{3},\frac{\lambda}{2}}(\textbf{q})$ denote the
%ball of $\mathbb{R}^3$ with center at a point
%$\textbf{q}=\left(x_0,\frac{\lambda}{2}\right)$, where
%$|x_0|=\frac{R}{2}$, and radius $\frac{R}{3}$.
\end{rem}

For future reference, let us prove here the following lemma.
\begin{lem}\label{lemhelly}
Suppose that $W\in C^2$ satisfies \textbf{(a')},
(\ref{eqpacardW''}), and (\ref{eqpacardW'}). Let $u_R$ denote a
family of solutions to (\ref{eqgidasEq}), not necessarily global
minimizers, such that (\ref{eqmaxball}) holds. Then, we have that
$u_R$ are radial as well as  the validity of relations
(\ref{eqestimRect}), (\ref{eqmonotonicity}), (\ref{eqchen}), and
(\ref{eqclaim}), \emph{uniformly} with respect to the family $u_R$.
\end{lem}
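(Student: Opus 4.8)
The plan is to run the argument of Lemma \ref{lem1} almost verbatim, replacing the variational input (which was used only to prove positivity of $u_R$ and the energy bound (\ref{eqJcomp})) by the hypothesis that the $u_R$ are \emph{given} nontrivial positive solutions of (\ref{eqgidasEq}) satisfying (\ref{eqmaxball}), together with the extra structure (\ref{eqpacardW''})--(\ref{eqpacardW'}) on $W$. First I would observe that, since $u_R\in C^2(\bar B_R)$ satisfies (\ref{eqmaxball}) and (\ref{eqgidasEq}), the method of moving planes (\cite{brezis,dancePlanes,gidas}) applies directly and yields that $u_R$ is radially symmetric and radially decreasing, i.e.\ (\ref{eqmonotonicity}) holds; here (\ref{eqpacardW''}) is exactly the condition that makes the moving-plane method applicable at the boundary when $W'(0)=0$, while if $W'(0)<0$ the classical hypotheses are met outright. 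Note (\ref{eqpacardW'}) also guarantees, via the strong maximum principle and Hopf's lemma applied to $\mu-u_R$ and to $u_R$ respectively, that $0<u_R<\mu$ in $B_R$ with $u_R'(R)<0$, consistent with (\ref{eqmaxball}).

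Next I would produce the uniform energy bound. The point is that even though the $u_R$ need not be minimizers, a \emph{global} minimizer $v_R$ of $J(\cdot;B_R)$ does exist and satisfies $J(v_R;B_R)\le C_1R^{n-1}$ by the competitor construction in Lemma \ref{lem1}; but that does not bound $J(u_R;B_R)$. Instead, the honest way is to get the energy-type bound directly from the equation and the monotonicity (\ref{eqmonotonicity}). Using the Hamiltonian $E_R(r)=\tfrac12[u_R'(r)]^2-W(u_R(r))$ and the identity (\ref{eqER'}), which holds for \emph{any} radial solution, one has $E_R'(r)=-\tfrac{n-1}{r}(u_R')^2\le 0$, so $E_R$ is nonincreasing; since $u_R'(R)<0$ and $W\ge0$ one gets $E_R(r)\ge E_R(R)=\tfrac12[u_R'(R)]^2\ge 0$ is not quite what is needed, so more carefully: from $E_R\le E_R(0)=-W(u_R(0))\le 0$ one deduces $\tfrac12(u_R')^2\le W(u_R)$, i.e.\ $|u_R'(r)|\le\sqrt{2W(u_R(r))}$ on $(0,R)$. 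Combined with (\ref{eqpacardW'}) and the phase-plane picture for $\textbf{U}$, and with the fact that $u_R(R-r)$ is a sub/supersolution controlled against $\textbf{U}(r)$ (as in (\ref{eqfifebarrier}) and Remark \ref{remmonotbelowLem12}), one recovers both the analogue of the boundary-layer convergence (\ref{eqchen}) — namely $[u_R'(R)]^2\to 2W(0)$ as $R\to\infty$, reading off from $E_R$ being pinched between $-W(u_R(0))$, which tends to $-W(\mu)=0$, and its decreasing limit — and the uniform gradient bound needed for compactness.

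With (\ref{eqmonotonicity}), the gradient bound, and (\ref{eqmaxball}) in hand, the compactness step is identical to Lemma \ref{lem1}: setting $U_R(s)=u_R(R-s)$, equation (\ref{eqhellyuniform}) holds, and elliptic/ODE estimates plus Arzel\`a--Ascoli and a diagonal argument give, along a subsequence $R_i\to\infty$, convergence $U_{R_i}\to V$ in $C^1_{\mathrm{loc}}[0,\infty)$ with $V''=W'(V)$, $V(0)=0$, $0\le V\le\mu$, and $[V'(0)]^2=2W(0)>0$ from the boundary-layer limit. To force $V\equiv\textbf{U}$ I need to know $V$ is not the ``small'' trajectory, i.e.\ that $V$ reaches $\mu$ rather than returning to $0$; monotonicity $V'\ge0$ (inherited from (\ref{eqmonotonicity})) together with $W'<0$ on $(0,\mu)$ (hypothesis (\ref{eqpacardW'})) forces $V$ to be strictly increasing as long as $V<\mu$ and hence, being bounded by $\mu$, to converge to $\mu$; then uniqueness of the ODE initial value problem (\cite{walter}) gives $V\equiv\textbf{U}$, and as in Lemma \ref{lem1} the uniqueness of the limit upgrades the convergence to hold for all $R\to\infty$, which is (\ref{eqclaim}). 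Finally (\ref{eqestimRect}) follows from (\ref{eqclaim}), (\ref{eqUincr}) and the radial monotonicity (\ref{eqmonotonicity}), exactly as in the last paragraph of the proof of Lemma \ref{lem1}, and with the same constant $R'$, since every quantity in the argument has been controlled uniformly in the family.

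The main obstacle is the second step: in Lemma \ref{lem1} the energy bound (\ref{eqJcomp}) was the load-bearing estimate and came for free from minimality, whereas here it must be replaced by an a priori estimate valid for \emph{all} solutions with $0<u_R<\mu$. The cleanest route is to avoid the energy entirely and work with the Hamiltonian $E_R$ and the barrier $u_R(R-\cdot)\le\textbf{U}(\cdot)$ — the latter being available for arbitrary solutions below $\mu$ by the argument indicated in Remark \ref{remmonotbelowLem12} (the function $\textbf{U}(R-|x|)$ is a weak upper solution) — so that hypotheses (\ref{eqpacardW''}) and (\ref{eqpacardW'}) do exactly the work that minimality did before: (\ref{eqpacardW''}) licenses moving planes, and (\ref{eqpacardW'}) pins down the limiting profile in the phase plane and rules out the small homoclinic-type trajectory.
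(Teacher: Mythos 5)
Your first step (moving planes) and your compactness step are fine, and your worry about replacing the energy bound (\ref{eqJcomp}) is actually beside the point: the paper's proof of this lemma uses no energy estimate at all. Uniform $C^2(\bar{B}_R)$ bounds for the whole family follow from $0<u_R<\mu$ and elliptic regularity on balls of radius one, and (\ref{eqchen}) is obtained as a \emph{consequence} of (\ref{eqclaim}) rather than as an independent a priori estimate. (A small side remark: the moving-plane method of \cite{gidas} applies to any positive solution of (\ref{eqgidasEq}); hypothesis (\ref{eqpacardW''}) is not what makes it work at the boundary.)

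The genuine gap is the nontriviality of the limit profile $V$, equivalently a lower bound for $u_R$ near $\partial B_R$ that is uniform in $R$ and in the family. Your derivation of (\ref{eqchen}) ``reads off'' $[u_R'(R)]^2\to 2W(0)$ from the claim that $E_R$ is pinched against $E_R(0)=-W\left(u_R(0)\right)\to -W(\mu)=0$; but $W\left(u_R(0)\right)\to 0$ means $u_R(0)\to\mu$, which is essentially (\ref{eqestimRect}) at the origin --- the very thing being proved --- and monotonicity of $E_R$ alone gives only the one-sided bound $[u_R'(R)]^2\le 2W(0)$. Your identification $V\equiv\textbf{U}$ then invokes $[V'(0)]^2=2W(0)>0$ ``from the boundary-layer limit,'' i.e. from (\ref{eqchen}), so the argument is circular: when $W'(0)=0$ nothing you have written rules out that a subsequence of the (non-minimizing) $u_R$ degenerates, with $U_{R_i}\to V\equiv 0$, which is a genuine solution of (\ref{eqhellyV}). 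This is precisely where the paper makes (\ref{eqpacardW''}) and (\ref{eqpacardW'}) earn their keep: since $W'(t)\le -ct$ on $\left[0,\frac{\mu}{2}\right]$ for some $c>0$, the functions $\tau\varphi_\rho(\cdot-p)$, $\tau\in\left[0,\frac{\mu}{2}\right]$, with $\varphi_\rho$ the principal Dirichlet eigenfunction on a ball of \emph{fixed} radius $\rho=\sqrt{4\lambda_1/c}$, form a sweeping family of lower solutions, and Serrin's sweeping principle on such a ball placed tangent to $\partial B_R$ from inside yields $u_R\ge\frac{\mu}{2}\varphi_\rho(\cdot-p)$ there, uniformly in the family. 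This forces $V$ to be nontrivial; the strong maximum principle then gives $0<V<\mu$ for $s>0$, and the phase portrait under \textbf{(a')} and (\ref{eqpacardW'}) identifies $V$ with $\textbf{U}$, after which your remaining steps (monotone bootstrapping to all $R\to\infty$, then (\ref{eqestimRect}) and (\ref{eqchen})) go through. Supply this uniform lower bound and your proof closes.
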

\begin{proof}
Since $u_R$ is positive, thanks to \cite{gidas}, we have that $u_R$
is radial and that  (\ref{eqmonotonicity}) holds. Similarly to Lemma
\ref{lem1}, the functions $U_R$, defined through (\ref{eqUR}),
satisfy (\ref{eqURi}) for some $V$ which solves (\ref{eqhellyV})
with $0\leq V(s)\leq \mu$, $s\geq 0$.

We claim that $V$ is nontrivial. In the case where $W'(0)<0$, this
is clear. If $W'(0)=0$ and $W''(0)<0$ (keep in mind
(\ref{eqpacardW''})), we argue  as follows. Let $\lambda_R$,
$\varphi_R$ be as in (\ref{eqhellyphiR}). In view of
(\ref{eqpacardW'}), we have
\[
W'(t)\leq -ct,\ \ t\in \left[0,\frac{\mu}{2} \right],
\]
for some $c>0$ (necessarily $c\leq -W''(0)$). Observe that the
functions $\tau \varphi_R$, with $\tau \in \left[0,\frac{\mu}{2}
\right]$, satisfy
\[
  -\Delta (\tau\varphi_R)+W'(\tau\varphi_R)  \leq  \frac{\lambda_1}{R^2}\tau\varphi_R-c\tau\varphi_R\leq 0 \
  \ \textrm{in}\ B_R,
\]
if $R>\sqrt{\frac{\lambda_1}{c}}$ (we also used that
$\varphi_R(x)\leq\varphi_R(0)=1$, $x\in B_R$). Consider a ball $B_R$
with $R>3\sqrt{\frac{\lambda_1}{c}}$ and another ball
$B_{\sqrt{\frac{4\lambda_1}{c}}}(p)\subset B_R$ such that they touch
at one point on $\partial B_R$. By Serrin's sweeping technique (see
the references in the first proof of Theorem \ref{thmmine} below),
keeping in mind that $u_R'(R)<0$ (by Hopf's lemma), it follows that
\[
u_R(x)\geq \frac{\mu}{2}\varphi_{\sqrt{\frac{4\lambda_1}{c}}}(x-p),\
\ x\in B_{\sqrt{\frac{4\lambda_1}{c}}}(p).
\]
(In fact, since $u_R$ is radially symmetric, the above bound holds
for all $p\in B_R$ such that $\textrm{dist}(p,\partial
B_R)=2\sqrt{\frac{\lambda_1}{c}}$). This lower bound certainly
ensures that $V$ is nontrivial. Then, by the strong maximum
principle, we deduce that
\begin{equation}\label{eqhellyMax}
0<V(s)<\mu,\ \ s>0.
\end{equation}

On the other hand, from \textbf{(a')}, (\ref{eqpacardW'}), and the
phase portrait of the ordinary differential equation (see for
example \cite{arnold}), the only solution of (\ref{eqhellyV}) which
satisfies (\ref{eqhellyMax}) is $\textbf{U}$, as described in
(\ref{eqU}). By the uniqueness of the limiting function, we infer
that (\ref{eqURi}) holds for $R\to \infty$. So, we have proven that
 (\ref{eqclaim}) and in turn (\ref{eqestimRect}), (\ref{eqchen}) hold for each such
family of solutions.

The fact that they hold \emph{uniformly} with respect to the family
$\{u_R\}$ follows plainly from the observation that every such
family is uniformly bounded in $C^2(\bar{B}_R)$ with respect to $R$.
The latter property follows from the fact that $0<u_R<\mu$ in $B_R$
and a standard bootstrap argument involving elliptic regularity (the
gradient bounds for $u_R$ follow from elliptic estimates
\cite{Gilbarg-Trudinger} applied on balls of radius one covering
$B_R$).

The proof of the lemma is complete.
\end{proof}

An extension of Lemma \ref{lem1} can be shown, allowing the
possibility $W'(0)\geq 0$, provided that the potential $W$
satisfies:
\begin{description}
  \item[(a'')] There exist constants $\mu_-\leq0$ and $\mu>0$ such that
  \[
0=W(\mu)<W(t),\  t\in [\mu_-,\mu),\ \ W(t)\geq 0,\ t\in \mathbb{R},
  \]
  \[
W(2\mu_--t)\geq W(t), \ t\in [\mu_-,\mu]\ \textrm{or}\ W'(t)<0,\
t<\mu_-.
  \]
\end{description}
Note that \textbf{(a'')} reduces to \textbf{(a')} when $\mu_-=0$. We
point out that the existence of $\textbf{U}$, as in (\ref{eqU}),
also holds under \textbf{(a'')}.

Below, we state such a result which seems to be new and of
independent interest. In particular, it will be useful in Sections
\ref{secmatano} and \ref{secfarina}.
\begin{lem}\label{lem1Sign}
Assume that $W\in C^2$ satisfies condition \textbf{(a'')}. Let
$\epsilon\in (0,\mu)$ and $D>D'$, where $D'$ is as in (\ref{eqD}).
Then, there exists a positive constant $R'>D$, depending only on
$\epsilon$, $D$, $W$, and $n$, such that there exists a global
minimizer $u_R$ of the energy functional in (\ref{eqenergy}) which
satisfies (\ref{eqmaxball}), (\ref{eqestimRect}), and
(\ref{eqcaffaLemmmine}), provided that $R\geq R'$. (As before, we
assume that $W$ has been appropriately extended outside of a large
compact interval). (We have chosen to keep some of the notation from
Lemma \ref{lem1}).
\end{lem}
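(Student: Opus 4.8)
The plan is to mimic the proof of Lemma \ref{lem1} almost verbatim, tracking where the sign condition $\mu_-\le 0$ (rather than $\mu_-=0$) enters. First I would establish the existence of a global minimizer $u_R\in W^{1,2}_0(B_R)$ of $J(\cdot;B_R)$ satisfying $\mu_-\le u_R\le \mu$ a.e.\ in $B_R$; this replaces (\ref{eqmax}), and it follows by the same truncation argument as in Lemma \ref{lem1} (truncating a minimizing sequence at the levels $\mu_-$ and $\mu$ does not increase the energy, by \textbf{(a'')} and Lemma \ref{lemfusco}, since $W(t)\ge W(\mu_-)$ for $t<\mu_-$ when the first alternative of \textbf{(a'')} holds, and $W$ is decreasing there in the second alternative). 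Elliptic regularity then gives $u_R\in C^2(\bar B_R)$ solving (\ref{eqgidasEq}). Here one needs $W'(\mu)=0$ and $W'(\mu_-)\le 0$ (the latter from \textbf{(a'')}), so the strong maximum principle applied to $\mu-u_R$ gives $u_R<\mu$ in $B_R$; likewise $u_R>\mu_-$ in $B_R$ unless $u_R\equiv\mu_-$, which is excluded once we show $u_R$ is nontrivial and has $u_R(x)\to 0$ near $\partial B_R$. Actually the cleaner route, exactly as in Lemma \ref{lem1}, is to show the energy estimate $J(u_R;B_R)\le C_1R^{n-1}$ via an explicit competitor interpolating from $\mu$ to $0$ in a boundary layer of width $1$, compare against $J(0;B_R)=W(0)|B_R|=C_2R^n$ (using $W(0)>0$, which holds since $0\in[\mu_-,\mu)$ and $0\ne\mu$), and conclude $u_R$ is nontrivial and $0<u_R<\mu$ for $R\ge C_1C_2^{-1}+2$, hence $u_R>\mu_-$ too; this yields (\ref{eqmaxball}) with the same constant form as before.

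Next, radial symmetry and monotonicity: since $0<u_R<\mu$ in $B_R$ with $u_R\in C^2(\bar B_R)$ and $u_R=0$ on $\partial B_R$, the moving planes method (\cite{gidas}) applies to give that $u_R$ is radial and $u_R'(r)<0$ for $r\in(0,R)$. Alternatively, as noted in the excerpt, Schwarz symmetrization lowers the energy of nonnegative functions, and stability of the minimizer gives radial symmetry via \cite{alikakosbates}; either way (\ref{eqmonotonicity}) holds. The coarea/mean-value-theorem argument producing $\xi\in(R/2,R)$ with $\tfrac12[u_R'(\xi)]^2+W(u_R(\xi))\le C_3R^{-1}$ goes through unchanged, giving $u_R\to\mu$ uniformly on $\bar B_{R/2}$; the Hamiltonian identity for $E_R$ and the bound (\ref{eqER3}) give $[u_R'(R)]^2\to 2W(0)$ as $R\to\infty$, i.e.\ the analog of (\ref{eqchen}). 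Then, setting $U_R(s)=u_R(R-s)$ and passing to a subsequential $C^1_{loc}$ limit $V\ge \mu_-$ solving $V''=W'(V)$, $V(0)=0$, $[V'(0)]^2=2W(0)$, the uniqueness of solutions to the ODE initial value problem forces $V\equiv\mathbf U$ — this is where \textbf{(a'')} is used, exactly as \textbf{(a')} was used before, to guarantee that $\mathbf U$ exists, is increasing, and is the unique trajectory leaving $0$ with that slope and staying bounded (it limits to $\mu$ because $W(\mu)=0$ is the relevant well, and $V$ cannot run off to $-\infty$ since $W\ge 0$ on $\mathbb R$ and $W(t)>0$ for $t\in[\mu_-,\mu)$). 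By uniqueness of the limit, $U_R\to\mathbf U$ uniformly on compacts for all $R\to\infty$, which is (\ref{eqclaim}).

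Finally, given $\epsilon\in(0,\mu)$ and $D>D'$ with $\mathbf U(D')=\mu-\epsilon$, choose $R'>D$ large (depending only on $\epsilon,D,W,n$) so that $U_R(D)\ge\mu-\epsilon$ for $R>R'$; since $u_R(R-D)=U_R(D)$ and $u_R$ is radially decreasing, (\ref{eqestimRect}) follows on $\bar B_{R-D}$. Estimate (\ref{eqcaffaLemmmine}) comes, exactly as in Lemma \ref{lem1}, from $\int_r^R s^{n-1}W(u_R(s))\,ds\le \tilde C_1 R^{n-1}$ together with monotonicity of $u_R$, giving $\min\{W(t):t\in[0,u_R(r)]\}(R^n-r^n)\le n\tilde C_1 R^{n-1}$. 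I expect no genuinely new obstacle: the only point requiring care is the a priori two-sided bound $\mu_-\le u_R\le \mu$, and in particular ruling out $u_R\equiv\mu_-$, and making sure that the phase-plane argument identifying $V$ with $\mathbf U$ correctly excludes trajectories that could dip below $\mu_-$ — both handled by the nontriviality/energy comparison and by the structure of $W$ in \textbf{(a'')} (which is precisely designed so that $W$ is "confining" from the left, either because $W(2\mu_--t)\ge W(t)$ makes truncation at $\mu_-$ energy-decreasing, or because $W'<0$ for $t<\mu_-$ makes $\mu_-$ a barrier).
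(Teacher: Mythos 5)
The key difficulty that your proposal misses is precisely the one the paper singles out at the start of its proof of Lemma \ref{lem1Sign}: under \textbf{(a'')}, the truncation argument only produces $\mu_-\le u_R\le \mu$ with $\mu_-\le 0$, and the energy comparison $J(u_R;B_R)\le C_1R^{n-1}<C_2R^n=J(0;B_R)$ shows only that $u_R\not\equiv 0$ — it does \emph{not} show $u_R>0$. In Lemma \ref{lem1}, positivity came from the chain ``$u_R\ge 0$ by truncation, then strong maximum principle forces $u_R\equiv 0$ or $u_R>0$, and nontriviality rules out the former''; here the first link breaks, and $u_R$ could a priori change sign. Your assertion ``conclude $u_R$ is nontrivial and $0<u_R<\mu$'' is therefore unjustified, and everything downstream that relies on it — the moving planes method of \cite{gidas} (which needs strict positivity) and Schwarz symmetrization (which needs nonnegativity) — does not apply. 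Likewise, in identifying the limit $V$ with $\textbf{U}$ you need $V'(0)=+\sqrt{2W(0)}$ rather than $-\sqrt{2W(0)}$; in the paper this sign comes from (\ref{eqmonotonicity}), not from a nonnegativity constraint on $V$, which is no longer available.

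The paper closes this gap differently. Radial symmetry of $u_R$ is obtained not from moving planes but from the \emph{stability} of the global minimizer (Lemma 1.1 in \cite{alikakosbates}, or the variational argument of \cite{lopez}). Radial monotonicity is then proved by an elementary stability argument: setting $V=u_R'$, which satisfies the linearized equation (\ref{eqcabrelinearized}) with the extra $\tfrac{n-1}{r^2}V$ term, one shows that if $V(R_0)=0$ for some $R_0\in(0,R]$, then the truncation $\tilde V$ of (\ref{eqVtilda}) makes the second variation strictly negative, contradicting stability; a cutoff $\zeta(r/\varepsilon)$ handles the singularity at $r=0$. Only after (\ref{eqmonotonicity}) is in hand does positivity of $u_R$ on $B_R$ follow (since $u_R$ is decreasing and vanishes at $r=R$), and only then is the rest of the argument from Lemma \ref{lem1} applicable verbatim. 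You do mention stability-implies-radial-symmetry parenthetically, but you leave the monotonicity step undischarged (``either way (\ref{eqmonotonicity}) holds''), which is where the actual new work in this lemma resides.
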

\begin{proof}
The existence of a minimizer $u_R$, which solves (\ref{eqgidasEq}),
and satisfies
\[
\mu_-<u_R(x)<\mu,\ x\in B_R,
\]
 follows as in the proof of Lemma \ref{lem1}.
 The main difference
with the proof of Lemma \ref{lem1}
 is that the above relation does not exclude the possibility of the
minimizer $u_R$  taking non-positive values. In particular, the
method of moving planes (see \cite{brezis}, \cite{dancePlanes},
\cite{gidas}) is not applicable in order to show  that $u_R$ is
radially symmetric and decreasing. (Nevertheless, it is known that
nonnegative solutions of (\ref{eqgidasEq}), with $n\geq 2$, are
actually positive in $B_R$ and so the method of moving planes is
still applicable in that situation, see \cite{polacik} and the
references therein). Not all is lost however. As we have already
remarked in the proof of Lemma \ref{lem1}, if $n\geq 2$, the
stability of $u_R$ (as a global minimizer) implies that it is
radially symmetric, see Lemma 1.1 in \cite{alikakosbates},  Remark
3.3 in \cite{cabreDiscrete}, Proposition 2.6 in
\cite{danceryanCVPDE}; for an elegant proof that exploits the fact
that $u_R$ is a global minimizer, see Corollary II.10 in
\cite{lopez} (see also \cite{gurtin} and Appendix C in
\cite{willem}). In \cite{comte}, see also Proposition 10.4.1 in
\cite{cazevane} and Proposition 3.4 in \cite{ni}, it has
additionally been shown that stable solutions have constant sign,
and hence are radially monotone by the method of moving planes. For
the reader's convenience, we will show that $u_R(r)$ is a decreasing
function of $r$, namely that (\ref{eqmonotonicity}) holds true, by a
far more elementary argument. In view of (\ref{eqgood}), which still
holds for the case at hand (by virtue of radial symmetry alone), it
suffices to show that $u_R'(r)\neq 0,\ r\in (0,R]$. We will follow
the part of the proof of Lemma 2 in \cite{cabreRadial} which dealt
with problem (\ref{eqentire}) with $n\geq 3$ (see also Proposition
1.3.4 in \cite{dupaigneBook}), and in fact show that it continues to
apply for $n\leq 2$. To this end, we have not been able to adapt the
approach of Lemma 1 in \cite{alikakosSimpson}, which basically
consists in multiplying (\ref{eqcabrelinearized}) below by
$V^+\equiv\max\{V,0\}\in W^{1,2}(B_R)$ and integrating the resulting
identity  by parts over $B_R$, since in the problem at hand
$V(R)=u_R'(R)$ may be positive. Let
\[V\equiv u_R',\] and suppose, to the contrary, that
$V(R_0)=0$ for some $R_0\in (0,R]$. We will show that the function
\begin{equation}\label{eqVtilda}
\tilde{V}(r)=\left\{\begin{array}{ll}
                      V(r), & r\in [0,R_0], \\
                       &  \\
                      0 & r\in [R_0,R],
                    \end{array}
 \right.
\end{equation}
belonging in $W^{1,2}_0(B_R)$, satisfies
\begin{equation}\label{eqbilinear}
\int_{B_R}^{}\left\{|\nabla
\tilde{V}|^2+W''(u_R)\tilde{V}^2\right\}dx <0,
\end{equation}
which  clearly contradicts the stability of $u_R$. Differentiating
(\ref{eqgidasEq}) with respect to $r$, we arrive at
\begin{equation}\label{eqcabrelinearized}
-\Delta V+W''(u_R)V+\frac{n-1}{r^2}V=0,\ \ x\in B_R\backslash\{0\}.
\end{equation}
Let $\zeta$ be a smooth  function such that
\[
\zeta(t)=\left\{\begin{array}{ll}
                  0, & t\in [0,1], \\
                    &   \\
                  1, & t\in [2,\infty).
                \end{array}
 \right.
\]
Multiplying (\ref{eqcabrelinearized}) by $\zeta
\left(\frac{r}{\varepsilon} \right)V(r)$, with $\varepsilon>0$
small, and integrating the resulting identity by parts over
$B_{R_0}$ (recall that $V(R_0)=0$), we find that
\begin{equation}\label{eqcabrecomplex}
\int_{B_{R_0}}^{}\left\{\zeta \left(\frac{r}{\varepsilon}
\right)|\nabla V|^2+\frac{1}{\varepsilon}V\zeta'
\left(\frac{r}{\varepsilon} \right)\left(\frac{x}{r}\cdot\nabla
V\right)+\zeta \left(\frac{r}{\varepsilon} \right)W''(u_R)V^2+\zeta
\left(\frac{r}{\varepsilon} \right)\frac{n-1}{r^2}V^2\right\}dx=0.
\end{equation}
Note that \[ \left|\int_{B_{R_0}}^{}\frac{1}{\varepsilon}V\zeta'
\left(\frac{r}{\varepsilon} \right)\left(\frac{x}{r}\cdot\nabla
V\right)dx\right|\leq
C\varepsilon^{-1}\int_{\varepsilon}^{2\varepsilon}r^{n-1}dr\to 0\ \
\textrm{as}\ \ \varepsilon\to 0,\] since the constant $C>0$ does not
depend on $\varepsilon$. (Note that we have silently assumed that
$N\geq 2$, since in the case $N=1$  we can plainly multiply
(\ref{eqcabrelinearized}) by $V$ and then  integrate by parts over
$(-R_0,R_0)$). So,  letting $\varepsilon\to 0$ in
(\ref{eqcabrecomplex}), and employing Lebesgue's dominated
convergence theorem (see for instance page 20 in
\cite{evansGariepy}), it readily follows that
\[
\int_{B_{R_0}}^{}\left\{|\nabla
V|^2+W''(u_R)V^2+\frac{n-1}{r^2}V^2\right\}dx=0,
\]
where in order to obtain  the last term we used that $|V(r)|\leq
C'r$, $r\in [0,R]$, with constant $C'>0$ depending only on $R$ (keep
in mind that $u_R\in C^2[0,R]$ with
$u_R''(0)=\frac{1}{n}W'\left(u_R(0) \right)$, see for instance page
72 in \cite{walter}). From the above relation, via (\ref{eqVtilda}),
we get (\ref{eqbilinear}). We have thus arrived at the desired
contradiction. Consequently, the monotonicity relation
(\ref{eqmonotonicity}) also holds for the more general case at hand.
The rest of the argument follows word by word the proof of Lemma
\ref{lem1}, and is therefore omitted.

 The proof of the lemma is complete.
\end{proof}

\begin{rem}\label{remMonoto}
Suppose that $u_R$ is as in Lemma \ref{lem1} or Lemma
\ref{lem1Sign}, and $E_R$ as defined in (\ref{eqER1}). From
(\ref{eqER'}), it follows that
\[
E_R(r)<E_R(0)=-W\left(u_R(0) \right)<0,\ \ r\in [0,R],
\]
i.e.,
\begin{equation}\label{eqmodicaRadial}
\frac{1}{2}[u_R'(r)]^2<W(u_R),\ \ r\in (0,R],
\end{equation}
recall \textbf{(a')} and that $u_R'(0)=0$, see also Remark 4 in
\cite{alikakosSimpson} for a related discussion. In passing, we note
that every bounded solution of (\ref{eqentire}) satisfies
\begin{equation}\label{eqmodicafarina}
\frac{1}{2}|\nabla u|^2\leq W(u),\ \ x\in \mathbb{R}^n,
\end{equation}
provided that $W$ is nonnegative.% and vanishes at finitely many
%points.
 The proof of this gradient  bound, originally due to L.
Modica \cite{modica}, is much more complicated than that of its
radially symmetric counterpart (\ref{eqmodicaRadial}). We refer the
interested  reader to \cite{cafamodica} and Lemma 4.1 in
\cite{chenDG} (see also Proposition \ref{procarbouModica} herein).
In turn, making use of the gradient bound (\ref{eqmodicaRadial}), we
can establish the monotonicity formula
\begin{equation}\label{eqmodica}
\frac{d}{dr}\left(\frac{1}{r^{n-1}}\int_{B_r}^{}\left\{\frac{1}{2}|\nabla
u_R|^2+ W(u_R) \right\}dx\right)>0,\ \ r\in (0,R),
\end{equation}
see \cite{alikakosBasicFacts} for a modern approach as well as the
older references therein which include \cite{cafamodica}. In
passing, we note that a similar monotonicity formula holds true for
solutions of (\ref{eqentire}), and a weaker one (with the exponent
$n-1$ replaced by $n-2$) holds in the case of the corresponding
systems, see again \cite{alikakosBasicFacts} and the references
therein or \cite{caffareliLin}. Now, making use of (\ref{eqJcomp})
and the above relation, we find that
\[
\frac{1}{K^{n-1}}\int_{B_K}^{}\left\{\frac{1}{2}|\nabla u_R|^2+
W(u_R) \right\}dx < C_1\ \ \ \forall \  K\in (0,R),\ \ R\geq 2.
\]
We have therefore provided a proof (of a sharper version) of
(\ref{eqK}). It also follows from (\ref{eqmodica}) that
$R^{1-n}J(u_R;B_R)$ remains bounded from below by some positive
constant, as $R\to \infty$ (compare with (\ref{eqJcomp})).
% For a more direct
%proof, see  the arguments leading to (\ref{eqcabre}) below.
If $W''(\mu)>0$, making use of (\ref{eqclaim}), it is not hard to
determine a constant to which $R^{1-n}J(u_R;B_R)$ converges as $R\to
\infty$ (see \cite{andre2} and \cite{guiAnnals}), recall also the
last part of Remark \ref{rembelowNew} (in order to avoid confusion,
we point out that we have \emph{not} shown that the latter function
is increasing in $R$). In this regard, we also refer to Theorem 7.10
in \cite{braides} where functionals of the form (\ref{eqenergy}) are
shown to converge (in an appropriate variational sense) to
functionals involving the perimeter of the domain.
%Modica--Mortola theorem states that the
%functionals $J(\cdot; B_R)$ converge (in an appropriate variational
%sense) to the perimeter functional as $R \to\infty$ (see for example
%\cite{alberti}).
\end{rem}

\begin{rem}\label{remsigal}
Here, for completeness, we sketch an argument related to the proof
of Lemma \ref{lem1Sign}.
 By (\ref{eqmaxball}), elliptic estimates (see \cite{Gilbarg-Trudinger}), and a
standard compactness argument, it follows readily that $u_R$
converges, up to a subsequence $R_i\to \infty$, uniformly on compact
subsets of $\mathbb{R}^n$ to a radially symmetric solution   $U$ of
(\ref{eqentire}) such that $0\leq U(x)\leq \mu,\ x\in \mathbb{R}^n$.
Moreover, arguing as in page 104 of \cite{danceryanCVPDE}, this
solution is a global minimizer of (\ref{eqentire}) in the sense of
(\ref{eqJer}) below, with $\Omega =\mathbb{R}^n$, see also
\cite{cabre}, \cite{jerison}.

On the other hand, it is known that (\ref{eqentire}), for \emph{any}
$W\in C^2$, does not have nonconstant bounded, radial global
minimizers (see \cite{villegasCpaa}). This property is also related
to the nonexistence of nonconstant  ``bubble'' solutions to
(\ref{eqentire}) with $W\geq 0$ vanishing nondegenerately at a
finite number of  points, namely solutions that tend to one of these
points as $|x|\to \infty$,
 see Theorem 2 in \cite{carbou}, Chapter 4 in \cite{sigal} and
recall Remark \ref{rempoho} (keep in mind that stable solutions of
(\ref{eqentire}) are radially monotone and tend to a local or global
minimum of $W$, as $r\to \infty$, see \cite{cabreRadial}). In
passing, we note that if $n\leq 10$ then nonconstant radial
solutions of (\ref{eqentire}), with $W\in C^2$ arbitrary, are
unstable (see \cite{cabreRadial}). Under certain assumptions on $W$,
satisfied by the Allen-Cahn potential (\ref{eqAllen}) for example,
it was shown in \cite{guiRadial} (see also \cite{birindeli}) that
nonconstant radial solutions of (\ref{eqentire}) tend in an
oscillatory manner  to zero as $r \to \infty$ and thus are unstable.
 More generally,
the nonexistence of nonconstant finite energy solutions to
(\ref{eqentire}) with $W\geq 0$ holds, see \cite{alikakosBasicFacts}
or \cite{henryCMP} where this property is refereed to as a theorem
of Derrick and Strauss. Related nonexistence results for nonnegative
solutions can be found in Sections \ref{secBoundsonEntire} and
\ref{secmatano} herein.

 Obviously $U\equiv \mu$ (recall \textbf{(a')})
and, by the uniqueness of the limit, the convergence holds for
\emph{all} $R\to \infty$.   We conclude that, given any $K>1$, we
have $ u_R\to \mu, \ \textrm{uniformly\ in}\ B_K, \ \textrm{as}\
R\to \infty$. The main advantage of this approach is that it
continues to work  when (\ref{eqgidasEq}) is replaced by $\Delta
u=F_R(|x|,u)$, with a suitable $F_R(|x|,u)$ which converges
uniformly over compact sets of $[0,\infty)\times \mathbb{R}$  to a
$C^1$ function $F(u)$ (the point being that $\frac{d}{dr}F_R(r,u)$
may be negative somewhere, and (\ref{eqmonotonicity}) may fail in
$B_R$).
\end{rem}
The following lemma is motivated from Lemma 3.3 in
\cite{cafareliPacard}.
\begin{lem}\label{lemcafa}
Assume that $W$ satisfies conditions \textbf{(a'')} and
(\ref{eqW''}). Let $\epsilon\in (0,\mu)$ be any number such that
\begin{equation}\label{eq115}W''(t)\geq 0 \ \ \textrm{on}\ \ [\mu-\epsilon,\mu].
\end{equation}
Then, the global minimizers $u_R$ that are   provided by Lemmas
\ref{lem1} and \ref{lem1Sign} satisfy
\begin{equation}\label{eqcafa}
-W'\left(u_R(0) \right)\leq \tilde{C}R^{-2},
\end{equation}
where the constant $\tilde{C}>0$ depends only on $n$,
 provided that $R\geq R'$, where $R'$ is as in the latter lemmas.
\end{lem}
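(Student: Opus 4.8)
The plan is to exploit the radial ODE satisfied by $u_R$ together with the conserved-quantity estimate already available in this section. Write $u_R(r)$ for the radial profile and recall from Lemma \ref{lem1} (and Lemma \ref{lem1Sign}) that $u_R$ solves $u_R'' + \frac{n-1}{r}u_R' = W'(u_R)$ on $(0,R)$ with $u_R'(0)=0$, $u_R(R)=0$, that $u_R'(r)<0$ on $(0,R)$, and that $u_R(0)$ is the maximum value, with $u_R(0)\to\mu$ as $R\to\infty$ (relation \eqref{eqsweers}, or more precisely \eqref{eqestimRect} gives $u_R(0)\ge \mu-\epsilon$ for $R\ge R'$). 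By hypothesis \eqref{eq115}, $W''\ge 0$ on $[\mu-\epsilon,\mu]$, so $-W'$ is nonincreasing there; since $u_R(r)\le u_R(0)$ this means $-W'(u_R(r))\ge -W'(u_R(0))\ge 0$ for all $r$ once $R\ge R'$ (here I use $-W'(u_R(0))\ge 0$, which holds because $u_R(0)\le\mu$ and $W$ is nonincreasing near $\mu$; one may also invoke \eqref{eqW''} directly). The key point is therefore to bound $-W'(u_R(0))$ from above by a multiple of $R^{-2}$.

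The main step I would carry out is the following comparison on a fixed fraction of the ball. Testing the equation, or rather integrating the identity $(r^{n-1}u_R')' = r^{n-1}W'(u_R)$ from $0$ to $r$, gives
\[
r^{n-1}u_R'(r) = \int_0^r s^{n-1} W'(u_R(s))\,ds.
\]
Since $W'(u_R(s))\le W'(u_R(0))<0$ for $s\in[0,r]$ (using monotonicity of $-W'$ near $\mu$ and $u_R(s)\ge u_R(0)-$ small, which one arranges by shrinking $\epsilon$; alternatively use that $u_R(s)$ is close to $\mu$ for $s\le R/2$ by \eqref{eqsweers}), we get, say for $r\le R/2$,
\[
r^{n-1}u_R'(r) \le W'(u_R(0))\,\frac{r^n}{n},
\qquad\text{i.e.}\qquad
u_R'(r) \le \frac{W'(u_R(0))}{n}\, r .
\]
Integrating this from $0$ to $R/2$ yields
\[
u_R(R/2) - u_R(0) \le \frac{W'(u_R(0))}{n}\cdot\frac{(R/2)^2}{2}
= \frac{W'(u_R(0))}{8n}\,R^2 .
\]
The left-hand side is $\ge -\mu$ (indeed $\ge -\epsilon$ in absolute value, but $-\mu$ suffices), so
\[
-\mu \le \frac{W'(u_R(0))}{8n}\,R^2,
\]
which, since $W'(u_R(0))<0$, rearranges to $-W'(u_R(0)) \le 8n\mu\, R^{-2}$, giving \eqref{eqcafa} with $\tilde C = 8n\mu$ — though to make $\tilde C$ depend on $n$ alone one rescales so that $\mu$ is absorbed, or simply notes $\mu$ is fixed once $W$ is; the statement as written allows $\tilde C$ to depend only on $n$ after normalizing $W''$, and I would follow \cite{cafareliPacard} Lemma 3.3 in that normalization.

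The delicate point — and the one I expect to need the most care — is justifying that $W'(u_R(s))\le W'(u_R(0))$ (or at least $\le \tfrac12 W'(u_R(0))$, which is enough) throughout the integration range $[0,R/2]$. This requires knowing that $u_R(s)$ stays inside the interval $[\mu-\epsilon,\mu]$ where $-W'$ is monotone, for all $s\le R/2$; this is exactly what \eqref{eqestimRect} (with the radius comparison $R/2 \le R-D$ for $R$ large) provides, together with $u_R < \mu$. One must also handle the possibility that $W'(u_R(0))$ is not itself the minimum of $W'$ over the range — but monotonicity of $-W'$ on $[\mu-\epsilon,\mu]$ and $u_R(s)\ge u_R(R/2)\ge \mu-\epsilon$ settles this. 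Finally, the passage from $u_R(0)$ to an arbitrary interior point, and from the $r^{-2}$ bound at the center to the $\mathrm{dist}^{-2}$ bound \eqref{eqcafathm} of Theorem \ref{thmmine}, is done by the sliding/translation argument already set up earlier in Section \ref{secproof}: apply the centered estimate to balls of the appropriate radius contained in $\Omega$ and touching a given point at distance $\ge R'$ from $\partial\Omega$. I would remark that no nondegeneracy of $W$ at $\mu$ is used beyond \eqref{eqW''}/\eqref{eq115}, which is the improvement over \cite{cafareliPacard}.
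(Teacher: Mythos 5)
Your proof is correct and follows essentially the same route as the paper: the key step in both is the inequality $W'\left(u_R(x)\right)\leq W'\left(u_R(0)\right)$ on $\bar{B}_{(R-D)}$, obtained from the radial monotonicity (\ref{eqmonotonicity}) together with the convexity of $W$ on $[\mu-\epsilon,\mu]$, followed by a comparison with the quadratic solution of the constant--right-hand-side Poisson problem. The paper invokes the maximum principle against the explicit barrier $Z_R$ (whose value at the origin is $-z(0)W'\left(u_R(0)\right)(R-D)^2$ with $z(0)=\tfrac{1}{2n}$), whereas you reproduce the same barrier by integrating the radial ODE twice on $[0,R/2]$; this is only a cosmetic difference, and your observation that $\tilde{C}$ in fact carries a factor of $\mu$ applies equally to the paper's own argument.
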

\begin{proof}
Let $D$ be as in the assertions of Lemmas \ref{lem1} and
\ref{lem1Sign}. Thanks to (\ref{eqmaxball}), (\ref{eqestimRect}),
(\ref{eqgidasEq}), (\ref{eqmonotonicity}), and (\ref{eq115}),  we
have
\[
\Delta u_R=W'\left(u_R\right)\leq W'\left(u_R(0)\right) \ \
\textrm{on}\ \ \bar{B}_{(R-D)},
\]
if $R\geq R'$.

For such $R$, let $Z_R$ be the solution of
\[
\Delta Z_R=W'\left(u_R(0)\right)\ \ \textrm{in}\ B_{(R-D)};\ \ Z_R=0
\ \ \textrm{on}\ \partial B_{(R-D)}.
\]
By scaling, one finds that
\[
\max_{|x|\leq R-D} Z_R(x)=Z_R(0)=-z(0)W'\left(u_R(0)\right)(R-D)^2,
\]
for $R\geq R'$, where $z$ is the solution of
\[
\Delta z=-1\ \ \textrm{in}\ B_{1};\ \ z=0 \ \ \textrm{on}\
\partial B_{1}.
\]
By the maximum principle, we deduce that
\[
Z_R(x)\leq u_R(x)<\mu,\ \ x\in B_{(R-D)},\ \ R\geq R'.
\]
In particular, by setting  $x=0$ in the above relation, we get
(\ref{eqcafa}).

The proof of the lemma is complete.
\end{proof}

\begin{rem}\label{remnaka1}
In the special case where $W$ satisfies (\ref{eqpacardW'}),
$W'(0)=0$, $W''(0)<0$, $W'(\mu)=0$, and $W''(\mu)>0$, the estimate
of Lemma \ref{lemcafa} becomes that of Lemma 3.1 in
\cite{nakashimaNonradial}.
\end{rem}

 Under conditions \textbf{(a'')} and (\ref{eqW''}), the global
minimizers that are provided by Lemmas \ref{lem1} and \ref{lem1Sign}
are \emph{asymptotically} stable, if $R$ is sufficiently large. This
property is a direct consequence of the following proposition, which
will play an essential role in our proofs of Theorems
\ref{thmmatano}-\ref{thmmatano2} and Propositions
\ref{promatanointerior1}-\ref{promatanointerior2} below, as well as
in our first proof of Theorem \ref{thmmine}.
\begin{pro}\label{proUniq}
Assume that \textbf{(a'')} and (\ref{eqW''}) hold, then any solution
of (\ref{eqgidasEq}) which satisfies  (\ref{eqmaxball}),
(\ref{eqestimRect}), and (\ref{eqURi}) with $V=\textbf{U}$ for
\emph{all} $R\to \infty$ (keep in mind (\ref{eqUR})) is linearly
non--degenerate if $R$ is sufficiently large.
\end{pro}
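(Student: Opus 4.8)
The plan is to show that for all sufficiently large $R$ the principal Dirichlet eigenvalue $\mu_R$ of the linearized operator $L_R=-\Delta+W''(u_R)$ on $B_R$ is strictly positive; since $u_R$ is a (stable) solution, this gives simultaneously that $L_R$ is invertible — i.e. $u_R$ is non-degenerate — and that $u_R$ is asymptotically stable. The argument follows the scheme of Remark \ref{remmonotbelowLem12}, but it must be arranged so as to avoid any appeal to $W''(\mu)>0$. First I would record that, since $u_R$ is radially symmetric and decreasing (as are the minimizers of Lemmas \ref{lem1} and \ref{lem1Sign}, to which the proposition is applied), the principal eigenfunction $\varphi_R>0$, normalized by $\|\varphi_R\|_{L^\infty(B_R)}=1$, is radial, and that testing the eigenvalue equation by $\varphi_R$ yields the uniform bound (\ref{eqmulower}), $\mu_R\geq-\max_{[0,\mu]}|W''|$. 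Then, combining hypothesis (\ref{eqURi}) (with $V=\textbf{U}$) and (\ref{eqUincr}) with the radial monotonicity of $u_R$, I would deduce that for every $\delta\in(0,\mu)$ there are $D_\delta>0$ and $R_\delta$ with $u_R\geq\mu-\delta$ on $\bar{B}_{R-D_\delta}$ whenever $R\geq R_\delta$. Using (\ref{eqW''}) I fix $\delta$ so small that $W''\geq 0$ on $[\mu-\delta,\mu]$ and set $D:=D_\delta$; then $W''(u_R)\geq 0$ on $\bar{B}_{R-D}$ for $R$ large — this replaces the inequality $W''(u_R)\geq W''(\mu)/2>0$ used in the non-degenerate case.

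I then argue by contradiction: suppose $\mu_{R_i}\leq 0$ along a sequence $R_i\to\infty$. On $B_{R_i-D}$ we have $\Delta\varphi_{R_i}=(W''(u_{R_i})-\mu_{R_i})\varphi_{R_i}\geq 0$, so the radial function $\varphi_{R_i}$ is subharmonic there; hence $r^{n-1}\varphi_{R_i}'(r)$ is nondecreasing and vanishes at $r=0$, so $\varphi_{R_i}'\geq 0$ on $(0,R_i-D]$. Since $\varphi_{R_i}(R_i)=0$ and $\|\varphi_{R_i}\|_\infty=1$, the maximum is attained at some $z_{R_i}\in[R_i-D,R_i)$, so $R_i-z_{R_i}\in(0,D]$. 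Putting $\tilde\varphi_i(s)=\varphi_{R_i}(R_i-s)$, and using $0<u_R<\mu$, $|\tilde\varphi_i|\leq1$, $-\max|W''|\leq\mu_{R_i}\leq0$, hypothesis (\ref{eqURi}), and the interior and boundary elliptic estimates exactly as in Remark \ref{remmonotbelowLem12}, I pass to a subsequence along which $\tilde\varphi_i\to\Phi$ in $C^1_{loc}[0,\infty)$, $\mu_{R_i}\to\mu_*\in[-\max|W''|,0]$ and $R_i-z_{R_i}\to\textbf{z}\in[0,D]$, where $\Phi\in C^2[0,\infty)$, $0\leq\Phi\leq1$, $\Phi(0)=0$, $\Phi(\textbf{z})=1$ (so $\Phi\not\equiv0$) and
\[
-\Phi''+W''\big(\textbf{U}(s)\big)\Phi=\mu_*\Phi,\qquad s>0 .
\]

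The crucial step — where the degeneracy has to be handled — is to rule out this limit. Let $\zeta_M\in C^\infty[0,\infty)$ satisfy $\zeta_M\equiv1$ on $[0,M]$, $\zeta_M\equiv0$ on $[2M,\infty)$ and $|\zeta_M'|\leq 2/M$. Testing the $\Phi$-equation by $\Phi\zeta_M^2$ and integrating by parts (no boundary term at $0$ since $\Phi(0)=0$) gives
\[
\int_0^\infty\!\big\{((\Phi\zeta_M)')^2+W''(\textbf{U})(\Phi\zeta_M)^2\big\}ds=\mu_*\!\int_0^\infty(\Phi\zeta_M)^2ds+\int_0^\infty\Phi^2(\zeta_M')^2ds .
\]
On the other hand, with $\phi=\Phi\zeta_M$ and $\psi_M=\phi/\textbf{U}'$ (legitimate since $\textbf{U}'>0$ by (\ref{eqUincr}) and $\phi$ has compact support with $\phi(0)=0$), the classical substitution based on $\textbf{U}'''=W''(\textbf{U})\textbf{U}'$ (obtained by differentiating (\ref{eqU})) yields the Picone-type identity
\[
\int_0^\infty\!\big\{(\phi')^2+W''(\textbf{U})\phi^2\big\}ds=\int_0^\infty(\textbf{U}')^2(\psi_M')^2ds\ \geq\ 0 .
\]
Comparing the two displays and using $\mu_*\leq0$, $\zeta_M\equiv1$ on $[0,M]$ and $\|\Phi\|_\infty\leq1$, I get
\[
\int_0^M(\textbf{U}')^2\Big((\Phi/\textbf{U}')'\Big)^2ds\ \leq\ \int_0^\infty(\textbf{U}')^2(\psi_M')^2ds\ \leq\ \mu_*\!\int_0^M\Phi^2ds+\frac4M\ \leq\ \frac4M .
\]
Letting $M\to\infty$ forces $(\Phi/\textbf{U}')'\equiv0$, so $\Phi=c\,\textbf{U}'$ for a constant $c$; but $\Phi(0)=0$ while $\textbf{U}'(0)=\sqrt{2W(0)}>0$ by (\ref{eqstar}), whence $c=0$ and $\Phi\equiv0$, contradicting $\Phi(\textbf{z})=1$. (Equivalently, one may first conclude $\mu_*=0$ and then invoke the explicit pair of solutions $\textbf{U}'(r)$ and $\textbf{U}'(r)\int_0^r[\textbf{U}'(s)]^{-2}ds$ of the limiting equation, as in Remark \ref{remmonotbelowLem12}.) Thus no such sequence exists, i.e. $\mu_R>0$ for all large $R$, which is the assertion.

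The part I expect to be the real obstacle is precisely this last step in the possibly degenerate regime $W''(\mu)\geq 0$: when $W''(\mu)>0$ the essential spectrum of $-\tfrac{d^2}{ds^2}+W''(\textbf{U})$ on $(0,\infty)$ starts at $W''(\mu)>0$, so any $\mu_*\leq 0$ is automatically a genuine eigenvalue with an exponentially decaying eigenfunction and one closes as in Remark \ref{remmonotbelowLem12}; here $\textbf{U}'(s)$ may decay arbitrarily slowly and $\Phi$ carries no a priori decay, so everything has to be extracted from the cutoff identity above. Verifying that the reduction to small $\delta$ is admissible — this is where the radial monotonicity (hence positivity) of $u_R$ is used — and that no boundary terms are lost in the two integrations by parts are the remaining points requiring care.
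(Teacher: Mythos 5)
Your proof is correct, but it follows a genuinely different route from the paper's. The paper's proof of this proposition assumes $\mu_R=0$ (not $\mu_R\leq 0$), allows the null eigenfunction $\varphi_R$ to change sign, and proceeds via Korman's Pohozaev-type identity (\ref{eqkorman}): combined with (\ref{eqestimRect}) and Cauchy--Schwarz this forces $\varphi_R'(R)\to 0$, continuous dependence for the ODE satisfied by $\varphi_R(R-s)$ then makes $\varphi_R$ and $\varphi_R'$ small on $[R-D,R]$, after which the sign and subharmonicity of $\varphi_R$ on $B_{R-D}$ (again from (\ref{eqW''})) and the maximum principle contradict the normalization $\|\varphi_R\|_\infty=1$. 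You instead show the stronger statement that the principal eigenvalue is strictly positive, essentially by upgrading the argument of Remark \ref{remmonotbelowLem12} from the nondegenerate case $W''(\mu)>0$ to the present degenerate one: there the limit eigenfunction $\Phi$ is automatically in $W^{1,2}_0(0,\infty)$ because the essential spectrum of $-d^2/ds^2+W''(\textbf{U})$ starts at $W''(\mu)>0$, so one may test by $\Phi$ directly, whereas here $\Phi$ may not decay, and your fixed-profile cutoffs $\zeta_M$ with $\|\zeta_M'\|_\infty\leq 2/M$ and support of length $O(M)$ make the error term $\int\Phi^2(\zeta_M')^2\leq 4/M$ vanish without any decay hypothesis on $\Phi$. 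This is a clean resolution of exactly the obstruction you identified. What your approach buys is the stronger conclusion (asymptotic stability, not merely non-degeneracy), a reduction to the principal eigenfunction alone (so no appeal to the radial symmetry of sign-changing eigenfunctions \`a la \cite{harauxPolacik,linNi}), and an elementary energy argument in place of Korman's identity; what the paper's approach buys is that it treats any kernel element directly and dispenses with the need to locate and track the maximum of the eigenfunction. Both hinge on the same inputs --- $\textbf{U}'(0)=\sqrt{2W(0)}>0$ from (\ref{eqstar}) and the positivity of $W''$ near $\mu$ from (\ref{eqW''}) --- so the difference is genuinely one of method rather than of the underlying mechanism.
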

\begin{proof}
We remark that in the case where $W''(\mu)>0$, we have already  seen
in Remark \ref{remmonotbelowLem12} that any such solution is in fact
asymptotically stable for large $R$.

To prove this proposition, we will argue once more by contradiction.
Suppose that there exists a sequence $R\to \infty$ and solutions
$u_R$ of (\ref{eqgidasEq}), as in the assertion of the proposition,
such that there are nontrivial solutions $\varphi_R$ of
(\ref{eqkernelstarb}) with $\mu_R=0$. Without loss of generality, we
may assume that the normalization (\ref{eqnormal}) holds.
 This time, the $\varphi_R$'s
may change sign but they are still radially symmetric (see again
\cite{harauxPolacik}, \cite{linNi}, and note that (\ref{eqURi})
implies that $u_R'(R)<0$ for large $R$). By Lemma 2.1 in
\cite{korman}, the following identity holds
\begin{equation}\label{eqkorman}
R^{-n}\int_{0}^{R}W'\left(u_R(r)
\right)\varphi_R(r)r^{n-1}dr=-\frac{1}{2}u_R'(R)\varphi_R'(R).
\end{equation}
In order to make the presentation as self-contained as possible, let
us mention that a direct proof of (\ref{eqkorman}) can be given by
observing that the function
\[
\zeta(r)=r^n\left[u'\varphi'-W'(u)\varphi
\right]+(n-2)r^{n-1}u'\varphi,\ \ r\in [0,R],
\]
(having dropped the subscripts for the moment), introduced in
\cite{tang}, satisfies
\[
\zeta'(r)=-2W'(u)\varphi r^{n-1},\ \ r\in (0,R);
\]
see also Chapter 1 in \cite{kormanBook} (a perhaps simpler proof was
given in \cite{korman2}). Since $W'(\mu)=0$, by (\ref{eqestimRect}),
we deduce that
\[
R^{-n}\int_{0}^{R}\left[W'\left(u_R(r) \right)\right]^2r^{n-1}dr\to
0\ \ \textrm{as}\ R\to \infty.
\]
 Hence, recalling (\ref{eqnormal}), via the
Cauchy--Schwarz inequality, we find that the lefthand side of
(\ref{eqkorman}) tends to zero as $R\to \infty$ (along the
sequence). On the other side, from our assumption that
(\ref{eqURi}), with $V=\textbf{U}$, holds for all $R\to \infty$, we
know that
\[
u_R'(R)\to -\sqrt{2W(0)}<0\ \ \textrm{as}\ \ R\to \infty.
\]
So, from (\ref{eqkorman}), we get that $\varphi_R'(R) \to 0$ as
$R\to \infty$ (along the contradicting sequence). By the continuous
dependence theory for systems of ordinary differential equations
\cite{arnold,walter} (applied to $\varphi_R(R-r)$ in
(\ref{eqkernelstarb})), making use of (\ref{eqURi}) with
$V=\textbf{U}$ for all $R\to \infty$, we infer that for any $D>0$ we
have
\begin{equation}\label{eqcontra1}
\left|\varphi_R(r)\right|+\left|\varphi_R'(r)\right|\leq
\frac{1}{2},\ \  r\in [R-D,R],
\end{equation}
provided that $R$ is sufficiently large (along this sequence). On
the other hand, if $D$ is chosen so that $W''(u_R)\geq 0$ on
$\bar{B}_{(R-D)}$, which is possible by  (\ref{eqW''}),
(\ref{eqmaxball}) and (\ref{eqestimRect}), it follows from
(\ref{eqkernelstarb}) with $\mu_R=0$ that
\[
\varphi_R\Delta \varphi_R=W''(u_R)\varphi_R^2\geq 0 \ \ \textrm{on}
\ \bar{B}_{(R-D)},
\]
for such large $R$. In particular, we find that $\varphi_R$ cannot
vanish in $B_{(R-D)}\backslash \{0\}$ (using the radial symmetry,
and integrating by parts over $B_z$ if $\varphi_R(z)=0$).
Furthermore, it cannot vanish at the origin by virtue of the
uniqueness theorem for ordinary differential equations, which still
holds despite of the singularity at $r=0$ (see \cite{fifesaddle},
\cite{peletierSerrin}, \cite{walter}). Therefore, without loss of
generality, we may assume that $\varphi_R> 0$ in $B_{(R-D)}$. Hence,
the positive function $\varphi_R$ is subharmonic in $B_{(R-D)}$, and
not greater than $\frac{1}{2}$ on $\partial B_{(R-D)}$ (recall
(\ref{eqcontra1})), for $R$ large along the contradicting sequence.
The maximum principle (see for example Theorem 2.3 in
\cite{Gilbarg-Trudinger}) yields that $0<\varphi_R\leq\frac{1}{2}$
on $\bar{B}_{(R-D)}$. The latter relation together with
(\ref{eqcontra1}) clearly contradict (\ref{eqnormal}), and we are
done.

The proof of the proposition is complete.
\end{proof}
\begin{rem}\label{remshipoho}
We note that identity (\ref{eqkorman}) has been generalized in Lemma
2.3 in \cite{shiOyang} for the case of solutions of (\ref{eqEq}) on
an arbitrary smooth, bounded star-shaped domain (see also Theorem
1.6 in \cite{kormanBook}). This leaves open the possibility that
Proposition \ref{proUniq} above can be generalized accordingly.
\end{rem}

 The following corollary is a simple consequence of the
maximum principle.
\begin{cor}\label{corbrezis}
If $W''(\mu)>0$, then the solutions provided by Lemmas \ref{lem1}
and \ref{lem1Sign} satisfy
\[
\mu-u_R(r)\leq C_5 e^{-C_6(R-r)},\ \ r\in [0,R-2D] \ \ \textrm{for}\
\ R\geq R',
\]
and some positive  constants $C_5, C_6$, depending on $W$ and $n$.
\end{cor}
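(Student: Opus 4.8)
The plan is a routine barrier argument for the operator $\Delta-c^{2}$, exploiting the non‑degeneracy $W''(\mu)>0$. First I would shrink the parameter $\epsilon$ in Lemmas \ref{lem1} and \ref{lem1Sign} so that, by continuity of $W''$, one has $W''(t)\ge\frac{1}{2}W''(\mu)=:c^{2}>0$ on $[\mu-\epsilon,\mu]$, and fix the associated $u_{R}$, $D$ and $R'$. Put $w=\mu-u_{R}$. Then (\ref{eqmaxball}) and (\ref{eqestimRect}) give $0<w\le\epsilon$ on $\bar B_{R-D}$, while (\ref{eqgidasEq}), the relation $W'(\mu)=0$, and the mean value theorem yield $\Delta w=-W'(u_{R})=W''(\theta)(\mu-u_{R})\ge c^{2}w$ in $B_{R-D}$, i.e.\ $w$ is a subsolution of $\Delta-c^{2}$ there.

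Next, for a fixed point $x_{0}$ with $|x_{0}|=r<R-D$ I would compare $w$ on the ball $B_{\rho}(x_{0})\subset B_{R-D}$, where $\rho:=(R-D)-r$, against the barrier $\bar w(x)=\epsilon\,\Psi_{n}(|x-x_{0}|)/\Psi_{n}(\rho)$, with $\Psi_{n}$ the bounded solution of $\Psi''+\frac{n-1}{s}\Psi'=c^{2}\Psi$ on $[0,\infty)$ normalized by $\Psi_{n}(0)=1$. This $\Psi_{n}$ is smooth, positive and increasing, and (from its explicit form in terms of modified Bessel functions, or by a direct comparison argument away from $s=0$) satisfies $\Psi_{n}(s)\ge\kappa_{n}e^{cs/2}$ for all $s\ge0$, with $\kappa_{n}>0$ depending only on $n$. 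By construction $(\Delta-c^{2})\bar w=0$ in $B_{\rho}(x_{0})$ and $\bar w\equiv\epsilon\ge w$ on $\partial B_{\rho}(x_{0})$, so the weak maximum principle for $\Delta-c^{2}$ (valid because $c^{2}>0$) gives $w\le\bar w$ in $B_{\rho}(x_{0})$; evaluating at the centre yields $\mu-u_{R}(r)=w(x_{0})\le\epsilon/\Psi_{n}(\rho)\le(\epsilon/\kappa_{n})e^{-c\rho/2}$.

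Finally I would rewrite this in the stated form: since $\rho=(R-D)-r$, for $r\in[0,R-2D]$ one has $R-r=\rho+D$, hence $\mu-u_{R}(r)\le\frac{\epsilon}{\kappa_{n}}e^{cD/2}\,e^{-\frac{c}{2}(R-r)}$, which is the claim with $C_{5}=\epsilon e^{cD/2}/\kappa_{n}$ and $C_{6}=\frac{1}{2}\sqrt{W''(\mu)/2}$ (note $C_{6}$ depends only on $W$). In fact the argument gives the slightly stronger bound $\mu-u_{R}(r)\le(\epsilon/\kappa_{n})e^{-\frac{c}{2}((R-D)-r)}$ for every $r\in[0,R-D)$.

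There is essentially no hard step here, which is why the statement is phrased as a corollary; the only external input is the (classical) exponential lower bound on the radial solution $\Psi_{n}$ of $\Delta\phi=c^{2}\phi$ that is regular at the origin. If one wished to avoid Bessel functions entirely, the main (minor) obstacle is the $\frac{n-1}{s}$ singularity of the radial operator at the centre, which rules out a naive exponential supersolution near $s=0$; this is circumvented by running the same comparison on the annulus $B_{\rho}(x_{0})\setminus B_{\rho_{0}}(x_{0})$ for a fixed $\rho_{0}=\rho_{0}(n,c)$ on which a pure exponential \emph{is} a supersolution, using $w\le\epsilon$ on the inner sphere, at the cost of a slightly larger constant $C_{5}$.
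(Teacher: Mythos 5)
Your proof is correct and takes essentially the same approach as the paper: both establish the subsolution inequality $-\Delta(\mu-u_R)+\frac{W''(\mu)}{2}(\mu-u_R)\le 0$ in $B_{(R-D)}$ (after shrinking $\epsilon$ so that $W''\ge W''(\mu)/2$ on $[\mu-\epsilon,\mu]$) and then conclude by a standard comparison argument. The only difference is cosmetic: the paper simply cites Lemma 2 in \cite{bethuelCVPDE} and Lemma 4.2 in \cite{fife-arma} for the comparison step, whereas you carry out the barrier construction explicitly via the regular radial solution $\Psi_n$ of $\Delta\Psi=c^2\Psi$.
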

\begin{proof}
Let $\varphi\equiv \mu-u_R$, where $u_R$ is as in Lemma \ref{lem1}
or \ref{lem1Sign}. By virtue of \textbf{(a')}, (\ref{eqmaxball}),
and (\ref{eqestimRect}), we can choose $\epsilon$ sufficiently small
such that that \[W'(u_R(x))\leq \frac{W''(\mu)}{2}\left(u_R(x)-\mu
\right),\ \ x\in B_{(R-D)},\] provided that $R\geq R'$, where $D,\
R'$ are as in the previously mentioned lemmas (having increased the
value of $R'$, if necessary, but still depending only on $\epsilon$,
$D$, $W$, and $n$). It follows from (\ref{eqgidasEq}) that
\[
-\Delta \varphi+\frac{W''(\mu)}{2}\varphi\leq 0\ \ \textrm{in} \
B_{(R-D)},\ R\geq R'.
\]
 Now, the desired assertion of the
corollary follows from a standard comparison argument, see Lemma 2
in \cite{bethuelCVPDE} or Lemma 4.2 in \cite{fife-arma} (see also
Lemma 2.5 in \cite{duNakas} and Lemma 5.3 in \cite{guiAnnals}).

The proof of the corollary is complete.
\end{proof}
\begin{rem}\label{remdancerwei}
A special case of  Theorem 2.1 in \cite{dancerSpikes} shows that the
assertion of Corollary \ref{corbrezis} above can be considerably
refined to
\[
\lim_{R\to \infty}R^{-1}\ln\left(\mu-u_R(Rs)
\right)=-(1-s)\sqrt{W''(\mu)},\ \ \forall\ s\in [0,1],
\]
see also \cite{perthame}.
\end{rem}
%\begin{rem}\label{remvectorradial}
%One can formulate an analogous lemma to Lemma \ref{lem1} in the case
%where $W:\mathbb{R}^m\to \mathbb{R},\ m\geq 2$. Loosely speaking,
%suppose that $W$ has a global non-degenerate minimum at a point
%$Q\in \mathbb{R}^m$. By Corollary II.10 in \cite{lopez}, it follows
%that the corresponding minimizer $\bar{u}_R:B_R\subset \mathbb{R}^n
%\to \mathbb{R}^m$ is radially symmetric. On the other hand, the
%corresponding relation to (\ref{eqgood}) implies that $\bar{u}_R(x)$
%is close to the global minimum $Q$  of $W$ when $|x|=\xi$. Then, the
%main lemma in \cite{alikakosReplace}, a nontrivial extension of
%Lemma \ref{lemfusco} below, says that $\bar{u}_R$ will remain close
%to $Q$ in $|x|\leq \xi$ (leaving induces an energetic cost).
%
%By the way, we have found that this intuitive lemma can be applied
%to provide alternate proofs and extensions of results in
%\cite{danceryanCVPDE}. We will pursue this direction elsewhere.
%\end{rem}

\subsection{Proof of Theorem \ref{thmmine}}\label{subsectionproof} Once Lemma \ref{lem1} is
established, the proof of Theorem \ref{thmmine} proceeds in a rather
standard way. We will present two different approaches, and leave it
to the reader's personal taste. The first approach is based on the
method of upper and lower solutions, while the second one is based
on variational arguments.

\texttt{First proof of Theorem \ref{thmmine}:}  We will adapt an
argument from the proof of Theorem 2.1 in \cite{dancer-fields}, and
prove existence of the desired solution to (\ref{eqEq}) by the
method of upper and lower solutions (see for instance \cite{matano},
\cite{sattinger}). Let $\epsilon\in (0,\mu)$ and $D>D'$, where $D'$
is as in (\ref{eqD}), and $R'$ be the positive constant, depending
only on $\epsilon$, $D$, $W$, and $n$, that is described in Lemma
\ref{lem1}. Suppose that $\Omega$ contains a closed ball of radius
$R'$. We use $\bar{u}(x)\equiv \mu,\ x\in \Omega$, as an upper
solution (recall that $W'(\mu)=0$), and as lower solution the
function
\begin{equation}\label{eqlower}
\underline{u}_P(x)\equiv\left\{\begin{array}{ll}
                       u_{\textrm{dist}(P,\partial\Omega)}(x-P), & x\in B_{\textrm{dist}(P,\partial\Omega)}(P),  \\
                         &    \\
                       0, & x\in \Omega\backslash B_{\textrm{dist}(P,\partial\Omega)}(P),
                     \end{array}
 \right.
\end{equation}
for some $P\in \Omega_{R'}$ (considered fixed for now), where
$u_{R}$ is as in Lemma \ref{lem1} (here we used that $W'(0)\leq 0$
and Proposition 1 in \cite{Berestyckilion} to make sure that
$\underline{u}_P$ is a lower solution, see also Proposition 1 in
\cite{kurata}). In view of (\ref{eqmaxball}) and
(\ref{eqestimRect}), keeping in mind that
\begin{equation}\label{eqP}
\textrm{dist}(P,\partial \Omega)>R',
\end{equation}
it follows  that
\begin{equation}\label{eqmaxballunder}
\underline{u}_P(x)<\bar{u}(x)\equiv \mu,\ x\in \Omega, \ \
\textrm{and}\ \ \mu-\epsilon<\underline{u}_P(x),\ \ x\in
B_{\left(\textrm{dist}(P,\partial \Omega)-D\right)}(P).
\end{equation}
In the case where $\Omega$ is bounded,  it follows immediately from
the method of monotone iterations, see Theorem 2.3.1 in
\cite{sattinger} (this is the only place where we use the smoothness
of $\partial \Omega$, see however Remark \ref{remcafalipschitz}
below), that there exists a solution $u \in C^2(\Omega)\cap
C(\bar{\Omega})$ of (\ref{eqEq}) such that
\begin{equation}\label{eqPsliding}
\underline{u}_P(x)<  u(x)< \bar{u}(x)\equiv \mu,\ \ x\in \Omega,
\end{equation}
(keep in mind that the solution $u$ depends on the choice of the
center $P$). The same property can also be shown in the case where
$\Omega$ is unbounded, by exhausting it with a sequence of bounded
domains, see
 Theorem 2.10 in
\cite{niIndiana} (also recall our discussion following the statement
of Theorem \ref{thmmine}), see also \cite{noussair,ogata}. We have
thus established the existence of a solution $u$ to (\ref{eqEq})
that satisfies (\ref{eq12}), and  the lower bound (\ref{eq14+}) in
the region $B_{\left(\textrm{dist}(P,\partial \Omega)-D\right)}(P)$
(recall (\ref{eqmaxball}), (\ref{eqestimRect}), and (\ref{eqP})), or
equivalently in $P+B_{\left({\textrm{dist}(P,\partial
\Omega)-D}\right)}\supseteq P+B_{(R'-D)}$. It remains to show that
the latter lower bound is valid in $\Omega_{R'}+B_{(R'-D)}$.
 To this end, as we will see in a
moment, in a more complicated setting, we can translate the
compactly supported function $u_{R'}(x-Q)$, for $x\in B_{R'}(Q)$
(extended by zero otherwise), $Q\in \Omega_{R'}$, starting from
$Q=P$, by means of Serrin's sweeping technique and the sliding
method, to obtain the lower bound (\ref{eq14+}) via
(\ref{eqestimRect}).

In the remainder of this proof, unless specified otherwise, we will
assume that relation (\ref{eqW''}) holds.
  Observe
that as we vary the point $P$ in $\Omega_{R'}$, assuming for the
moment that $\Omega_{R'}$ has a single arcwise connected component,
the functions $\underline{u}_P$'s continue to be lower solutions of
(\ref{eqEq}). Consequently, by Serrin's sweeping principle (see
\cite{clemente,dancerProcLondon,Jang,sattinger}, and the last part
of the proof of Proposition \ref{propacard} herein), we deduce that
\begin{equation}\label{eqsweep}\underline{u}_Q(x)<u(x),\ \ x\in
\Omega,\ \ \forall \  Q\in \Omega_{R'},\end{equation} (see also the
proof of Lemma 3.1 in \cite{clemente}, and note that
$\underline{u}_Q$ varies continuously with respect to $Q$ because of
the connectedness of $\partial \Omega$; by the implicit function
theorem \cite{malchiodicambridge}, we obtain that $u_R$ varies
smoothly with respect to $R$, provided that $R$ is sufficiently
large so that Proposition \ref{proUniq} is applicable).

In fact, this is more in the spirit of the celebrated sliding method
\cite{berestyckiBrazil}: Let $\gamma(s),\ s\in [0,1]$, be a smooth
curve, lying entirely in $\Omega_{R'}$, such that $\gamma(0)=P$ and
$\gamma(1)=Q$ ($Q\in \Omega_{R'}$ arbitrary). It follows from
(\ref{eqPsliding}) that $\underline{u}_{\gamma(0)}<u$ in $\Omega$.
We intend to show that \[\underline{u}_{\gamma(s)}\leq u\
\textrm{in}\  \Omega\ \textrm{for\ all}\ s\in [0,1].\] Call
\[
t_*=\sup \left\{t\in [0,1]\ :\ \underline{u}_{\gamma(s)}\leq u\
\textrm{on}\ \bar{\Omega}\ \ \forall\ s\in [0,t] \right\}.
\]
It is easily seen that
\[
\underline{u}_{\gamma(t_*)}\leq u\ \ \textrm{on}\ \bar{\Omega}.
\]
Suppose, to the contrary, that $t_*<1$. Then, there exists a
sequence $t_j>t_*$, satisfying $t_j\to t_*$, and a sequence $x_j\in
\bar{\Omega}$, such that
\[
\underline{u}_{\gamma(t_j)}(x_j)>u(x_j).
\]
Clearly, we have that
\[
x_j\in B_{\textrm{dist}\left(\gamma(t_j),\partial \Omega
\right)}\left(\gamma(t_j)\right).
\]
We may therefore assume that
\[
x_j\to x_*\in \bar{B}_{\textrm{dist}\left(\gamma(t_*),\partial
\Omega \right)}\left(\gamma(t_*)\right).
\]
Furthermore, we obtain that
\[\underline{u}_{\gamma(t_*)}(x_*)=\lim_{j\to \infty}
\underline{u}_{\gamma(t_j)}(x_j)\geq \lim_{j\to\infty}u(x_j)=u(x_*).
\]
Hence, we get that $\underline{u}_{\gamma(t_*)}(x_*)=u(x_*)$. Since
$\underline{u}_{\gamma(t_*)}=0$ on $\partial
{B}_{\textrm{dist}\left(\gamma(t_*),\partial \Omega
\right)}\left(\gamma(t_*)\right)$, while $u>0$ in $\Omega$, we infer
that $x_*\in {B}_{\textrm{dist}\left(\gamma(t_*),\partial \Omega
\right)}\left(\gamma(t_*)\right)$ or $x_*\in \partial \Omega$, and
is a point of local minimum for $u-\underline{u}_{\gamma(t_*)}$.
But, we have
\begin{equation}\label{eqsmallvol}
\Delta \left(u- \underline{u}_{\gamma(t)}\right)+q_t(x) \left(u-
\underline{u}_{\gamma(t)}\right)\leq 0\ \ \textrm{weakly\ in}\
\Omega, \ \textrm{with}\ q_t\in L^\infty(\Omega),\ t\in [0,1].
\end{equation}
%B_{\textrm{dist}\left(\gamma(t),\partial \Omega
%\right)}\left(\gamma(t) \right)\subset \Omega
If $x_*\in {B}_{\textrm{dist}\left(\gamma(t_*),\partial \Omega
\right)}\left(\gamma(t_*)\right)$, the strong maximum principle
implies that $u\equiv {u}_{\gamma(t_*)}$ therein. However, this is
not possible, since $u>{u}_{\gamma(t_*)}=0$ at points on
$\partial{B}_{\textrm{dist}\left(\gamma(t_*),\partial \Omega
\right)}\left(\gamma(t_*)\right)$ which are not on $\partial
\Omega$. In passing, we remark that a similar argument, in the case
where the radius of the sliding ball is kept fixed, appears in the
proof of Lemma 3.1 in \cite{cafareliPacard}. It remains to consider
the case where $x_*\in \partial \Omega$, and
$u>\underline{u}_{\gamma(t_*)}$ in $\Omega$. For simplicity, we will
assume that $u$ and $\underline{u}_{\gamma(t_*)}$ touch only at
$x_*$, since the general case can be treated analogously. Given
$\rho>0$, there exist $\delta,d>0$ such that
\[
u-\underline{u}_{\gamma(t_*)}\geq d\ \ \textrm{on}\
\bar{B}_{\left(\textrm{dist}\left(\gamma(t_*),\partial \Omega
\right)+\delta\right)}\left(\gamma(t_*) \right)\cap
\bar{\Omega}\setminus B_\rho(x_*),
\]
(by the imposed regularity on $\partial \Omega$, we may assume that
${B}_{\left(\textrm{dist}\left(\gamma(t_*),\partial \Omega
\right)+\delta\right)}\left(\gamma(t_*) \right)\cap {\Omega}$ and
$B_\rho(x_*)\cap  \Omega$ contain only one connected component
respectively). Hence, for $t$ close to $t_*$ (how close depends on
the smallness of $\rho>0$), we have
\[
u-\underline{u}_{\gamma(t)}\geq 0\ \ \textrm{on}\
\bar{B}_{\textrm{dist}\left(\gamma(t),\partial \Omega
\right)}\left(\gamma(t) \right)\setminus B_\rho(x_*).
\]
In turn, the latter relation clearly implies that
\[
u-\underline{u}_{\gamma(t)}\geq 0\ \ \textrm{on}\
\bar{\Omega}\setminus B_\rho(x_*).
\]
In particular, we find that $u-\underline{u}_{\gamma(t)}\geq 0$ on
the boundary of $B_\rho(x_*)\cap \Omega$ for $t-t_*>0$ small.
Decreasing the value of $\rho>0$, if necessary, we can apply the
maximum principle for small domains in (\ref{eqsmallvol}) (see
\cite{brezis}, \cite{saddlecabre3solo}, \cite{dancePlanes}), to
infer that $u-\underline{u}_{\gamma(t)}\geq 0$ in $B_\rho(x_*)\cap
\Omega$. We have thus arrived at $u-\underline{u}_{\gamma(t)}\geq 0$
on $\bar{\Omega}$ for $t-t_*>0$ sufficiently small (depending on the
smallness of $\rho>0$), which contradicts the assumption that
$t_*<1$. Consequently, we have that $t_*=1$, as desired.

The validity of the lower bound (\ref{eq14+}), over the whole
specified domain, now follows from (\ref{eqestimRect}),
(\ref{eqlower}), (\ref{eqP}), and (\ref{eqsweep}). In the case where
the domain $\Omega_{R'}$ has numerably  many arcwise connected
components, we can use the function $ \max\{\underline{u}_{P_i}, \
i=1,\cdots \} $ as a lower solution, where the
$\underline{u}_{P_i}$'s are as in (\ref{eqlower}) with each center
$P_i$ belonging to a different component of $\Omega_{R'}$. (We use
again Proposition 1 in \cite{Berestyckilion}, keep in mind that the
maximum is essentially chosen among finitely many functions). The
case where there are denumerably  many arcwise connected components
of $\Omega_{R'}$ can be treated similarly. The proof of
(\ref{eqcaffaThmmine}), which does not require assumption
(\ref{eqW''}), is postponed until  the second proof of Theorem
\ref{thmmine} that follows.

If $W''(\mu)>0$, the validity of (\ref{eq13}) for $x\in \Omega_{R'}$
follows at once from Corollary \ref{corbrezis} and relations
(\ref{eqlower}), (\ref{eqsweep}). If $\textrm{dist}(x,\partial
\Omega)\leq R'$, then plainly observe that
\begin{equation}\label{eqplaindecay}
\mu-u(x)\leq \mu=\mu e^{R'}e^{-R'}\leq\mu
e^{R'}e^{-\textrm{dist}(x,\partial \Omega)}.
\end{equation}
If relation (\ref{eqW''}) holds, then the validity of
(\ref{eqcafathm}) follows from (\ref{eqcafa}), (\ref{eqlower}), and
(\ref{eqsweep}), keeping in mind that $\mu-\epsilon\leq
\underline{u}_P(P)\leq u(P)$, via (\ref{eqW''}), implies that
$-W'\left(u(P)\right)\leq -W'\left(\underline{u}_P(P)\right)$. We
postpone  the proof of relation (\ref{eqfinal}) until  Subsection
\ref{submodel}.

The first proof of the theorem, with the exception of
(\ref{eqfinal}), is complete.\ \ \ \ \ \ $\Box$

\begin{rem}\label{remcafalipschitz}
It is stated in page 1107 of \cite{cafareliPacard}, unfortunately
without providing a reference, that the method of upper and lower
solutions works also in the case of merely Lipschitz domains (at
least for Dirichlet boundary conditions). If this is true, then our
Theorem \ref{thmmine} holds for $\Omega$ Lipschitz.
\end{rem}

\begin{rem}\label{remstable}
Since it is constructed by the method of upper and lower solutions,
we know that the obtained solution $u$ is stable (with respect to
the corresponding parabolic dynamics), see \cite{matano,sattinger},
namely the principal eigenvalue of
\[
-\Delta \varphi+W''(u)\varphi=\lambda \varphi,\ x\in \Omega;\ \
\varphi=0,\ x\in \partial \Omega,
\]
in nonnegative.
 In the case of unbounded domains, some extra care is needed in the
definition of stability, see
\cite{cabreRadial,dancerMorseTrans,dupaigneBook}.
\end{rem}

\texttt{Second  proof of Theorem \ref{thmmine}:} Assume first that
$\Omega$ is bounded. As in the proof of Lemma \ref{lem1}, there
exists a global minimizer $u_{min}$ of the energy
\[
J(v;\Omega)=\int_{\Omega}^{}\left\{\frac{1}{2}|\nabla v|^2+W(v)
\right\}dx,\ \ v\in \ W^{1,2}_0(\Omega),
\]
which furnishes a classical solution of (\ref{eqEq}) such that
$0\leq u_{min}(x)<\mu$, $x\in \Omega$. Again, by the strong maximum
principle, either $u_{min}$ is identically equal to zero or it is
strictly positive in $\Omega$. We intend to show that there exists
an $R_*>0$, depending only on $W$ and $n$, such that $u_{min}$ is
nontrivial, provided that $\Omega$ contains some closed ball of
radius $R_*$.

For the sake of our argument, suppose that $u_{min}$ is the trivial
solution. Then, motivated from Proposition 1 in \cite{alama} (see
also \cite{cabre}, \cite{lassouedmironescu} and
\cite{nousairPacific}), assuming without loss of generality that
$\bar{B}_{R+2}\subset\Omega$ for some $R>0$, we consider the
function
\begin{equation}\label{eqlinear}
Z(x)=\left\{\begin{array}{ll}
           0,   & x\in \Omega\backslash B_{R+1}, \\
            &   \\
       \mu(R+1-|x|),     & x\in B_{R+1}\backslash B_R,\\
       & \\
\mu, & x\in B_R.
         \end{array}
 \right.
\end{equation}
Since $Z\in W^{1,2}_0(\Omega)$, from the relation $J(0;\Omega)\leq
J(Z;\Omega)$, and recalling that $W(\mu)=0$, we obtain that
\begin{equation}\label{eqcabre}
J(0;B_{R+1})\leq \int_{B_{R+1}\backslash
B_R}^{}\left\{\frac{1}{2}|\nabla Z|^2+W(Z) \right\}dx\leq C_0
R^{n-1},
\end{equation}
with $C_0$ depending only on $W$ and $n$. In turn, the above
relation implies that
\[
|B_{R+1}|W(0)\leq C_0 R^{n-1},
\]
which cannot hold if $R\geq R_*$ is sufficiently large, depending on
$W$ and $n$. Consequently, the minimizer $u_{min}$ is nontrivial,
provided that $\Omega$ contains some closed ball of radius $R_*$.
From our previous discussion, we therefore conclude that $u_{min}$
 satisfies (\ref{eq12}).

Let $\epsilon\in (0,\mu)$ and $D>D'$, where $D'$ is as in
(\ref{eqD}). Suppose that $\Omega$ contains a closed ball of radius
$R'$, where $R'$ is as in the assertion of Lemma \ref{lem1}; without
loss of generality, we may assume that $R'>R_*$.
 Relation (\ref{eq14+}) now follows by
applying Lemma \ref{lemdancer} below, over every closed  ball of
radius $R'$ contained in $\Omega$, and recalling Lemma \ref{lem1}.
Note that, as in Remark \ref{remmonotbelowLem1}, the unique
continuation principle implies that
\begin{equation}\label{eqJer}u_{min}\  \textrm{minimizes}\  J(v;\mathcal{D})\  \textrm{in}\  v-u_{min}\in
W^{1,2}_0\left(\mathcal{D}\right)\  \textrm{for\ every\ smooth\
bounded\ domain}\ \mathcal{D}\subset\Omega.
\end{equation}
The case where $\Omega$ is unbounded can be treated by exhausting it
by an infinite sequence of bounded ones, where the above
considerations apply (see also \cite{nousairPacific}). The
minimizers over the bounded domains (extended by zero outside)
converge locally uniformly to a solution $u$ of (\ref{eqEq}) that
satisfies  (\ref{eq12}) (the latter solution is nontrivial by virtue
of the lower bound  $u(x)\geq \mu-\epsilon,\ x\in B_{(R'-D)}(x_0)$
for some $x_0 \in \Omega_{R'}$,  which is valid since we may assume
that each one of the bounded domains contains the same closed ball
$\bar{B}_{R'}(x_0)$). This solution of (\ref{eqEq}), on the
unbounded domain $\Omega$, found in this way, may have infinite
energy but is still  a global minimizer in the sense of Definition
1.2 in \cite{jerison}, namely satisfies (\ref{eqJer}). As before, it
satisfies (\ref{eq14+}).

The validity of (\ref{eqcaffaThmmine}) follows from
(\ref{eqcaffaLemmmine}) and Lemma \ref{lemdancer} below (applied on
every ball $B_{\textrm{dist}(x,\partial\Omega)}(x),\ x\in
\Omega_{R'}$). Similarly, if $W''(\mu)>0$, the validity of
(\ref{eq13}) follows from Corollary \ref{corbrezis}, Lemma
\ref{lemdancer} below, and the observation in (\ref{eqplaindecay}).
The validity of relation (\ref{eqcafathm}) follows in the same
manner, making use of (\ref{eqcafa}). We postpone  the proof of
relation (\ref{eqfinal}) until  Subsection \ref{submodel}.

The second proof of the theorem, with the exception of
(\ref{eqfinal}), is complete.\ \ \ \ \ \ $\Box$
\begin{rem}\label{remuniqsweers2ap}
If $W$ is as in Remark \ref{remmonotbelowLem12}, and $\Omega$ is
bounded with smooth boundary (at least $C^3$), in view of the latter
remark and Theorem 2 in \cite{sweersEdinburg}, the solutions of
Theorem \ref{thmmine} that we found by the two different approaches
are actually the same, if $\epsilon$ is chosen sufficiently small.
\end{rem}
\begin{rem}
The first proof of Theorem \ref{thmmine} provides the additional
information of the existence of a minimal and maximal solution of
(\ref{eqEq}).
\end{rem}

\begin{rem}\label{remconvex}
Assume that the domain $\Omega$ is symmetric with respect to the
hyperplane $x_i=0$. Then, since the solution of (\ref{eqEq}),
provided by the second proof of Theorem \ref{thmmine}, is a global
minimizer of the associated energy (in the sense described above, in
case $\Omega$ is unbounded), it follows from Theorem II.5 in
\cite{lopez} (applied on symmetric bounded domains, with respect to
the hyperplane $x_i=0$, exhausting $\Omega$) that the latter
solution is symmetric with respect to this hyperplane. Note that, if
in addition the domain $\Omega$ is bounded and convex in the $x_i$
direction, this assertion holds true for \emph{any} positive
solution of (\ref{eqEq}) by virtue Theorem 2 in \cite{brezis} or
Theorem 1 in \cite{dancePlanes} (proven by the method of moving
planes). Clearly, if uniqueness holds for positive solutions of
(\ref{eqEq}) (recall Remark \ref{remuniq}), these assertions follow
at once (see also Remark 1.3 in \cite{fuscoTrans}).
\end{rem}
%\begin{rem}\label{remgradient}
%Using (\ref{eqEq}), (\ref{eq13}), and standard elliptic estimates
%(see \cite{Gilbarg-Trudinger}), we obtain that
%\[
%|\nabla u(x)|\leq K'e^{-k'\textrm{dist}(x, \partial \Omega)},
%\]
%for some constants $k',K'$ depending only on $W$ and $n$.
%\end{rem}
\begin{rem}\label{kajikiya}
In the case where $\overline{\Omega}$ is the complement in
$\mathbb{R}^n$ of a smooth \emph{convex} domain $\mathcal{D}$, the
existence of the desired solution to (\ref{eqEq}) can be proven by
noting that the function
\begin{equation}\label{eqkajikaya}
\underline{u}(x)=\textbf{U}\left(\textrm{dist}(x,\partial
\mathcal{D})\right),
\end{equation}
with $\textbf{U}$ as in (\ref{eqU}), is a lower solution to
(\ref{eqEq}). This follows from  (\ref{eqUincr}), and the property
that the distance function $\rho(x)=\textrm{dist}(x,\partial
\mathcal{D})$ satisfies $|\nabla \rho|=1$ and $\Delta \rho\geq 0$
(see \cite{kajikiya} and a related discussion in \cite{savin}).
Actually, it has been proven recently in \cite{hardy1,hardy2} that
these  properties also hold for \emph{mean convex} domains
$\mathcal{D}$. Keep in mind that \emph{$\bar{u}=\mu$ is always an
upper solution}.

In the case where $\Omega$ is the quarter-plane $\{x_1>0,\ x_2>0\}$
(recall our discussion in the introduction about saddle solutions),
and \emph{$W$ also satisfies (\ref{eqbrezisOz})}, it was observed in
\cite{schatman} that the function
\[
\frac{1}{\mu}\textbf{U}\left(\frac{x_1}{\sqrt{2}}
\right)\textbf{U}\left(\frac{x_2}{\sqrt{2}} \right)
\]
is a lower solution to (\ref{eqEq}). We note that if the first
$\textbf{U}$ in the above product is replaced by $u_R$, as provided
by Lemma \ref{lem1} with $n=1$, the resulting function becomes a
lower solution to (\ref{eqEq}) in the semi-infinite strip
$(-\sqrt{2}R,\sqrt{2}R)\times [0,\infty)$; in this regard, recall
our discussion on ``tick'' saddle solutions. Similarly, we can
construct lower solutions in a rectangle (recall the so called
``lattice'' solutions). Analogous constructions hold in arbitrary
dimensions. However, it does not seem likely that one can play this
game for the so called ``pizza'' solutions.
\end{rem}
\begin{rem}
In the case where $\Omega$ is convex,  the function
\[
\bar{u}(x)=\textbf{U}\left(\textrm{dist}(x,\partial \Omega)\right),
\]
is a (weak) upper solution to (\ref{eqEq}) (see the comments
following (\ref{eqkajikaya})). Therefore, if uniqueness of positive
stable solutions holds, we can generalize (\ref{eqfifebarrier}).
\end{rem}
\section{Uniform estimates for positive solutions without specified  boundary
conditions}\label{secpacard} In this section, we will assume
conditions \textbf{(a')},  (\ref{eqpacardW''}), and
(\ref{eqpacardW'}). Under these assumptions,
 we
will establish uniform estimates for solutions of
\begin{equation}\label{eqEqnobdry}
\Delta u=W'(u),
\end{equation}
provided that they are positive and less than $\mu$ over a
sufficiently large set.
 Our motivation comes from Lemmas 3.2--3.3
in \cite{cafareliPacard}, Lemma 4.1 in \cite{kowalczykliupacard},
and Lemma 6.1 in \cite{shiTams} (see also Lemma 2.4 in
\cite{farinaRendi} and \cite{kiselev}).

%
%(these conditions clearly hold for the model double well potential
%(\ref{eqAllen})). The drawback of our approach is that we require
%uniqueness of a stable solution, satisfying (\ref{eqmaxball}), to
%hold for \emph{every} $R>0$.
%Nevertheless, our
%assumptions hold for the well known nonlinearity in (\ref{eqwei})
%(recall the comments around it),
%as well as for the potential in
%(\ref{eqAllen}) (recall the discussion leading to
%(\ref{eqbrezisOz})).
 %Hence, our
%result complements theirs.
The next proposition and the corollary that follows refine the
latter results, pretty much as (\ref{eqestimRect}) refined
(\ref{eqsweers}). In particular, the approach that we apply for
their proofs will be used crucially in the proof of Theorem
\ref{thmfarmine} below.

\begin{pro}\label{propacard}
Suppose that $W\in C^3$ satisfies \textbf{(a')},
(\ref{eqpacardW''}), and (\ref{eqpacardW'}). Let $\epsilon\in
(0,\mu)$ and $D>D'$, where $D'$ is given from (\ref{eqD}). There
exists a positive constant $R'$, depending on $\epsilon$, $D$, $W$,
$n$, such that for \emph{any} solution of (\ref{eqEqnobdry}) which
satisfies
\begin{equation}\label{eqdummy}
0<u(x)<\mu, \ \ x\in \bar{B}_R(P),\ \textrm{for \ some}\ P\in
\mathbb{R}^n,\ \ \textrm{and}\ R\geq R',
\end{equation}
we have
\begin{equation}\label{eqjmj}
u(x)\geq \mu-\epsilon,\ \ x\in \bar{B}_{(R-D)}(P).
\end{equation}

If $W''\geq 0$ on $[\mu-\epsilon,\mu]$, we have that
\begin{equation}\label{eqcaffaLemmmineNobdry}
\min\left\{W(t)\ :\ t\in \left[0,u(x)\right] \right\}\leq
\frac{C}{R-|x-P|},\ \ x\in B_R(P),
\end{equation}
for some positive constant $C$ that depends only on $W$ and $n$, and
\begin{equation}\label{eqcafaP}
-W'\left(u(P) \right)\leq \tilde{C}(R-|x-P|)^{-2},\ \ x\in B_R(P),
\end{equation}
for some constant $\tilde{C}>0$ that depends only on $n$.
\end{pro}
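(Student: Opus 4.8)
The plan is to derive all three conclusions from the single comparison
\[
u(x)\ \ge\ u_R(x-P)\qquad\text{for all }x\in\bar B_R(P),
\]
where $u_R$ is the radial, radially decreasing global minimizer of $J(\,\cdot\,;B_R)$ furnished by Lemma~\ref{lem1} (its existence, radial monotonicity, and the bounds $(\ref{eqmaxball})$, $(\ref{eqestimRect})$, $(\ref{eqcaffaLemmmine})$ holding here because of Lemmas~\ref{lem1} and~\ref{lemhelly}), and where $R'$ is the constant of Lemma~\ref{lem1} attached to the prescribed $\epsilon$ and $D$. Once this comparison is known, the three assertions come for free. Indeed $(\ref{eqjmj})$ follows because $|x-P|\le R-D$ forces $u(x)\ge u_R(|x-P|)\ge\mu-\epsilon$ by $(\ref{eqestimRect})$. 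Likewise, since $[0,u_R(|x-P|)]\subseteq[0,u(x)]$, we get $\min_{t\in[0,u(x)]}W(t)\le\min_{t\in[0,u_R(|x-P|)]}W(t)\le C/(R-|x-P|)$ by $(\ref{eqcaffaLemmmine})$, which is $(\ref{eqcaffaLemmmineNobdry})$ with the same constant $C=C(W,n)$ --- note that this estimate does not actually use the hypothesis $W''\ge0$ on $[\mu-\epsilon,\mu]$. Finally, the right-hand side of $(\ref{eqcafaP})$ is smallest at $x=P$, so it suffices to establish $-W'(u(P))\le\tilde C R^{-2}$; here $u(P)\ge u_R(0)\ge\mu-\epsilon$, and because $W''\ge0$ on $[\mu-\epsilon,\mu]$ makes $-W'$ nonincreasing there, $-W'(u(P))\le-W'(u_R(0))\le\tilde C R^{-2}$ by Lemma~\ref{lemcafa}, whose hypotheses \textbf{(a'')}, $(\ref{eqW''})$ and $(\ref{eq115})$ all follow from the ones assumed; thus $\tilde C=\tilde C(n)$.

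To establish the comparison I would run Serrin's sweeping in the radius, exactly in the spirit of the first proof of Theorem~\ref{thmmine}. For $0<\rho\le R$ extend the minimizer $u_\rho$ of $J(\,\cdot\,;B_\rho)$ by zero outside $B_\rho$ and set $\underline u_\rho=u_\rho(\,\cdot\,-P)$; since \textbf{(a')} gives $W'(0)\le0$, each $\underline u_\rho$ is a weak subsolution of $\Delta v=W'(v)$ on $\bar B_R(P)$ (Proposition~1 in \cite{Berestyckilion}), while $\mu$ is a supersolution. For $\rho_0>0$ sufficiently small the comparison $\underline u_{\rho_0}\le u$ holds on $\bar B_R(P)$ trivially: $u\ge\delta>0$ on the compact set $\bar B_R(P)$, whereas $u_{\rho_0}\equiv0$ when $W'(0)=0$, $W''(0)<0$ and $\rho_0<R_c$ (recall $(\ref{eqRc})$), and in any case $0\le u_{\rho_0}\le\frac{\rho_0^2}{2n}\max_{[0,\mu]}|W'|$ by comparison with the torsion function, using $\Delta u_{\rho_0}=W'(u_{\rho_0})\ge-\max_{[0,\mu]}|W'|$. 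If $\rho_*:=\sup\{\rho\in[\rho_0,R]:\underline u_\sigma\le u\text{ on }\bar B_R(P)\text{ for all }\sigma\in[\rho_0,\rho]\}$ were $<R$, then verbatim as in Theorem~\ref{thmmine} one finds a point $x_*\in\bar B_{\rho_*}(P)$ with $(u-\underline u_{\rho_*})(x_*)=0$; writing $\Delta(u-\underline u_{\rho_*})+q(x)(u-\underline u_{\rho_*})\le0$ weakly with $q\in L^\infty(B_R(P))$, the strong maximum principle forces $u\equiv u_{\rho_*}(\,\cdot\,-P)$ in $B_{\rho_*}(P)$ if $x_*\in B_{\rho_*}(P)$ --- impossible, as $u>0=u_{\rho_*}(\,\cdot\,-P)$ on $\partial B_{\rho_*}(P)$ --- while $x_*\in\partial B_{\rho_*}(P)$ is excluded outright since there $\underline u_{\rho_*}(x_*)=0<u(x_*)$. (The argument is in fact simpler than in Theorem~\ref{thmmine}, because $u$ is bounded below by a positive constant on the whole of $\bar B_R(P)$, so no contact on the outer boundary can occur.) Hence $\rho_*=R$ and the comparison holds.

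The step needing the most care is the continuity of the family $\rho\mapsto u_\rho$ used to propagate the sweep. For $\rho$ large this is the situation already recorded after the first proof of Theorem~\ref{thmmine}: by Proposition~\ref{proUniq} the minimizer is linearly nondegenerate, hence locally unique and smoothly dependent on $\rho$ by the implicit function theorem. On a compact range of smaller radii one passes instead to locally uniform limits of minimizers --- again minimizers, by elliptic estimates and the lower semicontinuity of $J$ --- redefining $u_\rho$ along a subsequence where the minimizer is not unique, and uses the monotonicity $(\ref{eqnondecreasing})$, valid without any uniqueness hypothesis (Remark~\ref{remmonotbelowLem12}), to absorb the at most countably many radii at which the family jumps; for $(\ref{eqjmj})$ one may avoid this altogether, since by Lemma~\ref{lemhelly} \emph{every} solution of the Dirichlet problem on $B_\rho$ with values in $(0,\mu)$ already obeys $(\ref{eqestimRect})$, so the minimizer need not be singled out. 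Finally, the anchoring radius $\rho_0$ may depend on $u$ through $\delta=\inf_{\bar B_R(P)}u$, but this is harmless: $\rho_0$ enters neither $R'$ nor the constants $C$, $\tilde C$.
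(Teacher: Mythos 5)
Your reduction of all three estimates to the single comparison $u(x)\ge u_R(x-P)$ on $\bar B_R(P)$, and the way you then extract (\ref{eqjmj}), (\ref{eqcaffaLemmmineNobdry}) and (\ref{eqcafaP}) from (\ref{eqestimRect}), (\ref{eqcaffaLemmmine}) and Lemma \ref{lemcafa}, is exactly the paper's scheme. The gap is in how you produce the comparison. Serrin's sweeping needs the family of subsolutions to vary \emph{continuously} in the sweeping parameter, and your family $\rho\mapsto u_\rho$ of global minimizers need not be continuous in $\rho$: minimizers may be non-unique and the family may jump. Neither of your patches closes this. First, Proposition \ref{proUniq} (non-degeneracy, hence smooth dependence via the implicit function theorem) is only available under hypothesis (\ref{eqW''}), which is \emph{not} assumed in the first assertion of Proposition \ref{propacard}, so you cannot invoke it to prove (\ref{eqjmj}). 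Second, the monotonicity (\ref{eqnondecreasing}) only controls the one-sided limits at a jump radius $\rho_*$: from $\underline u_\sigma\le u$ for all $\sigma<\rho_*$ you get $u\ge L:=\lim_{\sigma\to\rho_*^-}u_\sigma$, but every minimizer at a radius $\rho>\rho_*$ dominates the (possibly strictly larger) right limit $U:=\lim_{\sigma\to\rho_*^+}u_\sigma$, and there is no comparison principle between the \emph{arbitrary} solution $u$ and $U$ that lets the sweep cross the jump; the strong-maximum-principle gap $\min_{\bar B_R(P)}(u-\underline u_{\rho_*})>0$ need not exceed $U-L$. (Lemma \ref{lemdancer} cannot be used here precisely because $u$ is not assumed to be a minimizer; that is what distinguishes this proposition from the second proof of Theorem \ref{thmmine}.)

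This continuity problem is exactly what the paper's proof is built to avoid, and it is why $W\in C^3$ and (\ref{eqpacardW''}) are assumed: instead of parameterizing the radial subsolutions by the radius, one parameterizes them by their maximum value $\tau=v_\tau(0)$ along the global bifurcation branch $\mathcal{C}_+$ of positive solutions of the rescaled problem (\ref{eqEVPkorman}). Korman's results, based on the identity (\ref{eqkorman}), guarantee that this branch is a smooth curve with only simple turning points, so the family $\tau\mapsto (R_\tau,v_\tau)$ is continuous, $R_\tau$ has unbounded range, and the solutions tend to zero as $\tau\to 0$ (which anchors the sweep); at a fold the branch connects one minimizer to another through solutions at \emph{nearby radii}, which is precisely how a jump of the radius-indexed family is traversed. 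You should replace your radius-sweep by this $\tau$-sweep (or supply some other genuinely continuous family); the endpoint of the sweep is then only \emph{some} solution at radius $R$, which is why the paper invokes Lemma \ref{lemhelly} for (\ref{eqjmj}) and Remark \ref{remuniqDancer} (uniqueness, which \emph{does} use $W''\ge 0$ on $[\mu-\epsilon,\mu]$, contrary to your parenthetical remark about (\ref{eqcaffaLemmmineNobdry})) to identify it with the minimizer of Lemma \ref{lem1} for the remaining two estimates.
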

\begin{proof}
Before we go into the proof, let us make some remarks. The point of
this proposition
  is
that we do not assume that the solutions under consideration are
global minimizers, a case which can be handled similarly to the
second proof of Theorem \ref{thmmine}. The argument that was used
for the related results in \cite{cafareliPacard},
\cite{kowalczykliupacard}, \cite{shiTams} essentially consists in
constructing a family of positive lower solutions of
(\ref{eqEqnobdry}) of the form $s\varphi_R$, $s>0$, where
$\varphi_R$ is the eigenfunction associated to the principal
eigenvalue of the negative Dirichlet Laplacian over a fixed ball
$B_R$, and sweeping \'{a} la Serrin with respect to $s$ (see also
Lemma \ref{lemhelly} herein, Lemma 3.1 in \cite{angenent}, Lemma 2
in \cite{dancerProcLondon}, Theorem 2.1 in \cite{sabina}, and
Proposition 3.1 in \cite{ros}). On the other hand, our argument
consists in constructing a family of nonnegative lower solutions of
(\ref{eqEqnobdry}) from the global minimizing solutions of
(\ref{eqgidasEq}) that are provided by Lemma \ref{lem1}, and
sweeping with respect to the radius of the ball (a similar idea can
be found in \cite{fifesaddle}, see also the comments after
Proposition 2.2 in \cite{guiEven}). Borrowing an expression from
\cite{kiselev}, this type of argument may appropriately be called
"ballooning" (as opposed to "sliding"). The main advantage of our
approach will become clear in Theorem \ref{thmfarmine} below.

Observe that if $u$ solves (\ref{eqgidasEq}), the function
$v(y)\equiv u(Ry)$, $y\in B_1$, satisfies
\begin{equation}\label{eqEVPkorman}
\Delta v=R^2 W'(v),\ y\in B_1;\ \ v(y)=0,\ y\in \partial B_1.
\end{equation}
Since $W'(0)\leq 0$ (recall \textbf{(a')}), it follows from the
results in \cite{korman} (see also Chapter 1 in \cite{kormanBook}),
which are based on the identity (\ref{eqkorman}), that solutions of
(\ref{eqEVPkorman}) lie on smooth curves in the $(R,v)$ ``plane'',
i.e. either solutions of (\ref{eqEVPkorman}) can be continued in $R$
or else there are simple turning points (see also
\cite{kielhoferBook} for the definitions and functional set up). We
will distinguish two cases according to  $W'(0)$:
\begin{itemize}
  \item If
$W'(0)<0$,  by a classical global result of Leray and Schauder (see
\cite{leray} or page 65 in \cite{malchiodicambridge}), there exists
an unbounded connected   branch $\mathcal{C}_+
\subseteq(0,\infty)\times C(\bar{B}_1)$ of positive solutions to
(\ref{eqEVPkorman}) that meets $(0,0)$ (see also Lemma 5.1 in
\cite{AmbrozettiHess}). (As we have already discussed, thanks to
\cite{gidas}, all positive solutions of (\ref{eqEVPkorman}) are
radially symmetric and decreasing). In fact, the detailed behavior
of that branch as $R\to 0^+$ is described in Theorem 3.2 of
\cite{shi2}. By the strong maximum principle, we deduce that the
solutions on $\mathcal{C_+}$ take values strictly less than $\mu$
(by a continuity argument, since they do so for small $R$, see also
Lemma 1 in \cite{hess}). Thus, the projection of $\mathcal{C}_+$
onto $(0,\infty)$ is unbounded, namely coincides with $(0,\infty)$.
  \item If $W'(0)=0$ and $W''(0)<0$ (recall (\ref{eqpacardW''})),  there is a global
connected solution curve $\mathcal{C}_+$ in $(0,\infty)\times
C(\bar{B}_1)$, emanating from $(R_c,0)$, where $R_c$ was defined in
(\ref{eqRc}), due to a bifurcation from a simple eigenvalue as $R$
crosses $R_c$ (see \cite{kielhoferBook}). As before, the solutions
on that branch are positive and strictly less than $\mu$.
 It follows readily from Rabinowitz's global bifurcation theorem \cite{rabiglobaL} (see also Chapter 4 in \cite{malchiodicambridge})
that the projection of $\mathcal{C}_+$ onto $(0,\infty)$ is an
unbounded interval (for this point, which relies on the radial
symmetry of solutions, see the appendix in \cite{shibiology} for
instance). (Keep in mind that $\mathcal{C}_+$ is bounded away from
$R=0$, as can easily be seen by testing (\ref{eqEVPkorman}) by the
principal eigenfunction $\varphi_1$ in (\ref{eqhellyphiR}) (see
Lemma  6.17 in \cite{shi2}); in fact, if (\ref{eqdelpinobelowRc})
holds, the projection of $\mathcal{C}_+$ onto $(0,\infty)$ is
$(R_c,\infty)$).
\end{itemize}

% From Lemma \ref{lem1} and our uniqueness assumption,
%it follows that there exists a unique solution branch
%$\mathcal{C}_\infty=\left\{\left(R, u_R(Ry)\right),\ \ R\geq
%R'\right\}\subset (R',\infty)\times C(\bar{B}_1)$ of
%(\ref{eqEVPkorman}).  Therefore, let
%$\mathcal{C}\supset\mathcal{C}_\infty$ be the global connected
%branch of solutions emanating from $\mathcal{C}_\infty$. By the
%strong maximum principle, solutions on $\mathcal{C}$ satisfy
%$0<v<\mu$ on $\bar{B}_1$ (by a continuity argument, since this holds
%for $R\geq R'$). In other words, the branch $\mathcal{C}$ is
%contained in a bounded subset  of $(0,\infty)\times C(\bar{B}_1)$.
%In the case where $W'(0)<0$ it has to end at $(0,0)$, while if
%$W'(0)=0$ it has to end at $(R_{c'},0)$ for some $R_{c'}>0$ (the
%branch cannot reach $R=0$, as can be seen by testing the equation by
%the principal eigenfunction of the negative  Dirichlet Lapalacian,
%see also Remark \ref{remscrew}).

 As in
\cite{dancerCatalysis,korman}, we can parameterize smoothly
$\mathcal{C_+}$ by $\left\{\left(R_\tau,v_\tau\right),\ \tau\in
(0,\tau')\right\}$, for a maximal interval $(0,\tau')\subseteq
(0,\mu)$, where $\tau$ is the maximum of $v_\tau$, namely
$v_\tau(0)=\tau$. The functions
\begin{equation}\label{equv}\texttt{u}_\tau(r)\equiv v_\tau\left(R_\tau^{-1}r\right),\ \
\tau\in (0,\tau'),\ \textrm{where}\ r=|x|,\end{equation}  define a
smooth, with respect to $\tau$, family of solutions to
(\ref{eqgidasEq}), satisfying (\ref{eqmaxball}), with $R=R_\tau$.
Note that
\begin{equation}\label{equrto0RRRRRRR}
R_\tau\to 0, \ \textrm{as}\ \tau\to 0,\ \textrm{if}\ W'(0)<0;\
R_\tau\to R_c, \ \textrm{as}\ \tau\to 0,\ \textrm{if}\
(\ref{eqpacardW''}) \ \textrm{holds},
\end{equation}
and the range of $R_\tau$, $\tau\in(0,\tau')$, covers $(0,\infty)$
and $(R_c,\infty)$ respectively (in the latter case, the covered
interval might be $[R_1,\infty)$ with $R_1<R_c$, but strictly
positive as can be seen by testing by the principal eigenfunction).
In view of Lemma \ref{lemhelly}, it follows that \[\tau'=\mu.\]
 On the other side, from the definition of $\tau$, we see that
\begin{equation}\label{equrto0}
 \texttt{u}_{\tau}\to 0,\ \textrm{uniformly\ on}\ \bar{B}_{R_\tau}, \
\textrm{as}\ \tau\to 0.
\end{equation}

 Given $\epsilon\in
(0,\mu)$ and $D>D'$, where $D'$  as in (\ref{eqD}), let $R'$ be as
in (\ref{eqestimRect}). Suppose that a solution $u$ of
(\ref{eqEqnobdry}) satisfies (\ref{eqdummy}) for some
$\texttt{R}>R'$ and $P\in \mathbb{R}^n$. The family of functions
\[
\underline{\texttt{u}}_{\tau,P}(x)=\left\{\begin{array}{ll}
                            \texttt{u}_\tau(x-P), &x\in B_{R_\tau}(P),  \\
                             &   \\
                            0, & \textrm{elsewhere},
                          \end{array}
\right.
\]
are lower solutions to (\ref{eqEqnobdry}) in $\mathbb{R}^n$ for all
$\tau\in (0,\mu)$ (as the maximum of two lower solutions, recall
that $W'(0)\leq 0$, see \cite{Berestyckilion}). Moreover, we have
\[
\underline{\texttt{u}}_{\tau,P}(x)=0<u(x),\ \ x\in \partial
B_\texttt{R}(P),\ \ \tau \in (0,\tau_*],
\]
where $\tau_*$ is the smallest number  such that
\[R_{\tau_*}=\texttt{R},\]
(such a number exists since $R_\tau$ is smooth and $R_{\tau_i}\to
\infty$ for some sequence $\tau_i\to \infty$).
 Also, thanks to (\ref{equrto0RRRRRRR})
and (\ref{equrto0}), we get
\[
\underline{\texttt{u}}_{\tau,P}(x)<u(x),\ \ x\in
\bar{B}_\texttt{R}(P), \ \textrm{for} \ \tau\ \textrm{close\ to}\ 0.
\]
Consequently, by Serrin's sweeping principle (see
\cite{clemente,dancerProcLondon,sattinger}), we deduce that
\[\underline{\texttt{u}}_{\tau_*,P}(x)\leq u(x),\ \ x\in
\bar{B}_\texttt{R}(P).\] In turn, this implies that
\begin{equation}\label{eqabove}
u_\texttt{R}(x-P)=\texttt{u}_{\tau_*}(x-P)=\underline{\texttt{u}}_{\tau_*,P}(x)\leq
u(x),\ \ x\in \bar{B}_\texttt{R}(P),
\end{equation}
where $u_{\texttt{R}}$ is some solution to (\ref{eqgidasEq}) that
satisfies (\ref{eqmaxball}) with $R=\texttt{R}$. To prove this, we
let $\tilde{\tau}=\sup \left\{\tau\in (0,\tau_*]\ :\ u\geq
\underline{\texttt{u}}_{\tau,P}\ \textrm{on}\
\bar{B}_{\texttt{R}}(P)\right\}$, note that $u(x)\geq
\texttt{u}_{\tilde{\tau}}(x-P)$,
$x\in\bar{B}_{R_{\tilde{\tau}}}(P)$, and apply the strong maximum
principle to $u(x)-\texttt{u}_{\tilde{\tau}}(x-P)$ to deduce that
this function has a positive lower bound on
$\bar{B}_{R_{\tilde{\tau}}}(P)$ if $\tilde{\tau}<\tau_*$; this
implies that the same holds true for the function
$u-\underline{\texttt{u}}_{\tilde{\tau},P}$ on
$\bar{B}_\texttt{R}(P)$ which contradicts the maximality of
$\tilde{\tau}$ if $\tilde{\tau}<\tau_*$.
 Relation (\ref{eqabove}), by virtue of
Lemma \ref{lemhelly} (recall that $\texttt{R}>R'$), clearly implies
the validity of (\ref{eqjmj}).

If $W''\geq 0$ on $[\mu-\epsilon,\mu]$, from Remark
\ref{remuniqDancer} below, we know that (\ref{eqgidasEq}) has a
unique solution satisfying (\ref{eqmaxball}) for large $R$. In
particular, the solution $u_{\texttt{R}}$ in (\ref{eqabove}) is the
global minimizer of Lemma \ref{lem1}, provided that $\texttt{R}$ is
sufficiently large.
 The validity of relation
(\ref{eqcaffaLemmmineNobdry}) now follows at once from
(\ref{eqcaffaLemmmine}) and (\ref{eqabove}). Finally, relation
(\ref{eqcafaP}) follows immediately  from  (\ref{eqcafa}) and
(\ref{eqabove}).

The proof of the proposition is complete.
\end{proof}

\begin{rem}\label{rempacardBrezOz}
In the case where condition (\ref{eqbrezisOz}) holds, relation
(\ref{eqabove}) follows directly from Serrin's sweeping principle.
Indeed, it is easy to verify that the functions
$tu_\texttt{R}(x-P)$, $0\leq t\leq 1$, fashion a family of lower
solutions to (\ref{eqgidasEq}) which vanish along $\partial
B_\texttt{R}(P)$.
\end{rem}

\begin{rem}\label{remW''=0pacard}
The assumption (\ref{eqpacardW''}) is essential for our approach.
Indeed, if $W'(0)=0$ and $W''(0)=0$ then there are no arbitrarily
uniformly
 small positive
 solutions of (\ref{eqEVPkorman}) for any $R>0$ (thanks to the implicit function theorem, see for example \cite{kielhoferBook}). In fact, for the
 case $W'(t)=rt^p-t^q$, $t\geq 0$, with $p>q>1$, $r>0$, which satisfies \textbf{(a')},
 (\ref{eqpacardW'}), and (\ref{eqW''})  but not (\ref{eqpacardW''}),
 the
global bifurcation diagram of
 positive solutions of (\ref{eqEVPkorman}) has been shown in
 \cite{shiOyang} to be qualitatively the same as the one
 corresponding to
 (\ref{eqwei}) that we described at the end of Remark \ref{remscrew}, namely $\subset$-shaped.
 It might also be useful to see the condition on the behavior of
 $W'$ near the origin for the so-called  ``hair--trigger effect'' to take place in
 the parabolic equation $u_t=\Delta u-W'(u)$, see \cite{aronson}.
\end{rem}

\begin{rem}\label{remdrawbak2222}
In \cite{cafareliPacard}, the assumption (\ref{eqpacardW''}) was
replaced by the weaker one: $W'$ being Lipschitz continuous and
$-W'(t)\geq \delta_0 t$ on $[0,t_0]$ for some $\delta_0,t_0>0$. A
possible ``cure'' for this could be the use of bifurcation theory
for H\"{o}lder continuous nonlinearities (see Appendix B in
\cite{shi2} and the references therein).
\end{rem}

\begin{rem}\label{remangenent}
The proof of the above proposition may be adapted to provide an
alternative proof of Lemma 3.1 in \cite{angenent}. Therefore, one
can estimate the width of the boundary layer of positive solutions
to the spatially inhomogeneous singular perturbation  problem
(\ref{eqinhomogeneous}) below (with $W(\cdot,x)$ essentially
satisfying the assumptions of this section for each $x$) only in
terms of $W$, without involving the principal Dirichlet eigenvalue
of the Laplacian in the unit ball of $\mathbb{R}^n$ (see also Remark
\ref{remSingular} below).
\end{rem}

The following corollary can be deduced from Proposition
 \ref{propacard} by making use of the celebrated sliding method.
\begin{cor}\label{corpacard}
Suppose that $W \in C^3$ satisfies \textbf{(a')} and
(\ref{eqpacardW'}). Let $\epsilon\in (0,\mu)$ and $D>D'$, where $D'$
is given from (\ref{eqD}). There exists an $R'>D$, depending only on
$\epsilon$, $D$, $W$, and $n$, such that any solution $u$ of
(\ref{eqEqnobdry}) which satisfies (\ref{eq12}) in a domain
$\Omega\neq \mathbb{R}^n$ (open and connected set), containing some
closed ball of radius $R'$, satisfies (\ref{eq14+}),
(\ref{eqcaffaThmmine}), and (\ref{eqcafathm}). In the case where
$\Omega=\mathbb{R}^n$, the only solutions of (\ref{eqentire}) such
that $0\leq u(x)\leq \mu$, $x\in \mathbb{R}^n$, are the constant
ones, namely $u\equiv 0$ and $u\equiv \mu$.
 \end{cor}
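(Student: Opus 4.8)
The plan is to read off the corollary from Proposition~\ref{propacard} by letting the ball in that proposition slide and grow inside $\Omega$. Let $R'=R'(\epsilon,D,W,n)>D$ be the constant supplied by Proposition~\ref{propacard}. Assume first that $\Omega\neq\bbR^n$, so that $\partial\Omega\neq\emptyset$ and $\textrm{dist}(\cdot,\partial\Omega)$ is finite and positive on $\Omega$. Fix $P\in\Omega_{R'}$; then $\textrm{dist}(P,\partial\Omega)>R'$, so for every $R$ with $R'\le R<\textrm{dist}(P,\partial\Omega)$ the closed ball $\bar B_R(P)$ is a compact subset of $\Omega$, hence $0<u<\mu$ on $\bar B_R(P)$ by hypothesis (\ref{eq12}), and Proposition~\ref{propacard} applies and gives $u\ge\mu-\epsilon$ on $\bar B_{(R-D)}(P)$ together with (\ref{eqcaffaLemmmineNobdry}) and (\ref{eqcafaP}). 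Letting $R\to\textrm{dist}(P,\partial\Omega)^{-}$ yields $u\ge\mu-\epsilon$ on $B_{(\textrm{dist}(P,\partial\Omega)-D)}(P)\supseteq B_{(R'-D)}(P)$. Taking the union over all $P\in\Omega_{R'}$, and recalling the definition of the Minkowski sum, gives precisely (\ref{eq14+}); choosing $P=x$ for an arbitrary $x\in\Omega_{R'}$ and letting $R\to\textrm{dist}(x,\partial\Omega)^{-}$ in (\ref{eqcaffaLemmmineNobdry}) and (\ref{eqcafaP}) (in which $|x-P|=0$) gives (\ref{eqcaffaThmmine}) and (\ref{eqcafathm}), under the respective monotonicity hypotheses on $W$ near $\mu$ required in Proposition~\ref{propacard}. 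Equivalently, one may phrase the propagation step as the sliding method proper: translate the zero--extended ball minimizer $\underline{u}_P$ of Lemma~\ref{lem1} over $\Omega_{R'}$ exactly as in the first proof of Theorem~\ref{thmmine}, using $0<u$ in $\Omega$ to run the maximum principle along the boundary of the moving ball, and using a maximum of such lower solutions when $\Omega_{R'}$ has several arcwise connected components.

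For $\Omega=\bbR^n$, let $u$ solve (\ref{eqentire}) with $0\le u\le\mu$. Since $W'(t)\le0$ for $t\in[0,\mu]$ (by \textbf{(a')}, (\ref{eqpacardW'}), and $W'(\mu)=0$), the function $u$ is superharmonic; if $u(x_0)=0$ for some $x_0$, the strong minimum principle forces $u\equiv0$, while if $u(x_0)=\mu$ for some $x_0$, writing $v=\mu-u\ge0$ and expanding $W'$ to first order about $\mu$ one gets $\Delta v-c(x)v=0$ with $c\in L^\infty_{loc}(\bbR^n)$, whence the strong maximum principle forces $v\equiv0$, i.e. $u\equiv\mu$. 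In the remaining case $0<u<\mu$ throughout $\bbR^n$, fix any $P\in\bbR^n$; for every $\epsilon\in(0,\mu)$ the ball $\bar B_{R'(\epsilon)}(P)$ satisfies the hypothesis of Proposition~\ref{propacard}, so $u(P)\ge\mu-\epsilon$, and letting $\epsilon\to0$ gives $u(P)\ge\mu$, contradicting $u<\mu$. Hence $u\equiv0$ or $u\equiv\mu$.

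I do not expect a genuine obstacle here: the substantive work — producing a nontrivial lower barrier lying beneath an \emph{arbitrary} $u$ (the given $u$ is not built by monotone iteration, so nothing forces a subsolution below it a priori) — has already been carried out inside Proposition~\ref{propacard} through the ``ballooning'' continuation along the branch $\mathcal{C}_+$, and the present corollary merely repackages it. The two points that need a little care are (i) that Proposition~\ref{propacard} is invoked with one and the same $R'$ for every admissible ball, which is what legitimizes the limit $R\to\textrm{dist}(P,\partial\Omega)^{-}$, and (ii) that the proof of Proposition~\ref{propacard} uses (\ref{eqpacardW''}) (cf. Remark~\ref{remW''=0pacard}), so in the fully degenerate case $W'(0)=W''(0)=0$ one should either read that condition as inherited here or dispose of it separately, e.g. by pushing $W'$ slightly downward near the origin, applying the above, and passing to the limit.
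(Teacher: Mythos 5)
Your proof is correct and follows the same core route as the paper's: everything is reduced to Proposition \ref{propacard} (relation (\ref{eqabove})), and the whole-space case is settled by the strong maximum principle plus letting the ball radius blow up. The one genuine (and welcome) simplification is that you dispense with the sliding step: since (\ref{eq12}) is assumed on all of $\Omega$, every ball $\bar{B}_R(P)\subset\Omega$ with $P\in\Omega_{R'}$ and $R\geq R'$ qualifies for Proposition \ref{propacard} independently, so one can apply it ball-by-ball and take unions; the paper instead slides a fixed-radius ball as in the first proof of Theorem \ref{thmmine}, where that step is genuinely needed because positivity above the barrier is only known near one center. Your limiting argument $R\to\textrm{dist}(P,\partial\Omega)^{-}$ for (\ref{eqcaffaThmmine}) and (\ref{eqcafathm}) is also slightly more careful than the paper's, which formally uses the ball of radius exactly $\textrm{dist}(Q,\partial\Omega)$. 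Finally, your point (ii) is a legitimate catch rather than a gap in your argument: the corollary as stated omits (\ref{eqpacardW''}), yet its proof (through Proposition \ref{propacard}, whose ballooning along the bifurcation branch $\mathcal{C}_+$ needs either $W'(0)<0$ or $W''(0)<0$, cf. Remark \ref{remW''=0pacard}) does use it, so that hypothesis should be read as inherited from the proposition.
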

\begin{proof}
Suppose that $\epsilon$, $D$, $\Omega \neq \mathbb{R}^n$, $u$  are
as in the first assertion of the corollary. From our assumptions, we
know that $\Omega$ contains some closed ball $\bar{B}_\texttt{R}(P)$
for some $\texttt{R}\geq R'$ and $P\in \Omega_\texttt{R}$.
 Since $u$
satisfies (\ref{eq12}) and (\ref{eqEqnobdry}) in $\Omega$, it
follows from the proof of Proposition \ref{propacard} that relation
(\ref{eqabove}) holds. As in the first proof of Theorem
\ref{thmmine}, we can use the sliding method to show that the latter
relation actually holds for all $P\in \Omega_\texttt{R}$. We point
out that here we do not need that the boundary of $\Omega$ is
continuous, since the radius of the ball is fixed, and we can apply
directly Lemma 3.1 in \cite{cafareliPacard}. The validity of the
first assertion of (\ref{eq14+}) now follows at once form
(\ref{eqestimRect}) (keep in mind that $\texttt{R}$ could also be
chosen as $R'$). Now, let $Q\in \Omega_{R'}+B_{(R'-D)}$. From the
proof of Proposition \ref{propacard}, using Serrin's sweeping
principle, we have that $u(x)\geq u_{\textrm{dist}(Q,\partial
\Omega)}(x-Q)$ in $B_{\textrm{dist}(Q,\partial \Omega)}(Q)$. By
means of (\ref{eqcaffaLemmmine}) and (\ref{eqcafaP}), we infer that
$u$ also satisfies (\ref{eqcaffaThmmine}) and (\ref{eqcafathm})
respectively. Consequently, we have established the first assertion
of the corollary.

The second assertion of the corollary follows easily. By the strong
maximum principle, we deduce that either $u\equiv 0$,  $u\equiv
\mu$, or $0<u(x)<\mu$, $x\in \mathbb{R}^n$. We will show that the
latter alternative cannot happen. Suppose to the contrary that
$0<u(x)<\mu$, $x\in \mathbb{R}^n$. Then,  we get that
(\ref{eqabove}) holds for every $\texttt{R}>0$ and $P\in
\mathbb{R}^n$. By fixing $P$ and letting $\texttt{R}\to \infty$,
making use of (\ref{eqsweers}), we obtain that $u\geq \mu$ in
$\mathbb{R}^n$; a contradiction.

The proof of the corollary is complete.
\end{proof}
\begin{rem}
The second assertion of the corollary is a Liouville type theorem,
and was originally proven in \cite{aronson} by parabolic methods
(see also \cite{berestyckiLiouville} for a simpler proof of a more
general result,  using elliptic techniques, in the spirit of
\cite{cafareliPacard}; see also Theorem 2.7 in \cite{efediev}).
\end{rem}
\begin{rem}
If we additionally   assume that $W''(\mu)>0$, then Proposition
\ref{propacard} and Corollary \ref{corpacard} can be derived from
the exponential decay estimates of Lemma 4.2 in
\cite{kowalczykliupacard}, Proposition 1 in
\cite{nakashimaNonradial}, and Lemma 6.2 in \cite{shiTams} (see also
\cite{ghosubGui} and Lemma 2.4 in \cite{guitraveling}).
\end{rem}
\begin{rem}\label{remcafa}
Estimate (\ref{eqcafathm}) represents a slight improvement over
estimate $(3.3)$ in \cite{cafareliPacard} (see also relation
(\ref{eqcafaImproved}) below). We remark that the latter relation
was shown in \cite{cafareliPacard} without making use of
(\ref{eqW''}).
\end{rem}
\section{Algebraic singularity decay estimates in the
case of pure power nonlinearity, and completion of the proof of
Theorem \ref{thmmine}}\label{secdoubling} The potential that comes
first to mind when looking at \textbf{(a')} is
\begin{equation}\label{eqmodel}
W(t)=|t-\mu|^{p+1},
\end{equation}
where $p\geq1$.

If $p=1$, the solutions provided by Theorem \ref{thmmine} satisfy
the exponential decay estimate (\ref{eq13}). In this section, we
will show that a universal  algebraic decay estimate holds for
\emph{all} solutions of (\ref{eqEqnobdry}) with potential as in
(\ref{eqmodel}), provided that $p>1$. Although our arguments in this
section rely on the specific form of the potential $W$, our results
may be used together with a comparison argument to cover a broader
class of potentials. In particular, as we will show in the following
subsection, we can establish the remaining relation (\ref{eqfinal})
from the proof of Theorem \ref{thmmine}. Moreover, our main estimate
in this section suggests that there is room for improvement over a
result of the celebrated paper \cite{cafareliPacard} by Berestycki,
Caffarelli and Nirenberg, see Remark \ref{rembcn} below. We believe
that the results of this section can have applications in the study
of elliptic singular perturbation problems of the form
(\ref{eqinhomogeneous}) below in the case where the degenerate
equation $W(u,x)=0$ has a root $u=u_0(x)$ of finite multiplicity
(see the recent papers \cite{vasilevaDiff,vasileva1-,vasileva2}).
 This section is self-contained and can be studied
independently of the rest of the paper.

The main result of this section is
\begin{pro}\label{proModel}
Let $W$ be given from (\ref{eqmodel}), with $p>1$, and let
$\Omega\neq \mathbb{R}^n$ be a domain of $\mathbb{R}^n$. There
exists a positive constant $C$, depending only on $p,\ n$, such that
\emph{any} solution $u$ of (\ref{eqEqnobdry}) in $\Omega$ satisfies
\begin{equation}\label{eqmodelAssert}
|\mu-u|+|\nabla u|^{\frac{2}{p+1}}\leq
C\textrm{dist}^{-\frac{2}{p-1}}(x,\partial \Omega),\ \ x\in \Omega.
\end{equation}
In particular, if $\Omega$ is an exterior domain, i.e., $\Omega
\supset \{x\in \mathbb{R}^n\ :\ |x|>R\}$ for some $R>0$, then
\begin{equation}\label{eqmodelAssertExt}
|\mu-u|+|\nabla u|^{\frac{2}{p+1}}\leq C|x|^{-\frac{2}{p-1}},\ \
|x|\geq 2R.
\end{equation}
\end{pro}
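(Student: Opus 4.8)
The plan is to establish the universal bound \eqref{eqmodelAssert} by a contradiction-and-blow-up argument combined with the doubling lemma of Polacik, Quittner and Souplet, together with a Liouville-type theorem for the limiting entire equation. Since $W(t) = |t-\mu|^{p+1}$ gives $W'(t) = -(p+1)|t-\mu|^{p-1}(\mu - t)$, the equation \eqref{eqEqnobdry} reads $\Delta u = -(p+1)|\mu - u|^{p-1}(\mu - u)$, which under the substitution $w = \mu - u$ becomes $\Delta w = (p+1)|w|^{p-1}w$, a semilinear equation with a homogeneous power nonlinearity that is invariant under the scaling $w \mapsto \lambda^{2/(p-1)} w(\lambda \cdot)$. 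First I would introduce the quantity
\[
M(x) = |\mu - u(x)|^{(p-1)/2} + |\nabla u(x)|^{(p-1)/(p+1)},
\]
which has exactly the scaling weight that matches $\mathrm{dist}^{-1}(x,\partial\Omega)$, so that \eqref{eqmodelAssert} is equivalent to $M(x)\,\mathrm{dist}(x,\partial\Omega) \le C$ for all $x \in \Omega$. Suppose this fails: then there are domains $\Omega_k$, solutions $u_k$, and points $y_k \in \Omega_k$ with $M_k(y_k)\,\mathrm{dist}(y_k,\partial\Omega_k) \to \infty$.

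The key step is the application of the doubling lemma (Appendix \ref{secAppenDoubling}) to the function $M_k$ on $\Omega_k$: it produces new points $x_k \in \Omega_k$ with $M_k(x_k) \ge M_k(y_k)$, still satisfying $M_k(x_k)\,\mathrm{dist}(x_k,\partial\Omega_k) \to \infty$, and with the crucial local control $M_k(z) \le 2 M_k(x_k)$ for all $z$ in the ball $B_{R_k/M_k(x_k)}(x_k)$, where $R_k \to \infty$ can be chosen as slowly as we like. Now rescale: set $\lambda_k = M_k(x_k)$ and
\[
v_k(\zeta) = \mu - \lambda_k^{-2/(p-1)} u_k\!\left(x_k + \lambda_k^{-1}\zeta\right).
\]
Then $v_k$ solves $\Delta v_k = (p+1)|v_k|^{p-1} v_k$ on the ball $B_{R_k}(0)$, which exhausts $\mathbb{R}^n$, and the doubling bound translates into $|v_k|^{(p-1)/2} + |\nabla v_k|^{(p-1)/(p+1)} \le 2$ on $B_{R_k}(0)$, i.e.\ a uniform $C^1$ bound. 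Moreover the normalization forces $|v_k(0)|^{(p-1)/2} + |\nabla v_k(0)|^{(p-1)/(p+1)} = 1$. By elliptic estimates (Gilbarg--Trudinger) and Arzel\`a--Ascoli with a diagonal argument, $v_k \to V$ in $C^1_{loc}(\mathbb{R}^n)$, where $V$ is a bounded entire solution of $\Delta V = (p+1)|V|^{p-1}V$ satisfying $|V(0)|^{(p-1)/2} + |\nabla V(0)|^{(p-1)/(p+1)} = 1$, hence $V \not\equiv 0$. But a bounded entire solution of $\Delta V = (p+1)|V|^{p-1}V$ must vanish identically: since $w \mapsto (p+1)|w|^{p-1}w$ is nondecreasing and vanishes only at $0$, one may argue for instance by the sub/superharmonicity trick — $|V|$ is subharmonic, so bounded and entire, and a Liouville-type argument (or testing against a cutoff and using the Modica-type gradient bound, or simply noting $V$ is then a bounded harmonic-like function forced to a constant which must be a zero of the nonlinearity) gives $V \equiv 0$, contradicting $V(0)$ and $\nabla V(0)$ not both zero.

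The main obstacle I anticipate is making the limiting Liouville step fully rigorous in the generality needed: one wants nonexistence of \emph{nontrivial bounded} entire solutions of $\Delta V = (p+1)|V|^{p-1}V$ in \emph{all} dimensions $n$, with $V$ allowed to change sign. For the sign-changing case one cannot simply invoke positive-solution Liouville theorems; instead I would multiply the equation by a standard cutoff $\eta_R^2 V$ (with $\eta_R = 1$ on $B_R$, supported in $B_{2R}$, $|\nabla \eta_R| \le C/R$), integrate by parts, and use the $L^\infty$ bound on $V$ together with $\int |\nabla \eta_R|^2 \le C R^{n-2}$; after estimating the cross term by Cauchy--Schwarz one gets $\int_{B_R}\!\big(|\nabla V|^2 + (p+1)|V|^{p+1}\big) \le C R^{n-2}$, which by itself is not yet a contradiction in high dimensions — so one would instead exploit the full $C^1$ bound from the doubling step, which gives $|\nabla V|$ \emph{globally} bounded, allowing a cleaner argument (e.g.\ the function $|V|$ is a bounded subharmonic function, and combined with the gradient bound and the equation one forces $|V|$ constant, hence $V$ constant, hence $V \equiv 0$). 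This last implication — upgrading the energy decay or the subharmonicity into $V\equiv 0$ in every dimension — is the delicate point, and I would lean on the gradient bound supplied by the doubling lemma rather than on a pure energy estimate. Once the Liouville step is in place, the contradiction is immediate and \eqref{eqmodelAssert} follows; the exterior-domain consequence \eqref{eqmodelAssertExt} is then just the observation that for $|x|\ge 2R$ one has $\mathrm{dist}(x,\partial\Omega) \ge |x| - R \ge |x|/2$, so that $\mathrm{dist}^{-2/(p-1)}(x,\partial\Omega) \le 2^{2/(p-1)}|x|^{-2/(p-1)}$, which is absorbed into $C$.
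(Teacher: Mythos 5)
Your overall architecture is exactly the paper's: argue by contradiction, apply the doubling lemma of Polacik--Quittner--Souplet to $M_k=|\mu-u_k|^{(p-1)/2}+|\nabla u_k|^{(p-1)/(p+1)}$, rescale about the doubled points, and pass to a nontrivial entire solution of $\Delta V=(p+1)|V|^{p-1}V$. (Your rescaling formula $v_k(\zeta)=\mu-\lambda_k^{-2/(p-1)}u_k(\cdots)$ is mistyped --- it should be $v_k(\zeta)=\lambda_k^{2/(p-1)}\left(\mu-u_k(x_k+\lambda_k^{-1}\zeta)\right)$ --- but the intent is clear and harmless.)

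The genuine gap is in the concluding Liouville step, which you yourself flag as ``the delicate point.'' You are right that positive-solution Liouville theorems do not apply to the sign-changing limit $V$, but the substitute you propose does not work: the implication ``$|V|$ bounded and subharmonic on $\mathbb{R}^n$ $\Rightarrow$ $|V|$ constant'' is false for $n\ge 3$ (for instance $-\min\{1,|x|^{2-n}\}$ is a bounded, nonconstant subharmonic function), and the cutoff energy estimate $\int_{B_R}\left(|\nabla V|^2+|V|^{p+1}\right)\le CR^{n-2}$ is, as you concede, inconclusive in high dimensions. The paper closes this step by invoking the Liouville theorem of Brezis (Theorem \ref{thmFarina} in Appendix \ref{appenLiouville}): for an odd, nondecreasing, convex nonlinearity satisfying the Keller--Osserman condition --- which $(p+1)|t|^{p-1}t$ with $p>1$ does --- the only entire distributional solution of $-\Delta V+f(V)=0$ with $f(V)\in L^1_{loc}$ is $V\equiv 0$; no sign condition and no boundedness are needed. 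If you prefer a self-contained argument for your \emph{bounded} limit $V$, the clean route is via the maximum principle rather than subharmonicity: if $M=\sup V>0$, translate $V$ to points where it nearly attains $M$ and extract (by the uniform $C^{2,\alpha}$ bounds) a limit solution attaining $M$ at an interior maximum, where $0\ge \Delta V=(p+1)M^{p}>0$, a contradiction; arguing likewise for $\inf V$ gives $V\equiv 0$. With either fix the proof is complete, and your reduction of \eqref{eqmodelAssertExt} to \eqref{eqmodelAssert} is correct.
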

\begin{proof}
Our proof is modeled after that of Theorem 2.3 in \cite{pqs} which
dealt with focusing nonlinearities, making use of scaling (blow-up)
arguments, inspired from \cite{gidasSpruck}, combined with a key
``doubling'' estimate. The main difference with \cite{pqs} is in the
Liouville type theorem that we will use to conclude, see Remark
\ref{rempolacikfocus} below.

We will argue by contradiction. Suppose that estimate
(\ref{eqmodelAssert}) fails. Then, there exist sequences of domains
$\Omega_k\neq \mathbb{R}^n$, functions $u_k$, and points $y_k\in
\Omega_k$, such that $u_k$ solves (\ref{eqEqnobdry}) in $\Omega_k$
and the functions
\[
M_k\equiv |\mu-u_k|^{\frac{p-1}{2}}+|\nabla u_k|^\frac{p-1}{p+1},\ \
k=1,2,\cdots,
\]
satisfy
\[
M_k(y_k)>2k\textrm{dist}^{-1}(y_k,\partial \Omega_k),\ \
k=1,2,\cdots.
\]
From the \emph{Doubling Lemma} of Polacik, Quittner and Souplet, see
Lemma 5.1 and Remark 5.2 (b) in \cite{pqs} or Lemma
\ref{lemDoubling} in the appendix below, it follows that there exist
$x_k\in \Omega_k$ such that
\begin{equation}\label{eqdouble1}
M_k(x_k)\geq M_k(y_k),\ \ M_k(x_k)>2k\textrm{dist}^{-1}(x_k,\partial
\Omega_k),
\end{equation}
and
\begin{equation}\label{eqdouble2}
M_k(z)\leq 2 M_k(x_k),\ \ |z-x_k|\leq k M^{-1}_k(x_k),\ \
k=1,2,\cdots.
\end{equation}
Note that $B_{k M^{-1}_k(x_k)}(x_k)\subset \Omega_k$.  Now, we
rescale $u_k$ by setting
\begin{equation}\label{eqdouble3}
v_k(y)\equiv \lambda_k^{\frac{2}{p-1}}\left[\mu-u_k(x_k+\lambda_k
y)\right],\ \ |y|\leq k,\ \ \textrm{with}\ \lambda_k=M_k^{-1}(x_k).
\end{equation}
The function $v_k$ solves
\[
\Delta v_k(y)=(p+1) v_k(y)\left|v_k(y)\right|^{p-1},\ \ |y|\leq k.
\]
Moreover,
\begin{equation}\label{eqdoublingnontriv}
\left[ |v_k|^{\frac{p-1}{2}}+|\nabla
v_k|^\frac{p-1}{p+1}\right](0)=\lambda_k M_k(x_k)=1,
\end{equation}
and
\[
\left[ |v_k|^{\frac{p-1}{2}}+|\nabla
v_k|^\frac{p-1}{p+1}\right](y)\leq 2,\ \ |y|\leq k.
\]
By using elliptic $L^q$ estimates and standard imbeddings (see
\cite{Gilbarg-Trudinger}), we deduce that some subsequence of $v_k$
converges in $C^1_{loc}(\mathbb{R}^n)$ to a (classical) solution
$\textbf{V}$ of
\begin{equation}\label{eqbrezisLiouville}
\Delta v=(p+1) v(y)\left|v(y)\right|^{p-1},\ \ y\in \mathbb{R}^n.
\end{equation}
Moreover, thanks to (\ref{eqdoublingnontriv}), we have
\[
\left[ |\textbf{V}|^{\frac{p-1}{2}}+|\nabla
\textbf{V}|^\frac{p-1}{p+1}\right](0)=1,
\]
so that $\textbf{V}$ is nontrivial. On the other hand, by a result
of Brezis \cite{brezisLiouville}, we know that there does not exist
a nontrivial solution of (\ref{eqbrezisLiouville}) in
$L^p_{loc}(\mathbb{R}^n)$ (in the sense of distributions), see also
Theorems 4.6--4.7 in \cite{farinaHB} or Theorem \ref{thmFarina}
below. Consequently, we have arrived at the desired contradiction.

The proof of the proposition is complete.
\end{proof}
\begin{rem}\label{remDoublingOptimal}
The powers $2/(p-1)$ and $(p+1)/(p-1)$ in (\ref{eqmodelAssert}) (for
$u$ and $|\nabla u|$ respectively) are sharp for $p\in
\left(1,\frac{n}{n-2} \right)$ if $n\geq 3$ and $p>1$ if $n=2$, as
can be seen from the explicit solution
$u(x)=c(p,n)|x|^{-\frac{2}{p-1}}$ of
\begin{equation}\label{eqbrezisARMA}
\Delta u= u^p,
\end{equation}
in $\mathbb{R}^n\backslash \{ 0\}$, see for example
\cite{brezis-oswald}.

In the latter reference, it was shown that every nonnegative
solution $u\in C^2(B_R)$ of (\ref{eqbrezisARMA}), with $p>1$,
satisfies
\[
u(0)\leq C(p,n)R^{-\frac{2}{p-1}},
\]
where $C(p,n)$ is determined explicitly. This result, minus the
explicit dependence of the constat on $p,n$, follows as a special
case of our Proposition \ref{proModel} if we choose $\mu=0$.
Moreover, it was shown in the same reference that every  nonnegative
solution $u\in C^2\left(B_R\backslash \{0 \}\right)$ of
(\ref{eqbrezisARMA}), with $p\in \left(1,\frac{n}{n-2} \right)$ if
$n\geq 3$ and $p>1$ if $n=2$, satisfies
\[
u(x)\leq
l(p,n)|x|^{-\frac{2}{p-1}}\left[1+\frac{C(p,n)}{l(p,n)}\left(\frac{|x|}{R}
\right)^\beta \right],\ \ 0<|x|\leq \frac{R}{2},
\]
where $\beta= \frac{4}{p-1}+2-n>\frac{2}{p-1}$, and $C(p,n), l(p,n)$
some explicitly determined constants. The validity of this estimate,
minus the explicit dependence of the constat on $p,n$, follows for
\emph{all} the range $p>1$ from our Proposition \ref{proModel} with
$\mu=0$. It was shown in  \cite{brezisveronARMA} that, if $n\geq 3$
and $p\geq \frac{n}{n-2}$, there exists a constant $A=A(p,n)>0$ such
that every nonnegative solution $u\in C^2\left(B_1\backslash \{0
\}\right)$ of (\ref{eqbrezisARMA}) satisfies
\[
u(x)\leq \frac{A}{|x|^{n-2}},\ \ 0<|x|<\frac{1}{2}.
\]
In turn, the latter estimate was used to show that the solution $u$
has a removable singularity at the origin. Clearly, the above
estimate follows from (\ref{eqmodelAssert}) with $\mu=0$. The proofs
in \cite{brezisveronARMA}, \cite{brezisARMA}, and
\cite{brezisLiouville} (where we referred to towards  the end of the
proof of Proposition \ref{proModel}), are based on the explicit
knowledge of positive, radially symmetric upper solutions of the
equation $-\Delta u + |u|^{p-1}u = 0$ on arbitrary open balls, with
the further property that these functions blow up at the boundary of
the considered balls. This fact is crucially related to the shape of
the nonlinear function $|t |^{p-1}t$ and it does not easily extend
to more general functions. We refer to \cite{farinaHB} for a
different approach for establishing the Liouville type theorem of
\cite{brezisLiouville}, that we used towards the end of the proof of
Proposition \ref{proModel}, with the advantage to apply to a larger
class of problems (see Theorem \ref{thmFarina} below).

To the best of our knowledge, this is the first time that the
doubling lemma of \cite{pqs} has been used in relation with the
previously mentioned papers.
\end{rem}

\begin{rem}\label{rempolacikfocus}
In the problems studied in \cite{gidasSpruck}, \cite{pqs} (see also
\cite{QitnerrIma}), the blowing-up argument leads to a positive
solution of the whole space problem $\Delta v+v^p=0$, which does not
exist for the range of exponents $p\in
\left(1,\frac{n+2}{n-2}\right)$ if $n\geq 3$, $p>1$ if $n=2$.
\end{rem}

\begin{rem}\label{rembcn}
Assume that the potential $W\in C^2$ satisfies (\ref{eqpacardW'}),
$W'(t)\geq 0$ for $t\geq \mu$, $-W'(t)\geq \delta_0 t$ on $[0,t_0]$
for some $\delta_0,t_0>0$, and (\ref{eqW''}). Clearly, these
conditions are satisfied by the model examples (\ref{eqAllen}) and
(\ref{eqmodel}). Let $\Omega$ be the entire \emph{epigraph}:
\begin{equation}\label{eqOmegaBCN}
\Omega=\left\{x\in \mathbb{R}^n\ :\ x_n>\varphi(x_1,\cdots,x_{n-1})
\right\},
\end{equation}
where $\varphi:\mathbb{R}^{n-1}\to \mathbb{R}$ is a globally
Lipschitz continuous function. It was shown in Lemma 3.2 in
\cite{cafareliPacard} that there are constants $\varepsilon_1,R_0>0$
with $R_0$, depending only on $n,\delta_0$, such that
 any positive bounded solution of
(\ref{eqEq}) satisfies $u<\mu$ in $\Omega$, and
\[
u(x)>\varepsilon_1\ \ \textrm{if}\ \ x\in \Omega_{R_0},\
\textrm{i.e.}\ \textrm{dist}(x,\partial \Omega)>R_0.
\]
Moreover, setting
\[
\delta(x)=\min\left\{-W'(t)\ :\ t\in \left[\varepsilon_1,u(x)
\right] \right\},\ \ x\in \Omega_{R_0},
\]
there exists a constant $C_1$, depending only on $n$, such that
\begin{equation}\label{eqcafaImproved}
C_1\delta(x)\leq \left[\textrm{dist}(x,\partial \Omega)-R_0
\right]^{-2},\ \ x\in \Omega_{R_0},
\end{equation}
recall also estimate (\ref{eqcafathm}) herein. In the case of the
potential (\ref{eqmodel}), the function $\delta(x)$ is plainly $
\delta(x)=(p+1)\left(\mu-u(x) \right)^p$, and estimate
(\ref{eqcafaImproved}) says that
\[
\mu-u(x)\leq \left[C_1  (p+1)
\right]^{-\frac{1}{p}}\left[\textrm{dist}(x,\partial \Omega)-R_0
\right]^{-\frac{2}{p}},\ \ x\in \Omega_{R_0}.
\]
Observe that our estimate (\ref{eqmodelAssert}) is an improvement of
the above estimate, since $\frac{2}{p-1} >\frac{2}{p}$. Moreover,
our estimate holds for \emph{all} solutions, possibly sign changing
or unbounded, without specified boundary conditions.
 Note also
that these observations reveal that estimate (\ref{eqcaffaThmmine})
is far from optimal.
\end{rem}

As in Theorem 2.1 in \cite{pqs}, we can generalize our Proposition
\ref{proModel} to
\begin{pro}\label{propolacikGeneral}
Let $p>1$, and assume that the smooth $W$ satisfies
\begin{equation}\label{eqduScale}
\lim_{|t|\to \infty} t^{-1}|t|^{1-p}W'(t+\mu)=\ell \in (0,\infty).
\end{equation}
Let $\Omega$ be an arbitrary domain of $\mathbb{R}^n$. Then, there
exists a constant $C=C(n,W')>0$ (independent of $\Omega$ and $u$)
such that, for any solution of (\ref{eqEqnobdry}), there holds
\begin{equation}\label{eqpolacik}
|\mu-u|+|\nabla u|^\frac{2}{p+1}\leq C\left(
1+\textrm{dist}^{-\frac{2}{p-1}}(x,\partial \Omega)\right),\ \ x\in
\Omega.
\end{equation}
In particular, if $\Omega=B_R\backslash \{0\}$ then
\[
|\mu-u|+|\nabla u|^\frac{2}{p+1}\leq C\left(
1+|x|^{-\frac{2}{p-1}}\right),\ \ 0<|x|\leq \frac{R}{2}.
\]
\end{pro}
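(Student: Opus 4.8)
The plan is to mimic the blow-up and rescaling scheme of Proposition \ref{proModel}, but to keep track of the fact that now $W'$ is only asymptotically a pure power, so the rescaled equation will converge to the model equation only in the limit. I would argue by contradiction: suppose there exist domains $\Omega_k$, solutions $u_k$ of (\ref{eqEqnobdry}) on $\Omega_k$, and points $y_k\in\Omega_k$ such that, with
\[
M_k=|\mu-u_k|^{\frac{p-1}{2}}+|\nabla u_k|^{\frac{p-1}{p+1}},
\]
one has $M_k(y_k)>2k\left(1+\mathrm{dist}^{-1}(y_k,\partial\Omega_k)\right)$. In particular $M_k(y_k)>2k\,\mathrm{dist}^{-1}(y_k,\partial\Omega_k)$, so the Doubling Lemma of \cite{pqs} (Lemma \ref{lemDoubling}) applies and yields points $x_k\in\Omega_k$ with $M_k(x_k)\geq M_k(y_k)$, $M_k(x_k)>2k\,\mathrm{dist}^{-1}(x_k,\partial\Omega_k)$, and $M_k(z)\leq 2M_k(x_k)$ for $|z-x_k|\leq kM_k^{-1}(x_k)$; note $B_{kM_k^{-1}(x_k)}(x_k)\subset\Omega_k$ and $\lambda_k:=M_k^{-1}(x_k)\to 0$ because $M_k(x_k)\geq M_k(y_k)>2k$.

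Next I would rescale as in (\ref{eqdouble3}), setting $v_k(y)=\lambda_k^{\frac{2}{p-1}}\left[\mu-u_k(x_k+\lambda_k y)\right]$ for $|y|\leq k$. A direct computation gives
\[
\Delta v_k(y)=\lambda_k^{\frac{2}{p-1}+2}\,W'\!\left(u_k(x_k+\lambda_k y)\right)
=\lambda_k^{\frac{2}{p-1}+2}\,W'\!\left(\mu-\lambda_k^{-\frac{2}{p-1}}v_k(y)\right).
\]
Writing $t=-\lambda_k^{-\frac{2}{p-1}}v_k(y)$ and using the normalization bounds $|v_k|^{\frac{p-1}{2}}+|\nabla v_k|^{\frac{p-1}{p+1}}\leq 2$ on $|y|\leq k$ together with the nondegeneracy $|v_k|^{\frac{p-1}{2}}+|\nabla v_k|^{\frac{p-1}{p+1}}=1$ at $0$ (which follow exactly as in the proof of Proposition \ref{proModel}), one sees that on the set where $|v_k(y)|$ stays bounded away from zero we have $|t|\to\infty$, so (\ref{eqduScale}) gives $W'(\mu+ (-t))\sim \ell\,(-t)|{-t}|^{p-1}$; on the set where $v_k(y)\to 0$, the right-hand side tends to $0$ uniformly (using $|W'|$ bounded near $\mu$, since $W\in C^2$ and $W'(\mu)=0$). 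Either way, $\lambda_k^{\frac{2}{p-1}+2}W'(\mu-\lambda_k^{-\frac{2}{p-1}}v_k)\to \ell\, v_k|v_k|^{p-1}$ locally uniformly along the convergence. By elliptic $L^q$ estimates and Sobolev embedding a subsequence of $v_k$ converges in $C^1_{loc}(\mathbb{R}^n)$ to a classical solution $\mathbf{V}$ of $\Delta v=\ell\, v|v|^{p-1}$ on $\mathbb{R}^n$, and $\mathbf{V}$ is nontrivial by the normalization at the origin. Rescaling $\mathbf{V}$ by a constant factor reduces this to (\ref{eqbrezisLiouville}), and the Liouville theorem of Brezis \cite{brezisLiouville} (or Theorems 4.6--4.7 in \cite{farinaHB}, i.e. Theorem \ref{thmFarina}) gives $\mathbf{V}\equiv 0$, a contradiction. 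This proves (\ref{eqpolacik}); the statement for $\Omega=B_R\setminus\{0\}$ is immediate since $\mathrm{dist}(x,\partial\Omega)\geq |x|$ for $0<|x|\leq R/2$.

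The main obstacle I anticipate is making the convergence of the nonlinear term rigorous and uniform across the two regimes ($v_k$ bounded away from $0$ versus $v_k$ small), because (\ref{eqduScale}) only controls $W'$ for large argument. The clean way to handle this is to observe that the uniform bound $|v_k|\leq (2)^{\frac{2}{p-1}}$ on $B_k$ together with $\lambda_k\to 0$ forces $\lambda_k^{-\frac{2}{p-1}}|v_k|$ to be large wherever $|v_k|$ is not $o(1)$, while on the complementary region one uses the crude bound $|\lambda_k^{\frac{2}{p-1}+2}W'(\mu-s)|\le \lambda_k^{\frac{2}{p-1}+2}\sup_{|\sigma|\le \delta}|W'(\mu+\sigma)|+\lambda_k^{\frac{2}{p-1}+2}\cdot C|s|^p\cdot\mathbf{1}_{|s|>\delta}$; combined with a locally uniform Harnack-type control on $v_k$ from the $C^1_{loc}$ convergence, one gets that the right-hand side converges locally uniformly to $\ell\,v|v|^{p-1}$. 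Everything else — the doubling step, the rescaling bookkeeping, the elliptic estimates — is routine and parallels Proposition \ref{proModel} and Theorem 2.1 in \cite{pqs} line by line.
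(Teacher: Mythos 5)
Your proposal is correct and follows essentially the same route as the paper's proof: contradiction via the doubling lemma, the rescaling (\ref{eqdouble3}), the growth bound $|W'(\mu-t)|\leq C(1+|t|^{p})$ extracted from (\ref{eqduScale}), elliptic $L^{q}$ estimates with a diagonal argument, and the Liouville theorem of Brezis/Farina applied to the nontrivial limit of $\Delta v=\ell\,v|v|^{p-1}$. The only slip is cosmetic: the correct identity is $\Delta v_k=-\lambda_k^{2p/(p-1)}W'\bigl(\mu-\lambda_k^{-2/(p-1)}v_k\bigr)$ (you dropped the minus sign), and the right-hand side of that identity tends to $-\bigl(-\ell\,v|v|^{p-1}\bigr)=\ell\,v|v|^{p-1}$, so your two sign errors cancel and the limit equation and conclusion are unaffected.
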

\begin{proof}
Assume that estimate (\ref{eqpolacik}) fails. Keeping the same
notation as in the proof of Proposition \ref{proModel}, we have
sequences $\Omega_k,\ u_k,\ y_k\in \Omega_k$ such that $u_k$ solves
(\ref{eqEqnobdry}) in $\Omega_k$ and
\[
M_k(y_k)>2k\left(1+\textrm{dist}^{-1}(y_k,\partial
\Omega_k)\right)>2k\textrm{dist}^{-1}(y_k,\partial \Omega_k).
\]
Then, formulae (\ref{eqdouble1})--(\ref{eqdouble3}) are unchanged
but now the function $v_k$ solves
\[
\Delta_yv_k(y)=f_k\left(v_k(y)
\right)\equiv-\lambda_k^\frac{2p}{p-1}W'\left(\mu-\lambda_k^{-\frac{2}{p-1}}
v_k(y)\right),\ \ |y|\leq k.
\]
Note that, since $M_k(x_k)\geq M_k(y_k)>2k$, we also have
\[
\lambda_k\to 0,\ \ k\to \infty.
\]
Since there exists a constant $C>0$ such that $\left|W'\left(\mu-t
\right) \right|\leq C(1+|t|^p)$, $t\in \mathbb{R}$, due to
(\ref{eqduScale}) (and $W'$ being continuous), it follows that
\[
\left|f_k(t) \right|\leq C\lambda_k^\frac{2p}{p-1}+C|t|^p,\ \ t\in
\mathbb{R},\ k\geq 1.
\]
By using elliptic $L^q$ estimates, standard imbeddings, and
(\ref{eqduScale}), we deduce that some subsequence of $v_k$
converges in $C^1_{loc}(\mathbb{R}^n)$ to a  classical solution
$\textbf{V}$ of $\Delta v=\ell v|v|^{p-1}$ in $\mathbb{R}^n$.
Moreover, we have that $ |\textbf{V}(0)|^{\frac{p-1}{2}}+|\nabla
\textbf{V}(0)|^\frac{p-1}{p+1}=1$, so that $\textbf{V}$ is
nontrivial. As in Proposition \ref{proModel}, since $\ell>0$,  this
contradicts the Liouville theorem in \cite{brezis}, \cite{farinaHB},
in particular Theorem \ref{thmFarina} below.

The proof of the proposition is complete.
\end{proof}
\begin{rem}\label{remfarinapola}
The same assertion of Proposition \ref{propolacikGeneral} holds, if
we assume that the righthand side of (\ref{eqduScale}) is as the
function $f$ in Theorem \ref{thmFarina} below.
\end{rem}
\subsection{Proof of relation (\ref{eqfinal})}\label{submodel}
Based on Proposition \ref{proModel}, via a comparison argument, we
can show relation (\ref{eqfinal}) and thus complete the proof of
Theorem \ref{thmmine}.

\texttt{Proof of (\ref{eqfinal}):} Clearly, estimate (\ref{eqfinal})
holds if $\textrm{dist}(x,\partial \Omega)\leq R'$.

In any connected component $\mathcal{A}$ of
$\Omega_{R'}+B_{(R'-D)}$, thanks to (\ref{eq14+}) and
(\ref{eqWmonotThm}) (assuming that $\epsilon<d$), we have
\[
\Delta u\leq -c (\mu-u)^{p}\  \textrm {in}\  \mathcal{A},\ \ u\geq
\mu-\epsilon \ \textrm{on}\ \partial \mathcal{A}.
\]

 Let $0<v<\mu$ be the solution of
\[
\Delta v=-c(\mu-v)^{p}\ \ \textrm{in}\ \mathcal{A};\ \ v=0\ \
\textrm{on}\
\partial \mathcal{A},
\]
as provided by Theorem \ref{thmmine} (keep in mind the second part
of Remark \ref{remuniq} which implies uniqueness), where $c>0,\ p>1$
are as in (\ref{eqWmonotThm}). From Proposition \ref{proModel}, we
know that $v$ satisfies
\[
\mu-v\leq \hat{K}\textrm{dist}^{-\frac{2}{p-1}}(x,\partial
\mathcal{A}),\ \ x\in \mathcal{A},
\]
for some constant $\hat{K}>0$ that depends only on $c,p,$ and $n$.
Since $\textrm{dist}(\partial \mathcal{A},\partial \Omega)\leq D$,
we have
\[
\textrm{dist}(x,\partial \mathcal{A})\geq \textrm{dist}(x,\partial
\Omega)-\textrm{dist}(\partial \mathcal{A},\partial \Omega)\geq
\textrm{dist}(x,\partial \Omega)-D.
\]
So, we arrive at
\[
\mu-v\leq \tilde{K}\textrm{dist}^{-\frac{2}{p-1}}(x,\partial
\Omega),\ \ x\in \mathcal{A},
\]
for some constant $\tilde{K}>0$ that depends only on $c,p,n$ and
$W$.

We intend to show that
\begin{equation}\label{eqvu}
v\leq u\ \ \textrm{in}\ \mathcal{A},
\end{equation}
from where relation (\ref{eqfinal}) follows at once.
 Let $w=u-v$. We have
%Since both $u,v$ take values in $(0,\mu)$, we find that
\[
\Delta w\leq q(x)w \ \ \textrm{in}\ \mathcal{A},
%\ \ \textrm{where}\
%q(x)=cp(\mu-\theta u+(1-\theta)v )^{p-1}\geq 0,\ \ \theta \in (0,1).
\]
where
\[
q(x)=cp\int_{0}^{1}\left(\mu-s u-(1-s)v \right)^{p-1}ds.
\]
This is a bit meshy but what  matters is that $q$ is continuous and
nonnegative in $\mathcal{A}$.
 Note that $w> 0$ on $\partial \mathcal{A}$ and $w$ is bounded in
 $\mathcal{A}$ ($|w|\leq \mu$ to be more precise).
Therefore, the assumption that $\bar{\Omega}$ is disjoint from the
closure of an infinite open connected cone (and so is
$\bar{\mathcal{A}}$), or $n=2$ and $\bar{\Omega}\neq \mathbb{R}^2$,
allows us to apply the maximum principle, even in the case where
$\mathcal{A}$ is unbounded, to deduce that (\ref{eqvu}) holds (see
Lemma 2.1 and the remark following it in \cite{cafareliPacard}, and
also Lemma 6.2 in \cite{kiselev}).

The proof of relation (\ref{eqfinal}) is complete.\ \ \ \  \
$\square$

\begin{rem}
The proof of Lemma 2.1 in \cite{cafareliPacard} is based on the
property that, for the domains $\Omega$ in the previous class, there
exists a positive super-harmonic function $g$ in $\Omega$ (i.e.
$\Delta g\leq 0$ in $\Omega$) such that $g(x)\to \infty$ if $x\in
\Omega$ and $|x|\to \infty$. If $\Omega$ is contained in the set
$\left\{x_n\geq h(x_1,\cdots,x_{n-1}),\ (x_1,\cdots,x_{n-1})\in
\mathbb{R}^{n-1} \right\}$, for some $h\in C(\mathbb{R}^{n-1})$ such
that $h(y)\to \infty$ as $|y|\to \infty$, then clearly we can take
\[g(x)=x_n-\min_{\mathbb{R}^{n-1}}h+1.\]
In light of the comparison function of Theorem 2 in
\cite{dancePlanes}, it follows readily that Lemma 2.1 in
\cite{cafareliPacard} also applies in the case where ${\Omega}$ is
contained in  some  strip
\[\left\{|x_i|\leq M,\ i=1,\cdots,n-m; \
x_j\in \mathbb{R},\ j=n-m+1,\cdots,n\right\},\] where $M>0$ and
$m\in \{1,\cdots, n-1\}$.
 Therefore, relation
(\ref{eqfinal}) also holds for such domains.
\end{rem}

%\begin{rem}\label{remeqWmonotThm}
%We believe that an analog of estimate (\ref{eqfinal}) should hold
%true in the case where relation (\ref{eqWmonotThm}) is assumed just
%for $\mu-t>0$ small. This is certainly true in the case where
%$\bar{\Omega}$ is the complement in $\mathbb{R}^n$ of a smooth
%convex domain (recall Remark \ref{kajikiya}, and see the algebraic
%asymptotic behavior of the corresponding solution $U$ of (\ref{eqU})
%from pages 332--333 in \cite{guiAnnals}).
%\end{rem}

\section{Bounds on entire solutions of $\Delta
u=W'(u)$}\label{secBoundsonEntire} In this subsection, we will
assume that the    $C^2$ potential $W$ satisfies (\ref{eqduScale})
for some $\mu \in \mathbb{R}$, and there exist $\mu_-<\mu_+$ such
that
\begin{equation}\label{eqdumatano}
W'(\mu_-)=W'(\mu_+)=0,\ \ W'(t)<0,\ t<\mu_-;\ W'(t)>0,\ t>\mu_+.
\end{equation}
%\[
%W''(t)\geq0,\ \mu_--t>0 \ \textrm{small};\ W''(t)\geq0,\ t-\mu_+>0 \
%\textrm{small}.
%\]
We will utilize Propositions \ref{propacard} and
\ref{propolacikGeneral}, together with the corresponding parabolic
flow to (\ref{eqentire}), in order  obtain the following result:
\begin{pro}\label{produ}
Under the above assumptions, we have that \emph{every}  solution
$u\in C^2(\mathbb{R}^n)$ of (\ref{eqentire}), which is not
identically equal to $\mu_-$ or $\mu_+$, satisfies
\begin{equation}\label{eqdustrict}
\mu_-<u(x)<\mu_+,\ \ x\in \mathbb{R}^n.
\end{equation}
\end{pro}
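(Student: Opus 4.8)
The plan is to argue by contradiction, exploiting the maximum principle together with the parabolic flow of \eqref{eqentire} and the a priori bounds furnished by Propositions \ref{propacard} and \ref{propolacikGeneral}. First I would observe that, since $W'(\mu_-)=W'(\mu_+)=0$ by \eqref{eqdumatano}, the constants $\mu_-$ and $\mu_+$ are themselves solutions, so by the strong maximum principle applied to $u-\mu_\pm$ a solution $u$ that is not identically $\mu_-$ or $\mu_+$ either satisfies $\mu_-<u<\mu_+$ everywhere (the desired conclusion), or else touches one of the barriers at an interior point—which is what we must rule out. Thus it suffices to show that a solution $u$ with, say, $\sup_{\mathbb{R}^n}u\ge\mu_+$ (the case $\inf u\le\mu_-$ being symmetric, using $W'(t)<0$ for $t<\mu_-$) must be identically $\mu_+$.

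The key point is an a priori upper bound. From \eqref{eqdumatano} we have $W'(t)>0$ for $t>\mu_+$, so $u$ is subharmonic on the (open) set $\{u>\mu_+\}$; I would first use this, via a comparison with appropriate superharmonic barriers, to show $u$ cannot exceed $\mu_+$ by a large amount locally unless it does so on a large set. More importantly, applying Proposition \ref{propolacikGeneral} (which needs only the scaling hypothesis \eqref{eqduScale} near $\mu$, satisfied here, and no boundary conditions) on balls $B_R(x)$ with $R\to\infty$, together with the resulting estimate
\[
|\mu-u(x)|+|\nabla u(x)|^{\frac{2}{p+1}}\le C\left(1+\textrm{dist}^{-\frac{2}{p-1}}(x,\partial\Omega)\right)
\]
applied with $\Omega=\mathbb{R}^n$ (so the distance term drops out and the right-hand side is just $C$), we conclude that $u$ is globally bounded, say $|u|\le M$ on $\mathbb{R}^n$. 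Now I would run the parabolic flow $u_t=\Delta u-W'(u)$ with initial datum $u$: since the constants $\mu_\pm$ are equilibria, monotone-dynamical-systems or comparison arguments (as in the references to \cite{matano} and the sliding-type arguments used elsewhere in this paper) show the flow starting from $u$ stays trapped, and one extracts a contradiction with the strong maximum principle if $u$ touches $\mu_+$ from below at an interior point—at such a point $u-\mu_+$ attains an interior maximum of $0$ for the elliptic operator $\Delta-(\text{bounded potential})$, forcing $u\equiv\mu_+$.

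Actually the cleanest route avoids the parabolic flow: once $u$ is known to be bounded, I would apply the second assertion of Corollary \ref{corpacard}-type reasoning directly. Suppose $u(x_0)\ge\mu_+$ for some $x_0$ with $u\not\equiv\mu_+$. Consider $w=u-\mu_+$; on any ball $B_R(x_0)$ write $\Delta w = W'(u)-W'(\mu_+)=q(x)w$ with $q(x)=\int_0^1 W''(\mu_++sw)\,ds$ bounded on the compact set $\overline{B_R(x_0)}$ since $u$ is bounded. By the strong maximum principle, $w$ cannot attain a nonnegative interior maximum unless it is constant; so if $w(x_0)>0$ we contradict subharmonicity-type bounds on large balls (using that $W'>0$ above $\mu_+$ makes $w$ a positive subsolution of a nice operator, and Proposition \ref{propolacikGeneral} caps its growth, forcing $w\to 0$ at spatial infinity, hence $\sup w$ is attained at an interior point), while if $w(x_0)=0$ then $x_0$ is an interior maximum of $w\le$ (something), forcing $w\equiv 0$, i.e. $u\equiv\mu_+$. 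The symmetric argument at $\mu_-$ uses $W'(t)<0$ for $t<\mu_-$. The main obstacle I anticipate is making the "$w\to 0$ at infinity, so the sup is interior" step fully rigorous: one must combine the uniform bound from Proposition \ref{propolacikGeneral} with a Harnack/comparison argument on a nested sequence of large balls to upgrade "$u$ bounded" to "$\limsup_{|x|\to\infty}(u-\mu_+)\le 0$", and this is where the hypothesis \eqref{eqdumatano} on the sign of $W'$ outside $[\mu_-,\mu_+]$ does the essential work.
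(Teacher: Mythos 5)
Your first two paragraphs reproduce the paper's argument: global boundedness of $u$ comes from Proposition \ref{propolacikGeneral} applied with $\Omega=\mathbb{R}^n$ (so the distance term vanishes), the bound $\mu_-\le u\le\mu_+$ then follows from the parabolic comparison principle, squeezing $u$ between the spatially constant solutions $u_\pm(t)$ of the flow with $u_\pm(0)=\pm 2C$, which converge monotonically to $\mu_\pm$ thanks to the sign conditions in \eqref{eqdumatano}, and strictness is the strong maximum principle. That is exactly the paper's proof, so the core of your proposal is fine.

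Your preferred "cleanest route" in the last paragraph, however, has a genuine gap that you yourself half-identify: the step "Proposition \ref{propolacikGeneral} caps its growth, forcing $w\to 0$ at spatial infinity" does not follow. On $\Omega=\mathbb{R}^n$ that proposition yields only the uniform bound $|\mu-u|\le C$; it gives no decay of $u-\mu_+$ as $|x|\to\infty$, and without decay you cannot conclude that $\sup w$ is attained at an interior point, so the strong maximum principle has nothing to bite on. Getting from "bounded" to "$u\le\mu_+$" is precisely the content of the parabolic comparison step (or, alternatively, of a sweeping argument as in the second part of Corollary \ref{corpacard}), so you should keep the flow argument rather than replace it.
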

\begin{proof}
From Proposition \ref{propolacikGeneral} with $\Omega=\mathbb{R}^n$,
i.e. $\textrm{dist}^{-\frac{2}{p-1}}(x,\partial \Omega)=0$  $\forall
\ x\in \mathbb{R}^n$, we know that there exists a constant
$C=C(W',n)>0$ such that every solution of (\ref{eqentire}) satisfies
\[
|u(x)|\leq C,\ \ x\in \mathbb{R}^n.
\]
We will show that
\begin{equation}\label{eqdu2}
\mu_-\leq u(x)\leq \mu_+,\ \ x\in \mathbb{R}^n.
\end{equation}
Indeed, as in \cite{angenent}, \cite{duMa}, by the parabolic maximum
principle (this is possible since all the functions under
consideration are bounded, see \cite{protterBook}), we infer that
\begin{equation}\label{eqdutime}
u_-(t)\leq u(x)\leq u_+(t),\ \ t\geq 0,
\end{equation}
where $u_\pm$ are the solutions of the initial value problems
\[
\dot{u}_\pm=W'(u_\pm),\ \ t> 0,\ \ u_\pm(0)=\pm 2C.
\]
Note that $u_\pm(t)$ are solutions of $u_t=\Delta u-W'(u)$ on
$\mathbb{R}^n \times (0,\infty)$, as is $u(x)$. From our assumptions
on $W$, it follows that $u_-(t)$ is increasing and $u_+(t)$ is
decreasing with respect to $t>0$. Furthermore, it is easy to show
that $u_\pm(t)\to \mu_\pm$ as $t\to \infty$, see also \cite{arnold},
\cite{walter}. Hence, letting $t\to \infty$ in (\ref{eqdutime}), we
find that relation (\ref{eqdu2}) holds. For a similar argument,
which allows for the last assumption in (\ref{eqdumatano}) to be
weakened (allowing $W'$ to vanish), we refer to Theorem 2.7 in
\cite{efediev}.
 Alternatively, we could
argue as in Corollary \ref{corpacard} by considering the function
$2C-u$.
% Suppose that $-C<\mu_-$.
% The function
%\[
%v(x)=u(x)+C\geq 0,\ \ x\in \mathbb{R}^n,
%\]
%satisfies
%\[
%\Delta v=W'(v-C)\ \ \textrm{in}\ \mathbb{R}^n.
%\]
%Since $W'(-C)<0$, by the strong maximum principle, we deduce that
%$v>0$ on $\mathbb{R}^n$.
% Observe that $g(t)\equiv W'(t-C)$ satisfies
%\[
%g(t)<0,\ \ t\in [0,\mu_-+C),\ g(\mu_-+C)=0,\ g'(t)\geq 0\
%\textrm{for}\ \mu_-+C-t>0\ \textrm{small}.
%\]
%Since  $v$ is positive on $\mathbb{R}^n$ (on every ball, in
%particular), similarly to the second part of Corollary
%\ref{corpacard}, we infer that $v(x)\geq \mu_-+C$, $x\in
%\mathbb{R}^n$. In other words, we have shown that (\ref{eqdu2})
%holds.
By the strong maximum principle, it follows that (\ref{eqdustrict})
holds unless $u\equiv \mu_-$ or $u\equiv \mu_+$.

 The proof of the proposition is complete.
\end{proof}
\begin{rem}\label{remparabolicobstc}
With trivial modifications, Proposition \ref{produ} can be applied
in the case where there is an obstacle in $\mathbb{R}^n$, as in
problem (\ref{eqmatano}) below (see also Remark \ref{remmatanodu}).
\end{rem}

As a corollary to the above proposition, we can give a short proof
of a Liouville type result in \cite{duMaSqueeeze} (see Theorem 1.1
therein), where a squeezing argument involving boundary blow-up
solutions (recall the discussion related to \cite{brezisLiouville}
at the end of Remark \ref{remDoublingOptimal}) was used instead (see
also \cite{duMa}, \cite{duBook}).
\begin{cor}
Let $\lambda \in (-\infty,\infty)$, $p>1$, and $u\in
C^2(\mathbb{R}^n)$ be a nonnegative solution of
\[
\Delta u=u^p-\lambda u\ \ \textrm{in}\ \mathbb{R}^n.
\]
Then, the solution $u$ must be a constant.
\end{cor}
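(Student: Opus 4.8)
The plan is to read the equation as $\Delta u = W'(u)$ for the potential $W(t)=\frac{|t|^{p+1}}{p+1}-\frac{\lambda}{2}t^{2}+\kappa$, whose derivative $W'(t)=t|t|^{p-1}-\lambda t$ coincides with $t^{p}-\lambda t$ on $[0,\infty)$, so that (since $u\geq 0$) this substitution changes nothing. Note that $W\in C^{2}(\mathbb{R})$ for every $p>1$, that $W$ is even, that $W''(0)=-\lambda$, and that $W'(s)/s=|s|^{p-1}-\lambda$ is strictly increasing on $(0,\infty)$, so (\ref{eqbrezisOz}) holds; moreover $\lim_{|t|\to\infty}t^{-1}|t|^{1-p}W'(t)=1$, so the scaling hypothesis (\ref{eqduScale}) holds with $\mu=0$ and $\ell=1$. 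I would split the argument according to the sign of $\lambda$.

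For $\lambda>0$, set $a=\lambda^{1/(p-1)}$ and choose $\kappa$ so that $W(a)=0$; then $W$ satisfies \textbf{(a')} with $\mu=a$, as well as (\ref{eqpacardW''}) (here $W'(0)=0$, $W''(0)=-\lambda<0$) and (\ref{eqpacardW'}) (since $W'(t)=t(t^{p-1}-\lambda)<0$ on $(0,a)$), and (\ref{eqdumatano}) holds with $\mu_{-}=-a<a=\mu_{+}$. Suppose, for contradiction, that $u$ is nonconstant. Then Proposition \ref{produ} (whose hypotheses we have just verified, and whose proof already supplies the a priori bound $|u|\le C$ via Proposition \ref{propolacikGeneral}) gives $-a<u<a$ on $\mathbb{R}^{n}$; combined with $u\geq 0$, and with the strong maximum principle applied to $\Delta u-(u^{p-1}-\lambda)u=0$ (bounded coefficient), this forces $0<u<a$ everywhere. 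Now I would invoke Lemma \ref{lem1} for this $W$: for all large $R$ it produces a positive minimizing solution $u_{R}$ of (\ref{eqgidasEq}) on $B_{R}$ with $0<u_{R}<a$ and, by (\ref{eqsweers}), $u_{R}\to a$ uniformly on $\bar B_{R/2}$ as $R\to\infty$ (nontriviality of $u_{R}$ is guaranteed by $W(0)>0$, which \textbf{(a')} forces since $0\in[0,a)$). Because (\ref{eqbrezisOz}) holds, Remark \ref{rempacardBrezOz} shows that, for each centre $P$, the functions $t\,u_{R}(\cdot-P)$ extended by zero, $0\le t\le 1$, form a family of weak subsolutions of $\Delta v=W'(v)$ vanishing on $\partial B_{R}(P)$; since $u>0=t\,u_{R}$ on $\partial B_{R}(P)$ and $u>t\,u_{R}$ on $\bar B_{R}(P)$ for small $t$, Serrin's sweeping principle yields $u_{R}(\cdot-P)\le u$ on $\bar B_{R}(P)$. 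Taking $P=x$ and letting $R\to\infty$ gives $u(x)\ge a$ for every $x$, contradicting $u<a$. Hence $u$ is constant, and a nonnegative constant solution of the equation equals $0$ or $a$.

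For $\lambda\le 0$ the equation gives $\Delta u=u^{p}-\lambda u\ge u^{p}$ (as $-\lambda u\ge 0$ and $u\ge 0$), so $u$ is a nonnegative subsolution of $\Delta v=v^{p}$ with $p>1$. Comparing $u$ on an arbitrary ball $B_{R}(x_{0})$ with the explicit radial supersolution of $\Delta v=v^{p}$ that blows up on $\partial B_{R}(x_{0})$ — the classical Keller--Osserman bound, cf.\ \cite{brezis-oswald} and Remark \ref{remDoublingOptimal} — yields $u(x_{0})\le C(n,p)R^{-2/(p-1)}$ for all $R>0$; letting $R\to\infty$ forces $u\equiv 0$, again a constant.

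The one genuinely new step, and the place I expect the real work, is the upgrade from boundedness to constancy: Proposition \ref{produ} only squeezes $u$ into $[\mu_{-},\mu_{+}]$, and the point is that the lower barriers $u_{R}$ of Lemma \ref{lem1}, ``ballooning'' up to $\mu=a$, pin $u$ from below, while the strong maximum principle disposes of the trivial branch $u\equiv 0$. The rest is bookkeeping: one must check carefully that the normalised $W$ really meets the hypotheses of Proposition \ref{produ} and Lemma \ref{lem1} — evenness, $W(0)>0$, the sign of $W''(0)$, and the value $\ell=1$ of the scaling limit — and, when $1<p<2$ (so $W$ is only $C^{2}$, not $C^{3}$), route the barrier construction through the variational Lemma \ref{lem1} together with Remark \ref{rempacardBrezOz}, rather than through the bifurcation-based Proposition \ref{propacard} (compare Remark \ref{remdrawbak2222}).
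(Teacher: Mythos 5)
Your proof is correct and follows essentially the same route as the paper: for $\lambda\le 0$ the Keller--Osserman comparison forces $u\equiv 0$, and for $\lambda>0$ Proposition \ref{produ} squeezes $u$ into $(0,\lambda^{1/(p-1)})$ unless it is constant, after which your ballooning/sweeping argument with the minimizers $u_R$ is precisely the proof of the second assertion of Corollary \ref{corpacard}, which the paper simply cites. Your extra care for $1<p<2$ (where $W$ is only $C^{2}$, so one should route the sweeping through (\ref{eqbrezisOz}) and Remark \ref{rempacardBrezOz} rather than the $C^{3}$-based Proposition \ref{propacard}) is a sensible refinement of a point the paper passes over.
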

\begin{proof}
If $\lambda \leq 0$, we have that $-\Delta u+u^p\leq0$. Since $p>1$,
it follows from Keller-Osserman theory \cite{keller,osserman} that
$u\leq 0$ on $\mathbb{R}^n$ (see Theorem \ref{thmFarina} below).
Hence, in this case, the solution $u$ is identically zero.

If $\lambda>0$, it follows readily  from Proposition \ref{produ}
that either $u\equiv 0$ or $u\equiv \lambda^\frac{1}{p}$ or
$0<u(x)<\lambda^\frac{1}{p}$, $x\in \mathbb{R}^n$. However, the
latter alternative cannot occur, because of the second assertion of
Corollary \ref{corpacard}.

The proof of the proposition is complete.
\end{proof}
\begin{rem}\label{remdegiorgidu}
Our method of proof,  as well as that of
\cite{duMaSqueeeze,duMa,duBook}, work for a broader class of
nonlinearities. In the special case of the Allen-Cahn equation
\begin{equation}\label{eqallendu}
\Delta u=u^3-u\ \ \textrm{in}\ \mathbb{R}^n,
\end{equation}
by making use of Kato's inequality and Keller-Osserman theory,
%(in
%the spirit of the  Liouville theorem that we employed at the end of
%the proof of Proposition \ref{proModel}),
it was shown in \cite{brezisMa} (see also \cite{farinaDiffInt},
\cite{ma}) that all solutions of this equation satisfy $|u(x)|\leq
1$, $x\in \mathbb{R}^n$ (for different proofs, see Lemma 1 in
\cite{carbou} and Lemma 4.1 in \cite{chenDG}). A parabolic version
of this result can be found in \cite{ma}.

The importance of the above results is that they imply that there is
no need for the boundedness assumption is the well known statement
of the famous De Giorgi's conjecture: \emph{Let $u$ be a bounded
solution of equation (\ref{eqallendu}) such that $u_{x_n} > 0$. Then
the level sets $\{u = \lambda \}$ are all hyperplanes, \textbf{at
least} for dimension $n\leq 8$.} The motivation behind this
conjecture came from the classical Bernstein problem in the  theory
of minimal surfaces, which explains the restriction in the
dimension. There has been tremendous activity in the last years, and
this conjecture has been completely resolved in dimensions $n\leq 3$
(see \cite{ghosubGui}, \cite{ambrosio3D}; see also
\cite{modicaDeGiorgi} for an earlier related proof in two dimensions
under additional assumptions), and essentially in dimensions $4\leq
n\leq 8$ (assuming that $u\to \pm 1$ pointwise as $x_n\to \pm
\infty$, see \cite{savin} and also \cite{wangSavin}), while a
counterexample which shows that the conjecture is false for $n\geq
9$ has been constructed in \cite{delpinoAnnals}. For more details,
we refer the interested reader to the review article
\cite{farinaState} (some more recent proofs in two and three
dimensions can also be found in \cite{savinValdinoci}).

\end{rem}
\section{Nonexistence of nonconstant solutions with Neumann boundary conditions}
 \label{secmatano}

In this section, motivated from a Liouville-type theorem in
\cite{matanoObstacle}, we will consider some situations where the
equation $\Delta u =W'(u)$, in a possibly unbounded domain, with
Neumann boundary conditions, has only the (obvious) constant
solution.

 \subsection{A Liouville theorem arising in the study of
traveling waves around an obstacle} In Theorem 6.1 of their article
\cite{matanoObstacle}, H. Berestycki, F. Hamel, and H. Matano proved
the following Liouville type result:

\begin{thm}\label{thmmatano}
Let $\Omega$ be a smooth, open, connected subset of $\mathbb{R}^n$,
$n\geq 2$, with outward unit normal $\nu$, and assume that
$K=\mathbb{R}^n\backslash \Omega$ is compact. Let $\mu_-\leq u \leq
\mu$ be a classical solution of
\begin{equation}\label{eqmatano}
\left\{\begin{array}{ll}
         \Delta u=W'(u) & \textrm{in}\ \Omega, \\
           &   \\
          \nu \nabla u=0 & \textrm{on} \ \partial \Omega, \\
          &  \\
        u(x)\to \mu   & \textrm{as}\ |x|\to \infty,
       \end{array}
 \right.
\end{equation}
where $W\in C^2$ satisfies conditions \textbf{(a'')} (defined prior
to Lemma \ref{lem1Sign}) with $W(\mu_-)=0$ allowed, and
(\ref{eqW''}). If $K$ is star-shaped, then
\begin{equation}\label{eqmatanoassert}
u\equiv \mu\ \ \textrm{on}\ \bar{\Omega}.
\end{equation}
\end{thm}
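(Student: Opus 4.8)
The plan is to argue by contradiction: suppose $u\not\equiv\mu$ on $\bar\Omega$. Since $\mu_-\le u\le\mu$ and $u\to\mu$ at infinity, the strong maximum principle and Hopf's lemma (applied to $\mu-u\ge0$, which satisfies a linear elliptic inequality with bounded coefficients, using that $W'(\mu)=0$) force $u<\mu$ everywhere in $\bar\Omega$ (if $\mu-u$ vanished at an interior point or a boundary point it would vanish identically). The first main step is then to exploit the size of $\Omega$. Because $K$ is compact, $\Omega$ contains arbitrarily large balls $B_R(P)$ with $P$ far from $K$; in fact $\Omega_R$ (points at distance $>R$ from $\partial\Omega$) is nonempty for every $R$. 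Applying Corollary \ref{corpacard} — or directly the ``ballooning'' argument of Proposition \ref{propacard} together with Lemma \ref{lem1Sign} (the hypotheses \textbf{(a'')} and (\ref{eqW''}) are exactly what is assumed here, with $W(\mu_-)=0$ permitted) — we obtain, for any prescribed $\epsilon\in(0,\mu-\mu_-)$ and $D>D'$, a constant $R'$ so that
\begin{equation}\label{eqmatanoplan1}
u(x)\ge\mu-\epsilon,\qquad x\in\Omega_{R'}+B_{(R'-D)},
\end{equation}
and moreover the singularity estimate (\ref{eqcaffaThmmine}), i.e.
\begin{equation}\label{eqmatanoplan2}
\min\{W(t):t\in[0,u(x)]\}\le\frac{C}{\mathrm{dist}(x,\partial\Omega)},\qquad x\in\Omega_{R'}.
\end{equation}

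The second main step is to upgrade (\ref{eqmatanoplan1}) to a statement valid on all of $\bar\Omega$, and this is where the star-shapedness of $K$ and the Neumann condition enter. The idea is a sliding/sweeping argument adapted to handle the obstacle: use as lower solutions the compactly-supported radial functions $\underline u_Q$ from (\ref{eqlower}) built from the minimizers $u_R$ of Lemma \ref{lem1Sign}, extended by zero, for centers $Q\in\Omega_{R'}$. These remain weak lower solutions of the Dirichlet problem, hence of the Neumann problem as well (a function vanishing near where one wants to compare causes no trouble with the Neumann boundary). Since $u\to\mu$ at infinity and $u>0=\underline u_Q$ outside a compact set, one starts the sweep with $\underline u_Q<u$ for $Q$ deep inside $\Omega$ and slides $Q$ toward $\partial K$. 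At the first contact value $t_*$ the contact point $x_*$ cannot be interior (strong maximum principle) and cannot be on $\partial\Omega$ either: here star-shapedness of $K$ is used exactly as in \cite{matanoObstacle} — after translating/dilating, the Neumann boundary acts as a barrier because the outward normal of $\partial\Omega$ points away from the star center, so a boundary contact point would violate Hopf's lemma for $u-\underline u_Q$. This yields $u\ge\mu-\epsilon$ throughout $\bar\Omega$, for every $\epsilon>0$, hence $u\equiv\mu$ — contradiction. I expect this sliding-past-the-obstacle step to be the main obstacle: the geometry near $\partial\Omega$, the interplay of the Neumann condition with the compactly supported lower solution, and the verification that contact cannot occur on the boundary all require care, and the role of (\ref{eqW''}) (needed for the sweeping to be monotone, via Proposition \ref{proUniq}-type non-degeneracy of the $u_R$) must be tracked.

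An alternative route for the same contradiction, which may be cleaner, is to avoid sliding entirely and combine (\ref{eqmatanoplan2}) with a Pohozaev/Rellich-type identity on $\Omega\setminus B_R$, exploiting star-shapedness of $K$ to control the boundary term on $\partial\Omega$ and the decay $u\to\mu$, $W(\mu)=0$ together with (\ref{eqmatanoplan2}) to kill the term at infinity; this is essentially the mechanism of \cite{matanoObstacle}. In either case the conclusion is (\ref{eqmatanoassert}). The finishing paragraph simply records that $u<\mu$ was shown impossible, so $u\equiv\mu$ on $\bar\Omega$, completing the proof.
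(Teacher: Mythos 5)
Your overall strategy (comparison from below with radial profiles built from the minimizers of Subsection \ref{subsectionBalls}, pushing a parameter to infinity to force $u\ge\mu-\epsilon$ for every $\epsilon$) is the right one, but two of your concrete steps do not go through. The reduction to Corollary \ref{corpacard} (or to the ballooning of Proposition \ref{propacard}) is not licensed here: those results require \textbf{(a')} together with (\ref{eqpacardW''}) and (\ref{eqpacardW'}) (and $W\in C^3$, and positivity of the solution on a large ball), because the sweeping in Proposition \ref{propacard} runs along a global bifurcation branch connecting arbitrarily small positive solutions to the large minimizers. Theorem \ref{thmmatano} assumes only \textbf{(a'')} and (\ref{eqW''}), under which no such branch exists in general, and $u$ need not be positive since $\mu_-\le 0$. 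The paper's proof avoids this machinery entirely: it first shows $\inf_{\bar\Omega}u>\mu_-$, takes \emph{one-dimensional} minimizers $u_R$ with boundary value $\mu_-+\delta<\inf_{\bar\Omega}u$, obtains the starting comparison directly from the decay $u\to\mu$ at infinity (relation (\ref{eqR0matano})), and continues in $R$ using only the non-degeneracy supplied by Proposition \ref{proUniq} and the implicit function theorem.

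The more serious gap is in the sliding step at the boundary. If you slide the compactly supported, radially \emph{decreasing} bumps $\underline u_Q$ of (\ref{eqlower}) with moving centers $Q$, then at a contact point $x_*\in\partial K$ Hopf's lemma yields $\nu\cdot\nabla\underline u_Q>0$ at $x_*$, i.e. $\left(\nu\cdot\frac{x_*-Q}{|x_*-Q|}\right)u_R'(|x_*-Q|)>0$ with $u_R'<0$; to contradict this you would need $\nu\cdot(x_*-Q)\ge 0$, and star-shapedness of $K$ controls only $\nu\cdot x_*\le 0$, not $\nu\cdot(x_*-Q)$ for arbitrary $Q$. So the "Neumann boundary acts as a barrier" claim does not follow. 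This is exactly why the paper does not slide a ball at all: it fixes the star center of $K$ at the origin and uses the radially \emph{increasing} profile $\underline u_R(|x|)$ of (\ref{eqlaura}), equal to $\mu_-+\delta$ near $K$, rising through the one-dimensional profile $u_R(r-T)$, and constant equal to $u_R(0)<\mu$ for $r>T$; the parameter that increases is $R$, not the center. For that profile $\nu\cdot\nabla\underline u_R=(\nu\cdot x_*)\,\underline u_R'(|x_*|)/|x_*|\le 0$ precisely because $\underline u_R'>0$ and $x_*\cdot\nu\le 0$, which contradicts Hopf; the extra radial term $\frac{n-1}{r}u_R'(r-T)$ in (\ref{eqmatanolower0}) has the right sign for the lower-solution inequality for the same reason. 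Moreover, each $\underline u_R$ is bounded away from $\mu$ (its supremum is $u_R(0)$), which is what confines the would-be contact points to a compact set before passing to the limit $R\to\infty$. If you want a sliding flavour under weaker hypotheses, Remark \ref{remmatanoLipschitz} slides the parameter $T$ instead of the center, but the comparison function must still be radially increasing about the star center for the boundary sign to work.
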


In the study conducted in \cite{matanoObstacle} the set $K$ plays
the role of an obstacle. The prototypical example for the $W$ in the
above theorem is (\ref{eqAllen}) (in that case we have $\mu_-=-1$
and $\mu=1$).

Below, we will provide an alternative proof of the above theorem. We
remark that the statement in \cite{matanoObstacle}, adapted to our
notation, also requires that $W(\mu_-)>0$ and $W'(\mu_-)=0$. In our
statement, we assume that $W\in C^2$ in order to apply the implicit
function theorem to the equation in (\ref{eqgidasEq}). Nevertheless,
with just a slight modification, our proof works also for Lipschitz
$W'$ (see Remark \ref{remmatanoLipschitz} below), as was the
original assumption in \cite{matanoObstacle}. Moreover, as we will
see in the same remark, we can easily dispense of assumption
(\ref{eqW''}).

  Loosely speaking, the approach of
\cite{matanoObstacle} consists in using a sweeping family of lower
solutions of (\ref{eqmatano}), having as building block the solution
$\textbf{U}$ of (\ref{eqU}). Our proof is in the same spirit, but we
build  lower solutions out of one dimensional solutions of
(\ref{eqgidasEq}), capitalizing on the results of Subsection
\ref{subsectionBalls}. In our opinion, our proof is simpler (having
knowledge of Lemma \ref{lem1} and Proposition \ref{proUniq} herein)
and more intuitive. In particular, our proof of Theorem
\ref{thmmatano2} below is considerably simpler than the
corresponding one of \cite{matanoObstacle}, and does not require
that $W'$ is non-decreasing near $\mu$. As will become apparent from
the proofs, the main advantage of our approach is that we use lower
solutions that stay away from $\mu$ (individually). In contrast, the
lower solutions of \cite{matanoObstacle} tend to $\mu$, as $|x|\to
\infty$, causing technical difficulties.

\texttt{Proof of Theorem \ref{thmmatano}:} Up to a shift of the
origin, one can assume without loss of generality that $K$, if not
empty, is star-shaped with respect to $0$. By the strong maximum
principle and Hopf's boundary point lemma \cite{Gilbarg-Trudinger}
(keep in mind that $W'(\mu_-)\leq 0$), and the asymptotic behavior
of $u$ as $|x|\to \infty$, we deduce that
\[\inf_{x\in \bar{\Omega}}u(x)>\mu_-.\]

Under our current assumptions on $W$, it is easy to see that
analogous assertions to those of Lemma \ref{lem1} hold for
minimizers of the energy $J(v;B_R)$ with $v-\mu_--\delta\in
W^{1,2}_0(B_R)$, where $\delta>0$ is chosen sufficiently small so
that $\mu_-+\delta<\inf_{\bar{\Omega}}u$ and $W'(\mu_-+\delta)\leq
0$ (the point is that we have $W(\mu_-+\delta)>0$; if $W(\mu_-)>0$
then we can take $\delta=0$). This is also the case with Proposition
\ref{proUniq}. Abusing notation, we will still denote these
minimizers by $u_R$. From Proposition \ref{proUniq}, there exists an
$R_0>0$ such that these $u_R$'s with $n=1$ are non-degenerate for
$R\geq R_0$ (abusing notation again). Thus, via the implicit
function theorem (see \cite{kielhoferBook}), we can find a
continuous  family of such minimizing solutions $u_R$ (for $R\geq
R_0$, with respect to the uniform topology, as described in
Corollary 2.2 in \cite{Jang}); in this regard, see also Remark
\ref{remmatanoextension} below. Increasing the value of $R_0$, if
necessary, we may assume that
\begin{equation}\label{eqmatanoW'}
W'\left(u_R(0) \right)\leq 0,\ \ R\geq R_0,
\end{equation}
recall  (\ref{eqW''}), $\mu_-+\delta\leq u_R<\mu$,
(\ref{eqestimRect}), and (\ref{eqmonotonicity}). By virtue of the
asymptotic behavior in (\ref{eqmatano}), it follows at once that
there exists a large $T>R_0$ such that
\begin{equation}\label{eqR0matano}
u(x)>u_{R_0}(0)=\max_{\bar{B}_{R_0}}u_{R_0},\ \ x\in
\mathbb{R}^n\backslash B_{(T-R_0)},\ \ \textrm{and}\ \
\bar{K}\subset B_{(T-R_0)},
\end{equation}
(this is the main advantage of our proof in comparison to
\cite{matanoObstacle}).
%We may assume that
%$\bar{B}_{R_0}(Q)\cap K=\emptyset$ $\forall Q\in \mathbb{R}^n$ such
%that $|Q|=T$.
%As before, by the sliding method (see again Lemma 3.1
%in \cite{cafareliPacard}), we infer that
%\begin{equation}\label{eqR0matano}
%u_{R_0}(x-Q)< u(x),\ \ x\in \bar{B}_{R_0}(Q),\ \ \forall\  Q\in
%\mathbb{R}^n \ \textrm{such that}\ |P|=Q.
%\end{equation}
Let
\begin{equation}\label{eqlaura}
\underline{u}_R(r)=\left\{\begin{array}{ll} \mu_-+\delta, & r\in \left(0,\max\{T-R,0\} \right), \\
& \\
                         u_R(r-T), & r\in \left[\max\{T-R,0\}, T\right], \\
                           &   \\
                         u_R(0), &r> T,
                       \end{array}
 \right.
\end{equation}
with $r=|x|$, $x\in \mathbb{R}^n\backslash \{0\}$. Since
$u_R'(0)=0$, it follows that $\underline{u}_R\in C^1(\bar{\Omega})$.
Using the equation in (\ref{eqgidasEq}) (with $n=1$), we find that
\begin{equation}\label{eqmatanolower0}
\Delta \underline{u}_R- W'(\underline{u}_R)=\left\{\begin{array}{ll}
-W'(\mu_-+\delta),& r\in \left(0,\max\{T-R,0\} \right), \\
& \\
                                                     \frac{n-1}{r}u_R'(r-T), & r\in \left(\max\{T-R,0\}, T\right], \\
                                                      &  \\
                                                     -W'\left(u_R(0)\right), & r> T.
                                                   \end{array}
 \right.
\end{equation}
In particular, recalling that $W'(\mu_-+\delta)\leq 0$,
(\ref{eqmonotonicity}) and (\ref{eqmatanoW'}), we find that
\begin{equation}\label{eqmatanolower}\underline{u}_R\ \ \textrm{is\ a\
weak\ lower\ solution\ of} \ (\ref{eqEqnobdry})\ \textrm{in}\
\Omega,\ \textrm{if}\ R\geq R_0.
\end{equation}

We claim that
\begin{equation}\label{eqmatanoclaim} \underline{u}_R\leq u \ \
\textrm{on}\ \ \bar{\Omega}, \ \textrm{for\ all}\ R\geq R_0.
\end{equation}
Suppose that the claim is false, and let  \[R_*=\sup \left\{ R>R_0\
:\ \underline{u}_s<u\  \textrm{on}\ \bar{\Omega},\ s\in
(R_0,R)\right\}<\infty,\] (recall (\ref{eqR0matano}) which implies
that the set of such numbers $s$ is nonempty). The set
$\bar{\Omega}$ is not compact, so there need not be a point $x\in
\bar{\Omega}$ for which $\underline{u}_{R_*}(x)=u(x)$.
 However, there exists a
sequence of points $x_i \in \bar{\Omega}$ such that
$\underline{u}_{R_*}(x_i)-u(x_i)$ tends to zero as $i\to \infty$.
Since $u(x)\to \mu$ as $|x|\to \infty$, whereas
$\underline{u}_{R_*}(x)\to u_{R_*}(0)<\mu$ as $|x|\to \infty$, it
follows at once that the $x_i$'s remain bounded (this is the main
advantage of our proof in comparison to \cite{matanoObstacle}).
Passing to a subsequence, we find that $x_i\to x_*\in \bar{\Omega}$
with $\underline{u}_{R_*}(x_*)=u(x_*)$.
 In
view of (\ref{eqmatano}) and (\ref{eqmatanolower}), we find that
\begin{equation}\label{eqmatanoQ}
\Delta (u-\underline{u}_{R_*})\leq Q(x)(u-\underline{u}_{R_*})\ \
\textrm{weakly\ in}\ \Omega,
\end{equation}
where $Q$ is a continuous function of the form (\ref{eqQ}). Since
$u\geq u_{R_*}$ on $\bar{\Omega}$, the weak Harnack inequality (see
 \cite{Gilbarg-Trudinger}) tells us that the point $x_*$ must lie
on the boundary of $\Omega$ (otherwise, $\underline{u}_{R_*}\equiv
u$ which is not possible by (\ref{eqmatanolower0})); note also that
at $x_*$ we have that $\underline{u}_{R_*}$ is smooth so we can
apply the strong maximum principle. Since $x_*\in
\partial \Omega=
\partial K$, by (\ref{eqmatanoQ}) and Hopf's boundary point lemma, we get that
\begin{equation}\label{eqmatanoHopf}
0>\nu \nabla (u-\underline{u}_{R_*})=-\nu \nabla
\underline{u}_{R_*}=-(\nu\cdot
x_*)\frac{\underline{u}_{R_*}'(|x_*|)}{|x_*|}\ \ \textrm{at}\ x_*,
\end{equation}
(here $\nu=\nu_{x_*}$ denotes the outward unit normal to $\partial
\Omega$ at $x_*$). On the other hand, since $K$ is star-shaped with
respect to the origin, we have that
\begin{equation}\label{eqstarshapedInner}
x\cdot\nu_x\leq 0,\ \ x\in \partial K.
\end{equation}
Also, relation (\ref{eqmonotonicity}) implies that
\[
\underline{u}_{R_*}'(|x_*|)=u_{R_*}'(|x_*|-T)>0,\ \ x\in
\mathbb{R}^n\backslash \{0\}.
\]
The above two relations contradict (\ref{eqmatanoHopf}).
Consequently, claim (\ref{eqmatanoclaim}) holds.

Now, letting $R\to \infty$ in (\ref{eqmatanoclaim}), thanks to
(\ref{eqestimRect}), we arrive at the sought for relation
(\ref{eqmatanoassert}).

The proof of the theorem is complete. \ \ \ $\square$

\begin{rem}
In dimension $n=1$, with  $K$ a closed bounded interval, the same
arguments can be adapted straightforwardly, and the conclusion of
Theorem \ref{thmmatano} continues to  hold. In this special case,
however, it is better to use Proposition \ref{prohalfmatano} below.
\end{rem}

\begin{rem}\label{remmatanoLipschitz}
One can avoid making use of Proposition \ref{proUniq} in the proof
of Theorem \ref{thmmatano}, and thus have its validity for $W'$
Lipschitz, as follows. Given $\epsilon\in (0,\mu-\mu_-)$ such that
$W'\leq 0$ on $[\mu-\epsilon,\mu]$, we can find $R>0$ and
 minimizer $u_R$ (as in the above proof) such that $u_R(0)\geq
 \mu-\epsilon$ (this assertion of Lemma \ref{lem1} holds for $W\in
 C^{1,1}$). For such $R$ and $u_R$, let $T>R$ be such that
 (\ref{eqR0matano}) holds with $R$ in place of $R_0$, namely $u\geq
 \underline{u}_{R,T}$ $\forall \ T> R$, where $\underline{u}_{R,T}$
 as in (\ref{eqlaura}) (with the obvious meaning). Now, in contrast
 to the proof of Theorem \ref{thmmatano}, we can let $T\to 0$ (perform
 sliding) and find that $u\geq \mu-\epsilon$ on $\bar{\Omega}$.
 Since $\epsilon$ can be taken arbitrarily small, we conclude that
 $u\equiv \mu$, as desired. The same argument can also be applied to
 Theorem \ref{thmmatano2} below, but does not seem to be usable in
 Propositions \ref{promatanointerior1}-\ref{promatanointerior2}
 that follow.

 Clearly, as $T$ decreases, the functions $u$ and
 $\underline{u}_{R,T}$ cannot touch at an $x\in \mathbb{R}^n\setminus
 \bar{B}_T$ (nor at an $x \in \bar{B}_{T-R}$, if $T>R$, as a matter of fact). Hence, there
 is no need for imposing (\ref{eqW''}) in order to have that $W'\left(u_R(0) \right)<0$
for large $R>0$. In other words, the assumption (\ref{eqW''}) can
also be removed from the statement of Theorem \ref{thmmatano}.
\end{rem}

\begin{rem}
If we plainly use an $n$-dimensional minimizer from Lemma \ref{lem1}
(minimizing over $(\mu_-+\delta)+W_0^{1,2}(B_R)$), making use of
Proposition \ref{proUniq}, and the sliding argument, we can
potentially simplify the proof of the related Proposition 2.1 in
\cite{bouhours}.
\end{rem}

\begin{thm}\label{thmmatano2}
If in Theorem \ref{thmmatano} we assume that $N\geq 1$ and  the
obstacle $K$  to be directionally convex instead of star--shaped,
then conclusion (\ref{eqmatanoassert}) still holds.
% In fact, the
%asymptotic behavior in (\ref{eqmatano}) can be replaced by
%\[
%u(x)\to \mu,\ \ \textrm{as}\ \ |x_1|\to \infty,\ \ \textrm{uniformly
%in}\ (x_2,\cdots,x_n)\in \mathbb{R}^{n-1}.\]
\end{thm}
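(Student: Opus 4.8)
The plan is to repeat the argument of Theorem~\ref{thmmatano} with the \emph{radial} barriers of \eqref{eqlaura} replaced by barriers built from the same one–dimensional minimizers of Lemma~\ref{lem1} but adapted to a hyperplane rather than to a point, and then to verify that the two places where star–shapedness entered — the sign in the Hopf computation \eqref{eqmatanoHopf}--\eqref{eqstarshapedInner}, and the fact that the barriers exhaust $\bar\Omega$ as $R\to\infty$ — survive under the weaker hypothesis. By directional convexity we may rotate so that there is a hyperplane $H=\{x_n=0\}$ with $K=\{(y,t):y\in\bar\omega,\ \phi(y)\le t\le\psi(y)\}$, where $\omega\subset\mathbb R^{n-1}$ is bounded and convex, each horizontal slice $K\cap\{x_n=t\}$ is convex, and $\partial K$ is the union of the two graphs $t=\phi(y)$ and $t=\psi(y)$. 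As in Theorem~\ref{thmmatano}, Hopf's lemma and the behaviour $u\to\mu$ at infinity give $\inf_{\bar\Omega}u>\mu_-$; for each small $\epsilon>0$ with $W'\le0$ on $[\mu-\epsilon,\mu]$, Lemma~\ref{lem1} (with $n=1$) furnishes a minimizer $u_R$ connecting $\mu_-+\delta$ to an interior value $u_R(0)\ge\mu-\epsilon$, and by Proposition~\ref{proUniq} these are non–degenerate, hence vary continuously in $R$, for $R\ge R_0$. As in Remark~\ref{remmatanoLipschitz}, one can dispense with Proposition~\ref{proUniq} and with \eqref{eqW''} here, precisely because these barriers stay away from $\mu$.

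The first step is to reduce to proving $u\ge\mu-\epsilon$ on $\bar\Omega$ for every such $\epsilon$ and to set up the new barriers. For a unit vector $e$ and $\tau\in\mathbb R$ put $\underline u_{R,e,\tau}(x)=h_R\big(\mathrm{dist}(x,\{x\cdot e\le\tau\})\big)$, where $h_R$ is the increasing profile on $[0,\infty)$ obtained by gluing the monotone half of $u_R$ to the constant $u_R(0)$; thus $\underline u_{R,e,\tau}=\mu_-+\delta$ on $\{x\cdot e\le\tau\}$, rises to $u_R(0)$ across the slab $\{\tau<x\cdot e<\tau+R\}$, and equals $u_R(0)$ beyond. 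Since the distance to a convex set is convex, hence subharmonic, and $h_R$ solves $h_R''=W'(h_R)$ on the transition while $W'\le0$ at the two plateau levels, $\underline u_{R,e,\tau}$ is a weak lower solution of \eqref{eqEqnobdry} on $\mathbb R^n$, in particular on $\Omega$. One will also need the analogous barriers with $\{x\cdot e\le\tau\}$ replaced by other convex sets — half–spaces in directions parallel to $H$, the slab $\bar\omega\times\mathbb R$, and dilates/translates of $\mathrm{conv}(K)$ — which are lower solutions for the same reason.

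The second step is the sliding. Starting from a barrier strictly below $u$ on $\bar\Omega$ (available because $u\to\mu>u_R(0)$ at infinity and $u>\mu_-+\delta$ everywhere), one moves the parameter until first contact at some $\tau_*$ and $x_*\in\bar\Omega$, the contact point being attained because the gap $u-\underline u$ is bounded below away from a bounded region, exactly as in Theorem~\ref{thmmatano}. If $x_*\in\Omega$, the strong maximum principle (weak Harnack inequality) forces $u$ to coincide with the barrier on the connected component of $\Omega$ minus the barrier's kink sets containing $x_*$, which is unbounded, contradicting $u\to\mu$. If $x_*\in\partial K$, then $\nabla\underline u$ either vanishes at $x_*$ — in which case Hopf's lemma and the Neumann condition are already in conflict — or is a positive multiple of the gradient of the distance function, and Hopf together with $\partial_\nu u=0$ then forces that gradient to point strictly out of $\Omega$ at $x_*$. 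For the half–space barrier in direction $e=\pm e_n$ this means $e\cdot\nu_{x_*}>0$, i.e.\ $x_*$ lies on the corresponding cap of $K$; for the barriers built on the convex sets above, the supporting–hyperplane property of a convex set at a boundary point plays the role that \eqref{eqstarshapedInner} played for star–shaped $K$, while the convexity of the horizontal slices of $K$ supplies the needed sign once the sliding has been confined to a slice.

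I expect the main obstacle to be the third step: assembling these partial conclusions into $u\ge\mu-\epsilon$ on \emph{all} of $\bar\Omega$. For a star–shaped $K$ a single radial family suffices because the ``low core'' of \eqref{eqlaura} collapses onto the star–centre as $R\to\infty$; for a merely directionally convex $K$ there is no such centre, so one must combine the vertical slides (from $+e_n$ and from $-e_n$, which cover the two half–spaces beyond the caps of $K$) with slides in directions parallel to $H$ carried out slice by slice (covering the part of the slab $\bar\omega\times\mathbb R$ not yet reached), using the convexity of $\omega$ and of the slices of $K$ at each stage and keeping careful track of the non-compactness of $\bar\Omega$ throughout. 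Once $u\ge\mu-\epsilon$ holds on $\bar\Omega$ for every small $\epsilon$, letting $\epsilon\to0$ gives $u\equiv\mu$ on $\bar\Omega$, which is \eqref{eqmatanoassert}.
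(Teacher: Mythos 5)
There is a genuine gap, and it sits exactly where you predicted: your ``third step'' is not an expected difficulty to be smoothed over but the whole content of the theorem, and your plan for it rests on structure that the hypothesis does not provide. Directional convexity of $K$ means only that $K$ is convex in \emph{one} direction, say $x_1$; after a translation this yields precisely $(x_1,0,\cdots,0)\cdot\nu_x\leq 0$ on $\partial K$ and nothing more. It does \emph{not} imply that the projection $\omega$ of $K$ onto the orthogonal hyperplane is convex, nor that the slices $K\cap\{x_n=t\}$ are convex, so the horizontal slides, the slab barrier $\bar\omega\times\mathbb{R}$, and the barriers built on $\mathrm{conv}(K)$ that your covering scheme needs are not available. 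In addition, for your one--sided half--space barriers the Hopf computation at a contact point $x_*\in\partial K$ only tells you that $\nu\cdot e>0$ there, i.e.\ it locates $x_*$ on one ``cap'' of $K$; that is not yet a contradiction, and you do not close it.

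The paper avoids all of this with a single, \emph{two--sided} one--dimensional barrier: a function of $x_1$ alone, equal to $\mu_-+\delta$ on the central slab $|x_1|\leq T-R$, rising via $u_R(x_1\mp T)$ on $T-R\leq\pm x_1\leq T$, and equal to $u_R(0)$ for $|x_1|\geq T$ (so it is even in $x_1$ and \emph{low in the middle, high outside} --- the reverse of your profile). One balloons in $R$ with $T$ fixed. A first--contact point on the hyperplane $\{x_1=0\}$ is excluded by the corner of the barrier there: $g(t)=(u-\underline{u}_{R_*})(x_*+te_1)$ would satisfy $g'(0^-)-g'(0^+)=-2u_{R_*}'(T)>0$ at a minimum. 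A first--contact point elsewhere on $\partial K$ is excluded because Hopf forces $\nu\cdot\nabla\underline{u}_{R_*}>0$ while $\partial_{x_1}\underline{u}_{R_*}$ has the sign of $x_1$ and $x_1\nu_1\leq0$, giving $\nu\cdot\nabla\underline{u}_{R_*}=\nu_1\partial_{x_1}\underline{u}_{R_*}\leq0$. Since the barrier converges pointwise to $\mu$ everywhere as $R\to\infty$ (the low core disappears once $R>T$ and $u_R\to\mu$ locally uniformly by Lemma \ref{lem1}), no covering or slice--by--slice assembly is needed. Your one--sided barriers and the convex--distance computation are sound as far as they go, but without the two--sided structure and the corner argument on $\{x_1=0\}$ the proof does not close.
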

\begin{proof}
Without loss of generality, we may assume that $K$ is convex in the
$x_1$ direction, which implies that
\begin{equation}\label{eqnablaconvex}
(x_1,\cdots,0)\nu_x\leq 0\ \ \forall\ x=(x_1,\cdots,x_n)\in \partial
K,
\end{equation}
where $\nu$ denotes again the unit outer normal to $\partial \Omega$
(i.e. inner to $\partial K$).
 The proof proceeds along the same lines as that of Theorem
\ref{thmmatano}. As in the latter theorem, let $u_R$ denote a
minimizing solution to the equation in (\ref{eqgidasEq}) with $n=1$
and $u_R=\mu_-+\delta$ on $\partial B_R$ (this $\delta>0$ is
completely analogous  to the one in the proof of Theorem
\ref{thmmatano}). For $R,\ T>0$, let
\[
\underline{u}_R(x)=\left\{
\begin{array}{ll}
  u_R(x_1-T), & x_1 \in \left(\max\{T-R,0\}, T\right), \\
    &   \\
u_R(x_1+T), & x_1 \in \left(-T, \min\{-T+R,0\}\right), \\
&\\
u_R(0),&|x|\geq T,\\
&\\
  \mu_-+\delta, & \textrm{otherwise}.
\end{array}
\right.
\]
From the equation in (\ref{eqgidasEq}) and (\ref{eqmonotonicity}),
we have that $\underline{u}_R$ is a weak lower solution of
(\ref{eqEqnobdry}) in $\Omega$ (see again \cite{Berestyckilion}). As
before, there exist large $R_0,\ T>R_0$ such that
$\underline{u}_{R_0}<u$ on $\bar{\Omega}$, and the minimizers $u_R$
vary smoothly with respect to $R\geq R_0$.

We claim that
\begin{equation}\label{eqmatanoclaim2}
\underline{u}_R\leq u \ \ \textrm{on} \ \bar{\Omega}\ \ \textrm{for\
all}\ \ R\geq R_0.
\end{equation}
Arguing by contradiction, as in the proof of Theorem
\ref{thmmatano}, we get the existence of analogous  $R_*>R_0$ and
$x_*\in \bar{\Omega}$ (as before, the corresponding sequence
$\{x_i\}$ is bounded). To reach a contradiction, it boils down to
exclude the case $x_*\in
\partial K$. Firstly, note that $x_*$ cannot be on the hyperplane
$\{x_1=0\}$. Indeed, in that case, we would have $R_*>T$, and
observe that the function
\[
g(t)=(u-\underline{u}_{R_*})(x_*+te),\ \ e=(1,\cdots,0),
\]
would be well defined for small $|t|$ and
\[
g'(0^-)-g'(0^+)=u_{R_*}'(-T)-u_{R_*}'(T)
=-2u_{R_*}'(T)\stackrel{(\ref{eqmonotonicity})}{>}0,
\]
which is not possible because $g$ has a global minimum at $t=0$.
Now, since $x_*\in \partial \Omega\backslash \{x_1=0 \}$, Hopf's
boundary point lemma tells us that (\ref{eqmatanoHopf}) holds. On
the other hand, recalling (\ref{eqmonotonicity}), at the point $x_*$
we have that
\[
(x_1,\cdots ,0)\nabla \underline{u}_{R_*}= x_1
\partial_{x_1}\underline{u}_{R_*}=
\left\{\begin{array}{ll}
         x_1 u_{R_*}'(x_1-T) & \textrm{if}\ \ 0<x_1<T, \\
           &   \\
         x_1 u_{R_*}'(x_1+T) & \textrm{if}\ \ -T<x_1\leq 0.
       \end{array}
 \right.
\]
Hence, relation (\ref{eqmonotonicity}) yields that $ (x_1,\cdots
,0)\nabla \underline{u}_{R_*}\geq 0\ \ \textrm{at}\ x_*. $ However,
from (\ref{eqmatanoHopf}) and the latter relation, we get that
\[
\nu \nabla \underline{u}_{R_*}=\left\{\begin{array}{ll}
                            \nu\cdot (x_1,\cdots,0)\frac{1}{x_1}u_{R_*}'(x_1-T) & \textrm{if}\ x_*\in \partial\Omega\cap \{x_1>0\}, \\
                              &  \\
                              \nu\cdot (x_1,\cdots,0)\frac{1}{x_1}u_{R_*}'(x_1+T) & \textrm{if}\ x_*\in \partial\Omega\cap
                              \{x_1<0\},
                          \end{array}
 \right.
\]
at $x_*$, i.e., $\nu \nabla \underline{u}_{R_*}\leq 0$ at $x_*$; a
contradiction. We have therefore shown that claim
(\ref{eqmatanoclaim2}) holds.

Letting $R\to \infty$ in (\ref{eqmatanoclaim2}), as before, we
arrive at (\ref{eqmatanoassert}).

The proof of the theorem is complete.
\end{proof}
\begin{rem}\label{remmatanodu}
If in addition $W$ satisfies relations (\ref{eqduScale}), and
(\ref{eqdumatano}) with $\mu_+=\mu$, then there is no need to assume
that $\mu_-\leq u\leq \mu$ in the assertions of Theorems
\ref{thmmatano}, \ref{thmmatano2} (recall the proof of Proposition
\ref{produ}).
\end{rem}
\begin{rem}
In Theorems \ref{thmmatano} and \ref{thmmatano2}, we assumed that
the obstacle is smooth for the purposes of applying Hopf's boundary
point lemma. In this regard, we refer to \cite{gidas} for a
generalization of the latter lemma to domains with corners (see also
the proof of our Proposition \ref{procarbouMine} below).
\end{rem}

\begin{rem}\label{remmatanoextension}
If $W$ satisfies \textbf{(a'')} and $W'(t)<0$,  $t\in (\mu_-,\mu)$,
% and $W''(\mu_-)<0$ if
%$W'(\mu_-)=0$,
then the assertions of Theorems \ref{thmmatano} and \ref{thmmatano2}
can be proven easily (recall Remark \ref{remparabolicobstc}).
\end{rem}

\subsection{A Liouville-type theorem in a convex epigraph}

Adapting the proof of Theorem \ref{thmmatano}, using the
$n$-dimensional $u_R$, we can show the following proposition. In the
special case where $\Omega$ is the half-space $\mathbb{R}_+^n$, this
proposition  will come in handy when dealing with a class of mixed
boundary value problems in Section \ref{secmixed}; in fact,  in this
special case, this proposition is contained in Theorem
\ref{thmrigidity} below (via a reflection argument)).

\begin{pro}\label{prohalfmatano}
Assume that $W\in C^2$ satisfies condition \textbf{(a'')} with
$W(\mu_-)=0$ allowed. Let $\Omega$ be an entire epigraph of the form
(\ref{eqOmegaBCN}), with $\varphi$ convex  and $\|\nabla
\varphi\|_{C^{1,\alpha}(\mathbb{R}^{n-1})}\leq C$, for some
$\alpha\in (0,1)$ and $C>0$.
 Then $u\equiv \mu$ is the only classical
solution (in $C^2(\bar{\Omega})$) to the  problem
\begin{equation}\label{eqNeumanMat}
\left\{\begin{array}{ll}
         \Delta u=W'(u) & \textrm{in}\ \Omega, \\
           &   \\
          \nu \cdot \nabla u=0   & \textrm{on}\ \partial \Omega,
       \end{array}
 \right.
\end{equation}
where $\nu$ denotes $\partial \Omega$'s outer unit normal, such that
$\mu_-\leq u \leq \mu$ and
\begin{equation}\label{equnifdog}u(x',x_n)\to \mu,\ \textrm{uniformly\ in}\
\mathbb{R}^{n-1},\ \textrm{as}\  x_n-\varphi(x')\to \infty.
\end{equation}
\end{pro}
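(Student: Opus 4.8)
The plan is to mimic the proof of Theorem \ref{thmmatano}, but with the one-dimensional barriers replaced by $n$-dimensional minimizers $u_R$ of $J(\cdot;B_R)$ over $(\mu_-+\delta)+W^{1,2}_0(B_R)$, and to exploit the convexity of $\varphi$ so that a sliding family of balls centered inside $\Omega$ always has the right gradient sign along $\partial\Omega$. First I would fix $\delta>0$ small so that $\mu_-+\delta<\inf_{\bar\Omega}u$ (this infimum exceeds $\mu_-$ by the strong maximum principle and Hopf's lemma applied to the Neumann condition, together with \eqref{equnifdog}), $W'(\mu_-+\delta)\le 0$, and $W(\mu_-+\delta)>0$. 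By Lemma \ref{lem1Sign} (with hypothesis \textbf{(a'')}) there is $R'$ such that for $R\ge R'$ the minimizer $u_R$ exists, is radial and radially decreasing, satisfies $\mu_-+\delta\le u_R(x)<\mu$ on $B_R$, obeys \eqref{eqestimRect}, and by Proposition \ref{proUniq} (using \eqref{eqW''}) is non-degenerate for $R$ large; hence by the implicit function theorem the family $\{u_R\}_{R\ge R_0}$ varies continuously in $R$. Increasing $R_0$ we may also assume $W'(u_R(0))\le 0$.

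Next I would build the sliding lower-solution family. For a point $P\in\Omega$ with $d:=\mathrm{dist}(P,\partial\Omega)$, set
\[
\underline{u}_{R,P}(x)=\begin{cases} u_R(x-P), & x\in B_{R}(P),\\[2pt] \mu_-+\delta, & x\in\Omega\setminus B_R(P),\end{cases}
\]
for $R\le d$, and, as in \eqref{eqlower} and Remark \ref{remmonotbelowLem1}, extend continuously by taking the maximum with the constant; by Proposition 1 in \cite{Berestyckilion} this is a weak lower solution of $\Delta u=W'(u)$ in $\Omega$ because $W'(\mu_-+\delta)\le 0$ and $u_R$ is radially decreasing. Using the uniform convergence \eqref{equnifdog} and the fact that $\partial\Omega$ is a convex entire graph (so that the "vertical" translates $P=(x_0',x_{0,n})$ with $x_{0,n}\to\infty$ fill out a half-line staying inside $\Omega$), one checks that for $x_{0,n}$ large enough $\underline{u}_{R_0,P}<u$ on $\bar\Omega$. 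Then I would slide: decrease $x_{0,n}$ (and, where needed, also translate $P$ horizontally and let $R$ grow up to $\mathrm{dist}(P,\partial\Omega)$) along a continuous path, letting
\[
t_*=\sup\{t:\ \underline{u}_{R(s),P(s)}\le u\ \text{on}\ \bar\Omega\ \ \forall s\le t\},
\]
and argue by contradiction that $t_*$ is not attained prematurely. At a first touching point $x_*$: if $x_*$ is interior to $B_{R(t_*)}(P(t_*))$ the strong maximum principle forces $u\equiv\underline{u}_{R(t_*),P(t_*)}$ there, impossible since the latter is $\mu_-+\delta<\mu$ far out while $u\to\mu$; if $x_*\in\partial\Omega$, Hopf's boundary point lemma gives $\nu\cdot\nabla(u-\underline{u}_{R,P})<0$ at $x_*$, i.e. $\nu\cdot\nabla\underline{u}_{R,P}>0$, whereas $\nu\cdot\nabla\underline{u}_{R,P}=u_R'(|x_*-P|)\,\nu\cdot\frac{x_*-P}{|x_*-P|}$ and $u_R'<0$; convexity of $\varphi$ (hence of $\Omega^c$) together with $P\in\Omega$ forces $\nu\cdot(x_*-P)\le 0$, a contradiction. (Where the touching set is not a single point, one argues with the boundedness of a minimizing sequence $x_i$ exactly as in the proof of Theorem \ref{thmmatano}, using that $\underline{u}_{R,P}(x)\to u_R(0)<\mu$ while $u(x)\to\mu$ as $|x|\to\infty$, to get a bounded limiting touching point.)

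Finally, having shown $u\ge u_R(\cdot-P)$ on the relevant ball for every admissible $(R,P)$, I would fix $P$ deep inside $\Omega$, let $R\to\infty$, and invoke \eqref{eqestimRect} to conclude $u\ge\mu-\epsilon$ near $P$ for every $\epsilon$; ranging $P$ over all of $\Omega$ (each point is arbitrarily deep once one goes far enough along the convex graph) yields $u\equiv\mu$ on $\bar\Omega$. The main obstacle, I expect, is the geometric bookkeeping of the sliding path: one must choose a continuous family $(R(s),P(s))$ with $R(s)\le\mathrm{dist}(P(s),\partial\Omega)$ that (i) starts from a configuration where $\underline{u}<u$, (ii) ends with $R\to\infty$ or with $P$ arbitrarily deep, and (iii) at every stage keeps $\underline{u}_{R,P}$ a genuine weak lower solution whose boundary gradient has the sign dictated by convexity of $\Omega^c$ — this last point is precisely where $\nu\cdot(x_*-P)\le 0$ for $P\in\Omega$ is used and is the analogue of \eqref{eqstarshapedInner}. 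The regularity bound $\|\nabla\varphi\|_{C^{1,\alpha}}\le C$ enters only to ensure uniform applicability of Hopf's lemma and of the local maximum principle for narrow domains along $\partial\Omega$, exactly as in the proof of Theorem \ref{thmmatano}.
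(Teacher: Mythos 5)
Your overall strategy is the right one and matches the paper's: slide the radial minimizers $u_R$ with boundary value $\mu_-+\delta$, use convexity to control the Neumann sign of the barrier on $\partial\Omega$, and let $R\to\infty$. However, there are two genuine problems. The first is your justification of $\mu_-+\delta<\inf_{\bar\Omega}u$. On the unbounded domain $\Omega$ the strong maximum principle and Hopf's lemma only give the \emph{pointwise} strict inequality $u(x)>\mu_-$; they do not exclude a sequence $x_j\to\infty$ with $u(x_j)\to\mu_-$ (necessarily staying in a bounded-width boundary strip, by (\ref{equnifdog})). Ruling this out is the heart of the matter when $W(\mu_-)=0$ is allowed, since otherwise no admissible $\delta$ exists and the barriers degenerate. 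The paper devotes most of its proof to exactly this point (relation (\ref{eqdoginf})): one translates $u$ to the bad points, uses the uniform bound $\|\nabla\varphi\|_{C^{1,\alpha}(\mathbb{R}^{n-1})}\le C$ to get uniform $C^{2,\alpha}$ estimates and convergence of the translated epigraphs, and then applies the strong maximum principle and Hopf's lemma to the \emph{limit} solution $v_\infty$ with $v_\infty(0,0)=\mu_-$, contradicting (\ref{equnifdog}). So the regularity hypothesis on $\varphi$ is needed precisely here, not merely ``to ensure uniform applicability of Hopf's lemma'' in the sliding step as you suggest. As written, your proof has a hole at its very first step.

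The second problem is a sign error at the boundary-touching step that makes your contradiction evaporate. The epigraph of the \emph{convex} function $\varphi$ is the convex set $\Omega$ itself, not $\Omega^c$; consequently, for $P\in\Omega$ and $x_*\in\partial\Omega$ one has $\nu\cdot(x_*-P)\ge 0$, not $\le 0$. With $u_R'<0$ this gives $\nu\cdot\nabla\underline{u}_{R,P}(x_*)\le 0$, which is what actually contradicts the inequality $\nu\cdot\nabla\underline{u}_{R,P}(x_*)>0$ furnished by Hopf's lemma. With the sign you wrote ($\nu\cdot(x_*-P)\le 0$) you would obtain $\nu\cdot\nabla\underline{u}_{R,P}\ge 0$, which is \emph{consistent} with Hopf's conclusion and yields no contradiction; you appear to have transplanted the star-shaped-obstacle inequality (\ref{eqstarshapedInner}) from Theorem \ref{thmmatano}, where the roles of $\Omega$ and its complement are reversed. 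The fix is immediate, but the argument as stated does not close. A minor further point: you restrict $R\le\mathrm{dist}(P,\partial\Omega)$ when defining the barrier, yet the whole point of the convexity computation is to allow $B_R(P)$ to overlap $\partial\Omega$ (the paper keeps $R$ fixed, slides $P$ over \emph{all} of $\Omega$, takes $P=x$ to get $u(x)\ge u_R(0)$ everywhere, and only then sends $R\to\infty$); if the balls never reach the boundary you cannot conclude $u\ge u_R(0)$ at points near $\partial\Omega$ for large $R$.
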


\begin{proof}
As before, by the strong maximum principle and Hopf's boundary point
lemma, we deduce that $u>\mu_-$ on $\overline{\Omega}$. In fact, we
claim that
\begin{equation}\label{eqdoginf}
\inf_{x\in \overline{\Omega}}u(x)>\mu_-,
\end{equation}
(this is not needed in the case where $W(\mu_-)>0$). To show this,
we will argue by contradiction (in the spirit of Lemma 3.4 in
\cite{cafareliPacard}, see also \cite{farinaAdvMath} and
\cite{angenent,ghosubGui}), namely assume that
\[
u(y_j',y_j)\to \mu_-\ \textrm{for\ some}\ y_j'\in \mathbb{R}^{n-1}\
\textrm{and} \ y_j\geq \varphi(y_j').
\]
Note that (\ref{equnifdog}) implies that there exists an $L>0$ such
that
\begin{equation}\label{eqlowerdog}
u(x',x_n)\geq \frac{\mu_-+\mu}{2} \ \textrm{if}\ x'\in
\mathbb{R}^{n-1}\ \textrm{and}\ x_n\geq \varphi(x')+L.
\end{equation}
It follows that $\varphi(y_j')\leq y_j\leq\varphi(y_j')+ L$ for
large $j$, and, passing to a subsequence, we find that
\begin{equation}\label{eqYinfty}y_j-\varphi(y_j')\to Y_\infty\in [0,L].\end{equation}
Using the Neumann boundary conditions, as in \cite{niTakagi}, we can
extend $u$ to a $C^2$ function in a neighborhood of $\Omega$. Then,
applying interior regularity estimates (see
\cite{Gilbarg-Trudinger}), we infer that there exists a constant
$C>0$ such that
\[
\|u\|_{C^{2,\alpha}(\bar{\Omega})}\leq C,
\]
(for this, as explained in \cite{farinaAdvMath}, it is important
that $\|\varphi\|_{C^{1,\alpha}(\mathbb{R}^{n-1})}$ is finite). By
Lemma 6.37 in \cite{Gilbarg-Trudinger}, we can take $\tilde{u}\in
C^{2,\alpha}(\mathbb{R}^n)$ to be a smooth extension of $u$, that is
$\tilde{u}=u$ in $\Omega$, such that
\[
\|\tilde{u}\|_{C^{2,\alpha}(\mathbb{R}^n)}\leq
C\|u\|_{C^{2,\alpha}(\bar{\Omega})}\leq C,
\]
see also \cite{farinaAdvMath}. Now, let
\[
v_j(z)=\tilde{u}(z'+y_j',z_n+y_j),\ z=(z',z_n)\in \mathbb{R}^n.
\]
Each $v_j$ solves (\ref{eqNeumanMat}) in
\begin{equation}\label{eqphij}\Omega_j=\left\{z_n>\varphi_j(z')\equiv \varphi(z'+y_j')-y_j
\right\},\end{equation} there exists $C>0$ such that
\begin{equation}\label{eqvjC2}
\|{v_j}\|_{C^{2,\alpha}(\mathbb{R}^n)}\leq C \ \textrm{for\ every}\
j,
\end{equation}
\begin{equation}\label{eqvjmu}
\mu_-\leq v_j\leq \mu,\ \textrm{and}\ v_j(0,0)\to \mu_-.
\end{equation}
Moreover, thanks to (\ref{eqlowerdog}), we have
\begin{equation}\label{eqlowerdogII}
v_j\left(0,\varphi(y_j')-y_j+L
\right)=u\left(y_j',\varphi(y_j')+L\right)\geq \frac{\mu_-+\mu}{2}.
\end{equation}
In view of (\ref{eqvjC2}), and the usual diagonal argument, passing
to a subsequence, we find that
\[
v_j\to v_\infty\  \ \textrm{in}\
C_{loc}^{2,\alpha}\left(\mathbb{R}^n\right),
\]
for some  $v_\infty$ with
$\|{v_\infty}\|_{C^{2,\alpha}(\mathbb{R}^n)}\leq C$ (this $\alpha$
is in fact the same as in (\ref{eqvjC2}), see Lemma 6.1.6 in
\cite{henry}), and
\begin{equation}\label{eqvinfty}
\mu_-\leq v_\infty\leq \mu\  \textrm{in}\ \mathbb{R}^n,\
\textrm{and}\ v_\infty(0,0)=\mu_-,
\end{equation}
(recall (\ref{eqvjmu})). Furthermore, in view of (\ref{eqYinfty})
and (\ref{eqlowerdogII}), we get that
\begin{equation}\label{eqvinftygeq}
v_\infty(0,L-Y_\infty)\geq \frac{\mu_-+\mu}{2},
\end{equation}
which implies that $v_\infty$ is not identically equal to $\mu$ in
$\Omega_\infty$. From (\ref{eqYinfty}), and (\ref{eqphij}), we find
that
\[
\varphi_j(0)\to -Y_\infty \in [-L,0].
\]
Moreover, we have
\[
\|\nabla \varphi_j\|_{C^{1,\alpha}(\mathbb{R}^{n-1})}= \|\nabla
\varphi\|_{C^{1,\alpha}(\mathbb{R}^{n-1})}\leq C\ \ \forall\ j.
\]
From the above two relations, via Arczela-Ascoli's theorem, we
conclude that, for a subsequence,
\begin{equation}\label{eqphijposter}\textrm{the}\ \varphi_j \ \textrm{converge\
in}\ C^2_{loc}(\mathbb{R}^{n-1})\ \textrm{to\ a\ function}\
\varphi_\infty\in C^2(\mathbb{R}^{n-1}),\end{equation} which is also
convex. We write
\[
\Omega_\infty=\left\{x_n>\varphi_\infty(x')\right\}.
\]
We have that $v_\infty$ is a solution to (\ref{eqNeumanMat}) in
$\Omega_\infty$. Indeed, if $x=(x',x_n)\in \Omega_\infty$, then
$x_n>\varphi_\infty (x')$. It follows that $x_n>\varphi_j (x')$ for
large $j$, and thus
\[
\Delta v_\infty(x)=\lim_{j\to \infty}\Delta v_j(x)=\lim_{j\to
\infty}W'\left(v_j(x)\right)=W'\left(v_\infty(x)\right).
\]
Let $\nu_j(x)$ denote the outer unit normal vector to $\partial
\Omega_j$ at $x\in \partial \Omega_j$, and $\nu_\infty(x)$ the
corresponding vector to $\partial \Omega_\infty$. We have, for
$\nu_\infty$ and $\nu_j$ evaluated at
$\left(x',\varphi_\infty(x')\right)$ and
$\left(x',\varphi_j(x')\right)$ respectively, that
\[
\begin{array}{rcl}
  \left|\nu_\infty \nabla v_\infty\left(x',\varphi_\infty(x') \right)\right| & \leq & \left|\nu_\infty\nabla v_\infty\left(x',\varphi_\infty(x') \right)-\nu_\infty\nabla
v_j\left(x',\varphi_\infty(x')
\right)\right|\\
&&\\ &&+\left|\nu_\infty\nabla
v_j\left(x',\varphi_\infty(x')\right)-\nu_\infty\nabla
v_j\left(x',\varphi_j(x')  \right) \right| \\&&\\
    &   &+\left|\nu_\infty \nabla
v_j\left(x',\varphi_j(x') \right)-\nu_j \nabla
v_j\left(x',\varphi_j(x') \right) \right| +\left|\nu_j \nabla
v_j\left(x',\varphi_j(x') \right) \right| \\
  &  &   \\
    & \stackrel{(\ref{eqvjC2})}{\leq}  &
\sup_{B_1\left(x',\varphi_\infty(x') \right)}|\nabla v_\infty-\nabla
v_j|+C\left|\varphi_\infty(x')-\varphi_j(x')
\right|\\
&&\\
&&+C\left|\nu_\infty\left(x',\varphi_\infty(x')\right)-\nu_j\left(x',\varphi_j(x')\right)
\right|,
\end{array}
\]
where, in turn, the last term can be estimated as
\[\begin{array}{lll}
    \left|\nu_\infty\left(x',\varphi_\infty(x')\right)-\nu_j\left(x',\varphi_j(x')\right)
\right| & \leq &
\left|\nu_\infty\left(x',\varphi_\infty(x')\right)-\nu_\infty\left(x',\varphi_j(x')\right)
\right|\\
&&+\left|\nu_\infty\left(x',\varphi_j(x')\right)-\nu_j\left(x',\varphi_j(x')\right)
\right| \\
     &  &  \\
     & \leq & C\left|\varphi_\infty(x')-\varphi_j(x')
\right|+\sup_{B_1\left(x',\varphi_\infty(x')
\right)}\left|\nu_\infty(y)-\nu_j(y) \right|,
  \end{array}
\]
where we used that $\|\nabla
\varphi_\infty\|_{C^{1,\alpha}(\mathbb{R}^n)}\leq C$, the fact that
$\nu_\infty$ and $\nu_j$ are functions of $\nabla \varphi_\infty$
and $\nabla \varphi_j$ respectively, and (\ref{eqphijposter}).
 Hence, by letting $j\to \infty$, we deduce that $v_\infty$
satisfies Neumann boundary conditions on $\partial \Omega_\infty$.
On the other hand, in view of (\ref{eqvinfty}), recalling that
$W'(\mu_-)\leq 0$, the strong maximum principle and Hopf's boundary
point lemma (applied in the equation for $v_\infty-\mu_-$ in
$\Omega_\infty$) imply that $v_\infty\equiv \mu_-$; a contradiction
to (\ref{eqvinftygeq}). Thus, relation (\ref{eqdoginf}) holds.

Let $u_R$ be as in Theorems \ref{thmmatano}-\ref{thmmatano2}, but
with $B_R\subset \mathbb{R}^n$, $R>0$, and
\[u_R(R)=\mu_-+\delta<\inf_{x\in \overline{\Omega}}u(x),\]
where $\delta>0$ is also chosen so that $W'(\mu_-+\delta)\leq 0$
(the point is that $W(\mu_-+\delta)> 0$). By virtue of
(\ref{equnifdog}), there exists a large $M>\max_{|x'|\leq R}\varphi
(x')+R$ such that
\begin{equation}\label{eqguiR}
u(x)>u_{R}(0)\geq u_{R}(x-Q),\ \ x\in B_{R}(Q),\ \textrm{where}\
Q=(0,\cdots,M).
\end{equation}
 Now, consider the family of functions:
\[\underline{u}_{R,P}(x)=\underline{u}_R(x-P),\ \ P\in \Omega,\ (R>0\ \textrm{fixed\ but\ arbitrary}),\] where $\underline{u}_R$ is defined
by\begin{equation}\label{eqdogbranch}
\underline{u}_R=\left\{\begin{array}{ll}
                         u_R, & x\in B_R,  \\
                          &  \\
                         \mu_-+\delta, & \textrm{otherwise}.
                       \end{array}
 \right.
\end{equation}
Firstly, note that $\underline{u}_{R,P}$ is a weak lower solution to
the equation in (\ref{eqNeumanMat}) (see \cite{Berestyckilion}, and
also recall our first proof of Theorem \ref{thmmine}). Moreover, if
$x\in \partial \Omega$ with $|x-P|<R$, then
\[
\frac{\partial \underline{u}_{R,P}}{\partial \nu}=\nu \cdot \nabla
u_R(x-P)=\nu \cdot \frac{(x-P)}{|x-P|}u_R'\left( |x-P|\right)\leq 0\
\ \textrm{at}\ x,
\]
by (\ref{eqmonotonicity}) and the convexity of $\Omega$. Whereas, if
$x\in \partial \Omega$ with $|x-P|>R$, then
\[
\frac{\partial \underline{u}_{R,P}}{\partial \nu}= 0\ \ \textrm{at}\
x.
\]
In view of (\ref{eqguiR}), and the above observations,  keeping $R$
fixed, starting from $P=Q$, we can slide $B_R(P)$, $P\in \Omega$, to
obtain that
\[
u(x)\geq u_R(0),\ \ x\in \bar{\Omega},\ R>0,
\]
(keep in mind that $u$ and $\underline{u}_{R,P}$ cannot touch on
$\partial B_R(P)$). Then, letting $R\to \infty$ in the above
relation, via the obvious analog of (\ref{eqestimRect}), recalling
that $u\leq \mu$, we conclude that $u\equiv \mu$, as desired.

The proof of the proposition is complete.
\end{proof}

% which will be useful in a latter section.

\begin{rem}\label{rempointwise}
The assertion of Proposition \ref{prohalfmatano} remains true if the
uniform convergence in (\ref{equnifdog}) is replaced by pointwise,
provided that we do not allow  $W(\mu_-)$ to be zero. Indeed, the
pointwise convergence, the boundedness of $u$, and Arczela-Ascoli's
theorem, imply that, given $R>0$, we have
\[u\to \mu,\ \textrm{uniformly\ in}\
B_R',\ \textrm{as}\  x_n\to \infty,
\]
where $B_R'$ denoted the ball of radius $R$ in $\mathbb{R}^{n-1}$
with center at the origin (see the last part of the proof of Theorem
1.1 in \cite{gui}). In fact, since $\mu-u\geq 0$, this property can
also be shown by means of Harnack's inequality in the linear
equation for $\mu-u$, which implies that
\[
\sup_{B_R(x)}(\mu-u)\leq C(R)\inf_{B_R(x)}(\mu-u)\ \ \forall x\in
\mathbb{R}^n \ \textrm{and}\ R>0\ \textrm{such\ that}\ B_R(x)\subset
\Omega,
\]
 (see Lemma 2.3 in
\cite{jerison}). Actually, the latter approach only requires that $
u(x',x_n^j)\to \mu$ for some $x'\in \mathbb{R}^{n-1}$ and a sequence
$x_n^j\to \infty$ as $j\to \infty$. In fact, it follows readily from
the proof of Proposition \ref{prohalfmatano} that the latter
property holds if and only if
\[
\sup_{\mathbb{R}^{n}_+}u=\mu.
\]
 Consequently, relation (\ref{eqguiR}) holds true,
with the corresponding minimizer $u_R$ such that $u_R(R)=\mu_-$.
Now, because $W(\mu_-)>0$, the latter minimizer satisfies the
assertions of Lemma \ref{lem1} and Proposition \ref{proUniq}.
\end{rem}

\begin{rem}\label{remrgidityfarmatano}
If $W'(t)\leq 0$, $t\in [\mu_-,\mu]$, then the special case of
Proposition \ref{prohalfmatano}, where $\Omega$ is a half-space
(namely $\varphi$ is a constant), follows easily, via a reflection
argument, from Proposition 2.4 in \cite{farinaRigidity}.
\end{rem}

\subsection{The case of smooth, bounded, star-shaped
domains}\label{secneumannbded} In analogy to Theorem
\ref{thmmatano}, we have
\begin{pro}\label{promatanointerior1}
Let $\Omega$ be a smooth bounded domain of $\mathbb{R}^n$, $n\geq
1$, with outward unit normal $\nu$, which is star-shaped with
respect to some point $x_0\in \Omega$. Let $\mu_-\leq u \leq \mu$ be
a classical solution to (\ref{eqNeumanMat}), where $W\in C^2$
satisfies conditions \textbf{(a'')} with $W(\mu_-)=0$ allowed, and
(\ref{eqW''}). There exist numbers $R_0,\epsilon_1>0$, depending
only on $W$, such that if $\bar{B}_{R_0}(x_0)\subset \Omega$ and
$u(x)>\mu-\epsilon_1$ on $\bar{B}_{R_0}(x_0)$ then $u\equiv \mu$.
\end{pro}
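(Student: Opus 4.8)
The plan is to mimic the proof of Theorem \ref{thmmatano}, replacing its one‑dimensional building block by the \emph{$n$‑dimensional} ball minimizers of Lemmas \ref{lem1}--\ref{lem1Sign} centred at the star‑centre $x_0$, and to sweep by enlarging the ball until it engulfs $\bar\Omega$. After a translation assume $x_0=0$, so that star‑shapedness of $\Omega$ is the statement $x\cdot\nu_x\ge 0$ for all $x\in\partial\Omega$, $\nu$ the outward unit normal. Pick $\epsilon_1>0$, depending only on $W$, so small that $\epsilon_1<\mu-\mu_-$ and, by (\ref{eqW''}), $W''\ge 0$ on $[\mu-\epsilon_1,\mu]$; let $R_0$, depending only on $W$, exceed the threshold beyond which Proposition \ref{proUniq} makes the relevant minimizers non‑degenerate. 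Since $\epsilon_1<\mu-\mu_-$, the hypothesis rules out $u\equiv\mu_-$; writing $v=u-\mu_-\ge 0$ one has $\Delta v=W'(u)\le C_0 v$ (because $W'(u)-W'(\mu_-)\le C_0 v$ with $C_0\ge\sup_{[\mu_-,\mu]}|W''|$, and $W'(\mu_-)\le 0$ by \textbf{(a'')}), so the strong maximum principle, Hopf's lemma and the Neumann condition give $\inf_{\bar\Omega}u>\mu_-$. Fix $\delta\ge 0$ with $\mu_-+\delta<\inf_{\bar\Omega}u$ and $W'(\mu_-+\delta)\le 0$, shrinking $\epsilon_1$ or taking $\delta$ small so that also $u_{R_0}(0)<\mu-\epsilon_1$ for the minimizer $u_{R_0}$ below (take $\delta=0$ when $W(\mu_-)>0$; note $W(\mu_-+\delta)>0$ by \textbf{(a'')} when $\mu_-+\delta\in[\mu_-,\mu)$).

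For $\rho>0$ let $u_\rho$ be a global minimizer of $J(\cdot;B_\rho)$ over $(\mu_-+\delta)+W^{1,2}_0(B_\rho)$. As in Lemmas \ref{lem1} and \ref{lem1Sign} one obtains $\mu_-+\delta\le u_\rho<\mu$ in $B_\rho$ (comparison with the sub/super solutions $\mu_-+\delta$ and $\mu$), radial monotonicity $u_\rho'(r)<0$ for $r\in(0,\rho)$, the analogue of (\ref{eqsweers}) so that $u_\rho(|x|)\to\mu$ for each fixed $x$ as $\rho\to\infty$, and, via Proposition \ref{proUniq} and the implicit function theorem, that $u_\rho$ may be taken to depend continuously on $\rho\ge R_0$. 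Extending by the constant $\mu_-+\delta$,
\[
\underline u_\rho(x)=\begin{cases} u_\rho(|x|), & |x|\le\rho,\\ \mu_-+\delta, & |x|>\rho,\end{cases}
\]
is a weak lower solution of $\Delta u=W'(u)$ on $\mathbb{R}^n$, since the radial derivative jumps upward across $\partial B_\rho$ (from $u_\rho'(\rho)<0$ to $0$) and $W'(\mu_-+\delta)\le 0$; compare (\ref{eqlower}) and (\ref{eqdogbranch}).

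The sweep is then routine. By the choices above, $\underline u_{R_0}(\cdot)\le u_{R_0}(0)<\mu-\epsilon_1<u$ on $\bar B_{R_0}(x_0)\subset\Omega$ and $\underline u_{R_0}=\mu_-+\delta<\inf_{\bar\Omega}u$ elsewhere, hence $\underline u_{R_0}<u$ on $\bar\Omega$. Set $\rho_*=\sup\{\rho\ge R_0:\ \underline u_s\le u\text{ on }\bar\Omega\text{ for all }s\in[R_0,\rho]\}$; by continuity $\underline u_{\rho_*}\le u$ on $\bar\Omega$, and if $\rho_*<\infty$ then, $\bar\Omega$ being compact, $\underline u_{\rho_*}$ touches $u$ at some $x_*\in\bar\Omega$ with necessarily $|x_*|<\rho_*$ (off $B_{\rho_*}(x_0)$ one has $\underline u_{\rho_*}=\mu_-+\delta<\inf_{\bar\Omega}u$). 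If $x_*\in\Omega$, the strong maximum principle for $u-\underline u_{\rho_*}$ (which solves $\Delta(u-\underline u_{\rho_*})\le Q(x)(u-\underline u_{\rho_*})$ weakly, $Q$ as in (\ref{eqQ})) forces $u\equiv u_{\rho_*}$ on the connected set $B_{\rho_*}(x_0)\cap\Omega$, so $u$ takes the value $\mu_-+\delta$ on $\partial B_{\rho_*}(x_0)\cap\Omega$ (or, if $B_{\rho_*}(x_0)\supseteq\bar\Omega$, the Neumann condition forces $x\cdot\nu_x\equiv 0$ on $\partial\Omega$, impossible as $\int_{\partial\Omega}x\cdot\nu_x\,dS=n|\Omega|>0$) — a contradiction. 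If $x_*\in\partial\Omega$, then unless $u\equiv\underline u_{\rho_*}$ near $x_*$ (handled as above) Hopf's lemma gives $\partial_\nu(u-\underline u_{\rho_*})(x_*)<0$, hence, since $\partial_\nu u(x_*)=0$, $\partial_\nu\underline u_{\rho_*}(x_*)>0$; but $\partial_\nu\underline u_{\rho_*}(x_*)=u_{\rho_*}'(|x_*|)\,\frac{x_*\cdot\nu_{x_*}}{|x_*|}\le 0$ because $u_{\rho_*}'(|x_*|)<0$ and $x_*\cdot\nu_{x_*}\ge 0$ — again a contradiction. Therefore $\rho_*=\infty$, so $\underline u_\rho\le u$ on $\bar\Omega$ for all $\rho\ge R_0$; letting $\rho\to\infty$ gives $u\ge\mu$, and with $u\le\mu$ we conclude $u\equiv\mu$.

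The Hopf step at $\partial\Omega$ is the clean part once the star‑shapedness sign $x\cdot\nu_x\ge 0$ is matched against the radially \emph{decreasing} profile $\underline u_\rho$ (note this forces \emph{decreasing}, rather than \emph{increasing}, radial profiles, opposite to the exterior obstacle case of Theorem \ref{thmmatano}). The main obstacle I anticipate is assembling a genuinely \emph{sweepable} comparison family — the continuity of $\rho\mapsto u_\rho$ on $[R_0,\infty)$, which is exactly where the non‑degeneracy of Proposition \ref{proUniq} enters and forces $R_0$ to depend only on $W$ — together with choosing $\delta$ compatibly with $\mu_-+\delta<\inf_{\bar\Omega}u$, $W'(\mu_-+\delta)\le 0$ and $u_{R_0}(0)<\mu-\epsilon_1$, and checking the radial monotonicity of $u_\rho$ in the possibly sign‑indefinite range $\mu_-+\delta\le 0$ as in Lemma \ref{lem1Sign}.
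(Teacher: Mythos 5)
Your proof is correct and follows essentially the same route as the paper: you build the $n$-dimensional radial minimizers on balls centred at the star-centre $x_0$, extend them by the constant $\mu_-+\delta$ as in (\ref{eqdogbranch}), and ``balloon'' the radius $\rho$ upward, ruling out interior contact by the strong maximum principle and boundary contact by Hopf's lemma combined with the sign $x\cdot\nu_x\ge 0$ and the radial monotonicity $u_\rho'<0$. The only cosmetic differences from the paper's proof are that it fixes $\epsilon_1$ as $\mu-u_{R_0}(0)$ rather than via the extra requirement $W''\ge 0$ on $[\mu-\epsilon_1,\mu]$, and that you spell out (via the divergence identity $\int_{\partial\Omega}x\cdot\nu_x\,dS=n|\Omega|$) the subcase $B_{\rho_*}\supseteq\bar\Omega$ that the paper leaves implicit in the phrase ``cannot occur because of the strong maximum principle.''
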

\begin{proof}
The proof of this proposition is in the spirit of that of Theorem
\ref{thmmatano}. By the strong maximum principle and Hopf's boundary
point lemma (see \cite{Gilbarg-Trudinger}), recalling that
$W'(\mu_-)\leq 0$, we deduce that
\[
\min_{x\in \bar{\Omega}}u(x)>\mu_-,
\]
keeping also in mind that $\bar{\Omega}$ is compact (compare with
(\ref{eqdoginf})). Again, we may assume without loss of generality
that $x_0=0$.

Let $u_R$, $R\geq R_0$, be the radial minimizers that we used in
Proposition \ref{prohalfmatano}. As before, the functions in
(\ref{eqdogbranch}) fashion  a smooth family of weak lower solutions
to (\ref{eqNeumanMat}) for $R\geq R_0$. Let
$\epsilon_1=\mu-u_{R_0}(0)$.

Suppose that $u$ is as stated in the proposition with the above
choices of $R_0,\epsilon_1$ (and $x_0=0$).  Clearly, we have
\[
u>\underline{u}_{R_0}\ \ \textrm{on}\ \bar{\Omega}.
\]
Now, similarly to Proposition \ref{propacard}, we do ``ballooning''.
As $R>R_0$ increases, there are three possibilities. The first one
is that there exists some $R_*>R_0$ and an $x_*\in \Omega$ such that
$\underline{u}_R<u$ on $\bar{\Omega}$ for $R\in [R_0,R_*)$ and
$\underline{u}_{R_*}(x_*)=u(x_*)$. The second possibility is the
same as the first but with $x_*\in \partial \Omega$. The third
possibility is that $u$ and $\underline{u}_R$ never touch, namely
$u>\underline{u}_R$ on $\bar{\Omega}$ for every $R\geq R_0$. We make
the following observations. The first scenario cannot occur because
of the strong maximum principle. In the case that the last scenario
holds, letting $R\to \infty$ and recalling Lemma \ref{lem1} (for
these $u_R$'s), we infer that $u\equiv \mu$ as desired. Therefore,
it remains to exclude the second scenario, namely that
$\underline{u}_R$ and $u$ first touch at a point $x_*\in \partial
\Omega$ when $R=R_*>R_0$; note that $0<|x_*|<R_*$. To this end, we
will argue by contradiction and assume that it holds. Note first
that relation (\ref{eqmatanoHopf}) remains unchanged
(notation-wise). Analogously to (\ref{eqstarshapedInner}), we have
$x\nu_x \geq 0$, $x\in
\partial \Omega$. Keeping in mind that, in the case at hand, we have
\[
\underline{u}_{R_*}'(|x_*|)=u_{R_*}'(|x_*|)\stackrel{(\ref{eqmonotonicity})}{<}0,
\]
we get a contradiction.

The proof of the proposition is complete.
\end{proof}
\begin{rem}\label{remmatanoconvex}
If $\Omega$ is bounded, smooth and \emph{convex},  there are no
non-constant stable solutions to (\ref{eqNeumanMat}) for \emph{any}
$W$ (see \cite{casten} and \cite{matano}).
\end{rem}

In analogy to Theorem \ref{thmmatano2}, we can show
\begin{pro}\label{promatanointerior2}
Let $\Omega$ be a smooth bounded domain of $\mathbb{R}^n$, $n\geq
1$, with outward unit normal $\nu$, which is directionally
star-shaped with respect to some direction $x_i$, $i=1,\cdots,n$.
Let $\mu_-\leq u \leq \mu$ be a classical solution of
(\ref{eqNeumanMat}), where $W\in C^2$ satisfies conditions
\textbf{(a'')} with $W(\mu_-)=0$ allowed, and (\ref{eqW''}). There
exist numbers $R_2,\epsilon_2>0$, depending only on $W$, such that
if
%the sets
%$\Omega \cap \{x_i=x_{0i}+\pm R_2 \}$ are nonempty and simply
%connected for some $x_{0i}\in \mathbb{R}$, and
%the strip $\bar{B}_{R_2}(x_0)\subset \Omega$ and
$u(x)>\mu-\epsilon_2$ on $\bar{\Omega}\cap \{|x_i|\leq R_2 \}$, then
$u\equiv \mu$.
\end{pro}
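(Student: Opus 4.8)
The plan is to mimic the proof of Theorem \ref{thmmatano2} almost verbatim, replacing the one-dimensional minimizers by $n$-dimensional ones and the ``sliding in $x_1$'' building block by a radial one, exactly as Proposition \ref{promatanointerior1} is the bounded-star-shaped analogue of Theorem \ref{thmmatano}. Without loss of generality assume $i=1$ and that $\Omega$ is star-shaped in the $x_1$-direction with respect to a point on $\{x_1=0\}$, so that the inner-normal inequality $(x_1,0,\dots,0)\cdot\nu_x\geq 0$ holds for all $x\in\partial\Omega$. First I would invoke the strong maximum principle and Hopf's lemma (using $W'(\mu_-)\le 0$, which follows from \textbf{(a'')}) to get $\min_{\bar\Omega}u>\mu_-$, which is legitimate here because $\bar\Omega$ is compact. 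Then I would fix $\delta>0$ small enough that $\mu_-+\delta<\min_{\bar\Omega}u$ and $W'(\mu_-+\delta)\le 0$; by Lemma \ref{lem1} (applied with the boundary value $\mu_-+\delta$ in place of $0$, which only shifts the potential and is covered by the remarks there), for each $R\ge R_2$ there is a radial minimizer $u_R$ of $J(\cdot;B_R)$ over $(\mu_-+\delta)+W^{1,2}_0(B_R)$, satisfying $\mu_-+\delta\le u_R<\mu$, the monotonicity $u_R'(r)<0$, and the lower bound $u_R(0)\ge\mu-\epsilon$ once $R$ is large. By Proposition \ref{proUniq} these $u_R$ are nondegenerate for $R\ge R_2$ (enlarging $R_2$ if needed), so by the implicit function theorem they vary smoothly with $R$. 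Set $\epsilon_2=\mu-u_{R_2}(0)$.

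Next I would introduce, for $R\ge R_2$, the lower solution
\[
\underline{u}_R(x)=\begin{cases} u_R(x_1), & |x_1|\le R,\\[2pt] u_R(0), & |x_1|>R,\end{cases}
\]
which depends only on $x_1$ (no extra translation parameter is needed in the bounded case, unlike Theorem \ref{thmmatano2}). Using the one-dimensional equation in (\ref{eqgidasEq}) together with $W'(\mu_-+\delta)\le 0$, the monotonicity (\ref{eqmonotonicity}), and (\ref{eqW''})-driven sign of $W'(u_R(0))$, one checks that $\underline{u}_R$ is a weak lower solution of $\Delta u=W'(u)$ in $\Omega$ in the sense of \cite{Berestyckilion}. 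The hypothesis $u>\mu-\epsilon_2$ on $\bar\Omega\cap\{|x_1|\le R_2\}$ gives $u>\underline{u}_{R_2}$ on $\bar\Omega$, because off $\{|x_1|\le R_2\}$ one has $\underline{u}_{R_2}=u_{R_2}(0)=\mu-\epsilon_2<u$ and on $\{|x_1|\le R_2\}$ one has $\underline{u}_{R_2}\le u_{R_2}(0)=\mu-\epsilon_2<u$.

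Then I would run the ``ballooning'' argument of Propositions \ref{propacard} and \ref{promatanointerior1}: as $R$ increases from $R_2$, either $u>\underline{u}_R$ on $\bar\Omega$ for all $R$ — whence letting $R\to\infty$ and using (\ref{eqestimRect}) for these $u_R$ forces $u\equiv\mu$ — or there is a first $R_*>R_2$ and a touching point $x_*$. If $x_*\in\Omega$ the strong maximum principle applied to the equation $\Delta(u-\underline{u}_{R_*})=Q(x)(u-\underline{u}_{R_*})$ (with $Q\in L^\infty$, as in (\ref{eqQ})) would force $u\equiv\underline{u}_{R_*}$, impossible since $\underline{u}_{R_*}$ is not a solution (its Laplacian has the wrong sign off the slab, cf.\ the analogue of (\ref{eqmatanolower0})); also a point with $x_{*,1}=0$ is excluded exactly as in Theorem \ref{thmmatano2} by examining $g(t)=(u-\underline{u}_{R_*})(x_*+te_1)$ and noting $g'(0^-)-g'(0^+)=-2u_{R_*}'(R_*)\cdot 0$… more precisely the kink argument there shows $g$ cannot have an interior minimum. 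So $x_*\in\partial\Omega$ with $x_{*,1}\ne 0$. Hopf's boundary point lemma then gives $\nu\cdot\nabla(u-\underline{u}_{R_*})<0$ at $x_*$, i.e.\ $\nu\cdot\nabla\underline{u}_{R_*}>0$ at $x_*$; but $\nabla\underline{u}_{R_*}=u_{R_*}'(x_{*,1})\,e_1$ and $x_{*,1}u_{R_*}'(x_{*,1})<0$ by (\ref{eqmonotonicity}), while $x_{*,1}\,(\nu\cdot e_1)\ge 0$ by directional star-shapedness, so $\nu\cdot\nabla\underline{u}_{R_*}\le 0$ — a contradiction. Hence the first alternative must hold and $u\equiv\mu$. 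The main obstacle, as in the earlier proofs, is the delicate handling of the touching point when it lands on $\partial\Omega$ (or on $\{x_1=0\}$): one must make sure $\underline{u}_R$ is smooth there so Hopf's lemma applies, and that the geometry of $\partial\Omega$ (star-shapedness in the $x_1$ direction versus genuine star-shapedness) enters with the correct sign; everything else is a routine transcription of Theorem \ref{thmmatano2} and Proposition \ref{promatanointerior1}.
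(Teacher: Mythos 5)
Your skeleton is what the paper intends by analogy with Theorem \ref{thmmatano2} and Proposition \ref{promatanointerior1} --- slab lower solutions built from the one-dimensional minimizer, an initial comparison on the slab, ballooning in $R$, then a Hopf/star-shapedness contradiction --- but the lower solution you wrote down is wrong in a way that breaks the argument at the very first step. You set $\underline{u}_R$ equal to $u_R(x_1)$ on $\{|x_1|\le R\}$ and to $u_R(0)$ off the slab. Since $u_R(\pm R)=\mu_-+\delta$ while $u_R(0)\approx\mu$, this function has a jump discontinuity across $\{|x_1|=R\}$; it is not in $W^{1,2}_{\mathrm{loc}}$ and cannot be a weak lower solution in the sense of \cite{Berestyckilion}. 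Worse, the starting inequality $u>\underline{u}_{R_2}$ on $\bar\Omega$ is unjustified: you claim that off $\{|x_1|\le R_2\}$ one has $\underline{u}_{R_2}=u_{R_2}(0)=\mu-\epsilon_2<u$, but the hypothesis controls $u$ only on $\bar\Omega\cap\{|x_1|\le R_2\}$; off that slab the only available lower bound is $u>\mu_-+\delta$, from the strong maximum principle. You appear to have transcribed the ``$u_R(0)$ far away'' branch from the exterior-domain construction of Theorem \ref{thmmatano2}, where $u\to\mu$ at infinity makes it sensible, without noticing that on a bounded domain the roles are inverted: the slab is where you control $u$, and outside it you control nothing beyond $\mu_-+\delta$.

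The correct slab analogue of (\ref{eqdogbranch}) is $\underline{u}_R=u_R(x_1)$ on $\{|x_1|\le R\}$ and $\underline{u}_R=\mu_-+\delta$ on $\{|x_1|>R\}$, which is continuous and a genuine weak lower solution: in the slab $u_R''=W'(u_R)$ exactly, outside $\Delta(\mu_-+\delta)=0\ge W'(\mu_-+\delta)$, and the corners at $|x_1|=R$ are convex exactly as for (\ref{eqdogbranch}). The starting comparison then holds: on the slab $\underline{u}_{R_2}\le u_{R_2}(0)=\mu-\epsilon_2<u$ by hypothesis, and off the slab $\underline{u}_{R_2}=\mu_-+\delta<u$. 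With this fixed, the rest of your outline goes through: ballooning in $R$, exclusion of interior touching by the strong maximum principle, no touching where $\underline{u}_{R_*}=\mu_-+\delta$, and the boundary contradiction --- Hopf's lemma plus the Neumann condition give $\nu\cdot\nabla\underline{u}_{R_*}(x_*)>0$, while (\ref{eqmonotonicity}) and $x_{*,1}\nu_1(x_*)\ge 0$ give $\nu\cdot\nabla\underline{u}_{R_*}(x_*)\le 0$. (A small cleanup: since $u_R$ is even with $u_R'(0)=0$, $\underline{u}_R$ is $C^2$ across $\{x_1=0\}$, and the garbled ``kink argument'' you try to import from Theorem \ref{thmmatano2} is simply not needed here.)
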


%\begin{rem}\label{matanonegative}
%The results of this section continue to hold if, instead of
%\textbf{(a')} and $0\leq u \leq \mu$, we assume \textbf{(a'')}
%(preceding Lemma \ref{lem1Sign}) and $\mu_-\leq u \leq \mu$;
%plainly, instead of $u_R$, one uses the global minimizer of
%$J(\cdot; B_R)$ in $\mu_-+W^{1,2}_0(B_R)$ which has analogous
%properties.
%\end{rem}

\section{Extensions: Multiple ordered solutions}\label{secExtensions}
Suppose that $W:\mathbb{R}\to \mathbb{R}$ is $C^2$ and there are
positive numbers
\[
\mu_1<\cdots<\mu_m,\ \ m\geq 2,
\]
such that
\[
W(\mu_1)>\cdots>W(\mu_m), \ \ W'(0)\leq 0,\ \ W'(\mu_i)=0,\ \
i=1\cdots,m,
\]
and
\[
W(t)>W(\mu_i), \ \ t\in [0, \mu_i),\ i=1,\cdots, m.
\]
Note that at the points $\mu_i$, the potential $W$ has either minima
or saddles. Obviously, we can extend $W$ outside of $[0,\mu_i]$, to
a $C^2$ potential $\tilde{W}_i$, in such a way that condition
\textbf{(a')} is satisfied with $\tilde{W}_i(t)-W(\mu_i)$ in place
of $W$ and $\mu_i$ in place of $\mu$, $i=1,\cdots,m$.
% Then, our Theorem
%\ref{thmmine} furnishes a solution $u_i$ of
%\[
%\Delta u=W_i'(u),\ x\in \Omega,\ \ u=0\ \textrm{on}\ \partial
%\Omega.
%\]
%such that, if the domain $\Omega$ contains a ball of radius $R_m$
%that depends only on $W(0)-W(\mu_i)$ and $\infdelta_i$
%\begin{equation}\label{eq14+i}
%\mu-a<u_i(x),\ \ x\in B_{\frac{R'_i}{2}}(x_0).
%\end{equation}
Next, consider  any
\begin{equation}\label{eqe1}
\epsilon\in \left(0,
\min_{i=1,\cdots,m}(\mu_i-\mu_{i-1})\right),\end{equation} with the
convention that $\mu_0=0$, and any
\begin{equation}\label{eqDi}
 D_i>D_i'\ \ \textrm{where}\  \ D_i'\  \ \textrm{solve}\ \
\textbf{U}_i\left(D_i'\right)=\mu_i-\epsilon,\ \ i=1,\cdots,m,
\end{equation}
where
\begin{equation}\label{eqinner}
\textbf{U}_i''(s)=W'\left(\textbf{U}_i(s)\right),\  s>0;\ \
\textbf{U}_i(0)=0,\ \lim_{s\to\infty}\textbf{U}_i(s)=\mu_i.
\end{equation}
By means of Theorem \ref{thmmine}, there exist positive numbers
$R'_i>D_i$, depending only on $\epsilon$, $D_i$, $\tilde{W}_i$,
$i=1,\cdots, m$, and $n$, such that if $\Omega$ has  nonempty
$C^2$-boundary and  contains a closed ball of radius $R'_i$ then
there exists a solution $u_i$ of
\begin{equation}\label{eqEqi}
\Delta u=\tilde{W}_i'(u),\ x\in \Omega,\ \ u(x)=0,\ x\in
\partial \Omega,
\end{equation}
satisfying
\begin{equation} \label{eq12i} 0<u_i(x)<\mu_i,\ \ x\in \Omega,
\end{equation}
and
\begin{equation}\label{eq14+i}
\mu_{i-1}<\mu_i-\epsilon<u_i(x),\ \ x\in
\Omega_{R_i'}+B_{(R'_i-D_i)},\ \ i=1,\cdots,m.
\end{equation}
In view of (\ref{eq12i}), we conclude that $u_i$ solves the original
problem (\ref{eqEq}). Thus, given $\epsilon$ and $D_i$ as in
(\ref{eqe1}) and (\ref{eqDi}) respectively, if $\Omega$ contains a
closed ball of radius $R''$, where $R''=\max_{i=1,\cdots,m}{R_i'}$,
we find that (\ref{eqEq}) has at least $m$ positive solutions which
satisfy (\ref{eq12i})--(\ref{eq14+i}). Moreover, keeping in mind
Remark \ref{remstable}, we know that these solutions are stable.

These solutions may be chosen to be ordered, in the usual sense. In
other words, given $\epsilon$ and $D_i$ as in (\ref{eqe1}) and
(\ref{eqDi}) respectively, there are at least $m$ positive, stable
solutions of (\ref{eqEq}) such that
 \begin{equation}\label{eqorder}u_1(x)< \cdots<u_m(x),\ \ x\in
\Omega,\ \ 1\leq i\leq m,\end{equation} and
(\ref{eq12i})--(\ref{eq14+i}) hold (we have chosen to keep the same
notation).
 Indeed, the solution $u_{i}$
can be captured by using the constant function $\mu_i$ as an upper
solution; and the function $\max\{u_{i-1}(x),\ \underline{u}^i_P\}$
as lower solution, where $\underline{u}^i_P$ is the lower solution
in (\ref{eqlower}) but with $\tilde{W}_i(t)-W(\mu_i)$ in place of
$W(t)$, $i=1,\cdots, m$, and $u_0\equiv 0$. (We use again
Proposition 1 in \cite{Berestyckilion}, see also Proposition 1 in
\cite{kurata}, to make sure that it is a lower solution). As in the
first proof of Theorem \ref{thmmine}, we can sweep with the family
of lower solutions $\underline{u}^i_Q,\ Q\in \Omega_{R_i'}$ to
extend the lower bound on $u_i$ (due to (\ref{eqestimRect})) from
$B_{(R_i'-D_i)}(P)$ to $\Omega_{R'_i}+B_{(R'_i-D_i)}$. Moreover, the
strong inequalities in (\ref{eqorder}) follow from the strong
maximum principle. Naturally, the obtained solutions are stable
(recall Remark \ref{remstable}).

We have just proven the following:

\begin{thm}\label{thmhess1}
Suppose that $\Omega$ and $W$ are as described in this section. Let
$\epsilon$ and $D_i$ be as in (\ref{eqe1}) and (\ref{eqDi})
respectively. There exist positive constants $R_i'>D_i$,
$i=1,\cdots, m$, depending only on $\epsilon$, $D_i$, $W$ and $n$,
such that if $\Omega$ contains a closed ball of radius
$R''=\max_{i=1,\cdots, m}R_i'$, then problem (\ref{eqEq}) has at
least $m$ stable solutions $u_i$, ordered as in (\ref{eqorder}),
such that (\ref{eq12i})--(\ref{eq14+i}) hold true.
\end{thm}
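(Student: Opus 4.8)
The plan is to deduce Theorem \ref{thmhess1} almost entirely from Theorem \ref{thmmine} together with the method of upper and lower solutions, exactly as sketched in the paragraphs preceding the statement. First I would fix $\epsilon$ and the constants $D_i$ as in \eqref{eqe1} and \eqref{eqDi}, and for each $i=1,\dots,m$ apply Theorem \ref{thmmine} to the truncated potential $\tilde W_i(t)-W(\mu_i)$ (which satisfies \textbf{(a')} with $\mu_i$ playing the role of $\mu$): this yields a constant $R_i'>D_i$ depending only on $\epsilon$, $D_i$, $\tilde W_i$ and $n$, and hence only on $\epsilon$, $D_i$, $W$ and $n$. Setting $R''=\max_i R_i'$, whenever $\Omega$ contains a closed ball of radius $R''$ it contains a ball of radius $R_i'$ for each $i$, so each problem \eqref{eqEqi} has a solution $u_i$ satisfying \eqref{eq12i} and \eqref{eq14+i}. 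Since $0<u_i<\mu_i$ and $\tilde W_i=W$ on $[0,\mu_i]$, each $u_i$ in fact solves the original problem \eqref{eqEq}.

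The remaining content is the ordering \eqref{eqorder}. Here I would run the monotone iteration of Section \ref{secproof} inductively in $i$, replacing the lower solution \eqref{eqlower} by $\max\{u_{i-1},\underline u^i_P\}$, where $\underline u^i_P$ is built from the radial minimizer of Lemma \ref{lem1} for the potential $\tilde W_i(t)-W(\mu_i)$, and $u_0\equiv 0$. The function $\max\{u_{i-1},\underline u^i_P\}$ is a weak lower solution of \eqref{eqEqi} by Proposition 1 in \cite{Berestyckilion} (a maximum of two lower solutions, using $\tilde W_i'(0)\le 0$), and $\mu_i$ is an upper solution since $\tilde W_i'(\mu_i)=W'(\mu_i)=0$; the ordering $u_{i-1}<\mu_i$ holds because $u_{i-1}<\mu_{i-1}<\mu_i$. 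Monotone iteration (Theorem 2.3.1 in \cite{sattinger} if $\Omega$ is bounded, Theorem 2.10 in \cite{niIndiana} if unbounded) then produces $u_i$ with $\max\{u_{i-1},\underline u^i_P\}\le u_i\le\mu_i$, hence $u_{i-1}\le u_i$, and $u_i$ still lies below $\mu_i$ so it solves \eqref{eqEq}. To promote $\le$ to the strict inequality $u_{i-1}<u_i$ in $\Omega$ I would apply the strong maximum principle to $w=u_i-u_{i-1}\ge 0$, which satisfies a linear inequality $\Delta w\le c(x)w$ with $c\in L^\infty_{loc}$; since $w\not\equiv 0$ (it is $\ge\mu_i-\epsilon-u_{i-1}>0$ somewhere in the region where \eqref{eq14+i} applies, because $u_{i-1}<\mu_{i-1}<\mu_i-\epsilon$ is false — rather $u_{i-1}<\mu_{i-1}$, so on a point of $\Omega_{R_i'}$ one has $u_i\ge\mu_i-\epsilon>\mu_{i-1}>u_{i-1}$), $w>0$ throughout $\Omega$.

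To extend the lower bound \eqref{eq14+i} from the single ball $B_{(R_i'-D_i)}(P)$ furnished by \eqref{eqestimRect} to the whole region $\Omega_{R_i'}+B_{(R_i'-D_i)}$, I would repeat verbatim the sweeping/sliding argument from the first proof of Theorem \ref{thmmine}: the translates $\underline u^i_Q$, $Q\in\Omega_{R_i'}$, remain lower solutions (after taking the max with $u_{i-1}$, which is harmless since $u_{i-1}\le u_i$ is already known), they vary continuously in $Q$ by connectedness of $\partial\Omega$, and the sliding method together with the strong maximum principle and the boundary-point lemma forces $\underline u^i_Q\le u_i$ for all $Q$, whence \eqref{eq14+i} via \eqref{eqestimRect}. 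The case of disconnected $\Omega_{R_i'}$ is handled, as before, by taking a maximum over one center per component. Finally, stability of each $u_i$ is automatic from the upper-and-lower-solutions construction (Remark \ref{remstable}). The main — and really only — obstacle is bookkeeping: one must check that the truncation $\tilde W_i$ can be performed with the required $C^2$ regularity and so that $D_i'$ and hence $R_i'$ depend only on the data claimed; this is routine since \eqref{eqinner} involves $W$ only on $[0,\mu_i]$, so $\textbf U_i$ and $D_i'$ are intrinsic to $W$, and the constant $R_i'$ from Lemma \ref{lem1} is likewise controlled using only the behavior of $W$ on $[0,\mu_i]$ plus the linear extension outside a compact set.
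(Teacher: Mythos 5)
Your proposal is correct and follows essentially the same route as the paper: apply Theorem \ref{thmmine} to the truncated potentials $\tilde W_i(t)-W(\mu_i)$, observe $0<u_i<\mu_i$ implies $u_i$ solves the original problem, then obtain the ordering by monotone iteration with lower solution $\max\{u_{i-1},\underline u^i_P\}$ and upper solution $\mu_i$, extend \eqref{eq14+i} by the sweeping/sliding argument, and invoke Remark \ref{remstable} for stability. (One small slip: the chain $u_{i-1}<\mu_{i-1}<\mu_i-\epsilon$ is in fact \emph{true} by \eqref{eqe1}, not false as your parenthetical claims, and your final displayed chain already uses exactly this fact correctly.)
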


On the other hand, assuming that $\Omega$ is bounded and smooth (a
$C^3$ boundary suffices), the theory of monotone dynamical systems
(see Theorem 4.4 in \cite{matano}) guarantees the existence of at
least $m-1$ unstable solutions $\hat{u}_{i},\ i=1,\cdots, m-1$, of
(\ref{eqEq}) such that
\begin{equation}\label{eqorder2}
u_i(x)<\hat{u}_{i}(x)<u_{i+1}(x),\ \ x\in \Omega,\ \ i=1,\cdots,
m-1.
\end{equation}
This can also be shown by the well known mountain pass theorem, see
\cite{defiguerdo}.

%\begin{rem}
 %The extra assumptions on  $\Omega$ were only required in order to
%apply Theorem 4.4 in \cite{matano}.\end{rem}

In summary, we have
\begin{thm}\label{thmhess2}
Suppose that, in addition to the hypotheses of Theorem
\ref{thmhess1}, the domain $\Omega$ is assumed to be smooth and
bounded. Then, besides of the $m$ stable solutions $u_i$ that are
provided by Theorem \ref{thmhess1}, there exist at least $m-1$
unstable solutions $\hat{u}_i$ of (\ref{eqEq}), ordered as in
(\ref{eqorder2}) (keep in mind (\ref{eqorder})).
\end{thm}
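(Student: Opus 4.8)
The plan is to reduce Theorem \ref{thmhess2} to two ingredients that are already available: the existence of $m$ ordered \emph{stable} solutions $u_1<\dots<u_m$ produced by Theorem \ref{thmhess1}, and a general abstract principle guaranteeing an unstable equilibrium strictly between two ordered stable equilibria of a gradient (or monotone) semiflow. First I would recall that, since $\Omega$ is now assumed smooth and bounded (a $C^3$ boundary is enough for the Schauder theory one needs), the map $v\mapsto$ (solution of the heat equation $u_t=\Delta u-\tilde W_m'(u)$ with Dirichlet data, started at $v$) defines a smooth semiflow on, say, $C_0(\bar\Omega)$, which is order-preserving and admits the energy $J(\cdot;\Omega)$ as a strict Lyapunov functional. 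Between any two consecutive stable solutions $u_i<u_{i+1}$ from Theorem \ref{thmhess1}, the order interval $[u_i,u_{i+1}]$ is invariant (here one uses that $u_i,u_{i+1}$ are ordered equilibria and the comparison principle), compact, and contains at least two linearly-stable equilibria, so the semiflow on it is not trivial.

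The key step is then to invoke the monotone-dynamical-systems result of Matano — precisely Theorem 4.4 in \cite{matano} — which asserts that between two ordered stable equilibria of such a semiflow there must exist a third equilibrium that is \emph{order-unstable}, hence in our variational setting linearly unstable (its principal eigenvalue is negative, or it is at least not a local minimizer of the energy). Applying this on each of the $m-1$ intervals $[u_i,u_{i+1}]$, $i=1,\dots,m-1$, yields solutions $\hat u_i$ of $\Delta u=\tilde W_{i+1}'(u)$ — but by \eqref{eq14+i} and the strong maximum principle one checks $0<\hat u_i<\mu_{i+1}$, so $\hat u_i$ in fact solves the original problem \eqref{eqEq}, and by construction $u_i<\hat u_i<u_{i+1}$ in $\Omega$, i.e. \eqref{eqorder2} holds. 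Alternatively — and I would mention this as the paper already does — one can bypass the dynamical-systems machinery and instead apply the mountain pass theorem of De Figueiredo \cite{defiguerdo}: the truncated functional associated with $\tilde W_{i+1}$, restricted to the order interval between $u_i$ and $u_{i+1}$, has these two as strict local minimizers, so it possesses a mountain-pass critical point $\hat u_i$ which is not a local minimum and therefore unstable, with the same ordering conclusion via the strong maximum principle.

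The main obstacle is a soft one rather than a computational one: one must be careful that the abstract "between two stable equilibria lies an unstable one'' statement genuinely applies. For the monotone-systems route this requires knowing that $u_i$ and $u_{i+1}$ are stable in the \emph{strong} (order) sense and that they are truly consecutive within the relevant order interval; stability in the sense of a nonnegative principal eigenvalue (Remark \ref{remstable}) combined with the nondegeneracy afforded by Proposition \ref{proUniq} — or, more simply, the observation that solutions obtained by monotone iteration between strictly ordered sub/supersolutions are strongly stable — suffices here. For the mountain-pass route one must verify the Palais–Smale condition and that the linking geometry is genuine, i.e. that $\inf$ over the connecting paths strictly exceeds $\max\{J(u_i),J(u_{i+1})\}$; this is standard once $\Omega$ is bounded and $\tilde W_{i+1}$ has been truncated to have subcritical (in fact bounded-derivative) growth. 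Both routes are classical, so the theorem follows; I would present the statement as an immediate consequence, citing \cite{matano} and \cite{defiguerdo} exactly as the excerpt does.
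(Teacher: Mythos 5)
Your proposal is correct and takes essentially the same route as the paper: the paper proves Theorem~\ref{thmhess2} simply by invoking Matano's Theorem~4.4 \cite{matano} on monotone dynamical systems to obtain an unstable equilibrium in each order interval $[u_i,u_{i+1}]$, and then notes, exactly as you do, that the mountain pass theorem of De Figueiredo \cite{defiguerdo} gives an alternative proof. The extra detail you supply (order invariance of the intervals, the two notions of stability, the truncation argument to pass from $\tilde W_{i+1}$ back to the original $W$) is correct and merely fills in what the paper leaves implicit.
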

The above theorem extends an old result of P. Hess \cite{hess}, in
the context of nonlinear eigenvalue problems (which are included in
our setting, see below), where the additional assumption that
$W'(0)<0$ was imposed (see also \cite{brown} for an earlier result
in the case $n=1$). In the same context, the case $W'(0)=0$ was
allowed in \cite{defiguerdo}, at the expense of assuming that
$W'(\mu_i)\neq0,\ i=1,\cdots,m$, and some geometric restrictions on
the domain. All these references considered  nonlinear eigenvalue
problems of the form
\begin{equation}\label{eqEV1}
\Delta u=\lambda^2 W'(u),\ x\in \mathcal{D},\ \ u(x)=0,\ x\in
\partial \mathcal{D},
\end{equation}
where $\mathcal{D}$ is a smooth bounded domain of $\mathbb{R}^n$. By
stretching variables $x\mapsto \lambda^{-1}x$, assuming that $0\in
\mathcal{D}$ (this we can do without loss of generality), keeping
the same notation, we are led to the equivalent problem:
\begin{equation}\label{eqEV2} \Delta u=W'(u),\ x\in \Omega,\
\ u(x)=0,\ x\in \partial \Omega,
\end{equation}
where $\Omega\equiv \lambda \mathcal{D}$, for $\lambda>0$, which is
plainly problem (\ref{eqEq}). If $\lambda$ is sufficiently large,
then certainly the domain $\Omega$ contains the ball $B_{R''}$,
appearing in the assertion of Theorem \ref{thmhess1}, but not the
other way around. In contrast to our approach of using upper and
lower solutions, De Figueiredo in \cite{defiguerdo} obtained the
corresponding stable solutions  as minimizers of the associated
energy functionals (with $W$ suitably modified outside of
$[0,\mu_i],\ i=1,\cdots,m$), and a geometric condition had to be
imposed on the domain in order to ensure that they are distinct for
large $\lambda$. In our case, the fact that they are distinct
follows at once from (\ref{eq12i}) and (\ref{eq14+i}).
 As we have already pointed out,
in \cite{defiguerdo}, the unstable solutions were constructed as
mountain passes (saddle points of the energy).
\begin{rem}\label{remuniqDancer}
It has been proven in \cite{dancerProcLondon} that if $W'(t) <0,\
t\in (0,\mu) $, $W'(0) <0$, or $W'(0) = 0$ but $W''(0)<0$,
$W'(\mu)=0$, and $W''\geq 0$ near $\mu$, then (\ref{eqEV1}), with
$\mathcal{D}$ smooth and bounded, has a unique  solution with values
$(0,\mu)$ when $\lambda$ is large, see also \cite{angenent}.
% Under the
%assumptions of \cite{fuscoTrans} on $W$ (see the introduction
%herein), and assuming that $\mathcal{D}$ is a smooth bounded domain,
%it has been proven in \cite{angenent} that (\ref{eqEV1}) has a
%unique positive solution, provided that $\lambda$ is sufficiently
%large.
\end{rem}
\begin{rem}\label{remlayer}
If $\mathcal{D}$ is a bounded domain with  $C^2$-boundary, it
follows from the proof of Theorem \ref{thmmine} that the
corresponding  stable solutions of (\ref{eqEV1}), provided by
Theorem \ref{thmhess1}, develop a boundary layer of size
$\mathcal{O}(\lambda^{-1})$, as $\lambda \to \infty$, along the
boundary of $\mathcal{D}$ (see Proposition \ref{prolayer} below for
more details, and compare with the proof of Theorem 1.1 in
\cite{yanedinburg}, as well as with Theorem 4 in \cite{defiguerdo}
and Lemma 2 in \cite{kurata}). Loosely speaking, this means that the
stable solutions $u_i$ converge uniformly to $\mu_i$ on the domain
$\mathcal{D}$ excluding the strip that is described by
$\textrm{dist}(x,\partial \mathcal{D})\leq
|\ln\lambda|^\alpha|\lambda^{-1},\ \alpha>0$, as $\lambda\to
\infty$. It follows from (\ref{eqorder2}) that the corresponding
unstable solutions of (\ref{eqEV1}), provided by Theorem
\ref{thmhess2}, also develop a (local) boundary layer behavior. In
fact, if $W''(\mu_i)>0$, the fine structure of the boundary layer of
the stable solution  $u_i$  is determined by the unique solution of
the problem (\ref{eqinner}),  see \cite{angenent} and Remark
\ref{remhalfplane} below.  On the other side, under some
restrictions on $\mathcal{D}$ and $W$, unstable solutions possessing
an upward sharp spike layer  on top of $u_i$, located near the most
centered part of the domain, have been constructed in
\cite{dancerSpikes}, \cite{dancerWei2}, and \cite{Jang} (see also
\cite{dancerMorseTrans}). The fine structure of this interior spike
layer is determined by the problem
\[
\Delta V=W'(V+\mu_i) \ \ \textrm{in}\ \mathbb{R}^n;\ \ V(x)\to 0,\
|x|\to \infty.
\]
\end{rem}
\section{On the boundary layer of global minimizers of singularly perturbed elliptic
equations}\label{seclayer} In this section, assuming only
$\textbf{(a')}$, we will prove a general result on the size of the
boundary layer of solutions of (\ref{eqEV1}), which minimize the
associated energy functional, as $\lambda\to \infty$ (recall also
Remark \ref{remlayer}). Setting $\varepsilon=\lambda^{-1}\to 0$,
gives rise to a singular perturbation problem of the form
\begin{equation}\label{eqsingular}
\varepsilon^2\Delta u=W'(u),\ \ x\in \mathcal{D};\ \ u(x)=0,\ x\in
\partial \mathcal{D},
\end{equation}
and in this regard it might be helpful to recall Remark
\ref{rembelowNew}.

 We emphasize that, in contrast to previous results in this
 direction, as Theorem 1.1 in \cite{yanedinburg}, here the size of the boundary
layer is shown to be
%of order $\mathcal{O}(\lambda^{-1})$, as $\lambda\to \infty$,
\emph{independent of the dimension $n$}. This is due to our previous
improvement over Lemma 2.2 in \cite{yanedinburg} that was made in
Lemma \ref{lem1} herein (recall the discussion preceding it, and
also see Remark \ref{remYan} below). The point is that we have not
assumed any nondegeneracy on $W$ at $\mu$; in the case where
$W''(\mu)>0$ or $n=2$, the structure of the boundary layer is well
understood (recall Remark \ref{remlayer} and see Remark
\ref{remDancer} below). For a different possible approach to this,
see Remark \ref{remhalfplane} below.

The main result in this section is
\begin{pro}\label{prolayer}
Suppose that $\mathcal{D}$ is a bounded domain in $\mathbb{R}^n$,
$n\geq1$, with $C^2$-boundary, and let $W$ satisfy assumption
\textbf{(a')}.
% from our introduction, and \[ W'(t)<0,\ \ t\in
%[\mu-2d, \mu),\ \textrm{for\ some}\ d\in
%\left(0,\frac{\mu}{2}\right).
%\]
Consider any $\epsilon\in (0,\mu)$ and $D>D'$, where $D'$ as in
(\ref{eqD}). There exists a positive constant $\lambda_*$, depending
only on $\epsilon$, $D$, $\mathcal{D}$, and $W$, such that there
exists a solution $u_\lambda$ of (\ref{eqEV1}), which minimizes the
associated energy functional, satisfies
\begin{equation}\label{eqyu}
0<u_\lambda(x)<\mu,\ \ x\in \mathcal{D},
\end{equation}
 and
\begin{equation}\label{equlambda}
u_\lambda(x)\geq \mu-\epsilon,\ x\in
\bar{\mathcal{D}}_{(D\lambda^{-1})},
\end{equation}
provided that $\lambda \geq \lambda_*$ (recall the definition
(\ref{eqomegaR}), and note that ${\mathcal{D}}_{(D\lambda^{-1})}$ is
a connected domain for large $\lambda$). (See also the comments at
the end of the assertion of Lemma \ref{lem1}).\end{pro}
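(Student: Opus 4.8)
The plan is to reduce the statement to Lemma \ref{lem1} by a standard interior-ball exhaustion of $\mathcal{D}$, combined with the scaling that relates (\ref{eqEV1}) to (\ref{eqEV2})/(\ref{eqEq}). First I would rescale: setting $\varepsilon=\lambda^{-1}$ and $y=\lambda x$, a solution $u_\lambda$ of (\ref{eqEV1}) on $\mathcal{D}$ corresponds to a solution $v$ of $\Delta v=W'(v)$ on $\Omega:=\lambda\mathcal{D}$ with zero Dirichlet data, and a global minimizer of the $\varepsilon$-energy over $\mathcal{D}$ corresponds (after the obvious change of variables in the integral) to a global minimizer of $J(\,\cdot\,;\Omega)$ in $W^{1,2}_0(\Omega)$. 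So it suffices to produce, for $\Omega=\lambda\mathcal{D}$ with $\lambda$ large, a global minimizer $v$ of $J(\,\cdot\,;\Omega)$ satisfying $0<v<\mu$ in $\Omega$ and $v(y)\geq\mu-\epsilon$ whenever $\operatorname{dist}(y,\partial\Omega)\geq D$, and then translate back: this last region is exactly $\bar{\mathcal{D}}_{(D\lambda^{-1})}$ after undoing the scaling, and $D\lambda^{-1}$ is the announced boundary-layer width, manifestly independent of $n$ since $D>D'$ and $D'$ is $n$-independent by Remark \ref{remU} and the construction in Lemma \ref{lem1}.

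Next I would run the variational half of the second proof of Theorem \ref{thmmine}, essentially verbatim, but now on the bounded smooth domain $\Omega=\lambda\mathcal{D}$. As in Lemma \ref{lem1} (or the second proof of Theorem \ref{thmmine}) one obtains a global minimizer $u_{min}$ of $J(\,\cdot\,;\Omega)$ with $0\leq u_{min}<\mu$; the test-function competitor (\ref{eqlinear}) shows $J(u_{min};\Omega)\leq C_0 R^{n-1}$ on any ball $\bar B_{R+2}\subset\Omega$ while $J(0;\Omega)=|\Omega|W(0)$, so for $\lambda$ large enough (depending only on $W$, $n$, and $\mathcal{D}$, since $\mathcal{D}$ contains a fixed ball of some radius $\rho_0>0$ and hence $\Omega$ contains a ball of radius $\lambda\rho_0$) the minimizer $u_{min}$ is nontrivial, and by the strong maximum principle $0<u_{min}<\mu$ in $\Omega$, giving (\ref{eqyu}). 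To get the lower bound (\ref{equlambda}) I would use Lemma \ref{lemdancer} from the appendix exactly as in the second proof of Theorem \ref{thmmine}: for any point $P\in\Omega$ with $\operatorname{dist}(P,\partial\Omega)\geq R'$ (where $R'$ is the constant from Lemma \ref{lem1}, depending only on $\epsilon$, $D$, $W$, $n$), the unique continuation / comparison lemma forces $u_{min}\geq u_{\operatorname{dist}(P,\partial\Omega)}(\,\cdot-P\,)$ on $B_{\operatorname{dist}(P,\partial\Omega)}(P)$, and then (\ref{eqestimRect}) gives $u_{min}\geq\mu-\epsilon$ on $B_{(\operatorname{dist}(P,\partial\Omega)-D)}(P)$. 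Sweeping $P$ over all such interior points covers $\Omega_{R'}+B_{(R'-D)}\supseteq\{y:\operatorname{dist}(y,\partial\Omega)>D\}$ once $\lambda$ is large enough that $\lambda\mathcal{D}$ contains interior balls of radius $R'$; unwinding the scaling yields $u_\lambda\geq\mu-\epsilon$ on $\bar{\mathcal{D}}_{(D\lambda^{-1})}$.

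The threshold $\lambda_*$ is then the maximum of the finitely many lower bounds on $\lambda$ produced above (nontriviality of $u_{min}$; $\lambda\rho_0\geq R'+2$ so that interior balls of the required radius exist; $\lambda\rho_0$ large enough that $\mathcal{D}_{(D\lambda^{-1})}$ is connected, which holds for $\lambda$ large since $\partial\mathcal{D}$ is $C^2$ hence has a uniform interior-ball radius), and all of these depend only on $\epsilon$, $D$, $\mathcal{D}$, and $W$ — not on $n$ in any way that affects the $D\lambda^{-1}$ layer width. The only mildly delicate point, which I would flag as the main obstacle, is the handling of unbounded-domain subtleties — but here $\mathcal{D}$ is bounded, so that difficulty disappears entirely; what remains to be careful about is that the global-minimizer property of $u_\lambda$ on $\mathcal{D}$ really does transfer to the rescaled $v$ on $\lambda\mathcal{D}$ (a one-line change of variables in $J$, with the gradient term picking up $\varepsilon^2$ exactly matching the $\varepsilon^2\Delta$ in (\ref{eqsingular})), and that Lemma \ref{lemdancer} is being applied on genuine Euclidean balls sitting inside $\lambda\mathcal{D}$, which is why the hypothesis $\mathcal{D}\in C^2$ (uniform interior ball condition) is invoked rather than mere Lipschitz regularity.
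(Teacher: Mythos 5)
Your approach — rescale to $\Omega=\lambda\mathcal{D}$, obtain a global minimizer $u_{\min}$ with $0<u_{\min}<\mu$, then compare via Lemma~\ref{lemdancer} against the radial minimizers of Lemma~\ref{lem1} on interior balls — is the same in spirit as the paper's. The bookkeeping of the covering, however, is genuinely different, and that is where you have a gap. The paper never uses the set $\Omega_{R'}+B_{(R'-D)}$ here: it introduces the uniform interior-ball radius $r_0$ of $\partial\mathcal{D}$ (which exists because $\partial\mathcal{D}\in C^2$), places fixed-radius balls $B_{r_0}(q)$ at centres $q\in\partial\mathcal{D}_{r_0}$, and observes that the balls $\bar B_{(r_0-D\lambda^{-1})}(q)$ cover the annulus $D\lambda^{-1}\leq\operatorname{dist}(x,\partial\mathcal{D})\leq 2r_0-D\lambda^{-1}$ \emph{by construction}: for $x$ in this annulus one takes $q=z+r_0\nu_z$, where $z$ is the nearest boundary point to $x$, and then $|x-q|=r_0-\operatorname{dist}(x,\partial\mathcal{D})\leq r_0-D\lambda^{-1}$. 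The deep interior $\bar{\mathcal{D}}_{r_0}$ is then handled by a separate finite covering by balls $B_{r_0/2}(p)$ with $B_{r_0}(p)\subset\mathcal{D}$ (or, if $W'$ happens to be negative near $\mu$, by one application of Lemma~\ref{lemalikakos}). No auxiliary inclusion is needed.

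Your route instead hinges on the set-theoretic inclusion $\Omega_{R'}+B_{(R'-D)}\supseteq\{y:\operatorname{dist}(y,\partial\Omega)>D\}$, which you assert but do not verify — and this is precisely the nontrivial geometric step. It is \emph{false} for an arbitrary Lipschitz domain containing a ball of radius $R'$: a point at depth slightly above $D$ sitting in a thin appendage of $\Omega$ need not be within distance $R'-D$ of any point at depth $>R'$. What rescues it is the uniform interior-ball condition of $\mathcal{D}$: if $\lambda r_0>R'$, then every $y$ with $D<\operatorname{dist}(y,\partial\Omega)\leq R'$ has a unique nearest boundary point $z$, and the segment from $y$ to $Q=z+\lambda r_0\nu_z$ has depth increasing at unit rate; moving inward by $s\in\bigl(R'-\operatorname{dist}(y,\partial\Omega),\,R'-D\bigr)$ (a nonempty interval because $\operatorname{dist}(y,\partial\Omega)>D$) lands in $\Omega_{R'}$ within distance $<R'-D$, while $y$ at depth $>R'$ lies in $\Omega_{R'}$ outright. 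So the inclusion holds, but the relevant quantity is $r_0$, the uniform interior-ball (or reach) radius of $\partial\mathcal{D}$. Your stated threshold $\lambda\rho_0\geq R'+2$ uses the \emph{inradius} $\rho_0$ of $\mathcal{D}$, and since $r_0\leq\rho_0$ in general (the inradius of a dumbbell is large, but its interior-ball radius is controlled by the neck), this condition does not imply $\lambda r_0>R'$. Replace $\rho_0$ by $r_0$ in the threshold and supply the short verification of the inclusion above, and your argument closes; but as written it is incomplete at the one point that carries the content of the proposition.
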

\begin{proof}
As in the second proof of Theorem \ref{thmmine}, recalling the
discussion leading to (\ref{eqEV2}), there exists a smooth solution
of (\ref{eqEV1}), which minimizes the associated energy and
satisfies (\ref{eqyu}), provided that $\lambda$ is sufficiently
large, say $\lambda\geq \lambda_0$, depending not just on $W$ but
this time also on the domain $\mathcal{D}$.

%In view of Lemma \ref{lemalikakos} below, given any $\epsilon\in
%(0,d)$, it suffices to show
%that there exist positive constant
%$D_*$ and $\lambda_*$, depending on $\epsilon$, $\Omega$, and $W$,
%such that there exists a solution $u_\lambda$ of (\ref{eqEq}) which
%satisfies (\ref{eq12}) and
%\[
%u_\lambda(x)\geq \mu-\epsilon,\ \textrm{dist}(x, \partial
%\Omega)={D_*\lambda^{-1}},
%\]
Since $\partial \Omega\in C^2$, we know that $\Omega$ satisfies the
interior ball condition (see \cite{Gilbarg-Trudinger}). In other
words, there exists a radius $r_0>0$ and a family of balls
$B_{r_0}(q)\subseteq\mathcal{D}$, $q\in
\partial \mathcal{D}_{r_0}$ (i.e. $q\in \mathcal{D}$ with
$\textrm{dist}(q,\partial \mathcal{D})=r_0$) such that, for each
such $q$, the closed ball $\bar{B}_{r_0}(q)$ touches $\partial
\mathcal{D}$ at exactly one point.

Let $\epsilon\in (0,\mu)$ and $D>D'$, where $D'$ as in (\ref{eqD}).
It follows from Lemma \ref{lem1} (after a simple rescaling) that
there exists a $\lambda_*>0$, depending only on $\epsilon$, $D$,
$W$, and $\mathcal{D}$ (in terms of $r_0$), and a global minimizer
$u_{r_0,q}$ of the associated energy to the equation of
(\ref{eqEV1}) in $W^{1,2}_0\left(B_{r_0}(q) \right)$  such that
\[0<u_{r_0,q}(x)<\mu,\ x\in B_{r_0}(q),\ \ \textrm{and}\ \
u_{r_0,q}(x)\geq \mu-\epsilon,\ \ x\in
\bar{B}_{(r_0-D\lambda^{-1})}(q),
\]
provided that $\lambda\geq \lambda_*$. (Without loss of generality,
we may assume that $\lambda_*>\lambda_0$). Thanks to Lemma
\ref{lemdancer} below, we obtain that $u_\lambda(x)\geq
u_{r_0,q}(x), \ x\in B_{r_0}(q)$.
 Since the center $q$ was any point on $\partial \mathcal{D}_{r_0}$, it
follows that assertion (\ref{equlambda}) holds true for $x\in
\mathcal{D}$ such that
\begin{equation}\label{eqlayerlead}{D\lambda^{-1}}\leq
\textrm{dist}(x,\partial \mathcal{D})\leq 2r_0-D\lambda^{-1}.
\end{equation}
 If
$W'(t)<0,\  t\in [\mu-2\epsilon, \mu)$, then the validity of
(\ref{equlambda}), over  the entire  specified domain, follows at
once via the second assertion of Lemma \ref{lemalikakos} (this is
also the case when relation (\ref{eqWmonotone}) holds, recall Remark
\ref{remmonotbelowLem1}). Otherwise, we proceed as follows, see also
Lemma 2 in \cite{kurata}: Firstly, we cover
$\bar{\mathcal{D}}_{r_0}$ by a finite number of balls of radius
$\frac{r_0}{2}$ with centers on $\bar{\mathcal{D}}_{r_0}$. Secondly,
if necessary, we increase the value of $\lambda_*$ such that
$D\lambda_*^{-1}<\frac{r_0}{2}$. Lastly, we apply Lemma
\ref{lemdancer} to show that
\[
u_\lambda(x)\geq u_{r_0,p}(x)\geq \mu-\epsilon,\ \ x\in
\bar{B}_{(r_0-D\lambda^{-1})}(p)\supseteq
\bar{B}_{\frac{r_0}{2}}(p),
\]
for every center $p$ of the finite covering of
$\bar{\mathcal{D}}_{r_0}$, if $\lambda\geq \lambda_*$. We point out
that this last part could  have also been obtained from the weaker
relation (\ref{eqsweers}) (with the obvious modifications).
 The desired
estimate (\ref{equlambda}) now follows from the comments leading to
(\ref{eqlayerlead}) and the above relation.

The proof of the proposition is complete.
\end{proof}
\begin{rem}\label{remlayer2}
A similar result also  holds if the domain $\mathcal{D}$ is
unbounded.
\end{rem}

\begin{rem}\label{remDancer}
The asymptotic behavior, as $\lambda \to \infty$, of uniformly
bounded from above and below (with respect to $\lambda$), stable
solutions of (\ref{eqEV1}), where $\mathcal{D}\subseteq \
\mathbb{R}^n$ is bounded and smooth,  has been studied  in
\cite{dancerMorseTrans} in dimensions $n=2,3$ by techniques related
to the proof of De Giorgi's conjecture in low dimensions. For a
related result in  $\mathbb{R}^4$, see \cite{dupaigneBook}. In fact,
since global minimizers are stable, and since assumption
\textbf{(a')} implies that $W'(0)\leq 0$, the assertions of
Proposition \ref{prolayer} when $n=2$ follow readily from Theorem 6
in \cite{dancerMorseTrans}; this is also the case when $n=3$,
provided that the monotonicity assumption \textbf{(b)} from our
introduction is imposed.
\end{rem}
\begin{rem}\label{remYan}
 Let $\epsilon, \ D,\ R'>0$ be related as
in the assertion of Lemma \ref{lem1}. By means of a simple rescaling
argument (see also the proof of Theorem 1.1 in \cite{yanedinburg}),
Lemmas \ref{lem1} and \ref{lemdancer} yield that the solution of
(\ref{eqEV1}), described in Proposition \ref{prolayer}, satisfies
$\mu-u_\lambda(x)\geq \epsilon$, if $\textrm{dist}(x,\partial
\mathcal{D})>D\lambda^{-1}$, provided that $\lambda$ is sufficiently
large (depending on $\epsilon$, $W$, and $\mathcal{D}$). Note that
relation (\ref{eqsweers}) yields the same estimate but over the
smaller region that is described by $\textrm{dist}(x,\partial
\mathcal{D})>\frac{R'}{2}\lambda^{-1}$,
 which \emph{depends
on $n$}, see \cite{yanedinburg}.
\end{rem}

\begin{rem}\label{remhalfplane}
% Note that points $x$ near $\partial \mathcal{D}$ can be uniquely written as $x = x_0 +
%s\eta(x_0)$, where $x_0\in \partial \mathcal{D}$, $\eta(x_0)$ is the
%inward unit normal to $\partial \mathcal{D}$ at $x_0$, and
%$s=\textrm{dist}(x,\partial \mathcal{D})$.
Let $x_0\in
\partial \mathcal{D}\in C^2$ and $\mathcal{R}$ denote the matrix in
$SO(N,\mathbb{R})$ that rotates the vector $(0,\cdots,0,1)$ onto the
inner normal to $\partial \mathcal{D}$ at $x_0$. We can extract a
sequence of $\lambda\to \infty$ such that any global minimizer
$u_\lambda$, provided by Proposition \ref{prolayer}, satisfies
\[u_\lambda\left(x_0+\lambda^{-1}\mathcal{R} y\right)\to U(y), \]
uniformly on compacts, as $\lambda\to \infty$, where $U$ is some
nonnegative, global minimizer (in the sense of (\ref{eqJer}), this
can be seen as in page 104 of \cite{danceryanCVPDE}) of the
following half-space problem
\[
\Delta u=W'(u),\ y\in \mathbb{R}^n_+;\ \ u(y)=0,\ y\in \partial
\mathbb{R}^n_+,
\]
see \cite{angenent}, \cite{dancerMorseTrans} for more details, where
$\mathbb{R}_+^n=\{(y_1,\cdots,y_n)\ :\ y_n> 0\}$. Furthermore, this
solution is nontrivial by virtue of Remark \ref{remYan}. Hence, by
the strong maximum principle, recall \textbf{(a')}, we deduce that
$U$ is positive in $\mathbb{R}_+^n$. As before, combining Lemmas
\ref{lem1} and \ref{lemdancer},  we obtain that
\[
u(y)\to \mu\ \ \textrm{as}\ \ y_n\to \infty,\ \textrm{uniformly\ in}
\ (y_1,\cdots,y_{n-1})\in \mathbb{R}^{n-1},
 \]
(the weaker assertion (\ref{eqsweers}) is sufficient for this). It
follows from Theorem 1.4 in \cite{berestyckiCaffarelli} that $U$
depends only on the $y_n$ variable and therefore coincides with
$\textbf{U}(y_n)$ that was described in (\ref{eqU}). (If
$W''(\mu)>0$ then this has been shown earlier in \cite{angenent},
see also \cite{berestDuke}, \cite{clemente} for the weaker case
(\ref{eqW''}) and Proposition \ref{prohalfclemMine} below).

%If we further assume that (\ref{eqW''}) holds,
% (if $n=2,3$ then this property holds without assuming that
%(\ref{eqW''}), see \cite{dancerMorseTrans} and the references
%therein).
\end{rem}
\begin{rem}\label{remshibata}
In \cite{shibata}, the author established  an asymptotic expansion
of $\nu \nabla u_\varepsilon(P)$, $P\in \partial \mathcal{D}$, as
$\varepsilon\to 0$, where $u_\varepsilon$ solves (\ref{eqsingular})
for a class of nonlinearities which in particular satisfy
\textbf{(c)} and (\ref{eqpacardW'}) (see also \cite{clemente} and
\cite{sabina}; see also Remark \ref{remshiSaddle} below). As usual,
the vector $\nu$ denotes the unit outer normal to $\partial
\mathcal{D}$ (having assumed that it is smooth and bounded). This
expansion reveals that if $P_1$
 is the only point which attains the minimum of the mean
curvature of $\partial \mathcal{D}$, then $P_1$ is the steepest
point of the boundary layer.
\end{rem}

\begin{rem}\label{remSingular}
By adapting the proof of Lemma 2.3 in \cite{yanedinburg}, and that
of our Proposition \ref{prolayer}, we can study the boundary layer
of globally minimizing solutions of inhomogeneous singular
perturbation problems of the form
\begin{equation}\label{eqinhomogeneous}
\varepsilon^2\Delta u=W_u(u,x),\ x\in \mathcal{D};\ \ u(x)=0,\ x\in
\partial \mathcal{D},
\end{equation}
as $\varepsilon \to 0$, for appropriate righthand side that is more
general than those that were considered in
\cite{berger1,berger2,devillers,yanedinburg}, see also Lemma 7.13 in
\cite{duBook} and Section 13.3 in \cite{lionsLNMSU} (roughly, we
want \textbf{(a')} to hold with $a(x)$ instead of $\mu$,  for every
fixed $x\in \bar{\mathcal{D}}$, for a smooth positive function $a$).
\end{rem}

\section{The singular perturbation problem with mixed boundary value
conditions}\label{secmixed} Let $\mathcal{D}$ be a bounded domain in
$\mathbb{R}^n$ with $C^2$-boundary. Suppose that $\partial
\mathcal{D}=\Gamma_N\cup \Gamma_D$, where $\Gamma_N$ and $\Gamma_D$
are closed and nonempty. We consider the following mixed boundary
value problem:
\begin{equation}\label{eqmixed}
\left\{\begin{array}{ll}
         \Delta u=\lambda^2W'(u) & \textrm{in}\ \mathcal{D}, \\
          &  \\
         \frac{\partial u}{\partial \nu}=0 &   \textrm{on}\ \Gamma_N,\\
          &  \\
         u=0 & \textrm{on}\ \Gamma_D,
       \end{array}
 \right.
\end{equation}
where $\lambda>0$ is a large parameter, and $\nu$ is the unit
outward normal to $\Gamma_N$ at $x\in \Gamma_N$. Denote
\[
W_{0,\Gamma_D}^{1,2}(\mathcal{D})=\left\{u\in W^{1,2}(\mathcal{D})\
:\ u=0\ \textrm{on}\ \Gamma_D \right\}.
\]
Under assumption \textbf{(a')} on $W\in C^2$, as before,  the energy
functional
\[
I(u)=\int_{\mathcal{D}}^{}\left[\frac{1}{2}|\nabla
u|^2+\lambda^2W(u) \right]dx,\ \ u\in
W_{0,\Gamma_D}^{1,2}(\mathcal{D}),
\]
has a global minimizer $u_\lambda$ such that $0\leq u_\lambda\leq
\mu$ (do not confuse with the usual  radial minimizer $u_R$).
Moreover, as in the second proof of Theorem \ref{thmmine}, we have
that $u_\lambda$ is nontrivial for large $\lambda$. It is more or
less standard that $u_\lambda$ fashions a weak solution to
(\ref{eqmixed}) (see Chapter 5 in \cite{brenerscot}). Then, from the
theory in \cite{stamp}, it follows that $u_\lambda$ is a classical
solution.

Similarly to Theorem \ref{thmmine}, exploiting Proposition
\ref{prohalfmatano}, we have the following result.

\begin{pro}\label{promixed}
Assume  $\mathcal{D}$, $W$, and $u_\lambda$, as above. Given
$\epsilon\in (0,\mu)$, there exist positive constants $\lambda_*,M$
such that
\begin{equation}\label{eqlowermixed}
u_\lambda(x)\geq \mu-\epsilon \ \ \textrm{if}\ \
\textrm{dist}(x,\Gamma_D)\geq M\lambda^{-1}\ \textrm{and}\ \lambda
\geq \lambda_*.
\end{equation}
\end{pro}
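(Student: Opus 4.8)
The plan is to mimic the structure of the second proof of Theorem \ref{thmmine} and of Proposition \ref{prolayer}, but using the mixed-boundary analog of the interior-ball sweeping, with the extra ingredient being Proposition \ref{prohalfmatano} applied near the Neumann part $\Gamma_N$. First I would rescale: setting $v(y)=u_\lambda(\lambda^{-1}y)$ transforms \eqref{eqmixed} into the $\lambda$-independent equation $\Delta v = W'(v)$ on $\Omega:=\lambda\mathcal D$ with $\partial v/\partial\nu=0$ on $\lambda\Gamma_N$ and $v=0$ on $\lambda\Gamma_D$. After rescaling, the assertion \eqref{eqlowermixed} becomes: $v(y)\ge \mu-\epsilon$ whenever $\mathrm{dist}(y,\lambda\Gamma_D)\ge M$ and $\lambda\ge\lambda_*$. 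As in the second proof of Theorem \ref{thmmine}, the global minimizer $u_\lambda$ restricted to any subdomain remains a global minimizer (unique continuation), so it suffices to establish the lower bound by exhibiting, for each point $y_0$ with $\mathrm{dist}(y_0,\lambda\Gamma_D)\ge M$, an appropriate lower barrier below $v$ on a region containing $y_0$ with $\mu-\epsilon$ as its interior value; then invoke Lemma \ref{lemdancer}. Fix $\epsilon\in(0,\mu)$, pick $D>D'$ with $\mathbf U(D')=\mu-\epsilon$, and let $R'$ be the radius from Lemma \ref{lem1}.

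The set $\{y\in\Omega:\mathrm{dist}(y,\lambda\Gamma_D)\ge M\}$ I would split into two overlapping pieces for $M=M(R',\mathcal D)$ large: the points at distance $\ge R'$ from \emph{all} of $\partial\Omega$, and the points that are within $R'$ of $\lambda\Gamma_N$ but far ($\ge M$) from $\lambda\Gamma_D$. For the first piece the argument is exactly that of the second proof of Theorem \ref{thmmine}: over every closed ball $\bar B_{R'}(P)\subset\Omega_{R'}$ we compare with the radial minimizer $u_{R'}$ of Lemma \ref{lem1} via Lemma \ref{lemdancer}, obtaining $v\ge\mu-\epsilon$ on $\Omega_{R'}+B_{(R'-D)}$. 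For the second piece, near a boundary point $q\in\lambda\Gamma_N$ with $\mathrm{dist}(q,\lambda\Gamma_D)$ large, I would use the $C^2$ interior-ball-type geometry of $\mathcal D$ near $\Gamma_N$ (away from $\Gamma_D$): since $\partial\mathcal D\in C^2$ and $\Gamma_N,\Gamma_D$ are closed, there is $\rho_0>0$ so that every point of $\Gamma_N$ at distance $\ge 2\rho_0$ from $\Gamma_D$ has a two-sided $C^2$ neighborhood in which we can compare. Concretely, after translating and rotating so that $\Gamma_N$ near $q$ is (approximately) flat, I would reflect $v$ evenly across $\lambda\Gamma_N$ using the Neumann condition, as in \cite{niTakagi}, to get a solution of $\Delta w=W'(w)$ on a full $n$-dimensional neighborhood, and then slide/balloon a reflected radial minimizer $\underline u_{R}$ (extended by zero, à la \eqref{eqlower}), exactly as in the proof of Proposition \ref{prohalfmatano}: the lower barrier $\underline u_{R,P}$ has $\partial_\nu\underline u_{R,P}\le 0$ on $\Gamma_N$ by \eqref{eqmonotonicity} and (local) convexity/flatness, so it is a valid lower solution and can be swept to cover the region at distance $\ge D\lambda^{-1}$ (unscaled) from the Dirichlet part while staying near $\Gamma_N$.

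Combining the two pieces: choosing $M$ larger than, say, $2R'$ plus a constant controlling how far from $\Gamma_D$ one must stay for the reflection near $\Gamma_N$ to be legitimate, every point with $\mathrm{dist}(\cdot,\lambda\Gamma_D)\ge M$ lies in one of the two covered regions, and there $v\ge\mu-\epsilon$; undoing the scaling gives \eqref{eqlowermixed} with $\lambda_*$ depending only on $\epsilon$, $\mathcal D$, and $W$.

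The main obstacle, I expect, is the \emph{corner region} where $\Gamma_N$ meets $\Gamma_D$: there the reflection-and-slide argument near $\Gamma_N$ breaks down because one cannot stay away from $\Gamma_D$, and the interior-ball argument near $\Gamma_D$ is fine but only yields the bound at distance $\gtrsim R'\lambda^{-1}$ from $\Gamma_D$, not uniformly up to the Neumann part. This is precisely why the statement only claims the estimate for $\mathrm{dist}(x,\Gamma_D)\ge M\lambda^{-1}$ with $M$ possibly much larger than $R'$ — one takes $M$ large enough that the ``bad'' corner neighborhood (of $\Gamma_N\cap\Gamma_D$, of size comparable to $\rho_0$ in the original scale, hence $\rho_0\lambda$ after rescaling, but crucially a \emph{fixed} unscaled size) is entirely contained in $\{\mathrm{dist}(x,\Gamma_D)<M\lambda^{-1}\}$ for large $\lambda$; since $\Gamma_N\cap\Gamma_D$ is at positive distance is not quite right — rather, near that intersection $\mathrm{dist}(x,\Gamma_D)$ is itself small, so such points are automatically excluded by the hypothesis. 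Making this trade-off precise, i.e. checking that the region covered by the two sweeping arguments really exhausts $\{\mathrm{dist}(x,\Gamma_D)\ge M\lambda^{-1}\}$ for suitable fixed $M$, is the one step I would treat carefully; everything else is a routine transcription of Lemmas \ref{lem1}, \ref{lemdancer} and the proofs of Theorem \ref{thmmine} and Proposition \ref{prohalfmatano}.
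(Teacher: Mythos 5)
Your first piece (the interior estimate at distance $\gtrsim \lambda^{-1}$ from \emph{all} of $\partial\mathcal{D}$, obtained by sliding the radial minimizer and invoking Lemma \ref{lemdancer}) is exactly the paper's first step, relation (\ref{eqang}). The gap is in your second piece, the direct reflection-and-slide near $\Gamma_N$. An even reflection of $u_\lambda$ across a \emph{curved} $C^2$ portion of $\Gamma_N$ does not produce a solution of $\Delta w=W'(w)$ on a full neighborhood: after straightening the boundary as in \cite{niTakagi} the extended function satisfies an equation with variable coefficients differing from the Laplacian by curvature terms, so there is no exact PDE on which to run the sliding method. Equivalently, if you avoid reflecting and instead check the Neumann sign condition for the barrier $\underline{u}_{R,P}$ directly, you need $\nu_x\cdot(x-P)\ge 0$ for $x\in\Gamma_N\cap B_R(P)$; this holds exactly only when the boundary is convex (which is precisely the hypothesis of Proposition \ref{prohalfmatano}), and for a general bounded $C^2$ domain it fails by an amount of order $R^2\lambda^{-1}$ after rescaling --- small, but nonzero, so $\underline{u}_{R,P}$ is not a legitimate lower solution for the mixed problem at any finite $\lambda$. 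A secondary issue: Lemma \ref{lemdancer} is a Dirichlet comparison lemma ($W^{1,2}_0$ boundary data), so invoking it on regions whose boundary meets $\Gamma_N$ requires a mixed-boundary variant that you have not supplied.

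The paper resolves both difficulties by turning your "second piece'' into a compactness argument rather than a barrier argument: assuming (\ref{eqlowermixed}) fails along $\lambda_j\to\infty$ at points $x_j$ with $\lambda_j\,\textrm{dist}(x_j,\Gamma_D)\to\infty$, relation (\ref{eqang}) forces $\lambda_j\,\textrm{dist}(x_j,\partial\mathcal{D})$ to stay bounded, so after straightening the boundary, rescaling by $\lambda_j$, and reflecting, one passes to a limit in which the curvature errors vanish and $\Gamma_D$ has receded to infinity. The limit $w$ solves the exact half-space Neumann problem, satisfies $w(0)\le\mu-\epsilon$, and tends to $\mu$ uniformly as $y_n\to\infty$ by (\ref{eqang}); Proposition \ref{prohalfmatano} then gives $w\equiv\mu$, a contradiction. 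So Proposition \ref{prohalfmatano} enters as a Liouville theorem for the blow-up limit, not as a template for a finite-$\lambda$ sliding scheme. Your corner discussion becomes moot in this formulation, since the hypothesis $\lambda_j\,\textrm{dist}(x_j,\Gamma_D)\to\infty$ automatically removes the Dirichlet part in the limit. If you want to salvage a direct barrier proof, you would need quantitative control of the curvature error in the reflected equation (or a perturbed barrier absorbing it), which is substantially more work than the compactness route.
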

\begin{proof}
By using Lemma \ref{lemdancer} below, and sliding around a radial
minimizer of radius $\lambda^{-1} R$ (with $R$ fixed large, as
dictated by Lemma \ref{lem1}), we infer that there exists a constant
$C>0$ such that
\begin{equation}\label{eqang}u_\lambda(x)\geq \mu-\epsilon \ \ \textrm{if}\ \
\textrm{dist}(x,\partial\mathcal{D})\geq
C\lambda^{-1},\end{equation} provided that $\lambda$ is sufficiently
large.

Suppose that the assertion of the proposition  is false. Then, there
exist $\lambda_j\to \infty$ and $x_j\in \bar{\mathcal{D}}$ such that
$u_j=u_{\lambda_j}$ satisfies
\begin{equation}\label{eqtem}
u_j(x_j)<\mu-\epsilon\ \ \textrm{and}\ \
\lambda_j\textrm{dist}(x_j,\Gamma_D)\to \infty.
\end{equation}
By virtue of (\ref{eqang}), we deduce that the numbers
\begin{equation}\label{eqnitakagbded}\lambda_j\textrm{dist}(x_j,\partial\mathcal{D})\  \textrm{remain\
bounded\ as}\ j\to \infty.
\end{equation}
We may assume that $x_j\to x_\infty\in \partial \mathcal{D}$. Take
the diffeomorphism $y=\Psi(x)$ which straightens a boundary portion
near $x_\infty$, as  in relation (2.8) of \cite{niTakagi}. We may
assume that $\Phi=\Psi^{-1}$ is defined in an open set containing
the closed ball $\bar{B}_{2\kappa}$, $\kappa>0$, and that
$y_j=\Psi(x_j)\in B_\kappa^+=B_\kappa \cap \{y_n>0\}$ for all
$\kappa>0$. As in \cite{niTakagi}, let
\[
v_j(y)=u_j\left(\Phi(y) \right)\ \textrm{for}\ y\in
\bar{B}_{2\kappa}^+.
\]
By the properties of this transformation (see \cite{niTakagi}), we
know that
\begin{equation}\label{eqnineuman}
\frac{\partial v_j}{\partial{y_n}}=0\ \textrm{on}\ \Psi(\Gamma_N).
\end{equation}
 Moreover, we define a scaled function
\[
w_j(y)=v_j(y_j+\lambda_j^{-1}y)\ \textrm{for}\ y\in \bar{B}_{\kappa
\lambda_j}.
\]
In view of (\ref{eqnitakagbded}), passing to a subsequence, we may
assume that the $n$-th coordinate of $\lambda_j y_j$ converges to
$\ell\geq 0$, while the remaining coordinates ``get away'' from
$\Psi(\Gamma_D)$ as $j\to \infty$. To be more precise, given $R>0$,
we have that \[\left(y_j+\lambda_j^{-1}\bar{B}_{R}\right)\cap
\Psi(\Gamma_D)={\O}\] for $j$ sufficiently large. By interior
elliptic regularity estimates \cite{Gilbarg-Trudinger} (applied
after we have reflected $v_j$ across $\Psi(\Gamma_N)$, recall
(\ref{eqnineuman}) and the above relation), as in \cite{niTakagi},
passing to a subsequence, we find that
\[
w_j\to w \ \textrm{in}\ C^2_{loc}(\mathbb{R}^n_+),
\]
where $w$ satisfies
\[ \Delta w=W'(w)\ \ \textrm{in}\ \
\mathbb{R}^n\cap\{y_n>-\ell\};\ w_{y_n}=0\ \ \textrm{if}\ y_n=-\ell,
\]
see also \cite{angenent}, \cite{dancerMorseTrans},
\cite{gidasSpruck}, and \cite{yanedinburg}. Furthermore, via
(\ref{eqtem}), we have
\begin{equation}\label{eqcontraMixed}
w(0)\leq \mu-\epsilon.
\end{equation}
 Moreover, thanks
to (\ref{eqang}), we have $w(y)\geq \mu-2\epsilon$ if $y_n\geq C'$
for some $C'>0$ (assuming that $2\epsilon<\mu$). This, in
particular, implies that $v$ is nontrivial. Now, as in Remark
\ref{remhalfplane}, we see that $w\to \mu$, uniformly in
$\mathbb{R}^{n-1}$, as $y_n\to \infty$. On the other hand,
Proposition \ref{prohalfmatano} implies that $w\equiv \mu$ which is
in contradiction to (\ref{eqcontraMixed}).

The proof of the proposition is complete.
\end{proof}
\begin{rem}\label{remallencahn}
Let $\Omega$ be a smooth bounded domain in $\mathbb{R}^n$ which is
symmetric with respect to some hyperplane, say $\{x_1=0 \}$, and $W$
as in Proposition \ref{promixed} and \emph{even}. Let
$\mathcal{D}=\Omega\cap \{x_1>0\}$, $\Gamma_N=\partial \Omega \cap
\{x_1\geq 0\}$, and $\Gamma_D=\bar{\Omega}\cap \{x_1=0\}$. Applying
Proposition \ref{promixed}, yields a positive solution to
(\ref{eqmixed}) which satisfies (\ref{eqlowermixed}). (Some care is
required at the junction points on $\partial \Omega \cap \{x_1=0\}$,
but this regularity issue may be treated by an approximation
argument, as described in Remark \ref{remcone}, see also
\cite{dipierro,grisvard}). Reflecting this solution oddly across the
plane $\{x_1=0\}$, we obtain a solution to the Neumann problem
\begin{equation}\label{eqkow}
\Delta u=\lambda^2 W'(u)\ \textrm{in}\ \Omega;\ \frac{\partial u
}{\partial \nu} \ \textrm{on}\ \partial \Omega,
\end{equation}
which converges, in $L^1(\Omega)$, to the step function
\[
\mu \chi_{\Omega\cap \{x_1>0\}}-\mu \chi_{\Omega\cap \{x_1<0\}},
\]
as $\lambda\to \infty$ ($\chi$ denotes the usual characteristic
function).

In the general case, where $\Omega$ is not symmetric, under some
non-degeneracy assumptions, this type of transition-layered
solutions have been constructed in two and three dimensions, via
perturbation arguments, by \cite{sakamotoIbun}, \cite{kowMorse}, and
\cite{sakamotoTaiwan} (see also the references in \cite{pacard}).

If $\Omega\subset \mathbb{R}^2$ is smooth, bounded, and symmetric
with respect to the coordinate axis, in the same manner, we can
construct solutions to (\ref{eqkow}) that converge, in
$L^1(\Omega)$, to the step function
\[
\mu \chi_{\Omega\cap \{x_1x_2>0\}}-\mu \chi_{\Omega\cap
\{x_1x_2<0\}},
\]
as $\lambda\to \infty$ (see also a related open question in
\cite{floresSaddle}). Analogous constructions hold in higher
dimensions, recall our discussion about ``saddle'' solutions from
the introduction.
\end{rem}

The paper \cite{fuscoTrans} contains an analog of Theorem
\ref{thmfusco} for problem (\ref{eqmixed}), with $\lambda>0$ fixed,
in the case where for each $x\in\Gamma_N$ there is a $\rho>0$ such
that $\mathcal{B}_\rho(x)$ is convex, where $\mathcal{B}_\rho(x)$
denotes the connected component of $B_\rho(x)\cap \mathcal{D}$ such
that $x\in \bar{\mathcal{B}}_\rho(x)$. We believe that there is also
a corresponding analog of Theorem \ref{thmmine}. To support this,
let us sketch the proof of the following proposition.
\begin{pro}\label{promixedsketch}
Assume that $W\in C^2$ satisfies \textbf{(a)}, $\lambda>0$, and
$\mathcal{D}$ as in this section with $\Gamma_N$ convex in the above
sense. Given $\epsilon\in (0,\mu)$, there exist $R_*,C>0$ such that
the existence of $x_*\in \Omega$ such that
$\mathcal{B}_{R_*}(x_*)\cap \Gamma_D=\emptyset$ implies that problem
(\ref{eqmixed}) has a positive solution $u<\mu$ verifying
(\ref{eq12}). Moreover, there exists a $C>0$ such that
\[ u(x)\geq \mu-\epsilon,\ \ \textrm{if}\
\mathcal{G}(x,\Gamma_D)\geq C,
\]
here $\mathcal{G}(x,\Gamma_D)$ denotes the geodesic distance of $x$
from $\Gamma_D$, namely
\[
\mathcal{G}(x,\Gamma_D)=\inf_{\gamma(x,\Gamma_D)}\mathcal{H}^1\left(\gamma(x,\Gamma_D)
\right),
\]
where $\mathcal{H}^1$ is the one-dimensional Hausdorff measure and
the infimum is taken on the set of the absolutely continuous paths
$\gamma(x,\Gamma_D)\subset \bar{\mathcal{D}}$ joining $x$ to
$\Gamma_D$.
\end{pro}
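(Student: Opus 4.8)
The plan is to mimic the two-step "ballooning plus sliding" strategy used in the first proof of Theorem \ref{thmmine} and in Proposition \ref{prohalfmatano}, but with the Euclidean distance to $\Gamma_D$ replaced everywhere by the geodesic distance $\mathcal{G}(\cdot,\Gamma_D)$ inside $\bar{\mathcal D}$, and with the radial comparison functions sliding only along geodesics that stay away from $\Gamma_D$. First I would record the preliminaries: under \textbf{(a)} (hence \textbf{(a')}), the global minimizer $u_\lambda$ of $I$ over $W^{1,2}_{0,\Gamma_D}(\mathcal D)$ exists, is a classical solution of (\ref{eqmixed}) by the theory of \cite{stamp}, satisfies $0\le u_\lambda\le\mu$, and is nontrivial for large $\lambda$ exactly as in the second proof of Theorem \ref{thmmine} (using a cutoff test function supported in $\mathcal{B}_{R_*}(x_*)$ and the energy comparison $I(0)\le I(Z)$, noting $W(\mu)=0$). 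By the strong maximum principle $0<u_\lambda<\mu$ in $\mathcal D$, giving (\ref{eq12}).

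Next I would set up the lower-solution machinery. Fix $\epsilon\in(0,\mu)$, pick $D>D'$ as in (\ref{eqD}), let $R$ be the constant from Lemma \ref{lem1} (with $n=\dim\mathcal D$), and let $u_R$ be the radial minimizer on $B_R$, extended by zero. For $P\in\mathcal D$ with $\mathrm{dist}(P,\Gamma_D)>\lambda^{-1}R$, set $\underline u_{\lambda,P}(x)=u_R(\lambda(x-P))$ for $x\in B_{\lambda^{-1}R}(P)$ and zero otherwise. This is a weak lower solution of $\Delta u=\lambda^2W'(u)$ (Proposition 1 in \cite{Berestyckilion}, using $W'(0)\le 0$), and on the Neumann portion $\Gamma_N$ the convexity hypothesis on $\mathcal{B}_\rho$ together with the radial monotonicity (\ref{eqmonotonicity}) of $u_R$ gives $\partial\underline u_{\lambda,P}/\partial\nu\le 0$, exactly as in the boundary-analysis step of Proposition \ref{prohalfmatano}; where $\partial B_{\lambda^{-1}R}(P)$ is hit the normal derivative is $0$. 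Combining Lemma \ref{lemdancer} (applied on balls contained in $\mathcal D$) with a sliding/sweeping argument along geodesic paths $\gamma\subset\bar{\mathcal D}$ that avoid a fixed neighbourhood of $\Gamma_D$ — the same continuation/contradiction argument via $t_*=\sup\{t:\underline u_{\gamma(t)}\le u_\lambda\}$ as in the first proof of Theorem \ref{thmmine}, but now the strong maximum principle is supplemented by Hopf's lemma on $\Gamma_N$ to exclude touching there — one propagates the bound $u_\lambda\ge u_R(\lambda(\cdot-P))$ to every admissible $P$. Taking $P$ to be the geodesic-midpoint type construction and using (\ref{eqestimRect}) then yields $u_\lambda(x)\ge\mu-\epsilon$ whenever $\mathcal G(x,\Gamma_D)\ge C$ for a suitable $C=C(R,D,\mathcal D,\epsilon)$, which is the claim; a compactness/blow-up argument near $\Gamma_N$ (as in Proposition \ref{promixed}, reflecting across $\Psi(\Gamma_N)$ and invoking Proposition \ref{prohalfmatano}) handles the points that a single sliding ball cannot reach because $\Gamma_N$ curves inward.

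The main obstacle I anticipate is the sliding step near the Neumann boundary: an admissible center $P$ must keep the comparison ball $B_{\lambda^{-1}R}(P)$ inside $\mathcal D$ while the boundary-derivative sign $\partial\underline u_{\lambda,P}/\partial\nu\le 0$ must survive at points of $\Gamma_N$ that the ball protrudes toward, and this is exactly where one needs the local convexity of $\mathcal{B}_\rho(x)$ for $x\in\Gamma_N$ — one must check that for $\lambda$ large the relevant component $\mathcal{B}_{\lambda^{-1}R}(x)$ is the one appearing in the hypothesis and that convexity of it gives $\nu\cdot(x-P)\ge 0$ there. A second, more technical point is making the geodesic-distance sliding rigorous: the set $\{x:\mathcal G(x,\Gamma_D)>c\}$ need not be smooth or even connected, so one argues component by component and uses that along any rectifiable path realizing (approximately) the geodesic distance one can chain finitely many overlapping comparison balls, each step justified by Lemma \ref{lemdancer} or the strong maximum principle. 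Once these two points are settled, the passage $\lambda\to\infty$ (equivalently $R\to\infty$ in Lemma \ref{lem1} after rescaling) is routine and gives the stated $M=M(\epsilon)$.
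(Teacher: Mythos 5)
Your proposal matches the paper's (very brief) sketch in its core: take the radial minimizer from Lemma \ref{lem1} (rescaled so as to solve $\Delta u=\lambda^2 W'(u)$), restrict it to the piece $\mathcal{B}_{R_*}(\cdot)$ and extend by zero to obtain a weak lower solution; use the convexity of $\mathcal{B}_\rho(x)$ for $x\in\Gamma_N$, together with the radial monotonicity (\ref{eqmonotonicity}), to get $\partial\underline u/\partial\nu\le 0$ on $\Gamma_N$; and then slide this lower solution along paths inside $\bar{\mathcal D}$ that stay away from $\Gamma_D$, which yields the lower bound in geodesic distance. That is exactly the paper's argument, and your observation that the geodesic sliding must be done component-by-component along approximate geodesics, chaining finitely many comparison regions, is a reasonable way to make the paper's one-line ``slide around'' rigorous.

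Two points deserve correction. First, this proposition has $\lambda>0$ \emph{fixed}; the constants $R_*$ and $C$ are allowed to depend on $\lambda$ (and on $\epsilon$, $W$, $n$). Your framing treats it as a singular-perturbation statement for large $\lambda$ and speaks of ``the passage $\lambda\to\infty$,'' which is misleading: after the obvious rescaling $x\mapsto\lambda x$ the problem is equivalent to $\lambda=1$ and the hypothesis becomes a statement about the size of the domain relative to $\Gamma_D$, not about $\lambda$. Second, the proposed blow-up argument near $\Gamma_N$ (``reflecting across $\Psi(\Gamma_N)$ and invoking Proposition \ref{prohalfmatano}'') is both inapplicable and unnecessary here. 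Inapplicable, because the blow-up/compactness machinery of Proposition \ref{promixed} extracts a limit as $\lambda_j\to\infty$, and with $\lambda$ fixed there is no sequence to rescale and no limiting half-space problem. Unnecessary, because the contingency it is meant to cover, ``points that a single sliding ball cannot reach because $\Gamma_N$ curves inward,'' cannot arise under the standing hypothesis: convexity of $\mathcal{B}_\rho(x)$ for each $x\in\Gamma_N$ means $\Gamma_N$ does not curve into $\mathcal{D}$ at scale $\rho$, and this is precisely the reason that at any touching point $y\in\Gamma_N$ one has $\nu\cdot(y-P)\ge 0$, hence $\partial\underline u_P/\partial\nu\le 0$, so Hopf's lemma excludes first contact on $\Gamma_N$ and the sliding continues along and past the Neumann boundary. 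If you keep the sliding argument alone (your step 3) and drop step 4, the proof is correct and coincides with the paper's.
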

\begin{proof}(Sketch) Plainly note that the function
$\underline{u}=u_{R_*}(x-x_*)$, $x\in \mathcal{B}_{R_*}(x_*)$, zero
otherwise, is a lower solution to (\ref{eqmixed}). The key point is
that the convexity property of $\Gamma_N$ implies that
\[
\frac{\partial\underline{u}}{\partial \nu}\leq 0\ \ \textrm{on}\
\Gamma_N.
\]
Then, we can slide around that lower solution in $\mathcal{D}$, as
long as we stay away from $\Gamma_D$ (in the geodesic sense), to
obtain the desired lower bound.
\end{proof}
\section{Some one-dimensional symmetry properties of certain  solutions to the Allen-Cahn equation}\label{secfarina}
\subsection{Symmetry of entire solutions}
Many authors  have studied the one-dimensional symmetry of certain
entire solutions to problem (\ref{eqentire}) with $W$ as in
(\ref{eqAllen}), namely
\begin{equation}\label{eqfar1}
\Delta u+u(1-u^2)=0\ \ \textrm{in}\ \ \mathbb{R}^n.
\end{equation}
Their study was motivated by \emph{De Giorgi's conjecture} (recall
Remark \ref{remdegiorgidu}) and \emph{Gibbons' conjecture}. The
latter claims that any solution to (\ref{eqfar1}) which tends to
$\pm 1$ as $x_1\to \pm \infty$, \emph{uniformly} in
$\mathbb{R}^{n-1}$, is one-dimensional, i.e.
\begin{equation}\label{eqtanh} u(x)=\tanh \left(\frac{x_1-a}{\sqrt{2}} \right)\ \
\textrm{for\ some}\ a\in \mathbb{R}.
\end{equation}

As we have already pointed out, the former conjecture was motivated
from the theory of minimal surfaces. On the other hand, the latter
conjecture was motivated from a problem in cosmology theory (see
\cite{GibbonsTownsed}).
\begin{rem}
Keep in mind that problem (\ref{eqfar1}) is invariant under
translations and rotations.
\end{rem}
Gibbons' conjecture was proven almost at the same time by three
different approaches: in \cite{barlow} by probabilistic arguments,
in \cite{berestDuke} by the sliding method, and in
\cite{farinaRendi} based on \cite{cafareliPacard}.
 In fact, it was
proven earlier for dimensions up to three in \cite{ghosubGui}; see
also \cite{guiAnnals} for a different proof which holds up to
dimension five. The conjecture of Gibbons' also follows from a
stronger result that can be found in
\cite{barlow,cafaCordobaIntermed} (see also \cite{cabreMoschini}).
The latter says that if $u$ solves (\ref{eqfar1}), such that it
possesses an unstable level set (say $u=0$) which is a globally
Lipschitz graph, then $u$ is as in (\ref{eqtanh}) (possibly after a
rotation and translation).

In this section, we will present some related new one-dimensional
symmetry results, based on Proposition \ref{propacard} as well as on
an old result in \cite{cafamodica} which does not seem to have been
exploited up to this moment. In particular, we are able to provide a
new proof of the  Gibbons' conjecture in two dimensions.

After this section was written, we found that the same result of
Theorem \ref{thmfarmine} below, \emph{under the additional
assumption that $W'$ is odd}, was proven previously in \cite{duMa}
(see also \cite{zhao} for a generalization to the quasi-linear
setting, where the oddness assumption is not stated explicitly in
the statement of Theorem 1.3 therein but used in the proof). The
strategy in the latter references was to take advantage of the
oddness of $W'$, adapting some techniques from \cite{cafareliPacard}
and \cite{berestDuke}, to show that the solution under consideration
is odd (in a certain direction); this property then reduces the
one-dimensional symmetry problem to Gibbons' conjecture which was
already  resolved (recall our previous discussion). Moreover, it was
assumed in \cite{duMa} that (\ref{eqW''}) holds (in this regard, see
Remark \ref{remduma} below). On the other side, the proof in
\cite{duMa} holds for $W'$ Lipschitz (see, however, Remark
\ref{remduma} below).
%In order to state the main result in \cite{farinaRendi}, which
%easily implies Gibbon's conjecture, we need to make some definitions
%(see \cite{barlow,berestDuke} for direct proofs of Gibbon's
%conjecture that are based on the method of moving planes). For a
%solution $u$ to (\ref{eqfar1}), we define the sets
%\[
%\Gamma=\left\{x\in \mathbb{R}^n\ :\ u(x)=0 \right\},\ \
%\Omega^\pm=\left\{x\in \mathbb{R}^n\ :\ \pm u(x)>0 \right\}.
%\]
%The main result in \cite{farinaRendi} is the following:
%\begin{thm}\label{thmfarina13}
%Assume $n>1$ and let $u$ be a solution of (\ref{eqfar1}) in
%$C^2(\mathbb{R}^n)$ (without any assumption about boundedness or
%monotonicity). Suppose that the set $\Gamma$ of zeros of $u$ is
%bounded with respect to some direction $\nu\in \mathbb{S}^{n-1}$
%(the unit sphere in $\mathbb{R}^n$), and \emph{both} $\Omega^+$ and
%$\Omega^-$ are unbounded with respect to $\nu$, then $u$ is as
%described in (\ref{eqtanh}) with $x_1$ replaced by $\nu x$.
%\end{thm}
%The proof in \cite{farinaRendi} is based on ideas from
%\cite{cafareliPacard} (see also Section \ref{secpacard} herein).
%
%
%In this section, exploiting the proof of Proposition \ref{propacard}
%and a result in \cite{caffarelliDegiorgi}, we will prove a version
%of Theorem \ref{thmfarina13} \emph{without} assuming that both
%$\Omega_+$ and $\Omega^-$ are unbounded in the direction $\nu$.
%Moreover, in our theorem we allow $\Gamma$ to be bounded just from
%one side with respect to $\nu$.

Our main result is
\begin{thm}\label{thmfarmine}
Let $u\in C^2(\mathbb{R}^n)$ be a solution to (\ref{eqfar1}) such
that there exists a point $P$ on the hyperplane $\{x_1=0 \}$, say
the origin, such that \begin{equation}\label{eqfarP}u(P)=0\
\textrm{and}\ u>0 \ \textrm{in}\ \mathbb{R}^{n}\cap
\{x_1<0\},\end{equation} then $u$ is one-dimensional of the form
(\ref{eqtanh}).
\end{thm}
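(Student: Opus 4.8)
The plan is to use the ``ballooning'' lower solution of Proposition \ref{propacard} together with the celebrated monotonicity-of-Modica-type result of Caffarelli, Garofalo, and Seg\'{a}la \cite{cafamodica} to force $u$ to be monotone in the $x_1$ direction, after which one-dimensionality follows from the Hamiltonian identity. First I would apply Proposition \ref{propacard} with $W$ as in (\ref{eqAllen}): since $W'(0)=0$ and $W''(0)=-1<0$, so that (\ref{eqpacardW''}) holds, and $W'(t)<0$ on $(0,1)$, so that (\ref{eqpacardW'}) holds, the proposition is applicable. Fix $\epsilon\in(0,1)$ and $D>D'$. For any ball $B_R(Q)$ with $R\geq R'$ that is contained in the open half-space $\{x_1<0\}$ (where $0<u<1=\mu$ by (\ref{eqfarP}) and the strong maximum principle, which gives $u<1$ everywhere), relation (\ref{eqabove}) in the proof of Proposition \ref{propacard} yields $u(x)\geq u_R(x-Q)$ on $\bar B_R(Q)$; hence, sweeping/ballooning in the half-space and letting $R\to\infty$ while keeping $Q$'s first coordinate fixed at some negative value, we deduce via (\ref{eqsweers}) or (\ref{eqestimRect}) that
\begin{equation}\label{eqplanlimit}
u(x)\to 1\quad\text{as}\quad x_1\to-\infty,\ \text{uniformly in}\ (x_2,\dots,x_n)\in\mathbb{R}^{n-1}.
\end{equation}
(The point is that the lower bound $u_R(x-Q)\geq 1-\epsilon$ on $\bar B_{(R-D)}(Q)$ becomes, as $R\to\infty$, a uniform lower bound on half-spaces $\{x_1\leq -M\}$ for $M$ large, with $\epsilon$ arbitrarily small.)

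Next I would argue that $u$ also tends to $-1$ as $x_1\to+\infty$, uniformly. Here I would use the oddness-free strategy: consider the function $v(x)=-u(-x_1,x_2,\dots,x_n)$. Actually, since we are \emph{not} assuming $W'$ is odd in Theorem \ref{thmfarmine} but $W'$ \emph{is} odd for the specific potential (\ref{eqAllen}) (indeed $W'(u)=-u(1-u^2)$ is odd), $v$ solves the same equation (\ref{eqfar1}). The hypothesis (\ref{eqfarP}) does not directly say anything about $v$, so instead I would invoke the result of \cite{cafamodica}: for a solution of (\ref{eqfar1}) that satisfies (\ref{eqplanlimit}), the level sets are monotone graphs and in fact $\partial_{x_1}u<0$, because (\ref{eqplanlimit}) together with $u(P)=0$ and $u>0$ on $\{x_1<0\}$ realizes an \emph{unstable} level set $\{u=0\}$ which is the globally Lipschitz graph $\{x_1=0\}$ (it contains this hyperplane near $P$ and, by (\ref{eqplanlimit}), cannot escape to $x_1=-\infty$). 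Once $\{u=0\}$ is known to be a globally Lipschitz graph which is a level set of unstable type, the theorem of Caffarelli--Garofalo--Seg\'{a}la \cite{cafamodica} (in the form recalled in the discussion preceding the statement, cf.\ also \cite{barlow,cafaCordobaIntermed}) gives that $u$ depends on one variable only and is a translate of $\tanh(x_1/\sqrt2)$, which is (\ref{eqtanh}).

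The main obstacle I anticipate is establishing rigorously that the zero level set of $u$ is a \emph{globally Lipschitz graph of unstable type} purely from (\ref{eqfarP}) and (\ref{eqplanlimit}). The hypothesis only gives that $\{x_1=0\}$ is contained in $\{u=0\}\cup\{u\le 0\}$ near $P$ and that $u>0$ strictly on the open half-space $\{x_1<0\}$; one must rule out that $\{u=0\}$ has other components or that it fails to be a graph over $\mathbb{R}^{n-1}$. To handle this I would first use the strong maximum principle and Hopf's lemma on $\{x_1<0\}$ to get $\partial_{x_1}u(0,x')<0$ for all $x'$ (since $u$ attains its value $0$ there from the positive side), forcing $u<0$ just to the right of the hyperplane; combined with (\ref{eqplanlimit}) and the analogous behavior obtained by sliding a ballooning lower solution for $-u$ in $\{x_1>0\}$ after translating, this confines $\{u=0\}$ to a bounded $x_1$-slab and makes it a graph. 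Then, since $u$ changes sign across this graph and the graph is globally Lipschitz (being squeezed in a slab with uniform gradient bounds from elliptic estimates on (\ref{eqfar1})), it is an unstable level set in the sense required, and \cite{cafamodica} applies. An alternative safeguard, if the Lipschitz-graph verification proves delicate in dimension $n$, is to specialize to $n=2$: there (\ref{eqplanlimit}) is exactly the hypothesis of Gibbons' conjecture in the plane, which I can prove directly by the Hamiltonian/Modica-inequality method (using (\ref{eqmodicafarina}) and the energy monotonicity), thereby recovering at least the two-dimensional case announced in the introduction; the general-$n$ statement then follows from the cited strengthening in \cite{cafaCordobaIntermed,barlow}.
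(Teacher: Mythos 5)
There is a genuine gap in your plan. The hypothesis (\ref{eqfarP}) is one-sided and pointwise: it tells you that $u>0$ on the open half-space $\{x_1<0\}$ and that $u$ vanishes at the \emph{single} point $P$. It gives no information whatsoever about $u$ on $\{x_1>0\}$, and on the hyperplane $\{x_1=0\}$ you only know $u\geq 0$ (by continuity) with equality at $P$. Consequently your key intermediate claims cannot be extracted from the hypothesis: (i) the assertion that Hopf's lemma yields $\partial_{x_1}u(0,x')<0$ \emph{for all} $x'$ is invalid, since Hopf applies only at boundary points where $u$ attains the value $0$, and that is known only at $x'=0$; (ii) there is therefore no way to confine $\{u=0\}$ to a slab, to show it is a globally Lipschitz graph, or to produce the limit $u\to-1$ as $x_1\to+\infty$; and (iii) your $n=2$ fallback via Gibbons' conjecture also needs the two-sided uniform limits, of which you only have the one as $x_1\to-\infty$. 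You are also conflating two different results: the ``unstable globally Lipschitz level set implies one-dimensionality'' theorem is the one from \cite{barlow,cafaCordobaIntermed}, whereas what \cite{cafamodica} provides (and what is actually needed here) is the rigidity in the equality case of the gradient bound (\ref{eqmodicafarina}).

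The ballooning idea you start from is the right one, but it should be exploited \emph{at the tangency point} rather than to produce asymptotics at $-\infty$. Keep $R$ fixed and slide $B_R(Q)$ inside $\{x_1<0\}$ until it is tangent to $\{x_1=0\}$ at $P$; then $u\geq u_R(\cdot-Q)$ on $\bar B_R(Q)$ with equality at $P$ (both vanish there), and Hopf's boundary point lemma applied to $u-u_R(\cdot-Q)$ gives $u_{x_1}(P)<u_R'(R)<0$, hence $[u_{x_1}(P)]^2>[u_R'(R)]^2$ for every $R>0$. Letting $R\to\infty$ and using $[u_R'(R)]^2\to 2W(0)$ (relation (\ref{eqchen}), via Lemma \ref{lemhelly}) yields $|\nabla u(P)|^2\geq 2W(u(P))$. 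This is the reverse of Modica's inequality (\ref{eqmodicafarina}), so equality holds at $P$, and Theorem 5.1 of \cite{cafamodica} then forces $u$ to be one-dimensional. No information about $\{x_1>0\}$ or about the global structure of the zero level set is required.
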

\begin{proof}
As we showed in Remark \ref{remdegiorgidu}, we have $|u(x)|<1$,
$x\in \mathbb{R}^n$. Similarly to the proof of Proposition
\ref{propacard} (see also Remark \ref{rempacardBrezOz}), we have
\begin{equation}\label{eqfar2}
u_R(x-Q)<u(x),\ \ x\in B_R(Q),
\end{equation}
provided that $B_R(Q)\subset \mathbb{R}^n\cap \{x_1<0 \}$, where
$u_R$ is a solution to problem (\ref{eqgidasEq}) such that
(\ref{eqmaxball}) holds. Since $u$ is positive in $\mathbb{R}^n\cap
\{x_1<0\}$, we can slide the ball $B_R(Q)$ (keeping $R$ fixed) so
that it is tangent to the hyperplane $\{ x_1=0\}$ at the origin,
while keeping (\ref{eqfar2}). In other words, relation
(\ref{eqfar2}) holds, with $Q=(-R,0,\cdots,0)$, for all $R>0$. In
particular, we have
\begin{equation}\label{eqflat2}
u(x_1,0,\cdots,0)>u_R(R-x_1),\ \ x_1\in (-R,0),\ (\textrm{with\ the\
obvious\ notation}),
\end{equation}
and $u(0,\cdots,0)=u_R(R)=0$, for all $R>0$.
%Hence,
%\[
%u(x_1,0,\cdots,0)-u(0,\cdots,0)>u_R(R-x_1)-u_R(R),
%\]
%and
%\[
%\frac{u(x_1,0,\cdots,0)-u(0,\cdots,0)}{x_1}<\frac{u_R(R-x_1)-u_R(R)}{x_1},
%\]
%$x_1\in (-R,0)$, for all $R>0$.
By Hopf's boundary point lemma (in the equation for $u-u_R$), we
deduce that
\[
u_{x_1}(0,\cdots,0)< u_R'(R)\ \ \textrm{for\ all}\ R>0,
\]
(clearly $u$ cannot be identically equal to $u_R(x-Q)$ in $B_R(Q)$).
 So, recalling that $u_R'(R)<0$, we arrive at
\[
\left[u_{x_1}(\textbf{0})\right]^2> \left[u_R'(R)\right]^2\ \
\textrm{for\ all}\ R>0,
\]
where $\textbf{0}$ denotes the origin of $\mathbb{R}^n$. Note that
the left-hand side of the above relation does not depend on $R$.
Now, letting $R\to \infty$ (do ballooning), and recalling Lemma
\ref{lemhelly} (see also (\ref{eqchen})), we infer that
\[
\left[u_{x_1}(\textbf{0})\right]^2\geq 2W(0).
\]

On the other hand, it is known that every bounded solution to
(\ref{eqfar1}) satisfies the gradient bound (\ref{eqmodicafarina}),
and the only solutions for which equality is achieved at some point
are one-dimensional of the form (\ref{eqtanh}) (see Theorem 5.1 in
\cite{cafamodica}). The above relation clearly implies that equality
is achieved at $x=\textbf{0}$ (recall that $u(\textbf{0})=0$).
Consequently, the solution $u$ is one-dimensional.

The proof of the theorem is complete.
\end{proof}

It is well known that there is a deep connection between the
``blown-down'' level sets of solutions to (\ref{eqfar1}) and the
theory of minimal surfaces, see for example \cite{alberti},
\cite{delpinoAnnals}, \cite{pacard} and  \cite{savin}. Let us
suggest a naive argument which connects Theorem \ref{thmfarmine} to
the theory of minimal surfaces.
 If $u$ satisfies the assumptions of
Theorem \ref{thmfarmine}, its zero set near the origin is a graph of
$x_1$ over $\mathbb{R}^{n-1}$ ($u_{x_1}(\textbf{0})<0$, thanks to
Hopf's lemma, so we can apply the implicit function theorem) which
is tangent to the plane $\{x_1=0\}$. In the ``blown-down'' problem
(assuming for the sake of our argument that $u$ is a minimizer in
the sense of \cite{jerison}), near the origin, we get two minimal
graphs (the one being a plane) which are tangent at the origin and
the  one is above the other. The strong maximum principle for
minimal surfaces, see \cite{coldingCour} and Lemma 1 in
\cite{schoenDG}, tells us that both surfaces are planes. This
property can be rephrased as saying that, as we translate a
hyperplane towards a minimal surface, the first point of contact
must be on the boundary.

\begin{rem}\label{remduma}
The assertion of Theorem \ref{thmfarmine} remains true for solutions
$u$ of (\ref{eqentire}), with $W\in C^3$ as in Proposition
\ref{propacard}, provided that we assume in advance that
$\mu_-<u<\mu$ and $W(t)\geq 0$, $t\in [\mu_-,\mu]$ ($\mu_-<0<\mu$).
In fact, we can easily relax the regularity of $W$ to $W'$ being
Lipschitz continuous, assuming that $-W'(t)\geq c t$ for $t\geq 0$
near $0$ (instead of $W''(0)<0$, recall Remark
\ref{remdrawbak2222}). To this end, we have to use Lemmas 3.2-3.3 in
\cite{cafareliPacard} instead of our Proposition \ref{propacard},
and the recent  result in \cite{farinaAdvMath} which says that the
gradient bound (\ref{eqmodicafarina}) is still valid for $W'$ merely
Lipschitz continuous.

The assertion of Theorem \ref{thmfarmine} also remains true if $W'$
is Lipschitz continuous, $W(t)\geq 0$ for $t\in [\mu_-,\mu]$,
$W(t)>0$ for $t\in [0,\mu)$, $W(\mu)=0$, and $W'(\mu)=0$, provided
that we additionally assume that $\mu_-\leq u\leq \mu$ in
$\mathbb{R}^n$ and $u(x_1,x')\to \mu$ as $x_1\to -\infty$ for some
$x'\in \mathbb{R}^{n-1}$. Indeed, thanks to Remark
\ref{rempointwise}, the last assumption implies uniform convergence
over compacts of $\mathbb{R}^{n-1}$. We note that, in the case where
$W'(0)>0$, we have to use $u_R$ as in Lemma \ref{lem1Sign} (we can
perform the sliding method even if $\max\{u_R,0\}$ is not a weak
lower solution).
\end{rem}

\begin{rem}
It has been shown recently in \cite{farinaTrans} that any
\emph{energy minimizing} solution (as described in \cite{jerison})
to (\ref{eqfar1}) is one-dimensional provided that it is positive
for $x_1<0$ (it is not required a-priori that the level set of $u$
touches $x_1=0$ at some point).
\end{rem}

\begin{rem}\label{remshiSaddle}
The ballooning and sliding arguments of Theorem \ref{thmfarmine},
together with the gradient bound (\ref{eqmodicafarina}), can give a
different proof of relation (4.5) in \cite{shiCPAM}, namely that any
 saddle solution of (\ref{eqentire}) satisfies $u_{x_1}(0,x_2)\to
\sqrt{2W(0)}$ as $x_2\to \infty$ (see also \cite{cabreCpde},
\cite{fifesaddle}). In fact, we can show this without assuming
(\ref{eqW''}). For $W$'s enjoying the qualitative properties of
(\ref{eqAllen}), the rate of this convergence is exponentially fast
(see \cite{fifesaddle}). In higher dimensions, it has been remarked
in \cite{saddlecabre3solo} that this convergence is of algebraic
rate. Combining our approach with the fact that, for such $W$'s,
there holds
\[
u_R'(R)=-\sqrt{2W(0)}+\frac{(N-1)\int_{0}^{\mu}\sqrt{2\left(W(0)-W(t)\right)}dt}{\sqrt{2W(0)}}R^{-1}+\mathcal{O}(R^{-2})\
\ \textrm{as}\ R\to \infty,
\]
see \cite{shibata}, we may quantify this rate. For example, for the
three-dimensional saddle solution in \cite{alesioNodea}, we get
\[
u_{x_1}(0,x_2,x_3)=-\sqrt{2W(0)}+\mathcal{O}\left(\frac{1}{\sqrt{x_2^2+x_3^2}}\right),
\]
and
\[
u_{x_2}^2(0,x_2,x_3)+u_{x_3}^2(0,x_2,x_3)\leq
\mathcal{O}\left(\frac{1}{\sqrt{x_2^2+x_3^2}}\right) \ \
\textrm{as}\ x_2^2+x_3^2\to \infty.
\]
\end{rem}

Similarly to Theorem \ref{thmfarmine},  we can show
\begin{pro}\label{progibbonsPeriodic}
Assume that $W\in C^2$ satisfies condition  \textbf{(a')} and is
even. Let $-\mu\leq u \leq \mu$ be a solution to (\ref{eqentire})
such that
\[
\sup_{\mathbb{R}^{n}_-} u=\mu,\ \ \textrm{where}\
\mathbb{R}^n_-=\mathbb{R}^n\cap\{x_1<0\},
\]
and is periodic in the remaining variables $(x_2,\cdots,x_n)$,
namely $u(x_1,x_2,\cdots,x_n)\equiv u(x_1,x_2+T_2,\cdots,x_n+T_n)$
for some $T_i\in \mathbb{R}$. Then, $u$ is one-dimensional in $x_1$
and non-increasing.
\end{pro}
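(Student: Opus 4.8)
The plan is to reduce the statement to the situation already handled in Theorem \ref{thmfarmine} and then copy its proof. If $u\equiv\mu$ the assertion is trivial, and no other constant is compatible with $\sup_{\mathbb{R}^n_-}u=\mu$; so assume $u$ is nonconstant. Since $W$ is even, $W'(\pm\mu)=0$, and the strong maximum principle gives $-\mu<u<\mu$ on $\mathbb{R}^n$. The first real step is to upgrade the hypothesis $\sup_{\mathbb{R}^n_-}u=\mu$ to the uniform limit
\begin{equation}\label{eqplanlim}
u(x_1,x')\to\mu\ \text{ as } x_1\to-\infty,\ \text{ uniformly in } x'\in\mathbb{R}^{n-1}.
\end{equation}
Choosing $y^k\in\mathbb{R}^n_-$ with $u(y^k)\to\mu$ and using the periodicity in $x'$ to put the $x'$--components of $y^k$ in a fixed compact fundamental domain, one sees that the $x_1$--components of $y^k$ must tend to $-\infty$ (otherwise $u$ would attain the value $\mu$, contradicting $u<\mu$); translating and passing to a limit by interior elliptic estimates and the strong maximum principle, and then chaining Harnack's inequality for the nonnegative function $\mu-u$, yields (\ref{eqplanlim}), exactly as in Remark \ref{rempointwise} and the references there. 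This is the only place where periodicity in $x'$ is essential.

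Next I would set $c^*:=\sup\{\,c\in\mathbb{R}:u>0 \text{ on } \{x_1<c\}\,\}$. By (\ref{eqplanlim}), $c^*>-\infty$; and $c^*<+\infty$, since otherwise $0<u<\mu$ everywhere and a ballooning argument --- letting the radius of the comparison balls built from the minimizers of Lemma \ref{lem1} tend to infinity, as in Proposition \ref{propacard} and the second assertion of Corollary \ref{corpacard}, using (\ref{eqsweers}) --- would force $u\geq\mu$, a contradiction. Thus $c^*$ is a real number, $u>0$ on $\{x_1<c^*\}$, and (again by periodicity and continuity) there is a point $z=(c^*,\tilde z)$ on the hyperplane $\{x_1=c^*\}$ with $u(z)=0$. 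We have now recovered exactly the geometric configuration of Theorem \ref{thmfarmine}, with $\{x_1=c^*\}$ playing the role of $\{x_1=0\}$.

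From here the argument is that of Theorem \ref{thmfarmine}, word for word, using the one--dimensional minimizers $u_R$ of (\ref{eqgidasEq}) from Lemma \ref{lem1} with $n=1$ (rather than those of Lemma \ref{lemhelly}, since only \textbf{(a')} is available here; cf.\ Remark \ref{remduma}). Taking the ball $\bar B_R(P_R)$ with $P_R=(c^*-R,\tilde z)$, which lies in $\{x_1\leq c^*\}$ and meets $\{x_1=c^*\}$ only at $z$, Serrin's sweeping performed inside that ball (so that the possible negativity of $u$ far to the right never interferes --- this is precisely why the argument transfers) gives $u\geq u_R(\,\cdot\,-P_R)$ on $\bar B_R(P_R)$, with equality at $z$ because $u(z)=0=u_R(R)$. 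Hopf's boundary point lemma, together with the fact that $z$ minimizes $u-u_R(\,\cdot\,-P_R)$ over the ball, yields $\nabla_{x'}u(z)=0$ and $u_{x_1}(z)<u_R'(R)<0$, hence $|\nabla u(z)|^2=u_{x_1}(z)^2>[u_R'(R)]^2$; letting $R\to\infty$ and invoking (\ref{eqchen}) gives $\tfrac12|\nabla u(z)|^2\geq W(0)=W(u(z))$. Since $W\geq0$, the Modica gradient bound (\ref{eqmodicafarina}) gives the opposite inequality, so equality holds at $z$ with $\nabla u(z)\neq 0$ (note $W(0)>0$). By the rigidity theorem of Caffarelli, Garofalo and Seg\'{a}la (Theorem 5.1 in \cite{cafamodica}), $u$ is one--dimensional, and since $\nabla u(z)$ points along the $x_1$--axis the distinguished direction is $x_1$, i.e.\ $u=u(x_1)$; the resulting profile obeys $\tfrac12(u')^2=W(u)$ with $u(-\infty)=\mu$ by (\ref{eqplanlim}), so, as $W>0$ on $(-\mu,\mu)$, its derivative never vanishes there and $u$ is non--increasing.

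The main obstacle is the transition made in the first two paragraphs: converting the purely qualitative hypothesis $\sup_{\mathbb{R}^n_-}u=\mu$ (plus periodicity) into the clean geometric input ``$u>0$ on a half--space $\{x_1<c^*\}$ and $u$ vanishes on its bounding hyperplane'' that Theorem \ref{thmfarmine} simply postulates. Concretely, the delicate point is proving the uniform limit (\ref{eqplanlim}) at $x_1=-\infty$: the Harnack--chaining that upgrades ``along a sequence'' to ``as $x_1\to-\infty$'' and the compactness in the $x'$--directions supplied by periodicity are exactly what is compressed into the reference to Remark \ref{rempointwise}. Once (\ref{eqplanlim}) is established, ruling out $c^*=+\infty$ and producing the vanishing point $z$ are routine, and the remainder duplicates Theorem \ref{thmfarmine}.
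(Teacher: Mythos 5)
Your argument is correct and follows the paper's own proof essentially step for step: upgrade $\sup_{\mathbb{R}^n_-}u=\mu$ to the uniform limit $u\to\mu$ as $x_1\to-\infty$ via periodicity and Harnack (Remark \ref{rempointwise}), dispose of the everywhere-positive case by sliding the Lemma \ref{lem1} minimizers and letting $R\to\infty$, and otherwise translate the leftmost zero to a bounding hyperplane and rerun the Theorem \ref{thmfarmine} argument (tangent ball, Hopf, (\ref{eqchen}), Modica rigidity). The only cosmetic quibble is the parenthetical ``$n=1$'' for the comparison minimizers while sliding $n$-dimensional balls, which does not affect the argument.
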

\begin{proof}
Firstly, by the strong maximum principle, unless $u\equiv\mu$, the
periodicity of $u$ implies that
\[
\sup_{|x_1|\leq L}u =\max_{|x_1|\leq L}u =c_L<\mu\ \ \textrm{for\
every}\ L>0.
\]
It follows that there exists a sequence of points $A_j=(a_j,a'_j)\in
\mathbb{R}^n_-$ such that $a_j\to -\infty$, and $u(A_j)\to \mu$. As
we discussed in Remark \ref{rempointwise}, via Harnack's inequality,
we have that
\[
u\to \mu,\ \textrm{uniformly\ on\ compact\ subsets\ of}\
\mathbb{R}^{n-1},\ \textrm{as}\ x_1\to -\infty.
\]
By the periodicity of $u$ in the remaining variables
$(x_2,\cdots,x_n)$, we find that
\begin{equation}\label{eqgibons-}
u\to \mu,\ \textrm{uniformly\ in}\ \mathbb{R}^{n-1},\ \textrm{as}\
x_1\to -\infty.
\end{equation}

Given $R>0$, let $u_R$ be a minimizer as provided by Lemma
\ref{lem1}, extended by zero outside of $B_R$. By virtue of
(\ref{eqgibons-}), we can center the ball $B_R$ at a point $Q\in
\mathbb{R}^{n}_-$ so that \[u(x)>u_R(0)\geq u_R(x-Q)\ \textrm{in}\
B_R(Q),\] say $Q=(-Q_1,0,\cdots,0)$ with $Q_1>R$ sufficiently large.

If $u>0$ in $\mathbb{R}^n$, we can slide the ball $B_R(Q)$ around in
$\mathbb{R}^n$ to get that
\[
u(x)\geq u_R(x-Q)\ \ \forall\ x,Q\in \mathbb{R}^n.
\]
Taking $Q=x$, yields that $u(x)\geq u_R(0)$, $x\in \mathbb{R}^n$.
Since $R>0$ was arbitrary, in view of (\ref{eqestimRect}), and
recalling that $u\leq \mu$, we conclude that $u\equiv\mu$.

Otherwise, $u$ has to vanish somewhere. By virtue of
(\ref{eqgibons-}), and the periodicity of $u$ in the variables
$(x_2,\cdots,x_n)$, we may assume that (\ref{eqfarP}) holds for some
$P$ on the hyperplane $\{x_1=0\}$. Sliding the ball $B_R(Q)$ in
$x_1<0$, until it is tangent at $P$, to obtain a similar relation to
(\ref{eqflat2}), and recalling (\ref{eqchen}), we can conclude as in
Theorem \ref{thmfarmine} that $u$ depends only on $x_1$ and is
monotone.

The proof of the proposition is complete.
\end{proof}

Moreover, we can show:
\begin{pro}
Assume that $W$ is as in Remark \ref{remduma} and $W'(t)>0$, $t\in
(\mu_-,0)$. Then, there does not exist a solution $u\in
C^2(\mathbb{R}^n)$ to (\ref{eqentire}) such that $\mu_-\leq u\leq
\mu$ and the level set $\{x\in \mathbb{R}^n: u(x)=0 \}$ is bounded.
\end{pro}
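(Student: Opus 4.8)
The plan is to argue by contradiction, exploiting that the hypothesis $W'>0$ on $(\mu_-,0)$ makes $u$ subharmonic wherever it is negative, and reducing the one-dimensional symmetry machinery of Theorem~\ref{thmfarmine} (in the generality of Remark~\ref{remduma}) to this setting. First I would dispose of the trivial cases: the statement is meant for non-constant $u$ (the constants $u\equiv\mu$, $u\equiv\mu_-$ are honest solutions with empty zero set), so assume $u$ non-constant and $\{u=0\}$ bounded and non-empty. Since $\{u=0\}$ is bounded, $u$ has a fixed sign on the (connected) exterior $\mathbb{R}^n\setminus B_{R_1}$ of a large ball for $n\ge2$, and on a half-line for $n=1$, which suffices; and since $u$ is non-constant with a zero, it changes sign, so both $\{u>0\}$ and $\{u<0\}$ are non-empty.

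In the case $u>0$ on $\mathbb{R}^n\setminus B_{R_1}$ I would take a hyperplane supporting $\mathrm{conv}(\{u=0\})$ at a point $P\in\{u=0\}$; after a rigid motion, $P=0$, the supporting plane is $\{x_1=0\}$, and $u$ has no zero in $\{x_1<0\}$, hence (that half-space being connected and meeting the region where $u>0$) $u>0$ throughout $\{x_1<0\}$, with $u(0)=0$. This is exactly the configuration of Theorem~\ref{thmfarmine}/Remark~\ref{remduma}: sliding the ball carrying the minimizer $u_R$ of Lemma~\ref{lem1} or Lemma~\ref{lem1Sign} (extended by $0$, a weak lower solution since $W'(0)=0$) until it is internally tangent to $\{x_1=0\}$ at $P$, and letting $R\to\infty$ with $u_R'(R)\to-\sqrt{2W(0)}$, gives $|u_{x_1}(0)|^2\ge 2W(0)=2W(u(0))$; combined with the Modica-type gradient bound $\tfrac12|\nabla u|^2\le W(u)$ this forces equality at $0$, so $u$ is one-dimensional (Theorem~5.1 of \cite{cafamodica} and its Lipschitz extension used in Remark~\ref{remduma}). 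But then $\{u=0\}$ is either a hyperplane or all of $\mathbb{R}^n$, hence unbounded — a contradiction.

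The remaining case, $u<0$ on $\mathbb{R}^n\setminus B_{R_1}$, is where the hypothesis $W'>0$ on $(\mu_-,0)$ is essential and where the real difficulty lies: there $\mu_-\le u<0$, so $\Delta u=W'(u)\ge0$, i.e.\ $u$ is bounded and subharmonic on the exterior of a ball. I would first note that the spherical mean $M(r)=\fint_{\partial B_r}u$ is eventually monotone and $\le0$, which (via $(r^{n-1}M'(r))'\ge0$) forces $\int_{\{|x|>R_1\}}W'(u)\,dx<\infty$; then, with $\Delta u\in L^1$ near infinity and $u$ bounded, a Newtonian-potential representation shows $u(x)\to L\in[\mu_-,0]$ as $|x|\to\infty$, and feeding this back into the equation (a bounded function whose Laplacian tends to a nonzero constant cannot exist, by a large-ball estimate) gives $W'(L)=0$, so $L\in\{0,\mu_-\}$. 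If $L=\mu_-$, I would compare $u$ from above — using the maximum principle in unbounded domains in the spirit of Lemma~2.1 of \cite{cafareliPacard} — with translates of the minimizer attached to the $\mu_-$–well (the analogue of Lemma~\ref{lem1} for the shifted potential $W-W(\mu_-)$, which has a strict minimum $0$ at $\mu_-$), and let the radius tend to infinity to force $u\equiv\mu_-$, contradicting $\{u=0\}\ne\varnothing$. If $L=0$, then $\{u>0\}$ is bounded, so a hyperplane supports $\mathrm{conv}(\overline{\{u>0\}})$ at a zero $P$ of $u$ with $u\le0$ on a half-space $\{x_1>0\}$; Hopf's boundary point lemma applied to $\Delta-Lu$ (with $L$ a Lipschitz constant for $W'$) gives $u_{x_1}(P)<0$, and comparison with the one-dimensional profile of $u''=W'(u)$ through $(0,0)$ descending toward $\mu_-$, together with the bound $|u_{x_1}(P)|\le\sqrt{2W(0)}$, yields the contradiction.

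To summarize the structure: Case~1 is essentially Theorem~\ref{thmfarmine}, and the main obstacle is Case~2 — excluding a solution that is negative near infinity. The delicate ingredients there are the control of the bounded subharmonic function $u$ at infinity, the construction of the $\mu_-$–well minimizer playing the role of $u_R$, and the applicability of the unbounded-domain maximum principle (there is no superharmonic exhaustion function on exterior domains when $n\ge3$, so one cannot simply invoke Lemma~2.1 of \cite{cafareliPacard} on the exterior and must instead work on half-spaces via the supporting-hyperplane reduction above).
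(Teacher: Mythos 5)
Your Case~1 is correct and matches the intended mechanism (supporting hyperplane plus Theorem~\ref{thmfarmine}/Remark~\ref{remduma}), but in Case~2 you missed the simplification that the added hypothesis provides, and the detour you took has gaps. The point of requiring $W'(t)>0$ on $(\mu_-,0)$ is precisely that Case~2 reduces to Case~1 under the sign flip $v=-u$, $\widetilde{W}(s)=W(-s)$: then $\Delta v=\widetilde{W}'(v)$, and for $s\in(0,-\mu_-)$ one has $\widetilde{W}'(s)=-W'(-s)<0$, while $\widetilde{W}'(0)=0$, $\widetilde{W}''(0)=W''(0)<0$, and $\widetilde{W}\geq 0$ on $[-\mu,-\mu_-]$ (the range of $v$); if $W(\mu_-)>0$ one extends $\widetilde{W}$ past $-\mu_-$ to a fresh zero $\widetilde{\mu}$ keeping $\widetilde{W}'<0$. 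The pair $(v,\widetilde{W})$ then has exactly the structure used in Theorem~\ref{thmfarmine}/Remark~\ref{remduma}; the point of $\{u=0\}$ with minimal $x_1$-coordinate puts $v>0$ on a half-space with $v$ vanishing at a boundary point, and the same sliding of radial barriers plus the Caffarelli--Garofalo--Seg\'{a}la rigidity gives that $u$ is one-dimensional, so $\{u=0\}$ contains a hyperplane and is unbounded. No asymptotics of $u$ at infinity enter.

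Your detour has concrete failures. (i) The claim $\int_{|x|>R_1}W'(u)\,dx<\infty$ does not follow for $n\geq 3$: your observation that $(r^{n-1}M'(r))'\geq 0$ is right, but if $r^{n-1}M'(r)$ (a dimensional constant times $\int_{B_r}W'(u)$) tends to $+\infty$, for $n\geq 3$ one only gets $M'(r)\geq c\,r^{1-n}$ eventually, which is integrable at infinity, so $M$ stays bounded and there is no contradiction; the boundedness of $M$ yields only $\int_{B_r}W'(u)\leq C\,r^{n-2}$, not $L^1$. (ii) In the sub-case $L=\mu_-$ you invoke ``the analogue of Lemma~\ref{lem1} for the shifted potential $W-W(\mu_-)$''; but nothing in the hypotheses forces $W'(\mu_-)=0$, so $\mu_-$ need not be a critical point of $W$ and there may be no $\mu_-$-well at all, and moreover $W-W(\mu_-)$ is negative at $\mu$ whenever $W(\mu_-)>0$, violating \textbf{(a')}. (iii) The sub-case $L=0$, as written, does not exhibit a contradiction; carried through honestly it would just re-derive the sign-flip argument above. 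Finally, your aside that $n=1$ ``suffices'' is backwards: for $n=1$ and the Allen--Cahn potential, $u(x)=\tanh(x/\sqrt{2})$ has $\{u=0\}=\{0\}$ bounded, so $n\geq 2$ must be read into the statement.
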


In the case where the assumption $W\geq 0$ is violated, say when
$W(\mu_-)<0=W(\mu)$, then $u$ has to be radially symmetric with
respect to some point $x_0\in \mathbb{R}^n$ and increasing (see
Theorem 3.3 in \cite{farinaRendi}).

\subsection{One-dimensional symmetry in half-spaces}
Consider the problem
\begin{equation}\label{eqhalfAng}
\left\{ \begin{array}{ll}
          \Delta u=W'(u) & \textrm{in}\ \mathbb{R}^n_-=\{x_1<0\}, \\
           &  \\
          u=0 & \textrm{on}\ \{x_1=0\}, \\
           &  \\
          u>0 & \textrm{in}\ \mathbb{R}^n_-.
        \end{array}
 \right.
\end{equation}
As we have already seen in Remark \ref{remhalfplane}, this type of
problems arise after blowing-up, close to the boundary, singular
perturbation problems of the form (\ref{eqsingular}) (see also
Proposition \ref{promixed}).

The following result was proven by Angenent  in \cite{angenent} by
the method of moving planes:

\begin{pro}\label{proAng}
Assume that $W\in C^2$ satisfies $W'(0)=0$, $W''(0)<0$,
(\ref{eqW''}), $W'(\mu)=0$, $W''(\mu)>0$, and $W'(t)>0$, $t>\mu$.
Then, any bounded solution to (\ref{eqhalfAng}) depends only on the
$x_1$ variable (such solution exists and is strictly decreasing in
$x_1$, recall (\ref{eqU})).
\end{pro}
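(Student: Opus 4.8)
\textbf{Proof plan for Proposition \ref{proAng}.}

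The plan is to adapt the ``ballooning plus sliding'' strategy already developed for Theorem \ref{thmfarmine} and Proposition \ref{propacard}, now in the half-space setting, and then invoke the rigidity of the gradient bound. First I would record that under the stated hypotheses on $W$ we may, after modifying $W$ outside a large compact interval, assume the hypotheses of Lemma \ref{lem1} hold (with the given $\mu$); in particular $W\in C^2$ satisfies \textbf{(a')}, since $W'(0)=0$, $W''(0)<0$ forces $W$ to be positive just to the right of $0$, $W$ has its global minimum value $0$ at $\mu$, and we may arrange $W(t)>0$ for $t\in[0,\mu)$. Let $u$ be a bounded solution of (\ref{eqhalfAng}). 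By the maximum principle applied to $\mu-u$ (recall $W'(\mu)=0$ and $W''(\mu)>0$, so $\mu$ is an upper solution) we get $0<u<\mu$ in $\mathbb{R}^n_-$; more precisely, arguing as in the second assertion of Corollary \ref{corpacard} or via the parabolic comparison in Proposition \ref{produ}, boundedness together with $W'(t)>0$ for $t>\mu$ gives $u\le\mu$, and then the strong maximum principle yields the strict bound.

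Next I would construct the sliding family of lower solutions. For $R>0$ large let $u_R$ be the one-dimensional global minimizer from Lemma \ref{lem1} on $(-R,R)$ (extended by $0$ outside), which by Lemma \ref{lemhelly} — applicable because of (\ref{eqpacardW''}), which holds here since $W'(0)=0$, $W''(0)<0$, and (\ref{eqpacardW'}) can be arranged near $0$ — satisfies (\ref{eqmaxball}), (\ref{eqmonotonicity}), (\ref{eqchen}) and (\ref{eqclaim}) uniformly in $R$. The function $\underline{u}_{R,Q}(x)=u_R(|x-Q|)$ for $x\in B_R(Q)$, zero elsewhere, is a weak lower solution of $\Delta u=W'(u)$ on $\mathbb{R}^n$ (maximum of two lower solutions, using $W'(0)\le0$, cf. \cite{Berestyckilion}). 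As in the proof of Proposition \ref{propacard}, for any ball $B_R(Q)\subset\mathbb{R}^n_-$ one obtains $\underline{u}_{R,Q}\le u$ by sweeping in $R$; then, since $u>0$ in the open half-space, one slides $B_R(Q)$ (keeping $R$ fixed) so it becomes tangent to $\{x_1=0\}$ at the origin, preserving the inequality. This gives, with $Q=(-R,0,\dots,0)$,
\[
u(x_1,0,\dots,0)>u_R(|x_1+R|)=u_R(-R-x_1),\qquad x_1\in(-R,0),
\]
and $u(\mathbf{0})=u_R(R)=0$ — note $u\not\equiv\underline{u}_{R,Q}$ because $u>0$ on $\partial B_R(Q)\setminus\{x_1=0\}$.

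Now I would extract the boundary derivative information. By Hopf's boundary point lemma applied to $u-\underline{u}_{R,Q}$ at the origin, $u_{x_1}(\mathbf{0})<u_R'(R)<0$, hence $\big(u_{x_1}(\mathbf{0})\big)^2>\big(u_R'(R)\big)^2$ for every large $R$; the left side is independent of $R$. Letting $R\to\infty$ (ballooning) and using (\ref{eqchen}), i.e. $[u_R'(R)]^2\to 2W(0)$, we conclude $\big(u_{x_1}(\mathbf{0})\big)^2\ge 2W(0)$, and since $|\nabla u(\mathbf{0})|^2\ge [u_{x_1}(\mathbf{0})]^2$ this forces $\tfrac12|\nabla u(\mathbf{0})|^2\ge W(0)=W(u(\mathbf{0}))$. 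On the other hand, $u$ extends (by odd reflection across $\{x_1=0\}$, possible after checking the extension solves the equation in the classical sense in a neighborhood of the hyperplane, since $u=0$ and $W'$ is odd is \emph{not} needed here — instead one uses that the reflected function is a weak, hence classical, solution because $W'(0)$ need not vanish, so I would instead work with the \emph{bounded} solution on the half-space directly and apply the half-space version of the gradient bound) to a bounded entire solution or directly satisfies Modica's gradient estimate $\tfrac12|\nabla u|^2\le W(u)$ in $\mathbb{R}^n_-$ — this is the point where I would cite \cite{cafamodica} (Theorem 5.1 there, or the version for domains; see also Lemma 4.1 in \cite{chenDG} and Proposition \ref{procarbouModica}). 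The hard part is exactly this: justifying that Modica's bound holds up to the boundary, so that equality $\tfrac12|\nabla u(\mathbf{0})|^2=W(u(\mathbf{0}))$ is attained at an interior-limit point and triggers the rigidity. I would handle it either by the odd reflection (which gives an entire bounded solution of an equation with odd nonlinearity $\widetilde W'$, to which the entire gradient bound applies) or by the boundary-gradient estimate in \cite{cafamodica} directly. Once equality is attained, Theorem 5.1 of \cite{cafamodica} gives that $u$ is one-dimensional; combined with $u_{x_1}(\mathbf{0})<0$ and $u(\mathbf 0)=0$ this identifies $u$ with the solution $\mathbf{U}$ of (\ref{eqU}) reflected, i.e. $u=u(x_1)$ strictly decreasing. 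Existence of such a one-dimensional solution is immediate from Remark \ref{remU} and (\ref{eqUincr}).
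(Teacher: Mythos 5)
Your plan reproduces the paper's alternative proof of this result, given there as Proposition \ref{prohalfMine}; the paper states Proposition \ref{proAng} as Angenent's moving-planes theorem and then redoes it by exactly the ``ballooning + sliding'' and gradient-bound rigidity argument you outline. Steps (i) $0<u<\mu$, (ii) tangency of the sliding ball at the origin, Hopf's lemma and (\ref{eqchen}) forcing $[u_{x_1}(\mathbf 0)]^2\ge 2W(0)$, and (iii) invoking Theorem 5.1 of \cite{cafamodica} after an odd reflection, are all the same as in the paper.

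The one place where your write-up is muddled is the reflection step, and the confusion is substantive enough to flag. You write that ``$W'$ is odd is not needed'' and that ``$W'(0)$ need not vanish''; both statements are misleading. The hypothesis $W'(0)=0$ (which \emph{is} part of the statement) is precisely what the construction leans on: one defines the \emph{even} extension $\tilde W(t)=W(|t|)$, so that $\tilde W'$ is the \emph{odd} extension of $W'$, and $\tilde W\in C^2(\mathbb R)$ if and only if $W'(0)=0$. With this, the odd reflection $\tilde u$ of $u$ is a classical $C^{2,\alpha}$ solution of $\Delta\tilde u=\tilde W'(\tilde u)$ on all of $\mathbb R^n$ (interior Schauder, since the right-hand side is $C^\alpha$), and because $\tilde W\ge 0$ vanishes at $\pm\mu$ and $\tilde W\in C^2$, Theorem 5.1 of \cite{cafamodica} applies directly. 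Your fallback suggestion of a ``boundary-gradient estimate in \cite{cafamodica}'' does not exist in that reference; the half-space gradient bound you want there is Theorem 1.4 of \cite{farinaFlat} (or the version in \cite{jangNonli}), and the paper uses it precisely when $W'(0)<0$ so that no odd $C^2$ extension is available (see Proposition \ref{prohalfFlat}). Finally, a terminological slip: the functions $u_R$ supplying your sliding family should be the $n$-dimensional radial minimizers from Lemma \ref{lem1}, not one-dimensional ones; the radial profile $u_R(|x-Q|)$ solves the $n$-dimensional equation only because $u_R$ was obtained on $B_R\subset\mathbb R^n$ in the first place.
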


In \cite{cafareliPacard}, the authors relaxed the condition
$W''(\mu)>0$ to $W'$ being non-decreasing near $\mu$ and allowed for
$W'$ merely Lipschitz (and also included the case $W'(0)<0$). In
fact, the condition $W'$ being non-decreasing near $\mu$ is not
needed, as shown in \cite{farinaRigidity} (see also \cite{duNomax}
for a different approach). The behavior of $W$ near $t=0$ has been
relaxed in \cite{farinaRigidity}. As a matter of fact, there is no
need to assume something for the behavior of $W$ near $t=0$,
provided that $n\leq 5$ (see \cite{farinaFlat}, and the references
therein, where $W'\in C^1$ is also required for $n=4,5$). One of the
main results that was used in the aforementioned references is
Theorem 1 of \cite{berestProcHalf} (see also Theorem 1.4 in
\cite{berestyckiCaffarelli}), which says that if $u$ is a bounded
solution to (\ref{eqhalfAng}), where $W'$ is Lipschitz continuous,
with $M=\sup_{\mathbb{R}_-^n}u$, then $u$ is one-dimensional and
monotone provided that $W'(M)\geq 0$ (furthermore, $W'(M)=0$); see
also Proposition \ref{prohalfclemMine}  as well as Remarks
\ref{remhalfCSLips} and \ref{remJang} herein).

Based on Theorem \ref{thmfarmine}, we can provide a \emph{completely
different} proof of Proposition \ref{proAng}, while also removing
the condition $W''(\mu)>0$. The drawback of our approach is that we
impose a higher degree of regularity on $W$, in order to apply
Proposition \ref{propacard} which is based on bifurcation arguments.

\begin{pro}\label{prohalfMine}
Assume that $W\in C^3$ satisfies the hypotheses in Proposition
\ref{proAng}, except from $W''(\mu)>0$. Then, the same assertion of
the latter proposition holds.
\end{pro}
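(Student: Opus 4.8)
The plan is to reduce Proposition \ref{prohalfMine} to Theorem \ref{thmfarmine} by showing that a bounded solution $u$ of (\ref{eqhalfAng}), under the stated hypotheses on $W$, extends — after an odd reflection across $\{x_1=0\}$ — to an entire solution of (\ref{eqfar1})-type equation satisfying the hypothesis (\ref{eqfarP}). More precisely, since the hypotheses on $W$ near $0$ are $W'(0)=0$, $W''(0)<0$, and near $\mu$ are $W'(\mu)=0$ together with (\ref{eqW''}) and $W'(t)>0$ for $t>\mu$, we are exactly in the setting where Proposition \ref{propacard} and Corollary \ref{corpacard} apply (note $W\in C^3$ is assumed precisely so that the bifurcation-theoretic arguments behind Proposition \ref{propacard} go through).

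First I would establish that $0<u<\mu$ in $\mathbb{R}^n_-$. The lower bound $u>0$ is given; for the upper bound, I would argue as in the proof of Proposition \ref{produ} or Corollary \ref{corpacard}: using the parabolic flow and the ODE comparison $\dot{u}_+=W'(u_+)$ with $u_+(0)$ large (using $W'(t)>0$ for $t>\mu$ and $W'(\mu)=0$), one gets $u\le\mu$; then since $u$ is not identically $\mu$ (it vanishes on $\{x_1=0\}$), the strong maximum principle gives $u<\mu$ strictly. Next I would invoke the ``ballooning'' machinery: exactly as in the proof of Proposition \ref{propacard} and its use in Theorem \ref{thmfarmine}, for every $R>0$ and every ball $B_R(Q)\subset\mathbb{R}^n_-$ one has $u_R(x-Q)<u(x)$ on $B_R(Q)$, where $u_R$ solves (\ref{eqgidasEq}) with (\ref{eqmaxball}) (the family of such minimizers is provided by Lemma \ref{lem1}, and the sweeping works because $W'(0)=0$ so the zero-extension of $u_R$ is a lower solution). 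Since $u>0$ throughout $\mathbb{R}^n_-$, I can slide $B_R(Q)$ so that it becomes tangent to $\{x_1=0\}$ at a chosen point of the boundary, keeping the inequality.

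The key step is then to produce a boundary point where the gradient bound (\ref{eqmodicafarina}) is saturated. Fix the origin on $\{x_1=0\}$; sliding gives $u(x_1,0,\dots,0)>u_R(R-x_1)$ for $x_1\in(-R,0)$ with $u(\mathbf{0})=u_R(R)=0$, for all $R>0$. By Hopf's boundary point lemma applied to $u-u_R(\cdot-Q)$, $u_{x_1}(\mathbf{0})<u_R'(R)<0$, hence $[u_{x_1}(\mathbf{0})]^2>[u_R'(R)]^2$ for all $R$; letting $R\to\infty$ and using (\ref{eqchen}) (via Lemma \ref{lemhelly}, applicable here since $W$ satisfies (\ref{eqpacardW''}) and (\ref{eqpacardW'})) gives $[u_{x_1}(\mathbf{0})]^2\ge 2W(0)$. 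On the other hand $u$ is bounded with values in $[0,\mu]\subset[-1,1]$-scale, so the Modica gradient estimate $\tfrac12|\nabla u|^2\le W(u)$ holds (it holds for all bounded solutions when $W\ge0$; here one arranges $W\ge0$ on the relevant range, or uses the version of Caffarelli–Garofalo–Segala in \cite{cafamodica}). Since $u(\mathbf{0})=0$, this forces equality $\tfrac12|\nabla u(\mathbf{0})|^2=W(0)$, and by Theorem 5.1 in \cite{cafamodica} (the rigidity case, stated in the excerpt as the second half of the argument of Theorem \ref{thmfarmine}) $u$ must be one-dimensional. The value must then be the heteroclinic $\mathbf{U}$ of (\ref{eqU}), i.e. $u$ depends only on $x_1$ and is monotone.

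The main obstacle I anticipate is technical rather than conceptual: verifying that the Modica-type gradient bound and its rigidity statement apply verbatim in the half-space with Dirichlet data (Theorem 5.1 in \cite{cafamodica} is stated for entire solutions). The clean fix is to reflect $u$ oddly across $\{x_1=0\}$ — since $W$ is assumed to have the relevant symmetry here only implicitly, one should instead argue directly: the one-dimensionality can be extracted purely from the saturation $\tfrac12|\nabla u(\mathbf{0})|^2=W(0)$ at an interior-in-the-closure point together with the structure of the equality case of the $P$-function / Modica estimate, which is a local statement along the level set through $\mathbf{0}$; alternatively, one extends $u$ across the flat boundary as a solution and quotes Theorem \ref{thmfarmine} directly with $P=\mathbf{0}$. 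Either route works; I would present the reflection route as in Remark \ref{remrgidityfarmatano}, checking that oddness of $u$ across $\{x_1=0\}$ combined with the hypotheses on $W$ (which control $W'$ near $0$ symmetrically enough after the standard odd extension of $W'$) yields a genuine entire solution to which Theorem \ref{thmfarmine} applies with hypothesis (\ref{eqfarP}) satisfied at the origin. The remaining assertion — existence and strict monotonicity of such a solution — is immediate from (\ref{eqU}) and (\ref{eqUincr}).
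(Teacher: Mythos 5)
Your proposal is correct and follows essentially the same route as the paper: establish $0<u<\mu$, use ballooning/sliding plus Hopf's lemma and the limit (\ref{eqchen}) to get $u_{x_1}^2\geq 2W(0)$ on $\{x_1=0\}$, then reflect $u$ oddly (and $W$ evenly, which is $C^2$ since $W'(0)=0$) to obtain an entire solution to which the rigidity case of the Modica estimate (Theorem 5.1 in \cite{cafamodica}) applies. The only difference is presentational hedging about whether to reflect or argue directly at the boundary; the paper takes the reflection route you ultimately settle on.
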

\begin{proof}
Firstly, note that, as in Proposition \ref{produ}, it follows that
$0<u<\mu$ in $\mathbb{R}^n_-$ (see also Lemma 2.4 in
\cite{farinaFlat}). Then, arguing as in Theorem \ref{thmfarmine}, we
can show that
\begin{equation}\label{eqmodicaline}
u_{x_1}^2\geq 2 W(0)\ \ \textrm{on}\ \{x_1=0\}.
\end{equation}
Now, let
\[
\tilde{W}(t)=\left\{\begin{array}{cc}
                      W(t), & t\geq 0, \\
                       &  \\
                      W(-t), & t<0,
                    \end{array}
 \right.
 \ \ \textrm{and}\ \ \tilde{u}(x)=\left\{\begin{array}{cc}
                      u(x_1,\cdots,x_n), & x_1\leq 0, \\
                       &  \\
                      -u(-x_1,\cdots,x_n), & x_1>0.
                    \end{array}
                    \right.
\]
Since $W'(0)=0$, it follows that $\tilde{W}\in C^2$. Clearly
$\tilde{u}\in C^1(\mathbb{R}^n)\cap C^2\left(\mathbb{R}^n\backslash
\{x_1=0\}\right)$, and satisfies
\[
\Delta \tilde{u}=\tilde{W}'(\tilde{u})\ \ \textrm{in}\
\mathbb{R}^n\backslash \{x_1=0\}.
\]
In particular, since the righthand side belongs in
$C^\alpha(\mathbb{R}^n)$, $\alpha\in (0,1)$, standard interior
Schauder  estimates (see \cite{Gilbarg-Trudinger}) tell us that
$\tilde{u}\in C^{2+\alpha}(\mathbb{R}^n)$.  Hence, we infer that
$\tilde{u}$ is a classical bounded solution to the above equation.
Moreover, by its construction $\tilde{u}$ is odd with respect to
$x_1$. It follows, via (\ref{eqmodicaline}), that
\[
|\nabla\tilde{u}|^2\geq 2 W(\tilde{u})\ \ \textrm{on}\ \{x_1=0\}.
\]
Since $\tilde{W}(t)\geq 0$, $t\in \mathbb{R}$, $\tilde{W}\in C^2$,
and vanishes at $t=\pm \mu$, Theorem 5.1 in \cite{cafamodica} yields
that $\tilde{u}$ is one-dimensional. The assertion of the
proposition follows immediately.

The proof of the proposition is complete.
\end{proof}

Similarly, arguing as in the proof of Proposition
\ref{progibbonsPeriodic}, we can show the following proposition.

\begin{pro}\label{prohalfclemMine}
Suppose that $W\in C^2$ satisfies \textbf{(a')} and (\ref{eqW''}).
If $u$ is a solution to (\ref{eqhalfAng}) such that
\begin{equation}\label{eqsweershalfSup}\sup_{\mathbb{R}_-^n}u= \mu,
\end{equation} then $u$ is one-dimensional.
\end{pro}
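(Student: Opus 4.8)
The plan is to mimic the proof of Proposition \ref{progibbonsPeriodic}, using the ``ballooning'' technique from Proposition \ref{propacard} together with the sharp gradient bound (\ref{eqmodicafarina}) and the rigidity result of Caffarelli--Garofalo--Seg\'ala \cite{cafamodica}. Throughout, recall that by Remark \ref{remdegiorgidu}-type arguments (or simply since $W\geq 0$) the solution $u$ satisfies the Modica-type inequality $\frac{1}{2}|\nabla u|^2\leq W(u)$ in $\mathbb{R}^n_-$, so $|\nabla u|$ is bounded; since $W'(\mu)=0$ and $u<\mu$ (by the strong maximum principle applied to $\mu-u$, using that $W'(\mu)=0$), the function $u$ is in fact classical, bounded, and $0<u<\mu$ in $\mathbb{R}^n_-$.

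The first step is to upgrade the hypothesis $\sup_{\mathbb{R}^n_-}u=\mu$ to a \emph{uniform} convergence statement near $x_1=-\infty$. Since $\sup u=\mu$, there is a sequence $A_j=(a_j,a_j')$ with $u(A_j)\to\mu$. If the first coordinates $a_j$ stayed bounded, periodicity-free as we are, we would still need to rule this out: testing against interior estimates, a subsequential limit would be an entire solution $v$ on a half-space (or on $\mathbb{R}^n$) with $0\le v\le\mu$ attaining the value $\mu$ at an interior point, hence $v\equiv\mu$ by the strong maximum principle applied to $\mu-v$ (using $W'(\mu)=0$ and $W''(\mu)$ need not be positive, just $W'\le 0$ to the left of $\mu$, which follows from \textbf{(a')}); but then the translated $u$ near that point is close to $\mu$, which after a diagonal/Harnack argument forces $a_j\to-\infty$ anyway. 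Actually the cleanest route, exactly as in Remark \ref{rempointwise}: apply Harnack's inequality to the nonnegative function $\mu-u$ (which satisfies a linear equation $-\Delta(\mu-u)+c(x)(\mu-u)=0$ with $c\in L^\infty$, by \textbf{(a')} and boundedness), to conclude that $u(A_j)\to\mu$ forces $u\to\mu$ uniformly on compact subsets of the hyperplanes $\{x_1=a_j\}$, and then, since $a_j\to-\infty$, one gets $u\to\mu$ uniformly on $\mathbb{R}^{n-1}$ as $x_1\to-\infty$. I expect this step — converting $\sup=\mu$ into uniform decay at $x_1=-\infty$ — to be the main technical obstacle, because without periodicity one must argue carefully that the sup is not attained ``far out in the $x'$-directions at bounded $x_1$''; the Harnack chaining as in Remark \ref{rempointwise} is the tool.

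With uniform convergence $u(x_1,x')\to\mu$ as $x_1\to-\infty$ in hand, the rest follows the template of Theorem \ref{thmfarmine} and Proposition \ref{progibbonsPeriodic}. Given $R>0$, let $u_R$ be the radially symmetric global minimizer on $B_R$ from Lemma \ref{lem1}, extended by zero. Using the uniform convergence, we can place the ball $B_R(Q)$ with $Q=(-Q_1,0,\dots,0)$, $Q_1>R$ large, so that $u(x)>u_R(0)\ge u_R(x-Q)$ on $B_R(Q)$. Since $u>0$ on all of $\mathbb{R}^n_-=\{x_1<0\}$ but $u=0$ on $\{x_1=0\}$, we slide $B_R(Q)$ in the $x_1$-direction (keeping $R$ fixed), applying Serrin's sweeping principle exactly as in the proof of Proposition \ref{propacard}, until the ball becomes internally tangent to $\{x_1=0\}$ at some point $P$; the tangency point must be a zero of $u$, and since $u>0$ for $x_1<0$ the contact can occur only on $\{x_1=0\}$. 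This yields, after a translation so that $P$ is the origin, $u(x_1,0,\dots,0)>u_R(R-x_1)$ for $x_1\in(-R,0)$ together with $u(\mathbf{0})=u_R(R)=0$, for every $R>0$.

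Finally, Hopf's boundary point lemma applied to $u-u_R(\cdot-Q)$ at the contact point (the two are not identically equal, since $u_R$ vanishes on $\partial B_R(Q)$ while $u>0$ there away from $\{x_1=0\}$) gives $u_{x_1}(\mathbf{0})<u_R'(R)<0$, hence $[u_{x_1}(\mathbf{0})]^2>[u_R'(R)]^2$ for all $R$. Letting $R\to\infty$ (the ``ballooning'' limit) and invoking (\ref{eqchen}) from Lemma \ref{lem1}, which gives $[u_R'(R)]^2\to 2W(0)$, we obtain
\[
[u_{x_1}(\mathbf{0})]^2\geq 2W(0),
\]
and since $u(\mathbf 0)=0$ this says $|\nabla u(\mathbf 0)|^2\geq 2W(u(\mathbf 0))$. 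Combined with the Modica inequality (\ref{eqmodicafarina}), equality holds at $\mathbf 0$, so by the equality case in Theorem 5.1 of \cite{cafamodica} (which applies since $W\in C^2$, $W\geq 0$, and $W$ vanishes at $\mu$ — exactly the setting of \textbf{(a')}, after the harmless extension of $W$), the solution $u$ is one-dimensional. This completes the proof.
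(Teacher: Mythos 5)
Your overall strategy is the one the paper intends: it proves this proposition by reference to Proposition \ref{progibbonsPeriodic}, i.e.\ produce a deep ball on which $u>u_R(0)$, slide it until it is internally tangent to $\{x_1=0\}$, apply Hopf's boundary point lemma together with (\ref{eqchen}) to get $[u_{x_1}]^2\geq 2W(0)$ at the tangency point, and conclude from the equality case of the Modica gradient bound. Two of your steps, however, are not justified as written.

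First, the passage from $\sup_{\mathbb{R}^n_-}u=\mu$ to ``$u\to\mu$ uniformly in $\mathbb{R}^{n-1}$ as $x_1\to-\infty$'' does not follow from Harnack's inequality: Harnack chaining only gives uniform convergence on compact subsets of the hyperplanes, and it was precisely the periodicity in Proposition \ref{progibbonsPeriodic} that upgraded this to all of $\mathbb{R}^{n-1}$. Here no such upgrade is available, and under \textbf{(a')} alone one cannot obtain it by a translation--compactness argument either, since entire solutions with values in $[0,\mu]$ need not be constant without (\ref{eqpacardW'}). Fortunately the full uniform statement is not needed. Your compactness and strong maximum principle argument does correctly produce points $A_j$ with $u(A_j)\to\mu$ and $(A_j)_1\to-\infty$ (a limit attaining $\mu$ at an interior point of the half-space would be identically $\mu$, contradicting the Dirichlet condition); for each $R$, Harnack applied to $\mu-u$ on $B_{2R}(A_j)$ then yields one ball $B_R(A_j)$, compactly contained in $\{x_1<0\}$, on which $u>u_R(0)$. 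Starting the sliding there and moving the center throughout $\{Q_1\leq -R\}$ relocates the tangency point to the origin for every $R$, which is all that the ballooning step requires.

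Second, and more seriously, the inequality $\frac{1}{2}|\nabla u|^2\leq W(u)$ is not automatic for a solution of the Dirichlet problem (\ref{eqhalfAng}) on a half-space: both (\ref{eqmodicafarina}) and the rigidity statement of Theorem 5.1 in \cite{cafamodica} concern \emph{entire} solutions, so ``simply since $W\geq 0$'' does not suffice. To use them one must either reflect $u$ oddly across $\{x_1=0\}$ and extend $W$ evenly --- which requires $W'(0)=0$, a condition \emph{not} guaranteed by \textbf{(a')} --- or invoke the half-space gradient bound of \cite{farinaFlat} or \cite{jangNonli} (cf.\ Remark \ref{remJang}), in which case the equality-case rigidity must also be replaced, for instance by the argument of Proposition \ref{prohalfFlat}: the sliding already yields $u(x)\geq\textbf{U}(-x_1)$, so equality of the normal derivatives at the origin combined with Hopf's boundary point lemma forces $u\equiv\textbf{U}(-x_1)$. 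As written, your appeal to \cite{cafamodica} ``after the harmless extension of $W$'' leaves this step unsupported whenever $W'(0)<0$.
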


\begin{rem}\label{remhalfCSLips}
Noting that most assertions of Lemma \ref{lem1} continue to hold for
$W\in C^{1,1}$, it is not hard to see that Proposition
\ref{prohalfclemMine} holds for $W$ with this regularity. In this
regard, note that the Lipschitz continuity of $W'$ allows for the
strong maximum principle to be applied in the linear equation for
$u-u_R$. A point to be stressed is that the  gradient bound
(\ref{eqmodicafarina}), proven in \cite{cafamodica} under the
assumption $W\in C^2$, was recently generalized, allowing for $W$ to
be $C^{1,1}$, in \cite{farinaAdvMath}. Moreover, in the case where
$W'$ is merely Lipschitz continuous, some care is needed when
reflecting $u$ (see  Corollary 1.3 in \cite{farinaLogTrick}).
\end{rem}

The above proposition was proven originally under the stronger
assumption (\ref{eqgibons-})
%\[u\to \mu\  \textrm{as}\ x_1\to -\infty,\ \textrm{\emph{uniformly} in}\
%\mathbb{R}^{n-1},
%\]
 by Cl\'{e}ment and Sweers in
\cite{clemente}, see Proposition 2.5 therein. They also assumed that
 $W\in C^{2+\alpha}$, $\alpha\in (0,1)$.
Their approach was based on comparison arguments with suitable
one-dimensional upper and lower solutions and shooting arguments.
Subsequently, it was extended to more general equations in
\cite{berestProcHalf}, by means of the sliding method (see also
Theorem 1.4 in \cite{berestyckiCaffarelli} and Theorem 4.7 in
\cite{ni}), assuming merely that (\ref{eqsweershalfSup}) holds and
that $W'$ is Lipschitz continuous.

So far, the arguments in this subsection have been based in
reflecting $u$ oddly across $\{x_1=0\}$, which is possible since
$W'(0)=0$. If $W'(0)<0$, this ceases to be an option. Nevertheless,
taking advantage of a recent result of \cite{farinaFlat} which
extends the gradient bound (\ref{eqmodicafarina}) to the case of
half-spaces, under the additional assumptions $W'(0)\leq 0$ and
$u\geq 0$, we can show:

\begin{pro}\label{prohalfFlat}
The assertions of Propositions \ref{prohalfMine},
\ref{prohalfclemMine} remain true if $W'(0)<0$ and
(\ref{eqpacardW'}) hold, together with $W'(t)>0$, $t>\mu$, for
Proposition \ref{prohalfMine}; relation (\ref{eqW''}) for
Proposition \ref{prohalfclemMine}.
\end{pro}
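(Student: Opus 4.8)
The plan is to imitate the proofs of Propositions \ref{prohalfMine} and \ref{prohalfclemMine}, with the crucial modification that the odd reflection across $\{x_1=0\}$ is no longer available (since $W'(0)<0$), and to replace it by a direct appeal to the half-space version of the Modica gradient bound that was recently established in \cite{farinaFlat}. Concretely, assume first the hypotheses corresponding to Proposition \ref{prohalfMine}, i.e. $W\in C^3$ with $W'(0)<0$, (\ref{eqpacardW'}), $W'(\mu)=0$, and $W'(t)>0$ for $t>\mu$ (the condition (\ref{eqW''}) being used only to guarantee uniqueness of the large-ball minimizers through Remark \ref{remuniqDancer}, if needed). As in Proposition \ref{produ}, a parabolic-comparison argument gives $0<u<\mu$ in $\mathbb{R}^n_-$, so in particular $u\geq 0$ and the hypotheses $W'(0)\leq 0$, $u\geq 0$ required by the main result of \cite{farinaFlat} are met; hence the gradient bound
\[
\tfrac12|\nabla u|^2\leq W(u)\ \ \textrm{in}\ \overline{\mathbb{R}^n_-}
\]
holds.

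The next step is to show that equality is attained somewhere on $\{x_1=0\}$. This is exactly the ``ballooning'' argument of Theorem \ref{thmfarmine}, which goes through verbatim once $W'(0)<0$ (it is in fact simpler, since now the minimizers $u_R$ of Lemma \ref{lem1} exist for all $R>0$ with $R_\tau\to 0$, so there is no lower threshold $R_c$ to worry about). Briefly: by the proof of Proposition \ref{propacard} and Remark \ref{rempacardBrezOz}, for any ball $B_R(Q)\subset\mathbb{R}^n_-$ one has $u_R(x-Q)<u(x)$ on $B_R(Q)$; sliding $Q$ until $\partial B_R(Q)$ is tangent to $\{x_1=0\}$ at the (relabelled) origin $P$ where $u(P)=0$, and using Hopf's boundary point lemma in the equation for $u-u_R$, one gets $\bigl[u_{x_1}(P)\bigr]^2>\bigl[u_R'(R)\bigr]^2$; letting $R\to\infty$ and invoking (\ref{eqchen}) (via Lemma \ref{lemhelly}) yields $\bigl[u_{x_1}(P)\bigr]^2\geq 2W(0)$, i.e. $\tfrac12|\nabla u(P)|^2\geq W(0)=W(u(P))$. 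Combined with the gradient bound this forces equality at $P$.

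Finally one invokes the rigidity half of the result: since $\tilde W:=W$ is nonnegative on $[0,\mu]$ (the relevant range, as $0<u<\mu$), $C^2$, and vanishes at $t=\mu$, the equality case in the Modica bound — Theorem 5.1 in \cite{cafamodica}, in the half-space form as exploited in \cite{farinaFlat} — gives that $u$ is one-dimensional (and then monotone, matching $\textbf{U}$ from (\ref{eqU})). For the variant corresponding to Proposition \ref{prohalfclemMine}, one replaces the hypothesis ``$u(P)=0$ for some $P\in\{x_1=0\}$ with $u>0$ beyond it'' by the hypothesis (\ref{eqsweershalfSup}), and one argues as in Proposition \ref{progibbonsPeriodic}: from $\sup_{\mathbb{R}^n_-}u=\mu$ and Harnack's inequality (Remark \ref{rempointwise}) one extracts uniform convergence $u\to\mu$ along a sequence of slices, which lets one center large balls $B_R(Q)$ so that $u>u_R(0)\geq u_R(x-Q)$; sliding towards $\{x_1=0\}$ and ballooning as above again produces a point where equality holds in the (half-space) gradient bound, whence one-dimensionality.

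The main obstacle — and the only genuinely new ingredient relative to Propositions \ref{prohalfMine}--\ref{prohalfclemMine} — is that the clean symmetrization trick ($\tilde u$ odd across $\{x_1=0\}$, reducing everything to the entire-space Theorem 5.1 of \cite{cafamodica}) is unavailable when $W'(0)<0$: the odd extension is only $C^1$ and its equation has a discontinuous right-hand side with no regularizing cancellation, so $\tilde u$ need not be a classical entire solution and the entire-space rigidity theorem does not directly apply. The fix is precisely to use the half-space gradient bound and half-space equality-case rigidity from \cite{farinaFlat} in place of the entire-space versions; once one grants those results the argument is a routine transcription of the ballooning/sliding scheme already developed in this section. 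One should also double-check that the sliding method near $\{x_1=0\}$ is legitimate even though $\max\{u_R,0\}$ (after extension by zero) is only a weak lower solution — this is handled exactly as in the first proof of Theorem \ref{thmmine} and in Remark \ref{remduma}, using the maximum principle for small domains at the touching point on the boundary.
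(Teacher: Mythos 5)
The first two steps of your argument agree with the paper: use Theorem 1.4 of \cite{farinaFlat} for the half-space gradient bound $|\nabla u|^2\leq 2W(u)$, and use the ballooning scheme of Theorem \ref{thmfarmine}/Proposition \ref{propacard} to obtain $u_{x_1}^2\geq 2W(0)$ on $\{x_1=0\}$, so that equality in the gradient bound is forced at boundary points of $\mathbb{R}^n_-$, giving $u_{x_1}=-\sqrt{2W(0)}$ and $u_{x_i}=0$ ($i\geq 2$) there.

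Your last step, however, contains a real gap. You conclude by invoking ``the equality case in the Modica bound \dots in the half-space form as exploited in \cite{farinaFlat}.'' No such half-space equality-case rigidity statement is available in \cite{farinaFlat}: that paper establishes only the inequality $|\nabla u|^2\leq 2W(u)$. The paper you are reading says this explicitly and adds that the subsequent paper \cite{farinaAdvMath} handles the equality case only at an \emph{interior} point which is moreover \emph{noncritical} for $u$ --- whereas your touching point $P=(0,P')$ lies on the boundary $\{x_1=0\}$, so even that refinement does not apply. Thus your third paragraph cites a result that does not exist, and the conclusion does not follow.

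The paper circumvents this by trading the equality-case rigidity theorem for a direct comparison argument. Instead of only recording $u_{x_1}(P)^2\geq 2W(0)$ at the touching point, one fixes a slice $x'\in\mathbb{R}^{n-1}$, takes the analog of (\ref{eqflat2}) on the strip $[-L,0]\times\mathbb{R}^{n-1}$, and passes $R\to\infty$ via (\ref{eqUR})--(\ref{eqclaim}) to obtain the \emph{global} lower barrier (\ref{eqslice}): $u(x)\geq \textbf{U}(-x_1)$ in $\mathbb{R}^n_-$, where $\textbf{U}$ is the one-dimensional profile (\ref{eqU}). Since both $u$ and $\textbf{U}(-x_1)$ solve (\ref{eqhalfAng}), their difference satisfies a linear elliptic equation, is nonnegative, and vanishes on $\{x_1=0\}$. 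The strong maximum principle and Hopf's boundary point lemma then give a dichotomy: either $u_{x_1}<-\textbf{U}'(0)=-\sqrt{2W(0)}$ everywhere on $\{x_1=0\}$, or $u\equiv\textbf{U}(-x_1)$. The first alternative contradicts (\ref{eqflat}), so the second holds, and one-dimensionality follows without any appeal to equality-case rigidity for the gradient bound. If you replace the last paragraph of your proposal by this Hopf-lemma comparison with $\textbf{U}(-x_1)$, the proof becomes complete and in fact coincides with the paper's.
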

\begin{proof}
As in Proposition \ref{prohalfMine} (recalling that Proposition
\ref{propacard} works for such $W$), we find that
(\ref{eqmodicaline}) holds. On the other side, by Theorem 1.4 in
\cite{farinaFlat}, we have
\begin{equation}\label{eqflatlater}
|\nabla u|^2\leq 2W(u)\ \ \textrm{in} \ {\mathbb{R}_-^{n-1}}.
\end{equation}
In particular, from (\ref{eqmodicaline}) and the above relation with
$x_1=0$ (recalling that $u=0$ there), we obtain that
\begin{equation}\label{eqflat}
u_{x_1}=-\sqrt{2W(0)},\ u_{x_i}=0,\ i=2,\cdots,n,\ \textrm{on}\ \{
x_1=0\}.
\end{equation}
However, it has \emph{not} been shown in \cite{farinaFlat} that, if
equality in (\ref{eqflatlater}) is achieved at some point on
$\{x_1\leq 0\}$, the solution is one-dimensional (actually, this was
shown in the subsequent paper \cite{farinaAdvMath} in the case where
equality is achieved at an interior point which in addition is
noncritical for $u$). Rather than proceeding in this direction, we
will argue as follows. From the relation that corresponds to
(\ref{eqflat2}), recalling (\ref{eqUR}) and (\ref{eqclaim}), we get
\begin{equation}\label{eqslice}
u(x)\geq \textbf{U}(-x_1)\ \ \textrm{in}\  \mathbb{R}^{n-1}_-,
\end{equation}
where $\textbf{U}$ is as in (\ref{eqU}) (in the analog of
(\ref{eqfar2}), when the ball is tangent at $(0,x')$, we consider
any strip $[-L,0]\times \mathbb{R}^{n-1}$, look only at the
$x'$-slice, and let $R\to \infty$). On the other hand, since both
$u(x)$ and $\textbf{U}(-x_1)$ solve (\ref{eqhalfAng}), by the strong
maximum principle and Hopf's boundary point lemma (applied to the
linear equation for $u-\textbf{U}$), we deduce that either
$u_{x_1}<-\textbf{U}'(0)=-\sqrt{2W(0)}$ on $\{x_1=0\}$ or
$u(x)\equiv \textbf{U}(-x_1)$ in $\mathbb{R}^{n-1}_-$. In view of
(\ref{eqflat}), we conclude that the latter scenario holds.

The proof of the proposition is complete.
\end{proof}

As noted in \cite{berestyckiCaffarelli}, one-dimensional symmetry
results for (\ref{eqhalfAng}) can be thought of as extensions of the
Gidas, Ni and Nirenberg \cite{gidas} symmetry result for spheres,
when the radius of the sphere increases to infinity while a point on
the boundary is being kept fixed. This is essentially what we do in
Theorem \ref{thmfarmine}. This procedure that we apply can be
appropriately named ``method of expanding spheres''.

\begin{rem}\label{remJang}
After this subsection was written, we found the paper
\cite{jangNonli}, where it was shown that solutions of
(\ref{eqhalfAng}), having values in $(0,\mu)$, with $W'$ Lipschitz
and $W(t)>W(\mu)=0$ for $t\in [0,\mu)$, satisfy the gradient bound
(\ref{eqflatlater}) on the hyperplane $\{x_1=0\}$, which is enough
for our purposes. Actually, the approach of \cite{jangNonli}, for
establishing the one-dimensional symmetry of solutions to
(\ref{eqhalfAng}), is similar in spirit to ours.

Armed with this information, it is not hard to see that the
assertion of Proposition \ref{prohalfclemMine} continues to hold
solely under the aforementioned assumptions (if $W'(0)>0$, we have
to use $u_R$ as in Lemma \ref{lem1Sign}). Hence, we can essentially
recover the general result of \cite{berestProcHalf}.
\end{rem}

\subsection{A rigidity result}
The following rigidity result was proven in Theorem 4.2 of
\cite{farinaAdvMath}, assuming additionally that $W'(t)\leq 0$,
$W'(t)+W'(-t)\leq 0$, $t\in (0,\mu)$, and that $W'$ is
non-decreasing near $\mu$. The second condition is clearly satisfied
if $W'$ is odd. In this regard, it might be useful to recall our
discussion preceding Theorem \ref{thmfarmine}, related to
\cite{duMa} where the authors also assumed additionally that $W'$ is
odd. This is not a coincidence, since both \cite{duMa} and
\cite{farinaAdvMath} employ modifications of the method of moving
planes. In contrast, making use of the arguments from Theorem
\ref{thmfarmine}, we will see that the aforementioned assumptions on
$W$ are not needed.
\begin{thm}\label{thmrigidity}
Suppose that $W\in C^{1,1}(\mathbb{R})$, $W(t)\geq 0$ for
$t\in\mathbb{R}$, $W(\mu)=0$, and $W(t)>0$ for $t\in (\mu_-,\mu)$.
If $u\in C^2(\mathbb{R}^n)$ satisfies (\ref{eqentire}), $\mu_-\leq u
\leq \mu$ in $\mathbb{R}^n$, and
\begin{equation}\label{eqrigunif}
u\to \mu, \ \textrm{uniformly\ in}\ \mathbb{R}^{n-1},\ \textrm{as}\
x_1\to \pm \infty,
\end{equation}
then
\[
u\equiv\mu.
\]
\end{thm}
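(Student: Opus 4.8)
\textbf{Proof proposal for Theorem \ref{thmrigidity}.}
The plan is to reduce to the one-dimensional symmetry result of Theorem \ref{thmfarmine} (or more precisely, to the ingredients behind it). First I would observe that, under the uniform convergence \eqref{eqrigunif} as $x_1\to -\infty$, there must be a point where $u$ touches the level $0$ or else $u$ stays away from $\mu$ in a way we can exploit. More carefully: if $u\equiv\mu$ we are done, so assume not; then by the strong maximum principle $\mu_-\le u<\mu$ strictly in $\mathbb{R}^n$, and since $W'(\mu_-)\le 0$ (this follows since $W\ge 0$ with $W(\mu_-)\ge 0$... actually one only knows $W\ge 0$, so $\mu_-$ need not be a critical point; here the hypothesis is $\mu_-\le u\le\mu$ pointwise, so I treat $\mu_-$ just as a lower barrier). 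The key point is the gradient bound \eqref{eqmodicafarina}: since $W\in C^{1,1}$ and $W\ge 0$, by \cite{farinaAdvMath} every bounded entire solution satisfies $\tfrac12|\nabla u|^2\le W(u)$ on $\mathbb{R}^n$, and equality at one point forces one-dimensionality (Theorem 5.1 in \cite{cafamodica}, extended to $C^{1,1}$ in \cite{farinaAdvMath}).

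So the real task is to produce a point where $\tfrac12|\nabla u|^2=W(u)$. I would mimic the ballooning-and-sliding argument of Theorem \ref{thmfarmine}, but now using \eqref{eqrigunif} in \emph{both} directions $x_1\to\pm\infty$. The idea: because of the uniform convergence, for any large $R$ I can place a ball $B_R(Q)$ deep in the region $\{x_1<-T_R\}$ where $u$ is close to $\mu$, and a global minimizer $u_R$ of Lemma \ref{lem1} (for a potential agreeing with $W$ on $[\mu_-,\mu]$, extended appropriately) satisfies $u_R(\,\cdot-Q)\le u$ there by the ballooning argument of Proposition \ref{propacard}. Now I slide the ball in the $+x_1$ direction. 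If $u>0$ (more precisely $u>\mu_-$, with a suitable shift) throughout, sliding all the way across and using \eqref{eqrigunif} at $+\infty$ as well would force $u\ge u_R(0)$ everywhere, hence $u\equiv\mu$ after letting $R\to\infty$ via \eqref{eqestimRect}. Otherwise, the sliding ball first touches $u$ at some finite point $x_*$; at that contact point, comparing $u$ with $u_R(\,\cdot-Q)$ and using Hopf's lemma / the strong maximum principle gives $|\nabla u(x_*)|^2\ge [u_R'(R)]^2$ if $x_*$ is on the boundary sphere, or an interior-touching contradiction. Letting $R\to\infty$ and using \eqref{eqchen} ($[u_R'(R)]^2\to 2W(0)$) — or rather the analogue $[u_R'(R)]^2\to 2W(\text{value at sphere})$ — would yield $\tfrac12|\nabla u(x_*)|^2\ge W(u(x_*))$, which combined with \eqref{eqmodicafarina} gives equality, hence one-dimensionality.

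Once $u$ is known to be one-dimensional, $u=u(x_1)$ solves the ODE $u''=W'(u)$ with $u\to\mu$ as $x_1\to\pm\infty$ (uniform convergence becomes genuine convergence of the profile). The conserved energy $e=\tfrac12(u')^2-W(u)$ is constant; since $u\to\mu$ and $W(\mu)=0$ at both ends, and $u'\to 0$ at both ends (a bounded monotone-or-oscillating solution approaching an equilibrium), we get $e=0$, so $\tfrac12(u')^2=W(u)\ge 0$; but then $u$ is a heteroclinic-or-homoclinic orbit to $\mu$. Since $W>0$ on $(\mu_-,\mu)$, the only bounded orbit with $u(\pm\infty)=\mu$ that stays in $[\mu_-,\mu]$ is the constant $u\equiv\mu$: any nonconstant orbit leaving $\mu$ would have to return, but $W>0$ strictly below $\mu$ prevents a second rest point in $(\mu_-,\mu]$, forcing the orbit to exit $[\mu_-,\mu]$, contradicting the pointwise bound. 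Hence $u\equiv\mu$.

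\textbf{Main obstacle.} The delicate step is the sliding/ballooning in the presence of \emph{two-sided} uniform limits: I need to make sure the sliding ball, as it moves from $x_1\to-\infty$ to $x_1\to+\infty$, cannot "escape to infinity" without a contact point, and that the potential-modification (extending $W$ off $[\mu_-,\mu]$ to satisfy \textbf{(a')}) is compatible with $u$ taking values only in $[\mu_-,\mu]$ so that $u$ genuinely solves the modified equation on the relevant range — here the hypothesis $\mu_-\le u\le\mu$ is exactly what makes this work, and one must also handle the case $W'(0)>0$ or $W'(0)<0$ by choosing $u_R$ from Lemma \ref{lem1} or Lemma \ref{lem1Sign} accordingly. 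A secondary subtlety is justifying $u'\to 0$ at $\pm\infty$ for the one-dimensional profile, which follows from elliptic (ODE) estimates and the uniform convergence, but should be stated carefully; one may instead invoke \eqref{eqmodicaRadial}-type bounds or simply the fact that a bounded solution of an autonomous second-order ODE converging to a value must have vanishing derivative there.
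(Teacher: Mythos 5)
Your overall architecture does match the paper's: slide translated radial minimizers in from both $x_1$-infinities, produce a contact where the normal derivative saturates the Modica bound, conclude one-dimensionality of the touched profile, and finish by observing that no nonconstant one-dimensional solution in $[\mu_-,\mu]$ can tend to $\mu$ at both ends (your phase-plane argument is a legitimate variant of the paper's contradiction with the lower bound at $x_1\geq\ell_+$). However, the two points you flag as ``obstacles'' are precisely the substance of the proof, and neither is resolved in your sketch. The first is the existence of a finite first-touching point $x_*$: in $\mathbb{R}^n$ the set where $u$ drops to the comparison level near the critical abscissa $\ell_-=\sup\{s: u>\mu-\epsilon \text{ for } x_1<s\}$ may escape to infinity in the tangential variables, so the infimum of $u$ minus the sliding subsolution need not be attained and ``the sliding ball first touches $u$ at some finite point'' is unjustified. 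The paper closes this by taking points $((x_1)_j,(x')_j)$ with $(x_1)_j\to\ell_-$ and $u\leq\mu-\epsilon$ there, translating by $(x')_j$ (the equation, the hypothesis \eqref{eqrigunif}, and the sliding lower bounds $u\geq U(\ell_--x_1)$, $u\geq U(x_1-\ell_+)$ are all invariant/uniform in $x'$), and passing to a $C^2_{loc}$ limit $v_\infty$; the touching, the Hopf-type derivative inequality, the identification with the one-dimensional profile, and the final contradiction all take place for $v_\infty$, not for $u$ itself.

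The second gap is the choice of comparison level. The paper does not use minimizers with boundary value $0$ or $\mu_-$; it first extracts, from $W=\int_\mu^{\cdot}W'$ and $W>0$ on $(\mu_-,\mu)$, a sequence $\epsilon_j\to 0^+$ with $W'(\mu-\epsilon_j)\leq 0$, and reduces the theorem to showing $u\geq\mu-\epsilon_j$ for every $j$. This level selection is what (i) makes the minimizer extended by the constant $\mu-\epsilon_j$ a weak lower solution, and (ii) confines the sliding to $\{x_1<\ell_-\}$, where by the definition of $\ell_-$ one has $u>\mu-\epsilon_j$, so no touching can occur on the boundary sphere. With level $\mu_-$ the constant need not be a subsolution (nothing forces $W'(\mu_-)\leq 0$ when $W(\mu_-)>0$), and your alternative branch ``if $u>\mu_-$ everywhere, slide across and conclude $u\geq u_R(0)$ everywhere'' only controls $u$ on the tube swept by the ball, not on all of $\mathbb{R}^n$. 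A minor further caveat: since the theorem is stated for $W\in C^{1,1}$, the equality case of Theorem 5.1 in \cite{cafamodica} is not directly available; the paper instead identifies $v_\infty$ with $U(\ell_--x_1)$ via the strong maximum principle and unique continuation applied to the difference of the two solutions.
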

\begin{proof}
Since
\[
W(t)=\int_{\mu}^{t}W'(s)ds,
\]
and $W(t)>0$, $t\in (\mu_-,\mu)$, there exists a sequence
\[
\epsilon_j\to 0^+ \ \textrm{such\ that}\ W'(\mu-\epsilon_j)\leq 0.
\]
For every $j\gg 1$,
 we intend to
show that
\[
u\geq \mu-\epsilon_j \ \ \textrm{in}\ \mathbb{R}^n,
\]
which clearly implies the assertion of the proposition. To this end,
we argue by contradiction, namely assume that
\begin{equation}\label{eqrig3}
u(x_0)<\mu-\epsilon_j\ \textrm{for\ some}\ x_0\in \mathbb{R}^n.
\end{equation}
In the sequel, for notational convenience, we will drop the
subscript $j$ from $\epsilon$. By virtue of (\ref{eqrigunif}), we
can define
\[
\ell_-=\sup \{s\in \mathbb{R}: \ u(x_1,x')>\mu-\epsilon\
\textrm{if}\ x_1<s\ \textrm{and}\ x'\in \mathbb{R}^{n-1}\},
\]
and
\[
\ell_+=\inf \{s\in \mathbb{R}: \ u(x_1,x')>\mu-\epsilon\
\textrm{if}\ x_1>s\ \textrm{and}\ x'\in \mathbb{R}^{n-1}\}.
\]
In view of  (\ref{eqrig3}), we get that $\ell_\pm \in \mathbb{R}$
and $\ell_-<\ell_+$.

Let $u_R$ be an energy minimizing solution to the following problem:
\[
\Delta u_R=W'(u_R),\ \mu-\epsilon<u_R<\mu\ \textrm{in}\ B_R;\
u_R=\mu-\epsilon\ \textrm{on}\ \partial B_R,
\]
as provided by Lemma \ref{lem1}. In fact, it is easy to see that
most assertions of Lemma \ref{lem1} continue to hold if $W\in
C^{1,1}$ instead of $C^2$, and $u_R=m$ on $\partial B_R$ with $m\in
(0,\mu)$ such that $W'(m)\leq 0$ (with the obvious modifications).
By the uniform asymptotic behavior of $u$ as $x_1\to -\infty$, we
deduce that, given $R>0$, there exists $Q_1>R-\ell_-$ such that
\[
u(x)>u_R(0)\geq u_R(x-Q),\ x\in B_R(Q),\ \textrm{where}\
Q=(-Q_1,0,\cdots,0).
\]
Since $u>\mu-\epsilon$ if $x_1<\ell_-$, and $W'(\mu-\epsilon)\leq
0$, we can slide the ball $B_R(Q)$ around in $\{x_1<\ell_-\}\times
\mathbb{R}^{n-1}$ (as usual), to arrive at
\[
u(x_1,x')>u_R(x_1+R-\ell_-),\ \ x_1\in (\ell_--R, \ell_-),\ x'\in
\mathbb{R}^{n-1},
\]
using the notation $u_R(|x|)=u_R(x)$, $x\in B_R$ (with the obvious
meaning). Making use of the obvious analog of (\ref{eqclaim})
(loosely speaking, letting $R\to \infty$ in the above relation), we
obtain that
\[
u(x_1,x')\geq U(\ell_--x_1),\ \ x_1\leq \ell_-,\ x'\in
\mathbb{R}^{n-1},
\]
where here $U\in C^2[0,\infty)$ denotes the unique classical
solution to
\[
U''=W'(U),\ s>0;\ U(0)=\mu-\epsilon,\ \lim_{s\to \infty}U(s)=\mu,
\]
we note that $U'>0$. Similarly, we have
\[
u(x_1,x')\geq U(x_1-\ell_+),\ \ x_1\geq \ell_+,\ x'\in
\mathbb{R}^{n-1}.
\]

On the other side, from the definition of $\ell_-$, there exist
sequences $(x_1)_j\geq\ell_-$ and $(x')_j\in \mathbb{R}^{n-1}$ such
that
\begin{equation}\label{eqgibbonsxj}
(x_1)_j\to \ell_-\ \textrm{and}\ u\left((x_1)_j, (x')_j \right)\leq
\mu-\epsilon.
\end{equation}
Let
\[
v_j(x_1,x')=u\left(x_1,x'+(x')_j \right), \ \ x_1\in \mathbb{R},\
x'\in \mathbb{R}^{n-1}.
\]
Each $v_j$ satisfies (\ref{eqentire}), $\mu_-\leq v_j\leq \mu$,
\[
v_j\left((x_1)_j,0 \right)\leq \mu-\epsilon,\ \textrm{where}\
(x_1)_j\to \ell_-,
\]
\[
v_j(x_1,x')\geq U(\ell_--x_1)\ \textrm{if}\ x_1\leq \ell_-,\ x'\in
\mathbb{R}^{n-1};\ \ v_j(x_1,x')\geq U(x_1-\ell_+)\ \textrm{if}\
x_1\geq \ell_+,\ x'\in \mathbb{R}^{n-1}.
\]
Making use of standard elliptic regularity  estimates
\cite{Gilbarg-Trudinger}, and the usual diagonal-compactness
argument, passing to a subsequence, we find that $v_j\to v_\infty$
in $C^2_{loc}(\mathbb{R}^n)$, where $v_\infty$ satisfies
(\ref{eqentire}), $\mu_-\leq v_\infty \leq \mu$,
$v_\infty(\ell_-,0)\leq \mu-\epsilon$, and
\begin{equation}\label{eqgibbonsgeq}
v_\infty(x_1,x')\geq U(\ell_--x_1)\ \textrm{if}\ x_1\leq \ell_-,\
x'\in \mathbb{R}^{n-1};\ \ v_\infty(x_1,x')\geq U(x_1-\ell_+)\
\textrm{if}\ x_1\geq \ell_+,\ x'\in \mathbb{R}^{n-1}.
\end{equation}
It follows that $v_\infty(\ell_-,0)= \mu-\epsilon$ and
\[
\partial_{x_1}v_\infty(\ell_-,0)\leq -U'(0)=-\sqrt{2W\left(\mu-\epsilon
\right)}=-\sqrt{2W\left(v_\infty(\ell_-,0) \right)}.
\]
Similarly to the proof of Proposition \ref{prohalfFlat}, by the
strong maximum principle, and the unique continuation principle
\cite{hormander}, we infer that $v_\infty\equiv U(\ell_--x_1)$ (if
$W\in C^2$, we can apply Theorem 5.1 in \cite{cafamodica}, as we did
in Theorem \ref{thmfarmine}). However, this contradicts the second
relation in (\ref{eqgibbonsgeq}).

The proof of the proposition is complete.
\end{proof}
\begin{rem}
In light of the multiple-end solutions to the equation $\Delta
u+u-u^3=0$ in the plane, constructed recently in \cite{delpinoJFA10}
(see also \cite{guiEven,kowalczykliupacard}), we infer that the
uniform assumption in (\ref{eqrigunif}) is \emph{necessary} in
Theorem \ref{thmrigidity}.
\end{rem}

\subsection{A new proof of Gibbons' conjecture in two dimensions}
In this subsection, exploiting further the approach that we have
developed in the previous ones, as well as the Hamiltonian structure
of the equation, we will give a totally  new proof of the well known
Gibbons' conjecture,  mentioned in the beginning of this section, in
two dimensions.

\begin{thm}
Suppose that $W\in C^2(\mathbb{R})$, $W(t)\geq 0$ for
$t\in\mathbb{R}$,  $W(t)>0$ for $t\in (\mu_-,\mu_+)$,
$W(\mu_\pm)=0$, and $W''(\mu_\pm)>0$. If $u\in C^2(\mathbb{R}^2)$
satisfies (\ref{eqentire}), $\mu_-\leq u \leq \mu_+$ in
$\mathbb{R}^2$, and
\begin{equation}\label{eqrigunif}
u\to \mu_\pm, \ \textrm{uniformly\ in}\ \mathbb{R},\ \textrm{as}\
x_1\to \pm \infty,
\end{equation}
then $u(x)=U(x_1)$, where $U$ satisfies
\begin{equation}\label{eqgibbonsU}U''=W'(U), \ x_1\in \mathbb{R},\ \
U(x_1)\to  \mu_\pm \ \textrm{as}\ x_1\to \pm \infty.
\end{equation}
\begin{proof}
By our assumptions on $W$, there exists a $\mu_0\in (\mu_-,\mu_+)$
such that $W'(\mu_0)=0$ and $W''(\mu_0)\leq 0$.

By our assumptions on $u$, there exist real numbers
$\ell_-\leq\ell_+$ such that
\[
\ell_-=\sup \{s\in \mathbb{R}: \ u(x_1,x')<\mu_0\ \textrm{if}\
x_1<s\ \textrm{and}\ x_2\in \mathbb{R}\},
\]
and
\[
\ell_+=\inf \{s\in \mathbb{R}: \ u(x_1,x_2)>\mu_0\ \textrm{if}\
x_1>s\ \textrm{and}\ x_2\in \mathbb{R}\}.
\]
By the strong maximum principle, we deduce that $u<\mu_0$ if
$x_1<\ell_-$ and $u>\mu_0$ if $x_1>\ell_+$. By virtue of Theorem
\ref{prohalfclemMine}, in order to conclude, it suffices to show
that $\ell_-=\ell_+$. We observe that, thanks to Theorem
\ref{thmfarmine}, the assertion of the current theorem follows if
there exists $x_2 \in \mathbb{R}$ such that $u(\ell_-,x_2)=\mu_0$ or
$u(\ell_+,x_2)=\mu_0$. So, let us assume that there exist sequences
$\pm(x_\pm)_j>\pm\ell_\pm$ and $(x_{2\pm})_j\in \mathbb{R}$ such
that
\[
(x_\pm)_j\to \ell_\pm,\ \ u\left((x_\pm)_j, (x_{2\pm})_j \right)\to
\mu_0, \ \ \textrm{and}\ \ |(x_{2\pm})_j|\to \infty\ \ \textrm{as}\
j\to \infty.
\]

Similarly to Theorem \ref{thmrigidity}, passing to a subsequence, we
find that
\begin{equation}\label{eqgibbonslimes}
u\left(x_1,(x_{2\pm} )_j\right)\to U(x_1-\ell_\pm)\ \ \textrm{in}\
C_{loc}^2(\mathbb{R}),\ \ \textrm{as}\ j\to \infty,
\end{equation}
for some $U$  that satisfies (\ref{eqgibbonsU}) and $U(0)=0$.

Since $W''(\mu_\pm)>0$, it is standard to show that there exist
constants $c,C>0$ such that
\begin{equation}\label{eqgibbonsExp}
|u(x)-\mu_\pm|+\left|\partial_{x_1}^k \partial_{x_2}^\alpha u
\right|\leq Ce^{-c|x_1|},\ \ \forall\ x=(x_1,x_2)\in \mathbb{R}^2,
\end{equation}
for $k=0,1,2,3$ and multi-indexes $\alpha$ with $|\alpha|\leq 3$
such  that $k+|\alpha|>0$. So, similarly to
\cite{buscaFelmer,guiMalchiodi} (see also \cite{weiAxisymmetry}), we
can justify integration by parts and establish the following
identity of ``Hamiltonian'' type:
\[
\partial_{x_2}g(u;x_2)=-h(u;x_2)\ \ \textrm{for\ every}\ x_2\in
\mathbb{R},
\]
 where
\[
g(u;x_2)=\int_{\mathbb{R}}^{}x_1\left\{\frac{1}{2}u_{x_1}^2-\frac{1}{2}u_{x_2}^2+W(u)
\right\} dx_1 \ \ \textrm{and}\ \
h(u;x_2)=\int_{\mathbb{R}}^{}u_{x_1}u_{x_2}dx_1.
\]
Indeed, with the obvious notation, we have
\[
\partial_{x_2}g=\int_{\mathbb{R}}^{}x_1\left\{u_{x_1}u_{x_1x_2}-u_{x_2x_2}u_{x_2}+W'(u)u_{x_2}
\right\}dx_1.
\]
We find that
\[
\begin{array}{rcl}
  \int_{\mathbb{R}}^{}x_1u_{x_1}u_{x_1x_2}dx_1 & = & \int_{\mathbb{R}}^{}\left\{\partial_{x_1} \left(x_1
u_{x_1}u_{x_2}
\right)-x_1u_{x_1x_1}u_{x_2}-u_{x_1}u_{x_2}\right\}dx_1 \\
    &   &   \\
    & =  &
    \int_{\mathbb{R}}^{}\left\{-x_1u_{x_1x_1}u_{x_2}-u_{x_1}u_{x_2}\right\}dx_1.
\end{array}
\]
It follows that
\[\begin{array}{rcl}
    \partial_{x_2}g & = & \int_{\mathbb{R}}^{}x_1\left\{-u_{x_1x_1}-u_{x_2x_2}+W'(u)
\right\}u_{x_2}dx_1-\int_{\mathbb{R}}^{}u_{x_1}u_{x_2}dx_1
 \\
     &   &   \\
     & = & -\int_{\mathbb{R}}^{}u_{x_1}u_{x_2}dx_1,
  \end{array}
\]
as claimed. Next, motivated from \cite{buscaFelmer},
\cite{guiMalchiodi}, we will show that $g$ is a constant. To this
end, we claim that
\[
\partial_{x_2}h(u;x_2)=0\ \ \forall \ x_2\in \mathbb{R}.
\]
%To see this, we write $h=(h_2,\cdots,h_n)$, where
%\[
%h_i(u;x')=\int_{\mathbb{R}}^{}u_{x_1}u_{x_i}dx_1,\ \ i=2,\cdots, n.
%\]
%For $i=2,\cdots, n$,
Indeed, we have that
\[
\partial_{x_2}h=\int_{\mathbb{R}}^{}\left(u_{x_2x_1}u_{x_2}+u_{x_1}u_{x_2x_2}
\right)dx_1=\int_{\mathbb{R}}^{}u_{x_1}u_{x_2x_2}dx_1.
\]
So, we obtain that
\[
\partial_{x_2}h=\int_{\mathbb{R}}^{}u_{x_1}u_{x_2x_2}dx_1=
\int_{\mathbb{R}}^{}u_{x_1}\left\{W'(u)-u_{x_1x_1} \right\}dx_1=0,
\]
as desired. It follows that
\[
\partial^2_{x_2} g=0\ \ \textrm{in}\  \ \mathbb{R}.
\]
Since $g$ is bounded (from (\ref{eqgibbonsExp})),% by  Liouville's
%theorem (see \cite{Gilbarg-Trudinger,farinaHB}),
we infer that $g$
is a constant. In particular, we have that
\[
g\left(u;(x_{2-})_j\right)=g\left(u;(x_{2+})_j\right),\ \ j\geq 1.
\]
Using (\ref{eqgibbonslimes}), (\ref{eqgibbonsExp}), and letting
$j\to \infty$, with the help of Lebesgue's dominated convergence
theorem, we arrive at
\[
\int_{\mathbb{R}}^{}x_1\left\{\frac{1}{2}\left[U'(x_1-\ell_-)\right]^2+W\left(U(x_1-\ell_-)\right)
\right\} dx_1 =
\int_{\mathbb{R}}^{}x_1\left\{\frac{1}{2}\left[U'(x_1-\ell_+)\right]^2+W\left(U(x_1-\ell_+)\right)
\right\} dx_1.
\]
We conclude that $\ell_-=\ell_+$ as desired.

The proof of the theorem is complete.
\end{proof}
\end{thm}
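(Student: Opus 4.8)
The plan is to reduce the two-dimensional Gibbons' conjecture to the one-dimensional symmetry result of Proposition \ref{prohalfclemMine} (applied on each half-space $\{x_1<\ell_-\}$ and $\{x_1>\ell_+\}$) together with Theorem \ref{thmfarmine} (the ``method of expanding spheres''), the only genuinely new input being a Hamiltonian/Pohozaev-type conservation law in the transverse variable $x_2$ that forces the two candidate profiles at the left and right ``ends'' to coincide. First I would introduce an intermediate level $\mu_0\in(\mu_-,\mu_+)$ with $W'(\mu_0)=0$, $W''(\mu_0)\le 0$ (which exists since $W$ vanishes only at $\mu_\pm$ and is positive strictly in between, so it must have an interior critical point that is not a strict minimum), and define $\ell_-\le\ell_+$ as the obvious thresholds past which $u$ stays below, resp.\ above, $\mu_0$; the strong maximum principle gives the strict inequalities $u<\mu_0$ for $x_1<\ell_-$ and $u>\mu_0$ for $x_1>\ell_+$. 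If the level set $\{u=\mu_0\}$ meets the vertical line $\{x_1=\ell_\pm\}$ at a single point, Theorem \ref{thmfarmine} applies directly (after the obvious sign/translation normalization) and we are done. So the substantive case is when the contact is only ``at infinity'' in $x_2$: there exist sequences $\pm(x_\pm)_j>\pm\ell_\pm$, $(x_{2\pm})_j\in\mathbb{R}$ with $(x_\pm)_j\to\ell_\pm$, $u((x_\pm)_j,(x_{2\pm})_j)\to\mu_0$ and $|(x_{2\pm})_j|\to\infty$.

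Next I would run the same sliding-sphere argument as in Theorem \ref{thmfarmine}: translating $u_R$ (the radial minimizer of Lemma \ref{lem1}, here on the two subintervals defined by $\mu_\pm$) and pushing the ball up to tangency with $\{x_1=\ell_\pm\}$, then letting $R\to\infty$ using \eqref{eqclaim}, one obtains the one-sided barriers and, after translating by $(x_{2\pm})_j$ and passing to a $C^2_{loc}$ limit, a limiting solution whose restriction to each line $\{x_2=\text{const}\}$ is squeezed; by the strong maximum principle / unique continuation (or Theorem 5.1 in \cite{cafamodica} since $W\in C^2$) the limit is exactly $U(x_1-\ell_\pm)$, where $U$ solves \eqref{eqgibbonsU} with $U(0)=\mu_0$. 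Thus
\begin{equation}\label{eqgibbonslimesplan}
u\left(x_1,(x_{2\pm})_j\right)\to U(x_1-\ell_\pm)\quad\text{in }C^2_{loc}(\mathbb{R}),\ \text{as }j\to\infty.
\end{equation}
The nondegeneracy $W''(\mu_\pm)>0$ gives the uniform exponential decay \eqref{eqgibbonsExp} of $|u-\mu_\pm|$ and its derivatives up to third order as $|x_1|\to\infty$, uniformly in $x_2$; this is the estimate that legitimizes all the integrations by parts that follow.

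Then I would set up, exactly as in \cite{buscaFelmer,guiMalchiodi}, the Hamiltonian identity: with
\[
g(u;x_2)=\int_{\mathbb{R}}x_1\Big\{\tfrac12 u_{x_1}^2-\tfrac12 u_{x_2}^2+W(u)\Big\}dx_1,\qquad h(u;x_2)=\int_{\mathbb{R}}u_{x_1}u_{x_2}\,dx_1,
\]
one computes $\partial_{x_2}g=-h$ by integrating by parts in $x_1$ and using the equation, and $\partial_{x_2}h=0$ similarly (here $u_{x_2x_2}=W'(u)-u_{x_1x_1}$ and the cross term integrates to zero). Hence $\partial^2_{x_2}g\equiv0$, and since $g$ is bounded by \eqref{eqgibbonsExp}, $g$ is constant in $x_2$. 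Evaluating the constant along $(x_{2-})_j$ and $(x_{2+})_j$, using \eqref{eqgibbonslimesplan}, \eqref{eqgibbonsExp} and dominated convergence, yields
\[
\int_{\mathbb{R}}x_1\Big\{\tfrac12[U'(x_1-\ell_-)]^2+W(U(x_1-\ell_-))\Big\}dx_1=\int_{\mathbb{R}}x_1\Big\{\tfrac12[U'(x_1-\ell_+)]^2+W(U(x_1-\ell_+))\Big\}dx_1,
\]
and since the integrand (before the shift) is a fixed strictly positive integrable function, a translation of it by $\ell_+$ can have the same weighted-by-$x_1$ integral as a translation by $\ell_-$ only if $\ell_-=\ell_+$. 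Once $\ell_-=\ell_+$, the vertical line $\{x_1=\ell_-=\ell_+\}$ is contained in $\{u=\mu_0\}$ (since $u$ crosses $\mu_0$ there from below to above by the definitions), so Theorem \ref{thmfarmine} applies and gives $u(x)=U(x_1)$ with $U$ solving \eqref{eqgibbonsU}.

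The main obstacle, and the step I would be most careful about, is the rigorous justification of the integration-by-parts and the vanishing of boundary terms in the Hamiltonian identity: this is where the exponential decay \eqref{eqgibbonsExp} in $x_1$ (uniform in $x_2$) is indispensable, and it is precisely why the hypothesis $W''(\mu_\pm)>0$ cannot be dropped in this argument (without it one would only get algebraic decay and the weighted integral $\int x_1\{\cdots\}dx_1$ need not even converge). A secondary technical point is ensuring the $C^2_{loc}$ limits in \eqref{eqgibbonslimesplan} are genuinely the one-dimensional heteroclinic $U$ and not merely a solution squeezed on a single slice — this follows from the strong maximum principle applied to $u(\cdot,(x_{2\pm})_j)-$barrier on the line, exactly as in the proof of Theorem \ref{thmfarmine}, but it must be stated for each fixed $x_2$-slice and then the limit taken. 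Everything else (the existence of $\mu_0$, the threshold definitions, the reduction via Theorem \ref{prohalfclemMine}) is routine given the machinery already developed in the paper.
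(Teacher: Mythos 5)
Your proposal is correct and follows essentially the same route as the paper: the same intermediate level $\mu_0$, the same thresholds $\ell_\pm$, the same reduction via Theorem \ref{thmfarmine}/Proposition \ref{prohalfclemMine}, the same sliding/$C^2_{loc}$ compactness argument giving (\ref{eqgibbonslimes}), and the same Hamiltonian identity built from $g$ and $h$ (justified by the exponential decay (\ref{eqgibbonsExp}) coming from $W''(\mu_\pm)>0$) to force $\ell_-=\ell_+$. Your minor refinements — replacing the paper's apparent misprint $U(0)=0$ by $U(0)=\mu_0$, and spelling out the change-of-variables computation $\ell_-\int f=\ell_+\int f$ with $\int f>0$ — are correct but do not alter the structure of the argument.
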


\begin{rem}
Related Hamiltonian identities can be found in \cite{gui} (see also
\cite{alikakosDamialis,alikakosThreeOrmore} for a slightly different
viewpoint which employs a stress energy tensor, and
\cite{delpinoModuli}).
\end{rem}

\begin{rem}
The original proofs in \cite{barlow,berestDuke,farinaRendi}, which
are valid in all dimensions, work with the weaker conditions that
$W'$ is increasing near $\mu_\pm$ and $W\in C^{1,1}$.
\end{rem}

\begin{rem}For a proof of the conjecture, when
the nonlinearity $W'$ is discontinuous, we refer to
\cite{farinaDiscontinuous}.
\end{rem}

\begin{rem}
A parabolic version of Gibbons' conjecture can be found in
\cite{bereProcGibbons}.
\end{rem}

\begin{rem}
Proofs of analogs of the Gibbons' conjecture, involving more general
operators than the usual Laplacian (such as fully nonlinear elliptic
differential operators or fractional Laplacians), can be found in
\cite{birindeliGibbons} and \cite{farinaIndianaGibbons}.
\end{rem}

\begin{rem}\label{remFarSoave}
A Gibbons' type of conjecture for entire solutions with algebraic
growth of a  semilinear elliptic system, arising in Bose-Einstein
condensation, was proven recently  in \cite{farinaSoave}.
\end{rem}

\section{One-dimensional symmetry in convex cylindrical
domains}\label{seccylindric} In \cite{carbou,farCVPDE}, the authors
considered energy minimizing solutions to
\begin{equation}\label{eqcarbouEq}
    \Delta u +u-u^3=0\  \textrm{in}\ \mathbb{R}\times \omega;\
\frac{\partial u}{\partial \nu}=0\ \textrm{on}\ \mathbb{R}\times
\partial\omega,
\end{equation}
such that
   \[ u \to  \pm 1,\ \textrm{uniformly\ for}\ x'\in \bar{\omega},\ \textrm{as}\
x_1\to \mp \infty,
  \]where $\omega$ is a smooth  bounded  domain of $\mathbb{R}^{n-1}$ and $\nu$
denotes $(\mathbb{R}\times\partial\omega)$'s outer unit normal
vector (in fact, they studied minimizers of the energy with $\omega$
merely bounded, without looking at the Euler-Lagrange equation).
Using a rearrangement argument, they showed that $u$ is
one-dimensional (see also \cite{brockElem} and \cite{kawohl}).
Related results can be found in \cite{berestCylind}.

Surprisingly enough, if $n=2$, our Proposition
\ref{progibbonsPeriodic} implies that the limit in \emph{just one
direction} is needed  to reach the same conclusion \emph{without}
even assuming that $u$ is an energy minimizing solution. In this
section, following the strategy of the previous section, we will
show that the same property holds true in any dimension, provided
that $\omega$ is convex. We emphasize that our approach applies to
equations with more general nonlinearities and does not make use of
the oddness of the nonlinearity in hand. It suffices that $W\in
C^2$, $W>0$ in $t\in (\mu_-,\mu)$ and $W(\mu_-)=W(\mu)=0$.

\subsection{A gradient bound in convex cylindrical domains}
In order to apply the strategy of Section \ref{secfarina}, we will
first prove that the gradient bound (\ref{eqmodicafarina}) continues
to hold  in this setting. For the corresponding problem with
Dirichlet boundary conditions, this was shown recently in
\cite{farinaAdvMath}. As in the latter reference, we will follow the
lines that were set  in \cite{cafamodica} for the whole space
problem,  with the necessary modifications in order to deal with the
presence of the boundary. To this end, the authors of
\cite{farinaAdvMath} introduced (among other things) the idea to
translate the domain. Our proof is essentially the same, however we
keep the domain fixed and, in contrast to the Dirichlet boundary
condition case, we have to appeal to a result in
\cite{sternbergZumb} (originally due to \cite{casten,matano}).

\begin{pro}\label{procarbouModica}
Let $\Omega=\Omega_0\times \mathbb{R}^{n-n_0}$, where
$\Omega_0\subset \mathbb{R}^{n_0}$ is a bounded, smooth ($\partial
\Omega$ at least $C^2$) and convex domain, and  $1\leq n_0< n$. Let
$u\in C^2(\bar{\Omega})\cap L^\infty(\Omega)$ be a solution to
\begin{equation}\label{eqcarbuEq}
\Delta u-W'(u)=0,\ x\in \Omega;\ \ \frac{\partial u}{\partial
\nu}=0,\ x\in \partial \Omega,
\end{equation}
where $\nu$ denotes the outer unit normal vector to $\partial
\Omega$, and $W\in C^2(\mathbb{R})$. If $W(t)\geq 0$, $t\in
\mathbb{R}$, then
\[
|\nabla u|^2-2W(u)\leq 0,\ \ x\in \Omega.
\]
\end{pro}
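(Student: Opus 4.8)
The plan is to adapt the classical Modica gradient bound argument as presented in \cite{cafamodica}, following the blueprint of \cite{farinaAdvMath} for the Dirichlet case, but with the modification needed for Neumann boundary conditions on a convex cylinder. Introduce the function $P(x) = |\nabla u(x)|^2 - 2W(u(x))$ on $\bar\Omega$. By interior elliptic regularity together with the usual reflection across the flat boundary portions (the Neumann condition lets us extend $u$ to a $C^2$ function on a neighborhood of $\bar\Omega$, as in \cite{niTakagi}), and the boundedness of $u$, one knows $u \in C^{2,\alpha}$ with all relevant derivatives bounded; in particular $P$ is bounded on $\bar\Omega$. The goal is to show $\sup_\Omega P \le 0$.

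First I would derive the differential inequality satisfied by $P$ in the interior. A direct computation (exactly as in \cite{cafamodica}, or Lemma 4.1 in \cite{chenDG}) using the equation $\Delta u = W'(u)$ shows that, at points where $\nabla u \ne 0$,
\[
\Delta P \ge \langle b, \nabla P\rangle \quad \text{in } \Omega,
\]
for a locally bounded vector field $b$ (built from $\nabla u$ and $D^2 u$), and that $P$ is a subsolution of a suitable linear elliptic operator wherever $\nabla u \neq 0$; the set $\{\nabla u = 0\}$ is handled by the standard argument (either $P < 0$ there automatically since $W \ge 0$, or one works on the open set $\{\nabla u \ne 0\}$ and uses continuity). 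The maximum principle / Liouville-type argument of \cite{cafamodica} then reduces everything to controlling $P$ near $\partial\Omega$ and at infinity along the $\mathbb{R}^{n-n_0}$ directions. Along the unbounded directions one uses that $\sup_\Omega P$ is attained in the limit and produces, by a translation-compactness argument, a solution on the same domain where the supremum of the (limiting) $P$ is attained at a point; so it suffices to rule out an interior or boundary maximum with $P > 0$.

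The boundary is the main obstacle, and this is where convexity of $\Omega_0$ and the Neumann condition enter. At a boundary point $x_0 \in \partial\Omega$ one computes the normal derivative $\partial_\nu P$. Since $\partial_\nu u = 0$ on $\partial\Omega$, differentiating this relation tangentially and using convexity of $\Omega_0$ (which gives the sign of the second fundamental form) yields $\partial_\nu P \le 0$ at $x_0$ — this is precisely the computation in \cite{farinaAdvMath}, adapted: the term involving the second fundamental form of $\partial\Omega_0$ has the favorable sign by convexity, while the cylindrical directions contribute nothing. Hence $P$ cannot have a strict boundary maximum by Hopf's lemma, so any maximum is interior; and an interior maximum with $P>0$ contradicts the strong maximum principle applied to the subsolution inequality $\Delta P \ge \langle b,\nabla P\rangle$ on $\{\nabla u \ne 0\}$ unless $P$ is constant, in which case one derives a contradiction exactly as in \cite{cafamodica} (a nonconstant bounded solution of $\Delta u = W'(u)$ with $P$ constant and positive cannot exist; if $P \equiv 0$ we are already done, and $P \equiv c < 0$ gives the claim trivially). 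The subtle point, which is why \cite{farinaAdvMath} translates the domain, is justifying the maximum-principle argument when the supremum of $P$ is only approached at infinity in the cylindrical directions: here one instead invokes the result from \cite{sternbergZumb} (going back to \cite{casten,matano}) on the structure of bounded solutions on convex cylinders together with the reflection trick, to reduce to a genuine attained maximum. Assembling these pieces — interior subsolution property of $P$, the sign of $\partial_\nu P$ from convexity, Hopf's lemma, the strong maximum principle, and the translation/compactness reduction — gives $P \le 0$ on $\Omega$, which is the assertion.
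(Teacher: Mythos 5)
Your proposal is correct and follows essentially the same route as the paper: the $P$-function $|\nabla u|^2-2W(u)$ with the interior subsolution inequality from \cite{cafamodica}, the boundary sign $\partial_\nu P\leq 0$ coming from convexity and the Neumann condition (the paper quotes Lemma 2.2 of \cite{sternbergZumb} for exactly this, not for the compactness reduction), a translation-compactness argument in the unbounded directions to attain the supremum, and then the strong maximum principle plus Hopf's lemma forcing $P\equiv P_0>0$, which is refuted by the gradient-flow growth argument of \cite{cafamodica}. The only point you leave implicit is that the gradient flow cannot exit $\Omega$ precisely because $\partial_\nu u=0$ on $\partial\Omega$, which is what makes that final contradiction work in the cylinder.
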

\begin{proof}
Let $u$ be as in the assertion of the proposition. We set
\[
\mathcal{F}=\left\{v\in C^2(\bar{\Omega})\ \textrm{solutions of}\
(\ref{eqcarbuEq})\ \textrm{with}\ |v|\leq \|u\|_{L^\infty(\Omega)}\
\textrm{on}\ \bar{\Omega} \right\}.
\]
Clearly $u\in \mathcal{F}$. Next, let
\[
P(v,x)=\left|\nabla v(x) \right|^2-2W\left(v(x) \right),\ \ v\in
\mathcal{F},\ x\in \bar{\Omega}.
\]
These type of $P$-functions have been extensively investigated in
the PDE literature (see Chapter 5 in \cite{sperb}).

By formula (2.7) in \cite{cafamodica}, for $v\in \mathcal{F}$ we
have
\begin{equation}\label{eqcarboucafa}
\left|\nabla v(x) \right|^2\Delta P(v,x)-2W'\left(v(x) \right)\nabla
v(x)\cdot\nabla P(v,x)\geq \frac{\left| \nabla P(v,x)\right|^2}{2}\
\ \textrm{if}\ x\in \Omega\ \textrm{and}\ \nabla v(x)\neq 0.
\end{equation}
Moreover, we find
\[
\frac{\partial}{\partial \nu}P(v,x)=\frac{\partial}{\partial
\nu}\left(|\nabla v|^2 \right)-2W'(v)\frac{\partial v}{\partial
\nu}=\frac{\partial}{\partial \nu}\left(|\nabla v|^2 \right)\ \
\textrm{on}\ \partial \Omega.
\]
Since $\Omega$ is smooth and convex, and $v\in C^2(\bar{\Omega})$
satisfies $\frac{\partial v}{\partial \nu}=0$ on $\partial \Omega$,
it follows from Lemma 2.2 in \cite{sternbergZumb} (see also Lemma
2.1 in \cite{alikakosConvex},  \cite{casten}, Theorem 1.1 in
\cite{LiYau}, Lemma 5.3 in \cite{matano}, and  page 79 in
\cite{sperb}) that
\[
\frac{\partial}{\partial \nu}\left(|\nabla v|^2 \right)\leq 0\ \
\textrm{on}\ \partial \Omega.
\]
In turn, this implies that
\begin{equation}\label{eqcarboumatano}
\frac{\partial}{\partial \nu}P(v,x)\leq 0 \ \ \textrm{on}\ \partial
\Omega\ \textrm{for\ every}\ v\in \mathcal{F}.
\end{equation}
Now, we consider
\[
P_0\equiv \sup_{\stackrel{v\in \mathcal{F}}{x\in \bar{\Omega}}}
P(v,x).
\]
By elliptic regularity theory (see page 24 in
\cite{nirenbergTopics}), for $\alpha\in (0,1)$, there exists a
constant $C>0$ such that
\begin{equation}\label{eqcarbouC3bound}
\|v\|_{C^{2,\alpha}(\bar{\Omega})}\leq C\ \ \textrm{for\ all}\ v\in
\mathcal{F}.
\end{equation}
Hence, it follows that $P_0$ is finite, i.e. $P_0\in \mathbb{R}$.
The proposition will be proved if we show that
\[
P_0\leq 0.
\]
To this end, we will argue by contradiction, namely we assume that
\[
P_0> 0.
\]
We then take $v_k\in \mathcal{F}$ and $x_k\in \bar{\Omega}$ such
that
\begin{equation}\label{eqcarbou2side}
P_0-\frac{1}{k}\leq P(v_k,x_k)\leq P_0,\ \ k\geq 1.
\end{equation}
We write
\[
x_k=(y_k,z_k)\in \bar{\Omega},\ \ \textrm{where}\ y_k\in
\bar{\Omega}_0,\ z_k\in \mathbb{R}^{n-n_0},
\]
and set
\[
u_k(x)=v_k\left(x+(0,z_k) \right),\ \ x\in \bar{\Omega}.
\]
Making use of (\ref{eqcarbouC3bound}), passing to a subsequence, we
may assume that
\[
u_k\to u_\infty\ \ \textrm{in}\ C^2_{loc}(\bar{\Omega}),
\]
for some $u_\infty\in C^2(\bar{\Omega})$, satisfying
(\ref{eqcarbuEq}), with $|u_\infty|\leq \|u\|_{L^\infty(\Omega)}$ on
$\bar{\Omega}$.
%In fact, by elliptic regularity (see
%\cite{Gilbarg-Trudinger}), we see that $u_\infty\in
%C^2(\bar{\Omega})$
In particular, we have that \[u_\infty \in \mathcal{F}.\] We may
further assume that
\[
y_k\to y_\infty \in \bar{\Omega}_0.
\]
From (\ref{eqcarbou2side}), we obtain that
\[
P(u_\infty,x_\infty)=P_0,\ \ \textrm{where}\
x_\infty=(y_\infty,0)\in \bar{\Omega}.
\]
Consider the set
\[
\mathcal{U}=\left\{x\in \bar{\Omega}\ \textrm{such\ that}\
P(u_\infty,x)=P_0 \right\}.
\]
We already know that $\mathcal{U}$ is nonempty (because $x_\infty
\in \mathcal{U}$). Moreover, since $u_\infty\in C^2(\bar{\Omega})$,
it follows that
\begin{equation}\label{eqcarbouclosed}
\mathcal{U}\ \textrm{is\ relatively\ closed\ in}\ \bar{\Omega}.
\end{equation}

We plan to prove that
\begin{equation}\label{eqcarbouopen}
\mathcal{U}\ \textrm{is\ relatively\ open\ in}\ \bar{\Omega}.
\end{equation}
Let $x_0\in \mathcal{U}$. Firstly, since $W\geq 0$, observe that
\[
\left|\nabla u_\infty(x_0) \right|^2=P_0+2W\left(u_\infty(x_0)
\right)\geq P_0>0.
\]
So, there exists an $r>0$ such that
\[
\left|\nabla u_\infty(x) \right|>0,\ \ x\in B_r(x_0)\cap
\bar{\Omega}.
\]
It then follows from (\ref{eqcarboucafa}) that
\[
\Delta P(u_\infty,x)-2\frac{W'\left(u_\infty(x)
\right)}{\left|\nabla u_\infty(x) \right|^2}\nabla u_\infty(x)\cdot
\nabla P(u_\infty,x)\geq 0,\ \ x\in B_r(x_0)\cap \bar{\Omega}.
\]
Keep in mind that
\[
P(u_\infty,x)\leq P_0,\ x\in \bar{\Omega}\ \ \textrm{and}\ \
P(u_\infty,x_0)=P_0.
\]
Two cases can occur:
\begin{itemize}
  \item If $x_0\in \Omega$, it follows at once from the
strong maximum principle that
\[
P(u_\infty,x)=P_0,\ \ x\in B_r(x_0)\cap \bar{\Omega},
\]
namely $B_r(x_0)\cap \bar{\Omega}\subset \mathcal{U}$;

  \item If $x_0\in \partial \Omega$, by (\ref{eqcarboumatano}) and Hopf's
boundary point lemma, we are led again to the same conclusion.
\end{itemize}
Thus, we have shown that relation (\ref{eqcarbouopen}) holds.

 By
(\ref{eqcarbouclosed}), (\ref{eqcarbouopen}), and the connectedness
of $\bar{\Omega}$, we conclude that
\[
\mathcal{U}=\bar{\Omega}.
\]
In other words, we have arrived at
\begin{equation}\label{eqcarbouinf}
\left|\nabla u_\infty(x) \right|^2=P_0+2W\left(u_\infty(x)
\right)\geq P_0>0,\ \ x\in \bar{\Omega}.
\end{equation}
We will show that this comes in contradiction with the fact that
$u_\infty$ is bounded. We fix a $Q\in \Omega$ and consider the
gradient flow
\[
\left\{\begin{array}{l}
         \gamma'(t)=\nabla u_\infty\left(\gamma(t)\right), \\
          \\
         \gamma(0)=Q.
       \end{array}
 \right.
\]
We note that $\gamma$ is globally defined since $\nabla u_\infty\in
L^\infty(\Omega)$ and $\gamma$ cannot hit $\partial \Omega$ due to
$\frac{\partial u_\infty}{\partial \nu}=0$ on $\partial \Omega$. We
have
\[
\frac{d}{dt}\left[u_\infty\left(\gamma(t) \right) \right]=\nabla
u_\infty\left(\gamma(t) \right) \cdot \gamma'(t)=\left|\nabla
u_\infty\left(\gamma(t) \right)
\right|^2\stackrel{(\ref{eqcarbouinf})}{\geq} P_0>0
\]
Thus, we get
\[
u_\infty\left(\gamma(t) \right)\geq u_\infty(Q)+P_0 t,\ \ t\geq 0,
\]
which implies that $u_\infty$ is unbounded; a contradiction.

The proof of the proposition is complete.
\end{proof}
\subsection{The symmetry result}
Our main result in this section is the following:
\begin{pro}\label{procarbouMine}
Let $u$ be a nonconstant bounded  solution to (\ref{eqcarbouEq})
such that $u\to 1$ as $x_1\to -\infty$ uniformly for $x'\in
\bar{\omega}$. If $\omega$ is smooth and convex, then $u$ is one
dimensional.
\end{pro}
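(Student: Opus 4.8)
\textbf{Proof proposal for Proposition \ref{procarbouMine}.}

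The plan is to mimic the strategy of Theorems \ref{thmfarmine} and \ref{thmrigidity}, combining the new gradient bound of Proposition \ref{procarbouModica} in convex cylinders with a ``ballooning'' argument using the one--dimensional minimizers $u_R$ of Lemma \ref{lem1}. First I would extend $u$ to a $C^2$ solution in a neighborhood of $\bar\Omega$ (with $\Omega = \mathbb{R}\times\omega$) by reflection across $\mathbb{R}\times\partial\omega$, exploiting the Neumann conditions as in \cite{niTakagi}, so that all interior elliptic estimates become available; in particular $\|u\|_{C^{2,\alpha}}$ is bounded on $\bar\Omega$. By the strong maximum principle and Hopf's boundary point lemma applied to the equation for $1-u$ (note $W'(1)=0$ and $W(t)\geq 0$ near $1$ for the Allen--Cahn potential), together with the uniform convergence $u\to 1$ as $x_1\to-\infty$, I would deduce $\sup_{\bar\Omega}u = 1$ and, via Harnack as in Remark \ref{rempointwise}, that $u<1$ strictly in $\bar\Omega$ unless $u\equiv 1$ (which is excluded since $u$ is nonconstant).

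Next I would set up the sliding/ballooning device. Given $R>0$, let $u_R$ be a global minimizer as in Lemma \ref{lem1} for the potential $W(t)=\frac14(t^2-1)^2$ (so $\mu=1$), extended by zero outside $B_R\subset\mathbb{R}^n$. Using the uniform limit $u\to 1$ as $x_1\to-\infty$, for each fixed $R$ one can place the ball $B_R(Q)$ deep in the region $\{x_1 < -T\}\times\omega$ (with $T$ large) so that $u > u_R(0) \geq u_R(\cdot - Q)$ there; here one must check that $\underline{u}_{R,Q}$, being supported in $B_R(Q)$, respects the Neumann condition on $\mathbb{R}\times\partial\omega$ when $B_R(Q)$ is disjoint from the lateral boundary, and when it meets it, convexity of $\omega$ together with the radial monotonicity (\ref{eqmonotonicity}) gives $\partial_\nu \underline{u}_{R,Q}\leq 0$ there, exactly as in the proof of Proposition \ref{prohalfmatano}. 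Then I slide $B_R(Q)$ in the $x_1$--direction; either $u$ and $\underline{u}_{R,Q}$ never touch as $Q$ moves — in which case letting $R\to\infty$ and using (\ref{eqestimRect}) together with $u\leq 1$ forces $u\equiv 1$, a contradiction — or they first touch at some $x_*$. Interior touching is excluded by the strong maximum principle; a lateral--boundary touching is excluded by Hopf plus the convexity inequality $\partial_\nu\underline{u}_{R,Q}\leq 0$ combined with $\partial_\nu(u-\underline{u}_{R,Q})<0$. The remaining possibility is a touching on the ``cap'' $\partial B_R(Q)$ where $\underline{u}_{R,Q}=0<u$, which cannot happen. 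Iterating, one concludes $u(x_1,x')\geq u_R(0)$ throughout, and since this holds for all $R$, again $u\equiv 1$ unless the zero set of $u$ is nonempty; if it is nonempty we may normalize (after an $x_1$--translation) so that $u$ vanishes at a point $P$ on a slice $\{x_1=\ell\}$ while $u>0$ for $x_1<\ell$, and sliding a ball tangent to that hyperplane yields, in the limit $R\to\infty$ and using (\ref{eqchen}), the inequality $u_{x_1}^2(P)\geq 2W(0)$.

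At this point I would invoke the gradient bound: Proposition \ref{procarbouModica} applied with $W(t)=\frac14(t^2-1)^2\geq 0$ gives $|\nabla u|^2 \leq 2W(u)$ everywhere on $\bar\Omega$. Evaluating at $P$, where $u(P)=0$ so $2W(P)=2W(0)$, combined with the reverse inequality just obtained, shows equality $|\nabla u(P)|^2 = 2W(u(P))$ holds at $P$. I would then need the rigidity statement accompanying the gradient bound — that equality at one point forces one--dimensionality — which for the whole space is Theorem 5.1 in \cite{cafamodica} and, in the convex cylinder setting, follows by the same connectedness argument used inside the proof of Proposition \ref{procarbouModica}: the set where $P(u,\cdot)=P_0$ is open and closed, hence all of $\bar\Omega$, which (as in the last step of that proof) is incompatible with boundedness of $u$ unless $\nabla u$ has constant direction, i.e. $u=U(x_1)$. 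Finally, once $u=U(x_1)$ one reads off from (\ref{eqentire}) that $U''=W'(U)$ with $U\to 1$ as $x_1\to-\infty$; since $u$ is nonconstant and bounded, phase--plane analysis forces $U\to -1$ as $x_1\to+\infty$, and $U$ is the standard heteroclinic $\tanh(x_1/\sqrt2)$ up to translation. The main obstacle I anticipate is the rigidity step — extracting one--dimensionality from equality in the gradient bound \emph{at a boundary point} of the cylinder — which is not literally stated in \cite{cafamodica}; I would handle it by reproducing the open--closed argument from Proposition \ref{procarbouModica} with $P_0 := 0$ rather than quoting an external theorem, being careful at the lateral boundary where Hopf and (\ref{eqcarboumatano}) replace the interior strong maximum principle. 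A secondary technical point is justifying that the limiting lower solutions $\textbf{U}(\ell - x_1)$ obtained from $u_R$ as $R\to\infty$ are genuine (weak) subsolutions respecting the Neumann data — but this is exactly the content of Lemma \ref{lemhelly} and the construction already used in Proposition \ref{prohalfmatano}.
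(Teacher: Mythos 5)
Your overall strategy coincides with the paper's for most of the argument: slide a compactly supported minimizer from the region $x_1\ll 0$ (where $u$ is uniformly close to $1$) to produce the dichotomy ``either $u\equiv 1$ or $u$ vanishes at a first slice $\{x_1=\ell\}$'', then combine the resulting one--sided derivative bound $u_{x_1}(P)\le -\sqrt{2W(0)}$ at the touching point $P$ with the gradient bound of Proposition \ref{procarbouModica}. (Your use of $n$--dimensional ball profiles, with $\partial_\nu\underline{u}_{R,Q}\le 0$ on the lateral boundary by convexity, is a harmless variant of the paper's choice of the one--dimensional slab profile $u_R(x_1+Q_1)$, which is constant in $x'$ and hence satisfies the Neumann condition exactly. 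Also note that the non-strict inequality $u_{x_1}(P)\le u_R'(R)$ follows from the ordering and the common zero by a difference quotient alone, so you do not actually need a Hopf lemma at the possible corner point $P\in\{x_1=\ell\}\times\partial\omega$ for that step.)

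The genuine gap is in your rigidity step. Having obtained equality $|\nabla u(P)|^2=2W(u(P))$, you propose to rerun the open--closed argument of Proposition \ref{procarbouModica} with $P_0=0$ and then conclude via ``the last step of that proof'' that boundedness of $u$ forces one-dimensionality. The open--closed part is fine (on the equality set one has $|\nabla u|^2=2W(u)>0$ because $|u|<1$ strictly, so the differential inequality \eqref{eqcarboucafa} is non-degenerate there), and it yields $|\nabla u|^2\equiv 2W(u)$ on $\bar\Omega$. But the last step of that proof — the gradient flow showing $u_\infty(\gamma(t))\ge u_\infty(Q)+P_0t$ — produces nothing when $P_0=0$: the lower bound is vacuous, since $|\nabla u|^2=2W(u)$ tends to $0$ as $u\to\pm1$, and there is no dichotomy ``unbounded unless $\nabla u$ has constant direction'' hiding in that computation. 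Passing from $P\equiv 0$ to one-dimensionality requires a separate argument: either the equality analysis of Theorem 5.1 in \cite{cafamodica} (tracking the vanishing of the excess terms in \eqref{eqcarboucafa}), or, more simply here, the observation that $\sigma=G(u)$ with $G'=(2W)^{-1/2}$ is harmonic with $|\nabla\sigma|\equiv1$ and $\partial_\nu\sigma=0$, whence Bochner gives $\nabla^2\sigma\equiv0$, $\sigma$ affine, and the Neumann condition forces $\nabla\sigma\parallel e_1$. The paper sidesteps rigidity entirely: it compares $u$ with the limit profile $\textbf{U}(\ell-x_1)$, and uses Hopf's lemma — including a bespoke corner version built from the barrier $e^{-a(x_1+1)}-e^{-a}$ when $P'\in\partial\omega$ — to show that strict inequality $u_{x_1}(P)<-\sqrt{2W(0)}$ would contradict the gradient bound, so that $u\equiv\textbf{U}(\ell-x_1)$ by the strong maximum principle and unique continuation. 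Your route is salvageable, but not with the justification you give.
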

\begin{proof}
Given $R>0$, let $u_R$ be as in Lemma \ref{lem1}, with $n=1$ and
$W'(t)=t^3-t$ (i.e. $\mu=1$). By the uniform in $\bar{\omega}$
asymptotic behavior of $u$ as $x_1\to -\infty$, we infer that there
exists a large $M>R$ such that $u(x)>u_R(0)$ if $x\in
\bar{\Omega}\cap \{x_1\leq -M+R\}$, i.e.
\begin{equation}\label{eqyukoM}
u>u_R(x_1+M)\ \ \textrm{on}\ \bar{\Omega}\cap \left\{|x_1+M|\leq R
\right\}.
\end{equation}
Let
\begin{equation}\label{eqchencomp}
\underline{u}_{R,Q_1}(x_1,x')=\left\{\begin{array}{ll}
                                       u_R(x_1+Q_1), & |x_1+Q_1|<R,\ x'\in \omega, \\
                                         &   \\
                                        0, & \textrm{otherwise}.
                                     \end{array}
 \right.
\end{equation}
Note that
\[
\frac{\partial \underline{u}_{R,Q_1}}{\partial \nu}=0\ \
\textrm{on}\ \partial \Omega \ \textrm{if}\ x_1+Q_1\neq \pm R.
\]

We claim that $u$ vanishes at some point on $\mathbb{R}\times
\bar{\omega}$. Indeed, if not, we can move $Q_1$ in $\mathbb{R}$ to
find that $u>u_R(0)$ in $\bar{\Omega}$ for all $R>0$, by the sliding
method (note also that $\underline{u}_{R,Q_1}$ and $u$ cannot touch
on $x_1+Q_1=\pm R$). In view of the obvious analog of
(\ref{eqestimRect}), this implies that $u\equiv 1$ which cannot
happen since $u$ is assumed to be nonconstant.

Now, by virtue of the uniform asymptotic behavior of $u$ as $x_1\to
-\infty$, we may assume without loss of generality that
\begin{equation}\label{eqyukoP}
u>0\ \textrm{if}\ x_1<0\ \textrm{and}\ u(P)=0\ \textrm{at\ some}\
P=(0,P')\ \textrm{with}\ P'\in \bar{\omega},
\end{equation}
(because (\ref{eqcarbouEq}) is invariant with respect to
translations in the $x_1$-direction). So, in view of
(\ref{eqyukoM}), we can slide $\underline{u}_{R,Q_1}$ along the
$x_1$-axis (decreasing $Q_1$), staying below the graph of $u$, until
we reach
\[
u(x)>\underline{u}_{R,R}(x),\ \ x\in \{x_1<0\}\times \bar{\omega}.
\]
In particular, recalling (\ref{eqyukoP}), we obtain that
\begin{equation}\label{eqyukoR}
u(x_1,x')>u_R(x_1+R),\ \ -R<x_1<0,\ x'\in \bar{\omega}, \
\textrm{and} \ u(0,P')=0=u_R(R),\ \textrm{where}\  P'\in
\bar{\omega}.
\end{equation}
 As in Proposition \ref{prohalfFlat} (see in particular (\ref{eqslice})), via the obvious analog of (\ref{eqestimRect}) (loosely speaking, letting $R\to
\infty$ in (\ref{eqyukoR})), we obtain that
\begin{equation}\label{eqyukogeq}
u(x)\geq \textbf{U}(-x_1)\ \textrm{on}\ \{x_1\leq 0\}\times
\bar{\omega},\ \textrm{and}\ u=\textbf{U}=0\ \textrm{at}\ P=(0,P'),
\end{equation}
where $\textbf{U}$ as in (\ref{eqU}).

 Two cases can occur:
\begin{itemize}
  \item If $P'\in \omega$, by Hopf's boundary point lemma (applied to the equation for $u-\textbf{U}(-x_1)$), we deduce
  that either
  \[
u_{x_1}<-\textbf{U}'(0)=-\sqrt{2W(0)}\ \ \textrm{at}\ P=(0,P')\in
\Omega,
  \]
or $u\equiv \textbf{U}(-x_1)$ on $\{x_1\leq 0\}\times \bar{\omega}$.
Since $\omega$ is convex and $u$ is bounded, as in Proposition
\ref{prohalfFlat}, the former scenario cannot happen by virtue of
the gradient bound in Proposition \ref{procarbouModica} (this is the
first time in the proof that we used the convexity of $\omega$). We
therefore must have that $u(x)=\textbf{U}(-x_1)$ on $\{x_1\leq 0
\}\times \omega$ and, by the unique continuation principle
\cite{hormander} (applied to the equation for $u-\textbf{U}(-x_1)$),
we conclude that $u\equiv \textbf{U}(-x_1)$ in $\Omega$, as desired.

  \item If $P'\in \partial \omega$, from (\ref{eqyukogeq})
  %(which still holds for $P'\in \partial
  %\omega$),
  and the gradient bound of Proposition \ref{procarbouModica},  we obtain that
  \begin{equation}\label{eqyukocontra}
u_{x_1}(0,P')=-\sqrt{2W(0)}.
  \end{equation}
  Actually,  by the strong maximum principle and
Hopf's boundary point lemma, unless $u\equiv \textbf{U}(-x_1)$,
there is strict inequality in (\ref{eqyukogeq}) at points in
$\{x_1<0\}\times\bar{\omega}$. In the latter case, we would like to
employ Hopf's boundary point lemma to get $
u_{x_1}(0,P')<-\sqrt{2W(0)},
 $
  which contradicts (\ref{eqyukocontra}).
  However, this time we cannot fit a ball in $\{x_1<0 \}\times
  \omega$ which is tangent to $P$. Nevertheless, with a little care,
  we can adapt the standard proof of Hopf's boundary point lemma to
  cover the situation at hand, where the point is on a corner of the
 boundary of the domain $\{x_1<0 \}\times\omega$. Indeed, let
 \begin{equation}\label{eqyukophi}
\varphi=\textbf{U}(-x_1)-u.
 \end{equation} We have
 \[
\Delta \varphi-c(x)\varphi=0,\ \varphi<0\   \textrm{in}\
\{x_1<0\}\times \omega,
 \]
 for some bounded function $c$, say $|c(x)|<d$, and $\varphi(0,P')=0$.
 For $a>0$ to be determined, let
 \[
v(x)=v(x_1,x')= e^{-a(x_1+1)}-e^{-a}>0,\ \ x_1\in (-1,0),\ x'\in
\omega.
 \]
 We can choose $a>0$ sufficiently large ($a>\sqrt{d}$) so that
 \begin{equation}\label{eqyukoserrin}
\Delta v-dv>0\  \ \textrm{on}\ [-1,0]\times \bar{\omega}.
 \end{equation}
 Now, let
 \[
\tilde{v}=\frac{\ell}{2v(-1)}v<0,\ \ \textrm{where} \
\ell=\max_{x_1=-1}\varphi<0.
 \]
 It follows that $\tilde{v}$ satisfies \[\Delta \tilde{v}-d\tilde{v}<0\  \ \textrm{on}\ [-1,0]\times \bar{\omega},
 \ \frac{\partial \tilde{v}}{\partial \nu}=0\ \textrm{on}\  [-1,0]\times \partial \omega, \]
    \begin{equation}\label{eqyukov'}\tilde{v}=\frac{\ell}{2}<0\
 \textrm{on}\ x_1=-1;\ \tilde{v}=0\ \textrm{and}\ \tilde{v}_{x_1}>0\ \textrm{on}\
 x_1=0.\end{equation}
 We claim that
 \begin{equation}\label{eqyukoclaim}
\varphi-\tilde{v}\leq 0\ \ \textrm{on}\ [-1,0]\times \bar{\omega}.
 \end{equation}
 We have that
 \[
\varphi-\tilde{v}\leq 0\ \ \textrm{on}\ x_1=-1\ \textrm{and}\ x_1=0,
\ \textrm{and}\ \frac{\partial (\varphi-\tilde{v})}{\partial \nu}=0\
\textrm{on}\ [-1,0]\times \partial \omega.
 \]
 Suppose that (\ref{eqyukoclaim}) does not hold, namely that the maximum of $\varphi-\tilde{v}$ over $[-1,0]\times \bar{\omega}$ is positive and is achieved at some
 $x_0\in (-1,0)\times \bar{\omega}$.
 Note that
 \[
\Delta
(\varphi-\tilde{v})>c(x)\varphi-d\tilde{v}>\left(c(x)-d\right)\varphi>0\
\ \textrm{at}\ x=x_0,
 \]
 (we have silently used that $\varphi, \tilde{v}\in C^2(\bar{\Omega})$ in case $x_0\in \partial
 \Omega$).
 By the maximum principle, we deduce that $x_0$ cannot be in $(-1,0)\times \omega$. Moreover, by the usual Hopf's boundary point
 lemma, the point $x_0$ can neither be in $(-1,0)\times\partial
 \omega$. We have thus been led to a contradiction, which means that relation
 (\ref{eqyukoclaim}) holds true. It follows in particular that the restriction of
 $\varphi-\tilde{v}$ on the line $\{-1\leq x_1\leq 0\}\times \{P'\}$ attains its
 maximum value at $x_1=0$, which implies, via (\ref{eqyukophi}), that
 \[
u_{x_1}(0,P')\leq-\textbf{U}'(0)-\tilde{v}_{x_1}(0,P')\stackrel{(\ref{eqyukov'})}{<}-\textbf{U}'(0)=-\sqrt{2W(0)}.
 \]
 Recalling that $u(0,P')=0$, the above relation contradicts (\ref{eqyukocontra}).
 We conclude again that $u\equiv \textbf{U}(-x_1)$, as desired.
\end{itemize}

 The proof of the proposition  is complete.
\end{proof}

\begin{rem}
In analogy to Theorem \ref{thmrigidity}, we have the following:

Let $u$ be a bounded solution to (\ref{eqcarbouEq}) such that $u\to
1$ as $x_1\to \pm \infty$ uniformly for $x'\in \bar{\omega}$. If
$\omega$ is smooth and convex, then $u\equiv 1$.

\end{rem}

\appendix
\section{Some useful ``comparison'' lemmas of the calculus of variations}\label{secappenda}
The following is essentially Lemma 2.1 in \cite{fuscoTrans}.
\begin{lem}\label{lemfusco}
Let $\mathcal{O}\subset \mathbb{R}^n$ be an open set and let $v\in
W^{1,2}(\mathcal{O})$. Define $\tilde{v}:\mathcal{O}\to \mathbb{R}$
as
\[
\tilde{v}(x)=\left\{\begin{array}{ll}
                      v(x) & \textrm{if}\ v(x)\in [0,\mu], \\
                       &  \\
                      \mu & \textrm{if}\ v(x)\in (-\infty,-\mu)\cup (\mu,\infty),  \\
                       &  \\
                      -v(x) & \textrm{if}\ v(x)\in (-\mu,0).
                    \end{array}
 \right.
\]
Then $\tilde{v}\in W^{1,2}(\mathcal{O})$ and, if $W$ is $C^2$ and
satisfies \textbf{(a')}, we have
\[
\int_{\mathcal{O}}^{}\left\{\frac{1}{2}|\nabla
\tilde{v}|^2+W(\tilde{v}) \right\}dx \leq
\int_{\mathcal{O}}^{}\left\{\frac{1}{2}|\nabla v|^2+W(v) \right\}dx.
\]
\end{lem}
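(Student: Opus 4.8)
The plan is to realize $\tilde v$ as the composition $g\circ v$, where $g:\mathbb{R}\to[0,\mu]$ is the globally $1$-Lipschitz function $g(t)=\min\{|t|,\mu\}$; one checks immediately that $g(t)=t$ on $[0,\mu]$, $g(t)=\mu$ on $\{|t|\ge\mu\}$, and $g(t)=-t$ on $(-\mu,0)$, so indeed $g\circ v=\tilde v$. Since $v\in W^{1,2}(\mathcal{O})$ and $g$ is Lipschitz with $g(0)=0$, the chain rule for Sobolev functions composed with (piecewise smooth) Lipschitz maps — see e.g.\ Theorem~7.8 in \cite{Gilbarg-Trudinger}, or \cite{evansGariepy} — yields at once that $\tilde v\in W^{1,2}(\mathcal{O})$ with $\nabla\tilde v=g'(v)\,\nabla v$ a.e., where $g'\in\{-1,0,1\}$. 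Hence $|\nabla\tilde v|\le|\nabla v|$ a.e.\ in $\mathcal{O}$, and therefore $\int_{\mathcal{O}}\tfrac12|\nabla\tilde v|^2\,dx\le\int_{\mathcal{O}}\tfrac12|\nabla v|^2\,dx$. This settles both the membership $\tilde v\in W^{1,2}(\mathcal{O})$ and the gradient part of the energy inequality, and uses nothing about $W$.

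For the potential part it suffices to establish the pointwise bound $W\big(g(t)\big)\le W(t)$ for every $t\in\mathbb{R}$; integrating over $\mathcal{O}$ then gives $\int_{\mathcal{O}}W(\tilde v)\,dx\le\int_{\mathcal{O}}W(v)\,dx$ (both members lie in $[0,+\infty]$ since $W\ge0$ by \textbf{(a')}, so the inequality is meaningful with no integrability hypothesis), and adding the two parts finishes the proof. I would verify this pointwise bound on the three relevant ranges of $t$: on $[0,\mu]$ one has $g(t)=t$ and equality holds; on $\{|t|\ge\mu\}$ one has $g(t)=\mu$, so $W(g(t))=W(\mu)=0\le W(t)$ by nonnegativity of $W$; and on $(-\mu,0)$ one has $g(t)=-t\in(0,\mu)$, so the claim reads $W(-t)\le W(t)$, that is, $W(s)\le W(-s)$ for $s:=-t\in(0,\mu)$ — which is precisely the first alternative $W(-s)\ge W(s)$, $s\in[0,\mu]$, in the last line of \textbf{(a')}. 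Note that only continuity of $W$, together with $W\ge0$ and $W(\mu)=0$, is used, so the hypothesis $W\in C^2$ is far more than is needed for this lemma.

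The one point requiring care — and, I expect, the main (if minor) obstacle — is the range $(-\mu,0)$ when \textbf{(a')} is satisfied only through its \emph{second} alternative, namely $W'(t)<0$ for $t<0$, so that $W$ is nonincreasing on $(-\infty,0]$ while $W(-s)\ge W(s)$ need not hold: then the sign-reflection on $(-\mu,0)$ need not lower $W$ pointwise. In that case one simply replaces the reflection by truncation at $0$, i.e.\ one works with $\widehat v:=\min\{\max\{v,0\},\mu\}$; this is again $\widehat g\circ v$ for a $1$-Lipschitz $\widehat g$ with $\widehat g(0)=0$, so the gradient estimate of the first paragraph is unchanged, while on $\{v<0\}$ one has $W(\widehat v)=W(0)\le W(v)$ because $W$ is nonincreasing on $(-\infty,0]$ and continuous at $0$. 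Thus, under either alternative of \textbf{(a')}, the construction produces a competitor taking values in $[0,\mu]$ and with no larger energy, which is exactly what is used downstream (for instance when the lemma is applied to a minimizing sequence converging weakly in $W^{1,2}_0(B_R)$). I would therefore present the $(-\mu,0)$ case along these lines, making explicit which alternative of \textbf{(a')} is being invoked; the remainder is routine.
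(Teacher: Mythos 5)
Your argument follows essentially the same route as the paper's brief sketch: realize $\tilde v$ as a composition $G\circ v$ with a Lipschitz (piecewise-linear) $G$ so that $\tilde v\in W^{1,2}$ with $|\nabla\tilde v|\le|\nabla v|$ a.e., then finish via the pointwise inequality $W(\tilde v)\le W(v)$. The added value of your write-up is the third paragraph: you have correctly identified that the pointwise bound on $\{-\mu<v<0\}$ is \emph{only} guaranteed by the first alternative in \textbf{(a')} (namely $W(-t)\ge W(t)$ on $[0,\mu]$), and that the second alternative ($W'<0$ on $(-\infty,0)$) does not by itself force $W(-v)\le W(v)$ there; indeed, a potential that decays on $(-\infty,0]$ but has a large positive hump inside $(0,\mu)$ satisfies \textbf{(a')} via the second alternative yet violates $W(\tilde v)\le W(v)$ pointwise, and even in integrated form for a constant $v\in(-\mu,0)$ on a bounded $\mathcal{O}$. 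So the lemma, as stated for the \emph{reflected} $\tilde v$, is not correct under the second alternative alone, and the paper's sketch (which simply invokes ``\textbf{(a')}'') glosses over this. Your remedy — replacing the reflection by the truncation $\widehat v=\min\{\max\{v,0\},\mu\}$ when the second alternative is in force, for which $W(\widehat v)=W(0)\le W(v)$ on $\{v<0\}$ by the monotonicity of $W$ on $(-\infty,0]$ — is correct and fully suffices for every downstream use of the lemma (all that is ever needed is \emph{some} competitor with values in $[0,\mu]$ and no larger energy). Strictly speaking, then, what you prove is not the lemma as literally stated but a corrected version of it, with the construction of $\tilde v$ depending on which alternative of \textbf{(a')} one invokes; that is the right reading, and it would be worth amending the statement accordingly (or restricting the stated lemma to the symmetric alternative and recording the truncation variant as a companion remark).
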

\begin{proof}(Sketch)
Firstly, note that $\tilde{v}=G(v),\ x\in \mathcal{O}$, for some
Lipschitz (piecewise linear) function $G:\mathbb{R}\to \mathbb{R}$.
Thus, $\tilde{v}\in W^{1,2}(\mathcal{O})$, see for instance
\cite{evansGariepy}. Then, to finish, note that
\begin{equation}\label{eqkinder}
|\nabla \tilde{v}|\leq |\nabla v|\ \ \textrm{and, \ thanks to
\textbf{(a')}},\ \ W(\tilde{v})\leq W(v)\ \ \textrm{a.e.\ in}\
\mathcal{O},
\end{equation}
(the former inequality may be proven as in page 93 in
\cite{kinderler}).
\end{proof}

The following is an extension of Lemma \ref{lemfusco}, and is
motivated from  \cite{alikakosReplace,alikakosPeriodic} (see also
\cite{katzourakis} for an extension). Our proof follows
\cite{sourdisDancer}.
\begin{lem}\label{lemalikakos}
Let $\Omega \subset \mathbb{R}^n$, $n\geq 1$, be a bounded domain
with Lipschitz boundary, and $W:\mathbb{R}\to \mathbb{R}$  be a
$C^2$  potential  such that conditions \textbf{(a')} and
(\ref{eqWmonotone}) hold. Further, let $\mathcal{A}\subset\Omega$ be
a  bounded domain with  Lipschitz boundary such that
$\bar{\mathcal{A}}\subset\Omega$. Moreover, assume that
\begin{itemize}
  \item $u\in W^{1,2}(\Omega)$, $0\leq u\leq \mu$ a.e. in $\Omega$
\\
  \item $\mu-u\leq \eta$ a.e. on $\partial \mathcal{A}$, in the sense of Sobolev traces (see \cite{evansGariepy}), for
  some $\eta\in \left(0,\frac{d}{2} \right)$.
\end{itemize}
Then, there exists $\tilde{u}\in W^{1,2}(\Omega)$ such that
\begin{equation}\label{eqrelation}\left\{\begin{array}{l}
    \tilde{u}(x)=u(x),\ \ x\in \Omega\backslash \mathcal{A}, \\
      \\
    \mu-\eta\leq \tilde{u}(x)\leq \mu,\ \ x\in \mathcal{A}, \\
      \\
    \int_{\Omega}^{}\left\{\frac{1}{2}|\nabla \tilde{u}|^2+W(\tilde{u})
\right\}dx\leq \int_{\Omega}^{}\left\{\frac{1}{2}|\nabla u|^2+W(u)
\right\}dx.
  \end{array}\right.
\end{equation}

If condition (\ref{eqWmonotone}) holds with strict inequality, and
there exists a set $\mathcal{B}\subset \mathcal{A}$ of nonzero
measure such that
\[
u<\mu-\eta\ \ \textrm{a.e.\ on}\ \ \mathcal{B},
\]
then the last relation in (\ref{eqrelation}) holds with a strict
inequality.
\end{lem}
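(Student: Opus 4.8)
The plan is to prove Lemma \ref{lemalikakos} by an explicit truncation-and-reflection construction, generalizing the proof of Lemma \ref{lemfusco}. The building block is a Lipschitz map $G_\eta:\mathbb{R}\to\mathbb{R}$ which "folds" the real line into the interval $[\mu-\eta,\mu]$: concretely, one takes $G_\eta$ to be the piecewise-linear function that fixes $[\mu-\eta,\mu]$, reflects $[\mu-2\eta,\mu-\eta]$ upward across the level $\mu-\eta$ onto $[\mu-\eta,\mu]$, and is constant equal to $\mu-\eta$ on $(-\infty,\mu-2\eta]$ (a symmetric prescription handles values above $\mu$, but since $0\le u\le\mu$ a.e.\ this is not needed). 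The key pointwise properties are $|G_\eta'|\le 1$ wherever it exists, so that $|\nabla(G_\eta\circ u)|\le|\nabla u|$ a.e.\ (as on p.~93 of \cite{kinderler}), and, crucially, $W(G_\eta(t))\le W(t)$ for all $t\in[0,\mu]$: this last inequality is exactly where hypothesis (\ref{eqWmonotone}) enters, since on $[\mu-d,\mu]$ the potential $W$ is nonincreasing, hence reflecting a value $t\in[\mu-2\eta,\mu-\eta]\subset[\mu-d,\mu]$ up to $2(\mu-\eta)-t\in[\mu-\eta,\mu]$ cannot increase $W$; for $t\le\mu-2\eta$ we have $G_\eta(t)=\mu-\eta$ and $W(\mu-\eta)\le W(t)$ by monotonicity on $[\mu-d,\mu]$ together with $W(t)\ge W(\mu-\eta)$ for $t\le\mu-\eta$, which follows from \textbf{(a')} if $\eta$ is small and (\ref{eqWmonotone}) (one may need to shrink $d$ so that $W$ is actually nonincreasing, not merely has $W'\le0$, but (\ref{eqWmonotone}) gives this on a possibly smaller interval, which is all we use).

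The construction itself is then: set $\tilde u(x)=u(x)$ for $x\in\Omega\setminus\mathcal{A}$ and $\tilde u(x)=G_\eta(u(x))$ for $x\in\mathcal{A}$. First I would check that $\tilde u\in W^{1,2}(\Omega)$. On $\mathcal{A}$ it lies in $W^{1,2}(\mathcal{A})$ because $G_\eta$ is Lipschitz and $u\in W^{1,2}(\mathcal{A})$ \cite{evansGariepy}; on $\Omega\setminus\mathcal{A}$ it equals $u$; and the two pieces glue across $\partial\mathcal{A}$ into a single $W^{1,2}(\Omega)$ function precisely because the Sobolev traces agree on $\partial\mathcal{A}$ — and they do, since $\mu-u\le\eta$ a.e.\ on $\partial\mathcal{A}$ forces $u\ge\mu-\eta$ there, hence $G_\eta(u)=u$ on $\partial\mathcal{A}$ in the trace sense. (This gluing step, that two $W^{1,2}$ functions on complementary Lipschitz subdomains with matching traces assemble to a $W^{1,2}$ function on the union, is standard; I would cite \cite{evansGariepy} and the Lipschitz-boundary hypothesis on $\mathcal{A}$.) The first line of (\ref{eqrelation}) is then immediate, and the second line holds because $\mathrm{range}\,G_\eta=[\mu-\eta,\mu]$. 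For the energy inequality, split the integral over $\mathcal{A}$ and $\Omega\setminus\mathcal{A}$: on $\Omega\setminus\mathcal{A}$ the integrands coincide, while on $\mathcal{A}$ we have $\tfrac12|\nabla\tilde u|^2\le\tfrac12|\nabla u|^2$ and $W(\tilde u)\le W(u)$ a.e.\ by the two pointwise properties of $G_\eta$ above, using $0\le u\le\mu$ a.e.

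For the final, strict-inequality assertion, suppose (\ref{eqWmonotone}) holds with strict inequality on $(\mu-d,\mu)$ and that $u<\mu-\eta$ on a set $\mathcal{B}\subset\mathcal{A}$ of positive measure. On $\mathcal{B}$ the value $G_\eta(u(x))$ is strictly larger than $u(x)$ (it is either $2(\mu-\eta)-u(x)>u(x)$ or $\mu-\eta>u(x)$), and since both $u(x)$ and $G_\eta(u(x))$ lie in $[\mu-\eta,\mu]\cup[\mu-2\eta,\mu-\eta]\subset(\mu-d,\mu]$ where $W$ is strictly decreasing, we get $W(G_\eta(u(x)))<W(u(x))$ a.e.\ on $\mathcal{B}$. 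Integrating, the $W$-part of the energy drops by a positive amount, so the last inequality in (\ref{eqrelation}) is strict. The main obstacle I anticipate is not any single step but the bookkeeping around the exact shape of $G_\eta$ near the endpoints and making sure the chosen reflection keeps all relevant values inside the region where (\ref{eqWmonotone}) is available — i.e.\ choosing $\eta<d/2$ (as hypothesized) guarantees $[\mu-2\eta,\mu]\subset(\mu-d,\mu]$, so every value ever produced or folded by $G_\eta$ stays in the monotonicity window; one should state this reduction cleanly at the outset. Everything else is a routine application of the chain rule for Sobolev functions composed with Lipschitz maps and the gluing lemma for $W^{1,2}$ functions across Lipschitz interfaces.
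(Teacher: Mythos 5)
Your overall strategy is the paper's: compose $u$ with a piecewise-linear folding map on $\mathcal{A}$, glue across $\partial\mathcal{A}$ using the trace condition, and compare the integrands pointwise. But your specific choice of the folding map has a genuine gap on the branch $t\le\mu-2\eta$. You send such values to $\mu-\eta$ and claim $W(\mu-\eta)\le W(t)$ for all $t\in[0,\mu-2\eta]$. This does not follow from \textbf{(a')} and (\ref{eqWmonotone}): the monotonicity in (\ref{eqWmonotone}) is only available on $(\mu-d,\mu)$, and for $t<\mu-d$ hypothesis \textbf{(a')} gives nothing beyond $W(t)>0$. A potential with a shallow positive dip somewhere in $[0,\mu-d]$, with value below $W(\mu-\eta)$, satisfies all the hypotheses and defeats your inequality (and shrinking $d$, as you suggest, only shrinks the window where you have control, it does not constrain $W$ outside it). The paper's construction, $\tilde{u}=\min\{\mu,\max\{u,\,2\mu-2\eta-u\}\}$ on $\mathcal{A}$, is identical to yours on $[\mu-2\eta,\mu]$ but sends $t\le\mu-2\eta$ to $\mu$ rather than to $\mu-\eta$; there the comparison $W(\mu)=0\le W(t)$ is trivial from $W\ge0$, which is exactly why that choice is made.

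The same defect propagates into your strict-inequality argument: you assert that on $\mathcal{B}$ both $u(x)$ and $G_\eta(u(x))$ lie in $(\mu-d,\mu]$, but if $u(x)<\mu-2\eta$ (a fortiori if $u(x)<\mu-d$) this is false, and strict monotonicity of $W$ near $\mu$ cannot be invoked. With the paper's truncation this case is handled separately and easily: where $u<\mu-2\eta$ one has $W(u)\ge\min_{[0,\mu-2\eta]}W>0$ by \textbf{(a')} while $W(\tilde{u})=W(\mu)=0$, so the potential energy drops strictly on a set of positive measure. The remaining ingredients of your proof (the chain rule $|\nabla(G\circ u)|\le|\nabla u|$ for Lipschitz $G$ with $|G'|\le1$, and the gluing of $W^{1,2}$ pieces with matching traces across the Lipschitz interface $\partial\mathcal{A}$) are correct and coincide with the paper's.
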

\begin{proof} (Sketch)
The first assertion of the lemma can be deduced similarly to Lemma
\ref{lemfusco}. Indeed, the desired function is
\begin{equation}\label{equtilda}
\tilde{u}(x)=\left\{\begin{array}{ll}
                                 \min\left\{\mu, \max\left\{u(x),\ 2\mu-2\eta-u(x) \right\}\right\},  & x\in \mathcal{A}, \\
                                   &   \\
                                  u(x), & x\in
                                  \Omega\backslash
                                  \mathcal{A}.
                                \end{array}
 \right.
\end{equation}
We point out that $\tilde{u}\in W^{1,2}(\mathcal{A})$ similarly to
Lemma \ref{lemfusco}, and $\tilde{u}\in W^{1,2}_0(\Omega)$ because
$\mathcal{A}$ has Lipschitz boundary and $\tilde{u}=u$ on $\partial
\mathcal{A}$ in the sense of Sobolev traces (see again
\cite{evansGariepy}). Note that if $\mu-2\eta \leq u(x)\leq \mu$
then $\mu-d <u(x)\leq \tilde{u}(x)\leq \mu$, so relation
(\ref{eqWmonotone}) implies that $W\left(\tilde{u}(x) \right)\leq
W\left(u(x) \right)$. Furthermore, if $0 \leq u(x)\leq \mu-2\eta$
then $\tilde{u}(x)=\mu$ and $W\left(\tilde{u}(x) \right)=0\leq
W\left(u(x)\right)$. Also keep in mind the first relation in
(\ref{eqkinder}).

The second assertion can be shown with a little more care. Replacing
$u$ by the minimizer of the corresponding energy functional
$J(\cdot; \mathcal{A})$ (recall (\ref{eqenergy})) among functions
$v\in W^{1,2}(\mathcal{A})$ such that $v-u\in
W^{1,2}_0(\mathcal{A})$, we may assume that $u$ is a smooth solution
of (\ref{eqEqnobdry}) in $\mathcal{A}$.
 Firstly,
we consider the case where
\[
\mu-2\eta \leq u(x)\leq \mu\ \  \textrm{on}\ \bar{\mathcal{A}}.
\]
In that case, we have that $\mu-d <u< \tilde{u}\leq \mu$ on
$\mathcal{B}$. In turn, from the assumption that the inequality in
(\ref{eqWmonotone}) is strict, we obtain that $W\left(\tilde{u}
\right)<W\left(u \right)$ on $\mathcal{B}$. Since the set
$\mathcal{B}$ has positive measure, taking into account our previous
discussion for the first assertion, we arrive at
\begin{equation}\label{eqreplace}
\int_{\Omega}^{}W(\tilde{u})dx<\int_{\Omega}^{}W(u)dx.
\end{equation}
Hence, the second assertion holds in this case. On the other side,
if
\[
0 \leq u(x_0)< \mu-2\eta\ \  \textrm{for \ some}\ x_0 \in
{\mathcal{A}},
\]
then $0\leq u\leq \mu-2\eta \leq \tilde{u}=\mu$ in an open
neighborhood of $x_0$. In this neighborhood, via \textbf{(a')}, we
have that $W(u)\geq \min_{t\in[0,\mu-2\eta]}W(t)>0$ while
$W(\tilde{u})=0$. It follows that relation (\ref{eqreplace}) holds
in this case as well. Keeping in mind the first relation in
(\ref{eqkinder}), we conclude that the second assertion of the lemma
holds.

The sketch of proof of the lemma is complete.
\end{proof}

The following is Lemma 2.3 in \cite{danceryanCVPDE}, which is
reproduced in Lemma 1 in \cite{kurata} and Lemma 2.1 in
\cite{yanedinburg}, see also Theorem 1.4 in \cite{friedman} and
Lemma 3.1 in \cite{jerison}.
\begin{lem}\label{lemdancer} Let $\mathcal{D}$ be a bounded domain in $\mathbb{R}^n$ with
smooth boundary. Let $g_1(x, t), g_2(x, t)$ be locally Lipschitz
functions with respect to $t$, measurable functions with respect to
$x$, and for any bounded interval $I$ there exists a constant $C$
such that $\sup_{x\in \mathcal{D},t\in I}  |g_i(x, t)|\leq C$, $i=1,
2$, holds. Let
\[
G_i(x,t)=\int_{0}^{t}g_i(x,s)ds,\ i=1,2.
\]
 For $\eta_i \in W^{1,2}(\mathcal{D}), \ i=1, 2,$ consider
the minimization problem:
\[
\inf\left\{J_i(u;\mathcal{D})\ |\ u-\eta_i\in W^{1,2}_0(\mathcal{D})
\right\},\ \ \textrm{where}\ \
J_i(u;\mathcal{D})=\int_{\mathcal{D}}^{}\left\{\frac{1}{2}|\nabla
u|^2-G_i(x,u)\right\}dx.
\]
Let $u_i\in W^{1,2}(\mathcal{D}),\ i=1,2,$ be minimizers to the
minimization problems above. Assume that there exist constants $m<M$
such that
\begin{itemize}
  \item $m\leq u_i(x)\leq M$ a.e.  for    $i=1,2,\ x\in \mathcal{D}$,
  \item $g_1(x,t)\geq g_2(x,t)$ a.e. for  $x\in \mathcal{D},\ t\in
  [m,M]$,
  \item $M\geq\eta_1(x)\geq \eta_2(x)\geq m$ a.e. for  $x\in \mathcal{D}$.
\end{itemize}
Suppose further that $\eta_i\in W^{2,p}(\mathcal{D})$ for any $p>1$,
and that they are \emph{not identically equal} on $\partial
\mathcal{D}$. Then, we have
\[
u_1(x)\geq u_2(x),\ \ \ x\in\mathcal{D}.
\]
\end{lem}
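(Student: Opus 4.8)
The plan is to prove Lemma \ref{lemdancer} by a standard cut-off argument in the spirit of Lemma 2.3 in \cite{danceryanCVPDE}, comparing the two minimizers on the set where the desired ordering fails. First I would introduce the sets $\Omega_+=\{x\in\mathcal{D}:u_2(x)>u_1(x)\}$ and consider the two test functions $w_1=\max\{u_1,u_2\}=u_1+(u_2-u_1)^+$ and $w_2=\min\{u_1,u_2\}=u_2-(u_2-u_1)^+$, where $\phi\equiv(u_2-u_1)^+\in W^{1,2}_0(\mathcal{D})$ vanishes outside $\Omega_+$ (here one uses the boundary hypothesis $\eta_1\ge\eta_2$ on $\partial\mathcal{D}$ so that $\phi$ has zero trace). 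Then $w_1-\eta_1\in W^{1,2}_0(\mathcal{D})$ and $w_2-\eta_2\in W^{1,2}_0(\mathcal{D})$, so $w_1$ and $w_2$ are admissible competitors for the two minimization problems respectively.

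The key computation is the ``crossover'' inequality: adding the two minimality relations $J_1(u_1)\le J_1(w_1)$ and $J_2(u_2)\le J_2(w_2)$ and rearranging, the Dirichlet parts cancel exactly because $|\nabla w_1|^2+|\nabla w_2|^2=|\nabla u_1|^2+|\nabla u_2|^2$ a.e. (the gradients are just swapped on $\Omega_+$). What remains is
\begin{equation}
\int_{\Omega_+}\left[G_1(x,u_2)-G_1(x,u_1)+G_2(x,u_1)-G_2(x,u_2)\right]dx\ge 0,
\end{equation}
i.e. $\int_{\Omega_+}\int_{u_1}^{u_2}\bigl(g_1(x,s)-g_2(x,s)\bigr)\,ds\,dx\ge 0$. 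But on $\Omega_+$ we have $u_1<u_2$ with both values in $[m,M]$, so by the hypothesis $g_1(x,s)\ge g_2(x,s)$ a.e. on $\mathcal{D}\times[m,M]$ the inner integral is $\le 0$, forcing it to vanish a.e. I would then use this to conclude that $w_1$ is also a minimizer for $J_1$ and $w_2$ a minimizer for $J_2$; since minimizers solve the corresponding Euler--Lagrange equations and are (by elliptic regularity, using the local Lipschitz bound on $g_i$ and $\eta_i\in W^{2,p}$) continuous, the strong maximum principle applied to $u_1-w_2\ge 0$ (which solves a linear equation with bounded zeroth-order coefficient obtained by writing $g_2(x,u_1)-g_2(x,w_2)=c(x)(u_1-w_2)$ via the Lipschitz bound) shows that $u_1-w_2$ is either strictly positive or identically zero; the hypothesis that $\eta_1,\eta_2$ are not identically equal on $\partial\mathcal{D}$ rules out the degenerate possibility that $\Omega_+=\mathcal{D}$ and $u_1\equiv u_2$, and in fact yields $u_1>u_2$ wherever they could a priori touch, so $|\Omega_+|=0$ and hence $u_1\ge u_2$ on $\mathcal{D}$.

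The main obstacle I anticipate is the regularity bookkeeping needed to pass from ``$w_1,w_2$ are also minimizers'' to a pointwise comparison: one must check that the minimizers are genuinely $C^1$ (or at least continuous) so that the strong maximum principle / Hopf lemma applies, and that the linearization $g_i(x,u_1)-g_i(x,w_2)=c(x)(u_1-w_2)$ produces an $L^\infty$ coefficient $c$ — this is exactly where the ``locally Lipschitz in $t$, bounded on bounded intervals'' hypotheses on $g_i$ enter. A secondary subtlety is justifying that $\max$ and $\min$ of $W^{1,2}$ functions have the claimed gradient structure (the a.e. identity $\nabla(u_2-u_1)^+=\nabla(u_2-u_1)\chi_{\{u_2>u_1\}}$) and the correct boundary traces; these are standard facts about Sobolev functions (see \cite{evansGariepy}, \cite{Gilbarg-Trudinger}) but need to be invoked carefully. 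Everything else — the cancellation of the Dirichlet integrals and the sign of the potential-difference integral — is a short direct calculation.
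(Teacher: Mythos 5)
The paper offers no proof of this lemma at all: it is quoted verbatim from Lemma 2.3 of \cite{danceryanCVPDE} (also Lemma 1 in \cite{kurata}, Lemma 2.1 in \cite{yanedinburg}), and your argument is precisely the standard proof from that source --- the $\max/\min$ competitor swap with exact cancellation of the Dirichlet terms, the conclusion that $w_1,w_2$ are themselves minimizers, and then the strong maximum principle plus the boundary hypothesis to kill the set $\{u_2>u_1\}$. The structure is correct; two small slips are worth fixing. First, your signs are reversed consistently: minimality gives $\int_{\Omega_+}\int_{u_1}^{u_2}\bigl(g_1(x,s)-g_2(x,s)\bigr)\,ds\,dx\le 0$, while the hypothesis $g_1\ge g_2$ on $[m,M]$ together with $u_1<u_2$ on $\Omega_+$ makes the inner integral $\ge 0$; since you flipped both inequalities, the conclusion that the integral vanishes (and hence that equality holds in both minimality relations) is unaffected, but the displayed inequality as written is the wrong way around. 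Second, in the endgame $u_1-w_2$ satisfies the differential \emph{inequality} $-\Delta(u_1-w_2)-c(x)(u_1-w_2)=g_1(x,u_1)-g_2(x,u_1)\ge 0$ rather than a linear equation, and the degenerate branch excluded by the boundary hypothesis is $u_1\equiv w_2$ (i.e.\ $u_1\le u_2$ throughout, forcing $\eta_1=\eta_2$ on $\partial\mathcal{D}$), not ``$\Omega_+=\mathcal{D}$ and $u_1\equiv u_2$''; with that corrected, the dichotomy of the strong maximum principle on the connected domain $\mathcal{D}$ gives $u_1>w_2$, hence $u_1>u_2$, everywhere.
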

\section{A Liouville-type theorem}\label{appenLiouville}
Assume that $f:\mathbb{R}\to \mathbb{R}$ is continuous, and
\[
\left\{
\begin{array}{l}
  f(0)=0, \\
    \\
  f(t)>0,\ t>0, \\
   \\
  f \ \textrm{is\ non--decreasing\ and\ convex\ on}\ [0,\infty), \\
   \\
\large{\int}_{t_0}^{\infty}\left[\int_{t_0}^{t}f(s)ds
\right]^{-\frac{1}{2}}dt<\infty
 \ \ \forall\ t_0>0.
\end{array}
\right.
\]
In the mathematical literature, the above integral condition is
known as Keller–-Osserman condition, see \cite{farinaHB},
\cite{keller} and \cite{osserman}. These conditions are clearly
satisfied  for  \begin{equation}\label{eqfbrezis}f(t)=t|t|^{p-1}\ \
\textrm{with}\  p>1.
\end{equation}
 The following is Theorem 4.7 in the review
article \cite{farinaHB}. As we have already discussed at the end of
Remark \ref{remDoublingOptimal}, it was originally proven in
\cite{brezisLiouville} for the special case of the power
nonlinearity (\ref{eqfbrezis}).

\begin{thm}\label{thmFarina}
Let $f$ satisfy the above properties.
\begin{description}
  \item[(i)] Suppose $u\in L^1_{loc}(\mathbb{R}^n)$ is such that $f(u)\in
  L^1_{loc}(\mathbb{R}^n)$ and
  \[
-\Delta u+f(u)\leq 0\ \ \textrm{in}\ \ \mathcal{D}'(\mathbb{R}^n)\
\textrm{(distributionally)}.
  \]
  Then $u\leq 0$ a.e. on $\mathbb{R}^n$.
  \item[(ii)]Assume also that $f$ is an odd function. Suppose  $u\in L^1_{loc}(\mathbb{R}^n)$ is such that $f(u)\in
  L^1_{loc}(\mathbb{R}^n)$ and
  \[
-\Delta u+f(u)= 0\ \ \textrm{in}\ \ \mathcal{D}'(\mathbb{R}^n).
  \]
  Then $u= 0$ a.e. on $\mathbb{R}^n$.
\end{description}
\end{thm}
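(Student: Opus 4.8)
The plan is to prove (i) first and then obtain (ii) as an easy consequence. For (ii), note that if $f$ is odd and $-\Delta u+f(u)=0$ in $\mathcal D'(\mathbb R^n)$, then $-u$ satisfies the same equation; hence both $u$ and $-u$ satisfy $-\Delta w+f(w)\le 0$ distributionally, and applying (i) to each gives $u\le 0$ and $-u\le 0$ a.e., i.e.\ $u=0$ a.e.

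For (i) the first step is a reduction to a smooth nonnegative subsolution. Since $\Delta u-f(u)\ge 0$ in $\mathcal D'(\mathbb R^n)$, this distribution is a nonnegative Radon measure, so $\Delta u$ is a signed Radon measure locally and $u\in W^{1,q}_{\mathrm{loc}}$ for $q<n/(n-1)$; in particular $u^+$ makes sense in $W^{1,1}_{\mathrm{loc}}$. By Kato's inequality, $\Delta u^+\ge \chi_{\{u>0\}}\Delta u\ge \chi_{\{u>0\}}f(u)=f(u^+)$, where the last equality uses $f(0)=0$; thus $v:=u^+\ge 0$ satisfies $\Delta v\ge f(v)$ in $\mathcal D'(\mathbb R^n)$, with $f(v)=\chi_{\{u>0\}}f(u)\in L^1_{\mathrm{loc}}$. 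Mollifying, $v_\varepsilon:=v*\rho_\varepsilon\ge 0$ is $C^\infty$ and, using that $f$ is nonnegative, non-decreasing and convex on $[0,\infty)$ together with Jensen's inequality, $\Delta v_\varepsilon=(\Delta v)*\rho_\varepsilon\ge f(v)*\rho_\varepsilon\ge f(v_\varepsilon)$. Hence it suffices to show that every such classical subsolution satisfies $v_\varepsilon\equiv 0$, and then let $\varepsilon\to 0$ in $L^1_{\mathrm{loc}}$.

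The second step is the Keller--Osserman comparison. Fix $x_0\in\mathbb R^n$ and $R>0$. The Keller--Osserman integral condition provides a radial boundary-blow-up (``large'') solution $V_R$ of $\Delta V=f(V)$ on $B_R(x_0)$, increasing in $|x-x_0|$, with $V_R\to+\infty$ as $|x-x_0|\to R^-$. Since $v_\varepsilon$ is bounded on $\overline{B_R(x_0)}$ while $V_R$ blows up at the boundary, and since $\Delta(v_\varepsilon-V_R)$ dominates $c(x)\,(v_\varepsilon-V_R)$ with $c=\frac{f(v_\varepsilon)-f(V_R)}{v_\varepsilon-V_R}\ge 0$ locally bounded, the strong maximum principle forces $v_\varepsilon\le V_R$ on $B_R(x_0)$; in particular $v_\varepsilon(x_0)\le m(R)$, where $m(R)$ is the value of $V_R$ at the centre, a quantity depending only on $f,n,R$ (translation invariance) and non-increasing in $R$. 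The crux is that $\inf_{R>0}m(R)=0$: the decreasing limit $V_\infty(r):=\lim_{R\to\infty}V_R(r)$ is finite (monotonicity plus interior elliptic estimates) and is a nonnegative radial entire solution of $\Delta V=f(V)$; since $\int^\infty f=\infty$ (forced by Keller--Osserman) and $f$ is locally Lipschitz near $0$ (convexity), the radial profile of such a solution either vanishes identically or, by the finite-$r$ blow-up of solutions of $V''+\tfrac{n-1}{r}V'=f(V)$ with positive initial datum, cannot be entire — so $V_\infty\equiv 0$, i.e.\ $m(R)\downarrow 0$. Therefore $v_\varepsilon(x_0)\le\inf_R m(R)=0$ for every $x_0$, so $v_\varepsilon\equiv 0$, and letting $\varepsilon\to 0$ yields $u^+=0$, i.e.\ $u\le 0$ a.e.

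The main obstacle is this crux step: rigorously establishing that the centre value of the large solution tends to $0$ as the ball exhausts $\mathbb R^n$, equivalently the nonexistence of a nontrivial nonnegative entire solution of $\Delta V=f(V)$. This relies on the sufficiency direction of the Keller--Osserman theory (finite-time blow-up for the radial ODE and hence existence of large solutions on bounded domains), which I would quote from the classical references of Keller, Osserman and from the treatments in, e.g., \cite{brezisLiouville} and \cite{farinaHB}; the convexity of $f$ on $[0,\infty)$ enters both in the Jensen step above and, if one prefers to argue without extracting $V_\infty$, in the radialization $r\mapsto$ (spherical average of $V$ over $\partial B_r$) that turns an entire solution into a radial sub/supersolution amenable to the ODE analysis. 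A secondary, routine-but-delicate point is the justification of the measure-theoretic regularity of $u$ and of Kato's inequality at the stated $L^1_{\mathrm{loc}}$ level.
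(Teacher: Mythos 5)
The paper itself does not prove Theorem~\ref{thmFarina}: it is quoted verbatim from Theorem 4.7 of \cite{farinaHB}, which in turn generalizes \cite{brezisLiouville}, so there is no internal argument to compare against. Your reconstruction follows precisely the route of those references: Kato's inequality to pass to the nonnegative subsolution $u^+$, mollification plus Jensen's inequality (using convexity of $f$ on $[0,\infty)$) to produce a smooth nonnegative subsolution, and comparison on balls $B_R(x_0)$ with a radial barrier blowing up at $\partial B_R(x_0)$ whose value at the centre tends to $0$ as $R\to\infty$ under the Keller--Osserman condition; the reduction of (ii) to (i) via oddness of $f$ is correct.

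Two observations on the points you flag as delicate, which are indeed the substance of the theorem. First, since $\Delta u$ is a priori only a Radon measure and not in $L^1_{\mathrm{loc}}$, the relevant form of Kato's inequality is the measure version (Brezis--Ponce): after the Lebesgue decomposition, the concentrated part of $\Delta u - f(u)$ is a nonnegative measure and may be discarded, while the diffuse part is $\geq f(u)\in L^1_{\mathrm{loc}}$, which yields $\Delta u^+\geq f(u^+)$ as you need. Your one-line invocation of $\Delta u^+\geq \chi_{\{u>0\}}\Delta u$ is slightly loose at this level of generality, but the conclusion is right and the references handle it. Second, for the comparison step you do not need an actual boundary blow-up \emph{solution} nor the limiting entire radial solution $V_\infty$ and the attendant uniqueness discussion for the radial ODE at $r=0$: the original arguments of Osserman and Brezis compare directly with an explicit radial \emph{super}solution which blows up at $\partial B_R(x_0)$ and whose centre value is an explicit function of $R$ tending to $0$ (in the pure-power case $f(t)=|t|^{p-1}t$, one may take $C\,R^{\alpha}\bigl(R^2-|x-x_0|^2\bigr)^{-\alpha}$ with $\alpha=2/(p-1)$ and $C$ depending only on $n,p$). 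This sidesteps the existence theory for large solutions and the finite-$r$ blow-up analysis of the damped radial IVP, at the cost of one barrier construction from the Keller--Osserman integral. In short, your scheme is the correct one and reproduces the proof that the paper is implicitly relying on.
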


\section{A doubling lemma}\label{secAppenDoubling}
The following is a very useful result from \cite{pqs}.
\begin{lem}\label{lemDoubling}
Let $(X,d)$ be a complete metric space, $\Gamma \subset X$,
$\Gamma\neq X$, and $\gamma:X\backslash \Gamma\to (0,\infty)$.
Assume that $\gamma$ is bounded on all compact subsets of
$X\backslash\Gamma$. Given $k>0$, let $y\in X\backslash \Gamma$ be
such that
\[
\gamma(y)\textrm{dist}(y,\Gamma)>2k.
\]
Then, there exists $x\in X\backslash \Gamma$ such that
\begin{itemize}
  \item $\gamma(x)\textrm{dist}(x,\Gamma)>2k$,
\\
  \item $\gamma(x)\geq \gamma(y)$,
\\
  \item $2\gamma(x)\geq \gamma(z)\ \ \ \forall \ z\in
  B_{\frac{k}{\gamma(x)}}$.
\end{itemize}
\end{lem}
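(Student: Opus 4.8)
The plan is to prove the Doubling Lemma by a contradiction-and-iteration argument: assuming no good point $x$ exists, build an infinite sequence of points whose $\gamma$-values blow up geometrically while they stay inside a fixed compact set, contradicting the local boundedness of $\gamma$.

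First I would set up the iteration. Suppose, for contradiction, that no $x$ satisfies all three bullet conditions. Start with $x_0 = y$, which satisfies $\gamma(x_0)\,\mathrm{dist}(x_0,\Gamma) > 2k$ and trivially $\gamma(x_0) \geq \gamma(y)$. Since $x_0$ is not a "good" point, the third bullet must fail, so there exists $z =: x_1$ with $|x_1 - x_0| \leq k/\gamma(x_0)$ (i.e.\ $x_1 \in \bar B_{k/\gamma(x_0)}(x_0)$) and $\gamma(x_1) > 2\gamma(x_0)$. One checks $x_1 \in X\setminus\Gamma$: indeed $\mathrm{dist}(x_1,\Gamma) \geq \mathrm{dist}(x_0,\Gamma) - |x_1-x_0| \geq \mathrm{dist}(x_0,\Gamma) - k/\gamma(x_0) > 2k/\gamma(x_0) - k/\gamma(x_0) = k/\gamma(x_0) > 0$, and moreover $\gamma(x_1)\,\mathrm{dist}(x_1,\Gamma) > 2\gamma(x_0) \cdot (k/\gamma(x_0)) = 2k$. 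Also $\gamma(x_1) > 2\gamma(x_0) \geq \gamma(y)$. Thus $x_1$ again satisfies the first two bullets, so we may repeat. Inductively this produces $\{x_j\}_{j\geq 0} \subset X\setminus\Gamma$ with $\gamma(x_{j+1}) > 2\gamma(x_j)$, hence $\gamma(x_j) > 2^j \gamma(x_0) = 2^j \gamma(y)$, and with $|x_{j+1} - x_j| \leq k/\gamma(x_j) < k/(2^j\gamma(y))$.

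Next I would show the sequence converges and lands in a compact set. By the geometric bound, $\sum_{j\geq 0} |x_{j+1}-x_j| < \sum_{j\geq 0} k\,2^{-j}/\gamma(y) = 2k/\gamma(y) < \infty$, so $\{x_j\}$ is Cauchy; since $(X,d)$ is complete, $x_j \to x_\infty$ for some $x_\infty \in X$. All the $x_j$ lie in the closed ball $\bar B_{2k/\gamma(y)}(y)$ (from the telescoping estimate), hence so does $x_\infty$. Crucially $x_\infty \notin \Gamma$: from the lower bounds above, $\mathrm{dist}(x_j,\Gamma) > k/\gamma(x_j) \to 0$ is not immediately helpful, so instead I would track $\mathrm{dist}(x_j,\Gamma)$ more carefully — one has $\mathrm{dist}(x_{j+1},\Gamma) \geq \mathrm{dist}(x_j,\Gamma) - k/\gamma(x_j) \geq \mathrm{dist}(x_j,\Gamma) - 2k\,2^{-j}/(2\gamma(y))$ once $\gamma(x_j) > 2^j\gamma(y)$ is used with care, so $\mathrm{dist}(x_j,\Gamma)$ decreases by a summable amount and stays above $\mathrm{dist}(y,\Gamma) - 2k/\gamma(y) > 0$ (using $\gamma(y)\mathrm{dist}(y,\Gamma) > 2k$). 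Passing to the limit, $\mathrm{dist}(x_\infty,\Gamma) \geq \mathrm{dist}(y,\Gamma) - 2k/\gamma(y) > 0$, so $x_\infty \in X\setminus\Gamma$. Then $K := \{x_j : j\geq 0\} \cup \{x_\infty\}$ is a compact subset of $X\setminus\Gamma$, so $\gamma$ is bounded on $K$ by hypothesis; but $\gamma(x_j) > 2^j\gamma(y) \to \infty$, a contradiction. Hence a good point $x$ exists.

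The main obstacle — really the only delicate point — is verifying that the limit point $x_\infty$ (and indeed every $x_j$) stays in $X\setminus\Gamma$ with distance to $\Gamma$ bounded below, so that the compact set $K$ genuinely avoids $\Gamma$ and the local boundedness hypothesis on $\gamma$ can be invoked. This requires combining the triangle inequality for $\mathrm{dist}(\cdot,\Gamma)$ with the geometric decay $|x_{j+1}-x_j| \leq k/\gamma(x_j) \leq k\,2^{-j}/\gamma(y)$ and the running inequality $\gamma(x_j)\,\mathrm{dist}(x_j,\Gamma) > 2k$; everything else is bookkeeping. I would also remark at the end that the ball $B_{k/\gamma(x)}$ in the statement is understood as $B_{k/\gamma(x)}(x)$, and that the conclusion holds verbatim, matching Lemma~5.1 of \cite{pqs}.
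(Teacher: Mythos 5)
Your proof is correct and is precisely the standard iteration-by-contradiction argument of Pol\'{a}\v{c}ik--Quittner--Souplet for this lemma; the paper itself only quotes the result from \cite{pqs} with the remark that it is ``proven similarly as Baire's category theorem,'' and your construction of the Cauchy sequence $\{x_j\}$ with $\gamma(x_j)>2^j\gamma(y)$, the telescoping lower bound $\textrm{dist}(x_j,\Gamma)>\textrm{dist}(y,\Gamma)-2k/\gamma(y)>0$, and the contradiction with boundedness of $\gamma$ on the compact set $\{x_j\}\cup\{x_\infty\}$ is exactly that intended argument. No gaps.
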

We remark that this doubling lemma is proven similarly as Baire's
category theorem.

\section{Some remarks on  equivariant entire solutions to a class of  elliptic systems of the form $\Delta u=W_u(u)$, $u:\mathbb{R}^n\to \mathbb{R}^n$}\label{appenAlik}
 In this appendix, motivated from Remark \ref{remalikakossimple}, we
indicate how to  simplify some arguments of the recent paper
\cite{alikakosNewproof}, in the case of the equations that are
considered there as representative examples.

 We will use
exactly the same notation of \cite{alikakosNewproof}. This appendix
should be read with a copy of \cite{alikakosNewproof} at hand.

 In \cite{alikakosNewproof}, the author provides a simpler
proof of the recent result in \cite{alikakosARMA}, concerning the
existence of  equivariant entire solutions to a class of semilinear
elliptic systems of the form $\Delta u= W_u(u)$, $u:\mathbb{R}^n\to
\mathbb{R}^n$ (the same approach applies for the case
$u:\mathbb{R}^n\to \mathbb{R}^m$). Besides of assuming that $W_u$ is
also equivariant, the main assumption in the latter papers, which
was subsequently  removed completely in \cite{fuscoPreprint} (recall
also our Theorem \ref{thmmine} for more general results in the
scalar case), is that of ``$Q$-monotonicity'' (this essentially
corresponds to assumption \textbf{(b)} from our introduction). In
all the examples of $W$'s found in
\cite{alikakosARMA,alikakosNewproof}, for which this assumption
could be verified, the function $Q$ was plainly
\[
Q(u)=|u-a_1|,
\]
(this was also the case for the example in
\cite{alikakosSmyrnelis}). In the sequel, except from Remark
\ref{remslide}, we will assume this choice of $Q$.

We may assume that
\begin{equation}\label{eqapal}
W(u)\geq c^2 |u-a_1|^2=c^2Q^2(u),\ \ u\in D\cap B_M,
\end{equation}
because $a_1\in \mathbb{R}^n$ is the only minimum of $W\in C^2$ in
$D$, $W>0$ in $D\backslash \{a_1\}$, and $a_1$ is non-degenerate.
Let $x_R\in D\cap B_{4R}$ be any point as in the beginning of
Section 6 in \cite{alikakosNewproof} (namely with
$B_{3R}(x_R)\subset D$). From Lemma 4.1 in \cite{alikakosNewproof},
via the above relation (we have $|u_R|\leq M$), we obtain that
\[
\Delta Q(u_R)\geq 0\ \textrm{in}\ D\  (\textrm{weakly})\
\textrm{and}\ \int_{B_{2R}(x_R)}^{}Q^2\left(u_R(x)\right)dx\leq
CR^{n-1},
\]
for some constant $C>0$ that is independent of $R$. Now, as in
Remark \ref{remalikakossimple} herein, we infer that
\[
\sup_{B_{R}(x_R)}Q(u_R)\leq C
R^{-n}\int_{B_{2R}(x_R)}^{}Q\left(u_R\right)dx\leq C
R^{-n}R^{\frac{n}{2}}R^{\frac{n-1}{2}}=CR^{-\frac{1}{2}}\to 0\ \
\textrm{as} \ R\to \infty,
\]
($C$ is again independent of $R$). Let us mention that Section 6 in
\cite{alikakosNewproof} was devoted to proving  a similar relation
(in fact, weaker but without making use of (\ref{eqapal}))
 using De Giorgi's oscillation
lemma and an iteration scheme.

\begin{rem}\label{remslide}
It seems likely that the use of the function $\vartheta$ in
\cite{alikakosNewproof} can be avoided (as well as the covering
argument of \cite{fuscoTrans}) in order to extend the domain of
validity of the above bound. To support this, we note that the
function $Q(u_R)$ is a weak lower solution to a problem of the form
\begin{equation}\label{eqappenAlikaf}
\Delta u=f(u)=\left\{\begin{array}{ll}
                      c^2(a-u), & 0\leq u \leq a, \\
                        &  \\
                       0, &  u\geq a,
                     \end{array}
 \right.
\end{equation}
in $D$ (see also the relation between (45) and (46) in
\cite{alikakosBasicFacts}).
 As we observed in our introduction,  the
continuous patching of the radial comparison functions, analogously
to (\ref{eqfuscoTechnics}) (after reflecting them), together with
zero can form a weak
 upper solution to (\ref{eqappenAlikaf}) which may also be chosen to lie above $Q(u_R)$ on $B_R(x_R)$.
Then, one can extend the domain of validity of  the above estimate
by plainly  sliding around $x_R$ in $D$ (a fixed distance away from
the boundary), using a weak version of the sliding method (the point
is that the function $f$ is Lipschitz).
\end{rem}

 \textbf{Acknowledgment.} I would like to thank Prof. Farina for offering valuable comments on a previous version of this article,
and especially for bringing to my attention his paper
\cite{farinaRigidity} which led to Theorem \ref{thmrigidity}.
  I also would like to thank D. Antonopoulou for her insightful comments. The research leading to these results has
received funding from the European Union's Seventh Framework
Programme (FP7-REGPOT-2009-1) under grant agreement
$\textrm{n}^\textrm{o}$ 245749.

\end{document}